\newcommand{\nn}{\nonumber}
\renewcommand{\b}{\textcolor{blue}}
\let\Re\relax
\let\Im\relax
\DeclareMathOperator{\Re}{Re}
\DeclareMathOperator{\Im}{Im}
\DeclareMathOperator{\supp}{supp}
\newcommand{\N}{\mathbb{N}}
\newcommand{\Z}{\mathbb{Z}}
\newcommand{\R}{\mathbb{R}}
\newcommand{\C}{\mathbb{C}}
\newcommand{\eq}[1]{\begin{align}#1\end{align}}
\newcommand{\eqs}[1]{\begin{align*}#1\end{align*}}
\newcommand{\p}[1]{\left( #1 \right)}
\newcommand{\set}[1]{\left\{ #1 \right\}}
\newcommand{\abs}[1]{\left\lvert #1 \right\rvert}
\newcommand{\norm}[1]{\left\lVert #1 \right\rVert}
\newcommand{\ceil}[1]{\left\lceil #1 \right\rceil}
\newcommand{\floor}[1]{\left\lfloor #1 \right\rfloor}
\newcommand{\comment}[1]{}
\newcommand{\alignlabel}[1]{\stepcounter{equation} \tag{\theequation} \label{#1}}
\newcommand{\cle}{\lesssim}
\let\EPSILON\epsilon
\let\VAREPSILON\varepsilon
\renewcommand{\epsilon}{\VAREPSILON}
\renewcommand{\varepsilon}{\EPSILON}
\renewcommand{\subset}{\subseteq}
\renewcommand{\L}{\mathcal{L}}
\renewcommand{\b}[1]{\left[ #1 \right]}
\renewcommand{\hat}{\widehat}
\renewcommand{\tilde}{\widetilde}
\definecolor{red}{RGB}{220,0,0}
\definecolor{green}{RGB}{0,180,0}
\declaretheoremstyle[headfont=\kpfonts]{normalhead}
\newtheorem{theorem}{Theorem}[section]
\newtheorem{lemma}{Lemma}[section]
\newtheorem{corollary}{Corollary}[section]
\theoremstyle{definition}
\newtheorem{remark}{Remark}[section]
\numberwithin{equation}{section}
\numberwithin{figure}{section}
\tikzset{->-/.style={decoration={
  markings,
  mark=at position #1 with {\arrow{>}}},postaction={decorate}}}
  \tikzset{middlearrow/.style={
        decoration={markings,
            mark= at position 0.55 with {\arrow{#1}} ,
        },
        postaction={decorate}
    }
}
\let\OLDthebibliography\thebibliography
\renewcommand\thebibliography[1]{
  \OLDthebibliography{#1}
  \setlength{\parskip}{0pt}
  \setlength{\itemsep}{0pt plus 0.3ex}
}
   \def\MR#1{}
  \renewcommand\subsection{\@startsection{subsection}{2}%
  \z@{-0.8\linespacing\@plus-0.7\linespacing}{0.7\linespacing}%
  {\bfseries}}
\begin{document}

\title{Well-posedness of the higher-order nonlinear Schr\"odinger equation on a finite interval}

\author{Chris Mayo$^\dagger$, Dionyssios Mantzavinos$^\dagger$,  Türker Özsarı$^{*}$}

\address{\normalfont $^{\dagger}$Department of Mathematics, University of Kansas
\\
\normalfont $^{*}$Department of Mathematics, Bilkent University
} \email{\!cmayo@ku.edu, mantzavinos@ku.edu \textnormal{(corresponding author)}, turker.ozsari@bilkent.edu.tr}

\thanks{\textit{Acknowledgements.} DM's and CM's research was partially supported by the U.S. National Science Foundation (NSF-DMS 2206270 and NSF-DMS 2509146) and the Simons Foundation (SFI-MPS-TSM-00013970). 
TÖ's research was supported by BAGEP 2020 Young Scientist Award.
}

\subjclass[2020]{35Q55, 35G31, 35G16}
\keywords{higher-order nonlinear Schr\"odinger equation, Korteweg-de Vries equation, initial-boundary value problem, finite interval, nonzero boundary conditions, unified transform of Fokas, well-posedness in Sobolev spaces, Strichartz estimates, low regularity solutions, power nonlinearity.
}
\date{December 19, 2024. Revised: November 11, 2025.}

\begin{abstract}
We establish the local Hadamard well-posedness of a certain third-order nonlinear Schr\"odinger equation with a multi-term linear part and a general power nonlinearity  known as the higher-order nonlinear Schr\"odinger equation, formulated on a finite interval with a combination of nonzero Dirichlet and Neumann boundary conditions. Specifically, for initial and boundary data in suitable Sobolev spaces that are related to one another through the time regularity induced by the equation, we prove the existence of a unique solution as well as the continuous dependence of that solution on the data. The precise choice of solution space depends on the value of the Sobolev exponent and is dictated both by the linear estimates associated with the forced linear counterpart of the nonlinear initial-boundary value problem and, in the low-regularity setting below the Sobolev algebra property threshold, by certain nonlinear estimates that control the Sobolev norm of the power nonlinearity. In particular, as usual in Schr\"odinger-type equations, in the case of low regularity it is necessary to derive Strichartz estimates in suitable Lebesgue/Bessel potential spaces. The proof of well-posedness is based on a contraction mapping argument combined with the aforementioned linear estimates, which are established by employing the explicit solution formula for the forced linear problem derived via the unified transform of Fokas. Due to the nature of the finite interval problem, this formula involves contour integrals in the complex Fourier plane with corresponding integrands that contain differences of exponentials in their denominators, thus requiring  delicate handling through appropriate contour deformations. It is worth noting that, in addition to the various linear and nonlinear results obtained for the finite interval problem, novel time regularity results are established here also for the relevant half-line problem.
\end{abstract}

\vspace*{-0.5cm}

\maketitle
\markboth
{C. Mayo, D. Mantzavinos \& T. Özsarı}
{Well-posedness of the higher-order nonlinear Schr\"odinger equation on a finite interval}

\section{Introduction}

We consider the higher-order nonlinear Schr\"odinger (HNLS) equation with a power nonlinearity, formulated on the finite interval $(0, \ell)$ with nonzero boundary conditions, namely
\begin{equation}
\begin{aligned}\label{hnls-ibvp}
&i u_t + i \beta u_{xxx} + \alpha u_{xx} + i \delta u_x = \kappa |u|^{\lambda - 1} u, \quad 0 < x < \ell, \ 0 < t < T,
\\
&u(x, 0) = u_0 (x) \in H^s(0, \ell),\\
&u(0, t) = g_0(t) \in H^{\frac{s+1}{3}}(0, T), \quad  u(\ell, t) = h_0(t) \in H^{\frac{s+1}{3}}(0, T), \quad u_x (\ell, t) = h_1(t) \in H^{\frac{s}{3}}(0, T),
\end{aligned}
\end{equation}
where $\beta > 0$, $\alpha, \delta \in \mathbb R$, $\kappa \in \mathbb C$, $\lambda>1$, $\ell>0$, and $T>0$ is an appropriate lifespan to be determined. 
In the above initial-boundary value problem, the initial data is taken from the 
 $L^2$-based Sobolev space $H^s(0, \ell)$, which is defined as the restriction on the finite interval $(0, \ell)$ of the usual Fourier-based Sobolev space on the whole line 
\eq{
H^s(\R) := \left\{\phi \in L^2(\R): \p{1+k^2}^{\frac s2} \mathcal F\{\phi\} \in L^2(\R)\right\}, \quad s\geq 0,
}
where $\mathcal F\{\cdot\}$ denotes the Fourier transform. The equivalent characterization of $H^s(0, \ell)$ as the space $W^{s, 2}(0, \ell)$ of functions in $L^2(0, \ell)$ whose first $s$ derivatives belong in $L^2(0, \ell)$ will also prove useful for our purposes.
The Sobolev spaces $H^{\frac{s+1}{3}}(0, T)$ and $H^{\frac{s}{3}}(0, T)$ for the three pieces of boundary data are defined similarly. It should be noted that the two Sobolev exponents $\frac{s+1}{3}$ and $\frac{s}{3}$ for the boundary data are determined in terms of the Sobolev exponent $s$ for the initial data through the study of both the spatial and the temporal regularity of the solution of problem~\eqref{hnls-ibvp} (see Theorem \ref{thm:fi-se} and Section \ref{dec-s} for more details). In other words, the (time) regularity of the boundary data is fully dictated by the (space) regularity of the initial data. 

The HNLS equation is a higher-order analogue of the renowned NLS equation $iu_t + u_{xx} = \kappa |u|^{\lambda-1} u$, which is a ubiquitous model in mathematical physics with applications ranging from nonlinear optics to water waves to plasmas to Bose-Einstein condensates. In the context of nonlinear optics, the HNLS equation has been derived as an improved approximation (in comparison to NLS) to the three-dimensional Maxwell equations, as it additionally involves a third-order dispersive term that serves as a necessary correction for modeling pulses in the femtosecond regime \cite{k1985,kh1987}. In its original form, the HNLS equation appeared with a \textit{cubic} power nonlinearity ($\lambda=3$) as well as additional cubic nonlinearities involving derivatives. Here, we consider the case of a \textit{general} power nonlinearity ($\lambda>1$) but without the terms involving the derivative cubic nonlinearities, so that the HNLS equation in \eqref{hnls-ibvp} is a direct higher-dispersion analogue of the NLS equation. Furthermore, it is interesting to note that the third-order dispersion of the HNLS equation induces a time regularity (through the relevant time estimates established in Section  \ref{dec-s}) of the same type with the Korteweg-de Vries (KdV) equation. In this light and from a purely mathematical viewpoint, the HNLS equation can be seen as an interesting hybrid between the NLS and KdV equations ---  the two most celebrated nonlinear dispersive equations in one spatial dimension.

The well-posedness of the initial value problem (also known as the Cauchy problem) for the HNLS equation on the whole line has been studied extensively, e.g. see the works \cite{l1997,s1997,tak2000,cl2003,c2004,c2006,f2023}. On the other hand, following the general trend in the well-posedness theory of nonlinear dispersive equations, the analysis of \textit{nonhomogeneous initial-boundary value problems} for HNLS is much more limited. An important challenge associated with such problems has to do with the presence of a boundary in the spatial domain, which in turn introduces the need for prescribing appropriate boundary conditions. In one space dimension, such domains are the half-line $(0, \infty)$ and the finite interval $(0, \ell)$. While in the case of the initial value problem one can prove local well-posedness by first using the Fourier transform on the whole line to solve the forced linear counterpart of the nonlinear equation and then obtaining the solution to the nonlinear problem as a fixed point (in a suitable function space) through a contraction mapping argument, the situation for initial-boundary value problems is less straightforward. Now, the Fourier transform method is not available and, therefore, an important challenge arises right away concerning the solution of the forced linear problem which is needed for defining the iteration map. Importantly, even when this obstacle is overcome, working outside the standard Fourier transform framework means that several tools from harmonic analysis that play a key role in the derivation of linear estimates for the initial value problem (which are then used in order to establish the contraction) are either no longer available or must be adapted in nontrivial ways. 

A general method for proving the (local) well-posedness of initial-boundary value problems for nonlinear dispersive equations has been developed systematically over the last decade or so. This method, which first appeared in the context of the NLS and KdV equations on the half-line \cite{fhm2017,fhm2016}, relies on the solution of the forced linear problem via the unified transform, also known as the Fokas method \cite{f1997,f2008}. The unified transform was introduced by Fokas in 1997 for solving initial-boundary value problems of (i)~linear and (ii)~integrable nonlinear evolution equations. In the former setting, the unified transform provides the direct analogue of the Fourier transform in domains with a boundary. In the latter setting, it provides the counterpart of the inverse scattering transform used for solving the initial value problem of integrable equations (including NLS and KdV) via the spectral analysis of their Lax pair. As most physical nonlinear dispersive models --- including HNLS --- are not  integrable and hence do not possess a Lax pair, it is not possible to employ the inverse scattering transform (for the initial value problem) and the unified transform (for initial-boundary value problems) in order to study these equations directly at the nonlinear level. As noted above, in the case of the initial value problem, one uses the Fourier transform/contraction mapping approach that leads to well-posedness through a fixed point theorem. In the case of initial-boundary value problems, an analogue of this approach is the method of \cite{fhm2017,fhm2016}, where the Fourier transform is replaced by the unified transform.

In the present work, the unified transform is combined with the basic ideas of \cite{fhm2017,fhm2016} as well as with new techniques needed specifically due to the finite interval setting (as opposed to the half-line) and the multi-term nature of the HNLS equation (as opposed to the simpler linear parts of NLS and KdV) in order to establish the Hadamard well-posedness (namely, the existence and uniqueness of solution, as well as the continuity of the data-to-solution map) of the HNLS finite interval problem~\eqref{hnls-ibvp}. More precisely, in the high-regularity setting of $s>\frac 12$ (``smooth'' data), our result reads as follows:
\begin{theorem}[High-regularity Hadamard well-posedness]\label{high-wp-t}
Suppose $\frac 12<s\leq 2$, $s\neq \frac 32$, and $\lambda>1$, where if $\lambda \notin 2\N+1$ then the following conditions are satisfied:
\eq{\label{l-cond}
\begin{aligned}
&\text{if $s\in\N$, then $\lambda\geq s+1$ if $\lambda\in 2\N$; $\floor{\lambda} \geq s$ if $\lambda \notin \N$},
\\
&\text{if $s\notin\N$, then $\lambda > s+1$ if $\lambda\in 2\N$; $\floor{\lambda} \geq \floor s + 1$ if $\lambda \notin \N$}.
\end{aligned}
}
Furthermore, let $T>0$ be such that
\eq{\label{T-cond-high}
&|\kappa| \max\{c_s, c(s, \lambda)\} \b{2 c(s, T)}^{\lambda} \sqrt T 
\Big(
\norm{u_0}_{H^s(0, \ell)}  
+
\norm{g_0}_{H^{\frac{s+1}{3}}(0, T)}
+
\norm{h_0}_{H^{\frac{s+1}{3}}(0, T)}
+
\norm{h_1}_{H^{\frac{s}{3}}(0, T)}\Big)^{\lambda-1} < 1,
}
where $c(s, T)=\max\big\{c_1(s, T), c_2(s, T), c_2(s, T) \sqrt T\big\}$ with  $c_1(s, T)$ and $c_2(s, T)$ being the constants in the Sobolev estimates  \eqref{sob-est} and \eqref{smooth-est}, $c(s,\lambda)$ is the constant in Lemma \ref{lemma:H-pow-diff}, and $c_s$ is the constant of the algebra property inequality in $H^s(0, \ell)$.
Then, under the compatibility conditions
\begin{equation}\label{comp-cond}
g_0(0) = u_0(0), \quad h_0(0) = u_0(\ell), \quad s> \frac 12, 
\qquad
h_1 (0) =  u_0' (\ell), \quad s > \frac{3}{2},
\end{equation}
the initial-boundary value problem \eqref{hnls-ibvp} for the HNLS equation on a finite interval is locally well-posed in the sense of Hadamard. More specifically, \eqref{hnls-ibvp} possesses a unique solution 
$u \in C_t([0, T]; H_x^s(0, \ell)) \cap L_t^2((0, T); H_x^{s+1}(0, \ell))$
which satisfies the estimate
\eq{
\begin{split}
&\quad
\max\left\{\norm{u}_{C_t([0, T]; H_x^s(0, \ell))}, \norm{u}_{L_t^2((0, T); H_x^{s+1}(0, \ell))}\right\} 
\\
&\leq 2 c(s, T) 
\Big(
\norm{u_0}_{H^s(0, \ell)}  
+
\norm{g_0}_{H^{\frac{s+1}{3}}(0, T)}
+
\norm{h_0}_{H^{\frac{s+1}{3}}(0, T)}
+
\norm{h_1}_{H^{\frac{s}{3}}(0, T)}\Big).
\end{split}
}
Furthermore, the data-to-solution map is locally Lipschitz continuous. In addition, uniqueness holds in the whole of $C_t([0, T]; H_x^s(0, \ell))$.
\end{theorem}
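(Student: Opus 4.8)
The plan is to obtain the solution of \eqref{hnls-ibvp} as the unique fixed point of the iteration map $\Phi$ that sends a function $v$ to the solution $u=\Phi(v)$ of the \emph{forced linear} initial-boundary value problem
\begin{equation*}
i u_t + i\beta u_{xxx} + \alpha u_{xx} + i\delta u_x = \kappa\,|v|^{\lambda-1}v, \qquad u(x,0)=u_0,\quad u(0,t)=g_0,\quad u(\ell,t)=h_0,\quad u_x(\ell,t)=h_1,
\end{equation*}
whose solution is represented explicitly by the unified transform formula. We work in the Banach space $X:=C_t([0,T];H^s_x(0,\ell))\cap L^2_t((0,T);H^{s+1}_x(0,\ell))$ with norm $\norm{\cdot}_X:=\max\{\norm{\cdot}_{C_tH^s_x},\norm{\cdot}_{L^2_tH^{s+1}_x}\}$; the compatibility conditions \eqref{comp-cond}, imposed once and for all on the fixed data $(u_0,g_0,h_0,h_1)$, are exactly what is needed for the unified transform representation and the accompanying linear estimates of Theorem \ref{thm:fi-se} to be valid, and since they involve only the data they are not affected by the iteration. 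Applying the two linear (Sobolev) estimates \eqref{sob-est} and \eqref{smooth-est}, with constants $c_1(s,T)$ and $c_2(s,T)$, to the forced linear problem yields
\begin{equation*}
\norm{\Phi(v)}_X \;\leq\; c(s,T)\,\Big(\norm{u_0}_{H^s(0,\ell)}+\norm{g_0}_{H^{\frac{s+1}{3}}(0,T)}+\norm{h_0}_{H^{\frac{s+1}{3}}(0,T)}+\norm{h_1}_{H^{\frac{s}{3}}(0,T)}\;+\;\norm{\,|v|^{\lambda-1}v\,}_{L^2_t((0,T);H^s_x)}\Big),
\end{equation*}
with $c(s,T)=\max\{c_1(s,T),c_2(s,T),c_2(s,T)\sqrt T\}$, so that everything reduces to estimating the power nonlinearity in $H^s(0,\ell)$.

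For the nonlinear estimate: since $s>\tfrac12$, $H^s(0,\ell)$ is a Banach algebra with constant $c_s$. If $\lambda\in 2\N+1$, then $z\mapsto|z|^{\lambda-1}z$ is a polynomial in $z$ and $\bar z$, so the algebra property gives $\norm{|v|^{\lambda-1}v}_{H^s}\leq c_s\norm{v}_{H^s}^{\lambda}$ and, after telescoping, a matching difference bound. If $\lambda\notin 2\N+1$, the nonlinearity is only finitely smooth at the origin, and it is precisely here that the hypothesis \eqref{l-cond} enters: under \eqref{l-cond} Lemma \ref{lemma:H-pow-diff} applies and provides $\norm{|v|^{\lambda-1}v-|w|^{\lambda-1}w}_{H^s}\leq c(s,\lambda)\big(\norm{v}_{H^s}^{\lambda-1}+\norm{w}_{H^s}^{\lambda-1}\big)\norm{v-w}_{H^s}$, in particular (taking $w=0$) $\norm{|v|^{\lambda-1}v}_{H^s}\leq c(s,\lambda)\norm{v}_{H^s}^{\lambda}$. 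Taking the supremum over $t\in[0,T]$ and using $\norm{f}_{L^2_t((0,T);H^s_x)}\leq\sqrt T\,\norm{f}_{C_tH^s_x}$, we obtain, with $C_\lambda:=\max\{c_s,c(s,\lambda)\}$,
\begin{equation*}
\norm{\,|v|^{\lambda-1}v\,}_{L^2_t((0,T);H^s_x)}\leq C_\lambda\sqrt T\,\norm{v}_{X}^{\lambda}, \qquad \norm{\,|v|^{\lambda-1}v-|w|^{\lambda-1}w\,}_{L^2_t((0,T);H^s_x)}\leq C_\lambda\sqrt T\big(\norm{v}_{X}^{\lambda-1}+\norm{w}_{X}^{\lambda-1}\big)\norm{v-w}_{X}.
\end{equation*}

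Now run the contraction on the closed (hence complete) ball $B_R\subset X$ of radius $R:=2c(s,T)\,\mathcal D$, where $\mathcal D$ denotes the sum of the four data norms. For $v\in B_R$, combining the two previous steps gives $\norm{\Phi(v)}_X\leq c(s,T)\mathcal D+c(s,T)|\kappa|C_\lambda\sqrt T\,R^{\lambda}=c(s,T)\mathcal D\big(1+|\kappa|C_\lambda(2c(s,T))^{\lambda}\sqrt T\,\mathcal D^{\lambda-1}\big)$, which is $\leq R$ as soon as $|\kappa|C_\lambda(2c(s,T))^{\lambda}\sqrt T\,\mathcal D^{\lambda-1}\leq 1$; for $v,w\in B_R$ the difference estimate gives $\norm{\Phi(v)-\Phi(w)}_X\leq c(s,T)|\kappa|C_\lambda\sqrt T\cdot 2R^{\lambda-1}\norm{v-w}_X=|\kappa|C_\lambda(2c(s,T))^{\lambda}\sqrt T\,\mathcal D^{\lambda-1}\,\norm{v-w}_X$. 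The coefficient on the right is exactly the left-hand side of \eqref{T-cond-high}, hence strictly less than $1$ under the hypothesis on $T$; therefore $\Phi$ maps $B_R$ into itself contractively and, by the Banach fixed point theorem, has a unique fixed point $u\in B_R$, which is a solution of \eqref{hnls-ibvp} obeying $\norm{u}_X\leq 2c(s,T)\mathcal D$. The Lipschitz dependence on the data follows from the same estimates: if $u,\tilde u$ are the solutions produced above for two data sets, then $u-\tilde u$ solves a forced linear problem with forcing $\kappa(|u|^{\lambda-1}u-|\tilde u|^{\lambda-1}\tilde u)$ and data equal to the difference of the two data sets, and since $u,\tilde u\in B_R$ the nonlinear contribution is absorbed into the left-hand side using \eqref{T-cond-high}, leaving $\norm{u-\tilde u}_X\cle\big(\text{difference of the data norms}\big)$.

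Finally, to upgrade uniqueness to the whole of $C_t([0,T];H^s_x(0,\ell))$ I would use a covering/continuation argument. If $u$ and $\tilde u$ are any two solutions of \eqref{hnls-ibvp} lying in $C_tH^s_x$, their difference $w$ solves the forced linear problem on $[0,T]$ with zero initial and boundary data and forcing $\kappa(|u|^{\lambda-1}u-|\tilde u|^{\lambda-1}\tilde u)$; applying the linear estimate on a subinterval $[t_0,t_0+\tau]\subset[0,T]$ together with the nonlinear difference bound yields $\norm{w}_{C_tH^s_x([t_0,t_0+\tau])}\leq C\sqrt\tau\,\norm{w}_{C_tH^s_x([t_0,t_0+\tau])}$, where $C$ depends only on $s,\lambda,\kappa$ and the finite $C_tH^s_x$-norms of $u$ and $\tilde u$ on $[0,T]$. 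Choosing $\tau$ so small that $C\sqrt\tau<1$ forces $w\equiv0$ on that subinterval, and finitely many such steps cover $[0,T]$, giving $u=\tilde u$. The bulk of the work is not in this assembly --- which is routine once the linear theory is in hand --- but in the two ingredients it relies on: the linear estimates of Theorem \ref{thm:fi-se}, i.e. \eqref{sob-est} and \eqref{smooth-est}, whose proof via the unified transform must contend with integrands carrying differences of exponentials in their denominators and hence requires delicate contour deformations to extract both the correct powers of $T$ and the constants $c_1(s,T),c_2(s,T)$, and Lemma \ref{lemma:H-pow-diff}, where the power nonlinearity must be controlled in $H^s$ under the sharp conditions \eqref{l-cond}. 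Within the proof above itself, the only point requiring care is the bookkeeping of constants, arranged so that the self-mapping and contraction thresholds collapse precisely to \eqref{T-cond-high}.
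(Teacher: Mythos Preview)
Your contraction-mapping argument, including the bookkeeping of constants that collapses both the self-mapping and contraction thresholds to \eqref{T-cond-high}, matches the paper's proof essentially line for line; likewise for the Lipschitz continuity. The one genuine divergence is your proof of uniqueness in the whole of $C_t([0,T];H^s_x)$: you argue by absorption on short subintervals, reusing the linear estimate \eqref{sob-est} together with Lemma~\ref{lemma:H-pow-diff} and iterating. The paper instead proceeds by an energy argument: multiplying the equation for $w=u_1-u_2$ by $\bar w$, integrating in $x$, taking imaginary parts, and using the pointwise bound $\big||u_1|^{\lambda-1}u_1-|u_2|^{\lambda-1}u_2\big|\lesssim (|u_1|^{\lambda-1}+|u_2|^{\lambda-1})|w|$ (via a mean-value identity) to obtain a differential inequality for $\norm{w(t)}_{L^2}^2$, which Gr\"onwall then closes. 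Your approach has the advantage of staying entirely within the linear/nonlinear estimate framework already set up and of not requiring any integration by parts (and hence no need to track the sign $\beta>0$ of the boundary term $\frac\beta2|w_x(0,t)|^2$); the paper's energy argument, on the other hand, is completely self-contained at the $L^2$ level and does not depend on re-invoking the unified-transform estimates on shifted time intervals. Both are standard and correct. A minor citation slip: the estimates \eqref{sob-est} and \eqref{smooth-est} are those of Theorem~\ref{lin-est-t} (the full forced linear problem), not Theorem~\ref{thm:fi-se} (the reduced problem).
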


In the setting of low regularity $s<\frac 12$ (``rough'' data), the well-posedness of the finite interval problem \eqref{hnls-ibvp} is more challenging, as it requires the derivation of suitable Strichartz estimates. The precise statement of our result is the following:
\begin{theorem}[Low-regularity Hadamard well-posedness]\label{low-wp-t}
Suppose $0\leq s < \frac 12$ and $2\leq \lambda \leq \frac{7-2s}{1-2s}$, and let   $T>0$ be such that
\eq{\label{T-cond-low}
&|\kappa| c(s, \lambda) \b{2c(s, \lambda, T)}^\lambda T^{\frac{7-\lambda+2s(\lambda-1)}{6}} 
\Big(
\norm{u_0}_{H^s(0, \ell)}  
+
\norm{g_0}_{H^{\frac{s+1}{3}}(0, T)}
+
\norm{h_0}_{H^{\frac{s+1}{3}}(0, T)}
+
\norm{h_1}_{H^{\frac{s}{3}}(0, T)}\Big)^{\lambda-1} < 1,
}
where $c(s, \lambda, T) := \max\big\{c_2(s, T), c_3(s, 2, T), c_3(s, \frac{2\lambda}{1+2(\lambda-1)s}, T)\big\}$ with the three constants involved coming from the Sobolev estimate \eqref{smooth-est} and the Strichartz estimate \eqref{strich-est}, and $c(s,\lambda)$ is the constant in Lemma \ref{qnls-non-l}.
Then, the initial-boundary value problem~\eqref{hnls-ibvp} for the HNLS equation on a finite interval is locally well-posed in the sense of Hadamard. More specifically, \eqref{hnls-ibvp} possesses a unique solution 
$$
u \in C_t([0, T]; H_x^s(0, \ell)) \cap L_t^2((0, T); H_x^{s+1}(0, \ell)) \cap L_t^{\frac{6\lambda}{(1-2s)(\lambda-1)}}((0, T); H_x^{s, \frac{2\lambda}{1+2(\lambda-1)s}}(0, \ell))
$$
which satisfies the estimate
\eq{
\begin{aligned}
&\quad
\max\Big\{\norm{u}_{C_t([0, T]; H_x^s(0, \ell))}, \norm{u}_{L_t^2((0, T); H_x^{s+1}(0, \ell))}, \norm{u}_{L_t^{\frac{6\lambda}{(1-2s)(\lambda-1)}}((0, T); H_x^{s, \frac{2\lambda}{1+2(\lambda-1)s}}(0, \ell))}\Big\}  
\\
&\leq 2 c(s, \lambda, T)
\Big(
\norm{u_0}_{H^s(0, \ell)}  
+
\norm{g_0}_{H^{\frac{s+1}{3}}(0, T)}
+
\norm{h_0}_{H^{\frac{s+1}{3}}(0, T)}
+
\norm{h_1}_{H^{\frac{s}{3}}(0, T)}\Big).
\end{aligned}
}
Furthermore, the data-to-solution map is locally Lipschitz continuous. In addition, uniqueness holds in the whole of $C_t([0, T]; H_x^s(0, \ell)) \cap L_t^{\frac{6\lambda}{(1-2s)(\lambda-1)}}((0, T); H_x^{s, \frac{2\lambda}{1+2(\lambda-1)s}}(0, \ell))$.
\end{theorem}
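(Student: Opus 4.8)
The plan is to realize the solution of \eqref{hnls-ibvp} as the unique fixed point of a contraction built from the explicit unified-transform representation of the solution of the forced linear counterpart of \eqref{hnls-ibvp}. Abbreviate the total data size by
\[
D := \norm{u_0}_{H^s(0,\ell)} + \norm{g_0}_{H^{\frac{s+1}{3}}(0,T)} + \norm{h_0}_{H^{\frac{s+1}{3}}(0,T)} + \norm{h_1}_{H^{\frac{s}{3}}(0,T)},
\]
and let $X_T^s$ be the triple-intersection space
\[
X_T^s := C_t([0,T];H_x^s(0,\ell)) \cap L_t^2((0,T);H_x^{s+1}(0,\ell)) \cap L_t^{\frac{6\lambda}{(1-2s)(\lambda-1)}}\big((0,T);H_x^{s,\frac{2\lambda}{1+2(\lambda-1)s}}(0,\ell)\big),
\]
normed by the maximum of its three norms. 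For $u\in X_T^s$ let $\Phi(u)$ denote the solution of the forced linear problem attached to \eqref{hnls-ibvp} with forcing $f=\kappa|u|^{\lambda-1}u$ and the prescribed initial/boundary data $u_0,g_0,h_0,h_1$; then $u$ solves \eqref{hnls-ibvp} on $(0,T)$ if and only if $\Phi(u)=u$. Since $s<\tfrac12$, no compatibility conditions are needed at this stage.

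First I would bound $\Phi$ using the linear theory already in hand: the Sobolev estimate \eqref{smooth-est} controls the $C_tH_x^s$ and $L_t^2H_x^{s+1}$ parts of $\norm{\Phi(u)}_{X_T^s}$, and the Strichartz estimate \eqref{strich-est} — applied with the two pairs of exponents entering $c(s,\lambda,T)$ — controls the remaining Lebesgue/Bessel-potential part, so that $\norm{\Phi(u)}_{X_T^s}\le c(s,\lambda,T)\big(D+\norm{\kappa|u|^{\lambda-1}u}_{\ast}\big)$, where $\norm{\cdot}_{\ast}$ denotes the forcing norm appearing in those estimates. The nonlinearity is then absorbed through Lemma \ref{qnls-non-l}: for $2\le\lambda\le\frac{7-2s}{1-2s}$ — a range that is precisely what makes the auxiliary Strichartz exponents admissible and the time power $\theta:=\frac{7-\lambda+2s(\lambda-1)}{6}$ nonnegative (indeed $\theta\ge0\Longleftrightarrow\lambda(1-2s)\le7-2s$), the lower bound $\lambda\ge2$ guaranteeing enough smoothness of $z\mapsto|z|^{\lambda-1}z$ — the lemma yields both $\norm{\kappa|u|^{\lambda-1}u}_{\ast}\le|\kappa|\,c(s,\lambda)\,T^{\theta}\norm{u}_{X_T^s}^{\lambda}$ and the companion difference estimate $\norm{\kappa(|u|^{\lambda-1}u-|v|^{\lambda-1}v)}_{\ast}\le|\kappa|\,c(s,\lambda)\,T^{\theta}\big(\norm{u}_{X_T^s}^{\lambda-1}+\norm{v}_{X_T^s}^{\lambda-1}\big)\norm{u-v}_{X_T^s}$.

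Combining these, the contraction closes on the closed ball $B_R:=\{u\in X_T^s:\norm{u}_{X_T^s}\le R\}$ of radius $R:=2c(s,\lambda,T)D$. For $u\in B_R$ the two bounds give $\norm{\Phi(u)}_{X_T^s}\le c(s,\lambda,T)D+c(s,\lambda,T)|\kappa|\,c(s,\lambda)\,T^{\theta}R^{\lambda}$, and the smallness condition \eqref{T-cond-low} says exactly that the second term is $\le c(s,\lambda,T)D$, whence $\Phi(B_R)\subseteq B_R$; likewise, for $u,v\in B_R$ the difference estimate gives $\norm{\Phi(u)-\Phi(v)}_{X_T^s}\le c(s,\lambda,T)|\kappa|\,c(s,\lambda)\,T^{\theta}\cdot 2R^{\lambda-1}\norm{u-v}_{X_T^s}\le\tfrac12\norm{u-v}_{X_T^s}$, again by \eqref{T-cond-low}. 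The Banach fixed point theorem then delivers a unique $u\in B_R$ with $\Phi(u)=u$, and $\norm{u}_{X_T^s}\le R=2c(s,\lambda,T)D$ is the claimed estimate.

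It remains to address continuous dependence and uniqueness in the larger space. For the former, two data sets at distance $D_{\mathrm{diff}}$ (measured in the four norms above) produce solutions $u,\tilde u$ in balls of the preceding type; subtracting their fixed-point identities and applying the linear estimates to the data difference together with the nonlinear difference estimate gives $\norm{u-\tilde u}_{X_T^s}\le c(s,\lambda,T)D_{\mathrm{diff}}+\tfrac12\norm{u-\tilde u}_{X_T^s}$, hence local Lipschitz dependence with constant $2c(s,\lambda,T)$. For uniqueness in $C_t([0,T];H_x^s(0,\ell))\cap L_t^{\frac{6\lambda}{(1-2s)(\lambda-1)}}((0,T);H_x^{s,\frac{2\lambda}{1+2(\lambda-1)s}}(0,\ell))$, I would note that Lemma \ref{qnls-non-l} controls the nonlinearity using only these two norms, so the linear estimate forces any such solution to lie also in $L_t^2H_x^{s+1}$, i.e.\ in $X_T^s$; then on a short subinterval $[0,T']$ the contraction above forces agreement with the constructed solution, and a step-by-step continuation in $t$ propagates this to all of $[0,T]$. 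The genuinely hard analysis — the Strichartz estimate \eqref{strich-est} and the fractional-Leibniz/Hölder-in-spacetime work behind Lemma \ref{qnls-non-l}, which is what pins down the admissible range of $\lambda$ — is already packaged in the quoted results, so the main remaining difficulty here is bookkeeping: making all three components of the $X_T^s$-norm close under a \emph{single} smallness condition on $T$ with matching powers of $T$, and recovering the $L_t^2H_x^{s+1}$ control a posteriori in the uniqueness argument.
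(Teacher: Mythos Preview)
Your fixed-point argument is essentially the paper's: same solution space (the paper calls it $Y_T^{s,\lambda}$), same radius $R=2c(s,\lambda,T)D$, same use of Lemma~\ref{qnls-non-l} to close the contraction under condition~\eqref{T-cond-low}. One small mis-attribution: estimate~\eqref{smooth-est} is the $L_t^2H_x^{s+1}$ smoothing estimate only; the $C_tH_x^s$ bound in the low-regularity range comes from the Strichartz estimate~\eqref{strich-est} with the endpoint pair $(q,p)=(\infty,2)$, which is why $c_3(s,2,T)$ appears in the definition of $c(s,\lambda,T)$. This does not affect your argument, since you correctly invoke $c(s,\lambda,T)$ as the overall constant.

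Your uniqueness argument in the larger space is genuinely different from the paper's. The paper adapts the high-regularity energy identity (multiply by $\bar w$, integrate by parts, Gr\"onwall) to the rough setting via mollification, following Holmer. Your route---bootstrap any solution in $C_tH_x^s\cap L_t^qH_x^{s,p}$ back into $L_t^2H_x^{s+1}$ using Lemma~\ref{qnls-non-l} and the linear estimate, then run a local contraction plus continuation---is a legitimate and arguably cleaner alternative, since it recycles the fixed-point machinery rather than introducing a separate energy computation. Two points deserve a sentence each if you write this out: in the critical case $\lambda=\tfrac{7-2s}{1-2s}$ the exponent $\theta$ vanishes, so shrinking $T'$ does not help directly---you instead use that $\|u\|_{L_t^q((0,T');H_x^{s,p})}\to 0$ as $T'\to 0$ (here $q<\infty$) to obtain the needed smallness; and your Lipschitz-dependence sketch implicitly assumes the contraction constant $\tfrac12$ is uniform over nearby data, which the paper secures explicitly by fixing a ball in data space and choosing a common lifespan $T_c$ valid for every datum in that ball.
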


The proofs of Theorems \ref{high-wp-t} and \ref{low-wp-t} combine a contraction mapping argument (in the relevant solution spaces) with appropriate linear estimates for the forced linear counterpart of the nonlinear problem~\eqref{hnls-ibvp}, namely for the problem
\begin{equation}
\begin{aligned}\label{hls-ibvp}
&i u_t + i \beta u_{xxx} + \alpha u_{xx} + i \delta u_x = f(x, t), \quad 0 < x < \ell, \  0 < t < T,
\\
&u(x, 0) = u_0 (x),\\
&u(0, t) = g_0(t), \quad  u(\ell, t) = h_0(t), \quad u_x (\ell, t) = h_1(t),
\end{aligned}
\end{equation}
where $f(x, t)$ is a given forcing. In particular, we establish the following crucial linear estimates:
\begin{theorem}[Linear estimates]\label{lin-est-t}
For initial data $u_0 \in H^s(0, \ell)$, Dirichlet boundary data $g_0, h_0 \in H^{\frac{s+1}{3}}(0, T)$, Neumann boundary data $h_1 \in H^{\frac{s}{3}}(0, T)$, and forcing $f \in L_t^2((0, T); H_x^s(0, \ell))$ when $s>\frac 12$ and $f \in L_t^1((0, T); H_x^s(0, \ell))$ when $s<\frac 12$, the solution to the initial-boundary value problem \eqref{hls-ibvp} for the linear higher-order Schr\"odinger equation on a finite interval satisfies the Sobolev estimate
\eq{
\begin{split}
\norm{u}_{L_t^\infty((0, T); H_x^s(0, \ell))}
\leq
c_1(s, T) 
\Big(
&\norm{u_0}_{H^s(0, \ell)}  
+
\norm{g_0}_{H^{\frac{s+1}{3}}(0, T)}
+
\norm{h_0}_{H^{\frac{s+1}{3}}(0, T)}
+
\norm{h_1}_{H^{\frac{s}{3}}(0, T)}
\\
&
+ 
\norm{f}_{L_t^2((0, T); H_x^s(0, \ell))}
\Big),
\quad
\frac 12 < s \leq 2, \ s\neq \frac 32, 
\end{split}
\label{sob-est}
}
the additional smoothing estimate
\eq{
\begin{split}
\norm{u}_{L_t^2((0, T); H_x^{s+1}(0, \ell))}
\leq
c_2(s, T) 
\bigg(
&\norm{u_0}_{H^s(0, \ell)}  
+
\norm{g_0}_{H^{\frac{s+1}{3}}(0, T)}
+
\norm{h_0}_{H^{\frac{s+1}{3}}(0, T)}
+
\norm{h_1}_{H^{\frac{s}{3}}(0, T)}
\\
&
+ 
\begin{cases}
\norm{f}_{L_t^2((0, T); H_x^s(0, \ell))}, &\frac 12 < s \leq 2, \ s\neq \frac 32
\\
\norm{f}_{L_t^1((0, T); H_x^s(0, \ell))}, &0 \leq s < \frac 12
\end{cases}
\bigg),
\end{split}
\label{smooth-est}
}
and the family of Strichartz-type estimates (which contains the analogue of estimate \eqref{sob-est} as a special case)
\begin{equation}
\begin{aligned}
\norm{u}_{L_t^q((0, T); H_x^{s, p}(0, \ell))} 
\leq 
c_3(s, p, T) \Big(
&\norm{u_0}_{H^s (0, \ell)} + \norm{g_0}_{H^{\frac{s + 1}{3}} (0, T)} 
+ \norm{h_0}_{H^{\frac{s + 1}{3}} (0, T)} + \norm{h_1}_{H^{\frac{s}{3}} (0, T)}
\\
&
+
\norm{f}_{L_t^1((0, T); H_x^s (0,\ell))}
\Big), \quad 0 \leq s < \frac 12, 
\label{strich-est}
\end{aligned}
\end{equation}
where the positive constants $c_1(s, T)$, $c_2(s, T)$ and $c_3(s, p, T)$ remain bounded as $T\to 0^+$ and $(q, p)$ is any pair satisfy the admissibility condition
\begin{equation}\label{adm-pair}
q, p \ge 2, \quad \frac{3}{q} + \frac{1}{p} = \frac{1}{2}.
\end{equation} 
\end{theorem}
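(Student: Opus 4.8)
The plan is to follow the unified-transform (Fokas method) philosophy of \cite{fhm2017,fhm2016}: first produce an explicit solution formula for the forced linear problem \eqref{hls-ibvp} via the unified transform of \cite{f1997,f2008}, then estimate the resulting expressions piece by piece. Since the HLS dispersion relation $\Phi(k)$ is a cubic polynomial in $k$ (an Airy-type symbol), I would decompose the solution by linearity into three contributions: (I) the pure initial-value part, obtained by extending $u_0$ to some $U_0\in H^s(\R)$ with $\norm{U_0}_{H^s(\R)}\lesssim\norm{u_0}_{H^s(0,\ell)}$ and evolving it on the whole line; (II) the forcing part, given by a Duhamel integral of the whole-line propagator applied to an extension of $f$; and (III) a boundary corrector solving the homogeneous HLS equation with zero initial data and with boundary values equal to the difference between the prescribed data $g_0,h_0,h_1$ and the boundary traces of (I)--(II). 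Each of (I)--(III) is an explicit oscillatory/contour integral, and the estimates \eqref{sob-est}, \eqref{smooth-est}, \eqref{strich-est} would follow by bounding these.

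For (I) and (II) I would rely on Fourier analysis on $\R$. For (I): the whole-line propagator is unitary on $H^s(\R)$, giving the $L^\infty_t H^s_x$ bound; the Kato-type local smoothing estimate for the third-order operator (the analogue of the classical KdV smoothing) gives the gain of one derivative in $L^2_t H^{s+1}_x$; and the known Strichartz estimates for the Airy group --- valid precisely for the admissibility pairs \eqref{adm-pair} --- give the $L^q_t H^{s,p}_x$ bound. For (II), I would transfer these three families to the Duhamel term via the Christ--Kiselev lemma and Minkowski's inequality; this is what forces the forcing to be measured in $L^2_t H^s_x$ for \eqref{sob-est} and in $L^1_t H^s_x$ for \eqref{smooth-est} and \eqref{strich-est}, exactly as in the hypothesis. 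Restricting from $\R$ to $(0,\ell)\times(0,T)$ only improves norms, and by arranging the time-extensions of the data appropriately all resulting constants remain bounded as $T\to0^+$.

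Part (III) is the crux. The corrector would be represented, via the unified transform, as a sum of integrals over the boundaries $\partial D^\pm$ of the regions $D^\pm$ in the upper/lower half of the complex $k$-plane in which the exponential $e^{-i\Phi(k)t}$ decays; the integrands involve the transformed boundary data, the exponentials $e^{ikx}$ and $e^{ik(x-\ell)}$, and --- because the domain is a bounded interval rather than a half-line --- a characteristic determinant $\Delta(k)$ built from the three roots $\nu_0(k)=k$, $\nu_1(k)$, $\nu_2(k)$ of $\Phi(\nu)=\Phi(k)$ together with the exponentials $e^{i\nu_j(k)\ell}$, which enters the denominators as a difference of exponentials. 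First I would justify these formulas rigorously: deform the contours $\partial D^\pm$ to equivalent contours that avoid all zeros of $\Delta(k)$ --- a discrete set accumulating at infinity, the finite-interval analogue of eigenvalues --- while preserving the decay of the integrand, the key point being a lower bound for $\abs{\Delta(k)}$ on the deformed contours. Then I would split each contour integral into (a) a piece over a large compact arc, where every factor is a bounded continuous function and the estimate is immediate, and (b) pieces over the unbounded rays, where I would exploit the exponential decay of $e^{ikx}$ and $e^{ik(x-\ell)}$ for $x\in(0,\ell)$, the precise asymptotics of $\nu_1(k),\nu_2(k)$ and of $1/\Delta(k)$ as $\abs k\to\infty$, and oscillation in $t$. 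This would give the $L^\infty_t H^s_x$ bound; the extra power of $k$ available from the cubic symbol on the unbounded rays would upgrade it to the $L^2_t H^{s+1}_x$ smoothing bound; and the Strichartz bound \eqref{strich-est} would follow by adapting to these boundary integrals the oscillatory-integral/$TT^*$ argument used for the Airy group. A separate ingredient needed at this stage is the time regularity of the boundary traces of (I)--(II): to keep the corrector's data in $H^{\frac{s+1}{3}}(0,T)$ (Dirichlet) and $H^{\frac{s}{3}}(0,T)$ (Neumann), one must show these traces gain exactly that much temporal smoothness --- the $x$-to-$t$ transfer specific to third-order dispersion (hence the factor $\tfrac13$) --- which I would establish along the lines of the half-line time-regularity results mentioned in the introduction.

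The hard part will be Part (III), and within it the handling of the denominators $\Delta(k)$: unlike the half-line problem, where the analogous determinant does not vanish and the contours can be kept fixed, the finite-interval $\Delta(k)$ is a difference of exponentials with infinitely many complex zeros, so the contour deformations must be chosen carefully --- and uniformly in $x\in(0,\ell)$ --- to keep $\Delta$ away from zero without destroying the exponential decay that makes the integrals converge and the estimates close. This is compounded by the multi-term linear part $i\beta u_{xxx}+\alpha u_{xx}+i\delta u_x$: the three roots $\nu_j(k)$ have distinct asymptotics and the lower-order terms perturb the leading Airy behavior, so the reduction is substantially more involved than for the single-term NLS or KdV equations of \cite{fhm2017,fhm2016}.
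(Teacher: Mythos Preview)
Your proposal follows the same unified-transform strategy as the paper, but the paper's decomposition is more layered than your (I)+(II)+(III). Rather than going directly to a whole-line Cauchy problem plus a finite-interval corrector carrying all three nonzero boundary data, the paper first reduces to a \emph{half-line} problem (so as to import the estimates of \cite{amo2024} wholesale), decomposes that in turn into Cauchy pieces $\mathcal W,\mathcal Z$ plus reduced half-line correctors $\breve U_1,\breve U_2$, and is finally left with a reduced finite-interval corrector $\breve u$ carrying data \emph{only at $x=\ell$}. This layering confines the genuinely new work to two places: the reduced interval problem with data at $x=\ell$ only (Section~\ref{reduced-s}), and novel \emph{time} estimates for the reduced half-line solution (Theorem~\ref{thm:hl-te}), needed because the data of $\breve u$ at $x=\ell$ involve traces of the half-line correctors $\breve U_j$, not just of the Cauchy pieces. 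Your direct decomposition would work in principle but requires handling the full three-datum corrector at once.

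Your picture of the determinant $\Delta(k)$ is slightly off. You anticipate a discrete set of zeros accumulating at infinity that the contours must thread between; in fact the paper proves (Lemma~\ref{lemma:Deltabound}) that $|e^{i\nu_n\ell}\Delta(k)|\gtrsim\big|k-\tfrac{\alpha}{3\beta}\big|$ uniformly on $\overline{D_n}$ outside a single fixed disk of radius $R_\Delta$, so puncturing that disk once (which is exactly the definition \eqref{dntil-def} of $\tilde D_n$) disposes of the issue entirely. The remaining contour analysis --- splitting $\partial\tilde D$ into a compact arc and unbounded rays, as you describe --- then proceeds without further obstruction from $\Delta$. One further subtlety you gloss over: to land the forcing in $L^1_tH^s_x$ for \eqref{smooth-est} and \eqref{strich-est}, the naive route through the time estimate \eqref{amo26} for the trace $\mathcal Z|_{x=0}$ yields only $L^2_t$; the paper circumvents this via explicit Duhamel-type rewritings (Lemmas~\ref{lemma:alt-breveU2} and \ref{lemma:alt-breveu2}) that pull the $t'$-integral outside the boundary-corrector solution operator before estimating.
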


The Sobolev estimate \eqref{sob-est} provides the basis for the proof of the high-regularity Theorem \ref{high-wp-t}, while the Strichartz estimate \eqref{strich-est} used both for $(q, p) = (\infty, 2)$ (in which case it corresponds to the analogue of the Sobolev estimate \eqref{sob-est} in the low-regularity setting) and for $(q, p) = \big(\frac{6\lambda}{(1-2s)(\lambda-1)}, \frac{2\lambda}{1+2(\lambda-1)s}\big)$ plays an instrumental role in the proof of the low-regularity Theorem \ref{low-wp-t}. The proof of Theorem \ref{lin-est-t} relies on the following novel solution formula for the forced linear problem \eqref{hls-ibvp}, which is derived in Appendix \ref{app-s} via the unified transform:
\eq{\label{hls-sol}
\begin{aligned}
u(x, t) &=\frac{1}{2 \pi} \int_{-\infty}^{\infty} e^{i k x + i \omega t} \Big[\hat u_0 (k) - i \int_0^t e^{-i \omega t'} \hat f(k, t') dt'\Big] dk
\\
&\quad - \frac{1}{2 \pi} \int_{\partial \tilde D_0} \frac{e^{i k x + i \omega t}}{\Delta(k)}  \b{\p{\nu_- - k} e^{-i \nu_+ \ell} - \p{\nu_+ - k} e^{-i \nu_- \ell}} \Big[\hat u_0 (k) - i \int_0^T e^{-i \omega t'} \hat f(k, t') dt'\Big] dk
\\
&\quad + \frac{1}{2 \pi} \int_{\partial \tilde D_+ \cup \partial \tilde D_-} \frac{e^{-i k (\ell - x) + i \omega t}}{\Delta(k)}  \p{\nu_+ - \nu_-} \Big[\hat u_0 (k) - i \int_0^T e^{-i \omega t'} \hat f(k, t') dt'\Big] dk
\\
&\quad + \frac{1}{2 \pi} \int_{\partial \tilde D_0 \cup \partial \tilde D_+ \cup \partial \tilde D_-} \frac{e^{-i k \p{\ell - x} + i \omega t}}{\Delta(k)}  \bigg\{ \p{\nu_- - k} \Big[\hat u_0 (\nu_+) - i \int_0^T e^{-i \omega t'} \hat f(\nu_+, t') dt'\Big] 
\\
&\hspace{6cm}
- \p{\nu_+ - k} \Big[\hat u_0 (\nu_-) - i \int_0^T e^{-i \omega t'} \hat f(\nu_-, t') dt'\Big]
\\
&\hspace{6cm}
 - \p{\nu_+ - \nu_-} \omega' \, \tilde g_0 (\omega, T) 
 - \p{\nu_- e^{-i \nu_+ \ell} - \nu_+ e^{-i \nu_- \ell}} \omega' \, \tilde h_0 (\omega, T)
\\
&\hspace{6cm}
- i \p{e^{-i \nu_+ \ell} - e^{-i \nu_- \ell}} \omega' \, \tilde h_1 (\omega, T) 
 \bigg\} \, dk.
\end{aligned}
}
In the above formula, $\omega$ and $\Delta$ are given by \eqref{omega} and \eqref{Delta}, $\hat u_0, \hat f$ denote the finite interval Fourier transforms of $u_0, f$ defined by \eqref{fi-ft-def}, $\tilde g_0, \tilde h_0, \tilde h_1$ are certain time transforms defined by \eqref{tilde-transform}, 
$\nu_\pm$ are given by \eqref{nupm}, and the contours of integration $\partial \tilde D_0, \partial \tilde D_\pm$ are the positively oriented boundaries of the regions $\tilde D_0, \tilde D_\pm$ defined by \eqref{dntil-def} and depicted in Figure \ref{fig:Dbranches}. 

It should be noted that the well-posedness of the HNLS equation with a Dirichlet boundary condition on the half-line was established in the recent work \cite{amo2024}. However, the finite interval problem \eqref{hnls-ibvp} considered here involves several new and important challenges, which can be summarized as follows:
\begin{enumerate}[label=(\roman*), leftmargin=8mm, itemsep=1mm, topsep=1mm]
\item The unified transform solution formula \eqref{hls-sol} for the forced linear problem \eqref{hls-ibvp} is significantly more complicated than the one for the corresponding half-line problem. In particular, formula \eqref{hls-sol} involves a certain combination of exponentials in the denominator of the relevant integrands through the quantity $\Delta(k)$, which requires special care while proving the linear estimates of Theorem \ref{lin-est-t} (e.g. see Lemma \ref{lemma:Deltabound}).
\item Formula \eqref{hls-sol} involves integrals over three different complex contours (as opposed to just one in the case of the half-line) --- a byproduct of the two additional boundary conditions $u(\ell, t) = h_0(t)$ and $u_x(\ell, t) = h_1(t)$ that were not present in the half-line problem. The simultaneous presence of all three contours in the linear solution formula imposes a different choice of branch cut for the multivalued functions $\nu_\pm$, which in turn requires a different approach in the derivation of the relevant estimates.
\item The boundary datum $h_1(t)$ is of Neumann type, while the half-line problem considered in \cite{amo2024} involved a Dirichlet datum. In particular, the new type of datum introduces the need for new estimates at the stage of the linear decomposition of Section \ref{dec-s}.
\item Indeed, the proof of Theorem \ref{lin-est-t} motivates the derivation of linear time estimates both for the finite interval problem and, importantly, for the half-line problem. Such time estimates were not necessary in \cite{amo2024} and so they constitute novel results even at the level of the half-line.
\end{enumerate}

As noted earlier, the literature on the well-posedness of nonlinear dispersive initial-boundary value problems is much more limited than the one on their initial value problem counterparts. Nevertheless, over the course of the past two decades or so, important works have appeared on the rigorous analysis of initial-boundary value problems, starting from those on the well-posedness of the KdV equation on the half-line by Bona, Sun and Zhang~\cite{bsz2002} as well as Colliander and Kenig \cite{ck2002} (the latter work being on the generalized KdV equation) and continuing with the works of Holmer on both the NLS and the KdV half-line problems \cite{h2005,h2006} and, more recently, Cavalcante and collaborators \cite{c2017,cc2020,ck2020,ccg2020}. In the first of these works, the forced linear problem is solved via a temporal Laplace transform, while in the rest of them it is handled through a clever decomposition into appropriate initial value problems that relies on the construction of a certain boundary forcing operator. The temporal Laplace transform approach of \cite{bsz2002} has been employed in several other works; indicatively, we mention \cite{bsz2006,bsz2008,kai2013,ozs2015,bo2016,et2016,bsz2018}. The approach of~\cite{fhm2017,fhm2016}, which relies on the the unified transform (as the analogue of the Fourier transform in domains with a boundary) and is the one used in the present work, has also been further developed in recent years, e.g. see~\cite{hmy2019-kdv,oy2019,bfo2020,hm2020,hm2022,ko2022,hy2022b,mo2024}. Other works on the well-posedness of initial-boundary value problems for the classical NLS equation also include \cite{cb1991,sb2001}.

Concerning the HNLS equation in the initial-boundary value problem setting, we note that, in addition to the local well-posedness established in \cite{amo2024} for the half-line problem, Faminskii has recently obtained results on global solutions on the half-line \cite{f2024} as well as on the well-posedness of inverse problems with integral overdetermination on a bounded interval \cite{fm2023}. We also note that the analysis of the HNLS equation on a finite interval in the case of $\beta<0$ can be carried out in an \textit{entirely analogous way} as the case of $\beta>0$ presented in this work, the only difference being the prescription of two boundary data at $x=0$ and one at $x=\ell$ (as opposed to the one datum at $x=0$ and two data at $x=\ell$ in problem \eqref{hnls-ibvp}). On the other hand, the case of $\beta<0$ on the half-line is more interesting, as it requires the prescription of \textit{two} boundary data at $x=0$ (as opposed to the \textit{single} boundary condition present in \cite{amo2024}) and will be considered in the upcoming work \cite{amo2025}.

Through the present work on the HNLS equation, we develop a general approach for handling nonlinear dispersive models on a finite interval with a \textit{multi-term} linear part that directly reflects the relevance of these models in applications. 
In this regard, we note that, besides the half-line, \cite{h2006} considers the KdV equation also on a finite interval but \textit{without} the linear term $u_x$ that appears in the original physical model. Although this simplification is indeed possible on the whole line thanks to a Galilean transformation, in the case of domains with a boundary, like the half-line and the finite interval, the term $u_x$  cannot be removed and, as noted in \cite{ck2002}, the relevant function spaces must be modified appropriately. Specifically for KdV on a finite interval, an alternative analysis via the temporal Laplace transform approach of \cite{bsz2002} was given in \cite{bsz2003}. On the other hand, here we advance the spatial Fourier transform/unified transform approach of \cite{fhm2017} for the HNLS equation.

Finally, there are also numerical results \cite{ccs2019} as well as works on the controllability of HNLS~\cite{cpv2005,bbv2007,chen2018,boy2021,oy2022} --- a direction of research that directly involves the well-posedness of initial-boundary value problems.
In fact, the representation formulas for solutions of initial-boundary value problems obtained through the unified transform play an important role from control theoretical perspectives.  For instance, boundary or interior controllability problems can be recast or characterized as integral equations that involve given data (i.e. the initial state and a target state) and also sought-after control(s), e.g. one or more nonhomogeneous boundary inputs or a locally supported interior source function, respectively. Then, an analysis on the solvability of these integral equations can determine whether a given evolution is controllable or not.  For instance, \cite{ko2020} used unified transform formulas to revisit the classical lack of null controllability problem for the heat equation on the half line \cite{mz2001}  via boundary controls and established failure of the controllability feature in a quite elementary fashion.   Using unified transform formulas, the authors also extended their result of lack of controllability  to partial differential equations of different nature such as the Schr\"odinger and biharmonic Schr\"odinger equations in \cite{ok2023}, a question that had not been answered with the classical tools of control theory previously.  
It is well known that, in the framework of a bounded domain (e.g., a finite interval), the heat equation has the feature of (null) controllability via boundary controls in contrast with the case of an unbounded domain.  Although, this is a well-known result, its applicability to real-life problems was limited because most approaches in the literature are abstract, leading to existence of a control input without providing an explicit formula for a physically reasonable control in terms of the given initial and target states. It was shown recently in \cite{kod2024} that such a control in the form of a nonhomogeneous boundary input can be constructed explicitly by utilizing the representation formula established through the unified transform. 
The unified transform formulas can also be used for establishing Hadamard well-posedness for linear and nonlinear feedback control problems.  For instance, for a boundary feedback problem, one can simply replace the boundary terms in the unified transform formula with the given (possibly nonlinear) feedback terms and take the right-hand side of the resulting formula as the definition of a solution operator whose fixed point becomes the sought-after local solution, e.g. see the preprint \cite{mok2024}.
\\[2mm]
\noindent
\textbf{Structure}. The crucial Sobolev and Strichartz linear estimates of Theorem \ref{lin-est-t} are established in Section \ref{reduced-s} for the so-called reduced initial-boundary value problem, namely for problem \eqref{hls-ibvp} in the special case of zero initial data, zero forcing, zero Dirichlet data at $x=0$, and boundary data at $x=\ell$ that are supported inside a compact set. Once this reduced problem is estimated, the full version of Theorem \ref{lin-est-t} is established for the original forced linear problem \eqref{hls-ibvp} in Section \ref{dec-s} through a delicate linear decomposition. This part of the analysis is quite involved and motivates the derivation of novel time estimates for the higher-order Schr\"odinger equation on the half-line. The nonlinear analysis leading to the proof of Theorem \ref{high-wp-t} in the high-regularity setting is presented in the first part of Section \ref{wp-s} via the combination of the Sobolev estimate \eqref{sob-est} with the algebra property in $H_x^s(0,\ell)$ and a contraction mapping argument. The second part of that section contains the proof of the low-regularity Theorem~\ref{low-wp-t}, which relies on the Strichartz estimates of Theorem \ref{lin-est-t} and suitable nonlinear estimates that fix the pair of exponents $(q, p)$ in \eqref{strich-est} to the values appearing in the solution space $Y_T^{s,\lambda}$. Finally, the unified transform solution formula~\eqref{hls-sol} for the forced linear problem \eqref{hls-ibvp} is derived in Appendix \ref{app-s}.

\section{Linear estimates on a finite interval} \label{reduced-s}

We begin our analysis from the most essential part of the forced linear finite interval problem \eqref{hls-ibvp}, namely the following \textit{reduced} initial-boundary value problem:
\eq{\label{HNLSreduced}
\begin{aligned}
&i v_t + i \beta v_{xxx} + \alpha v_{xx} + i \delta v_x = 0, \quad 0 < x < \ell, \ 0 < t < T',
\\
&v(x, 0) = 0,\\
&v(0, t) = 0, \quad  v(\ell, t) = \psi_0(t), \quad v_x (\ell, t) = \psi_1(t),
\end{aligned}
}
where the boundary data $\psi_0, \psi_1$ are globally defined on $\R$ but only supported in the compact set $[0, T']$, i.e.
\eq{\label{supp-cond}
\text{supp}(\psi_0) \subset [0, T'], \quad \text{supp}(\psi_1) \subset [0, T'].
}
The analysis of the reduced problem \eqref{HNLSreduced} will allow us to determine the role played by the boundary conditions at $x = \ell$, which were not present in the half-line problem of \cite{amo2024}. Notably, in addition to the Dirichlet condition, we now also have a Neumann condition. Once we have estimated the solution to the reduced interval problem~\eqref{HNLSreduced}, we will be able to deduce linear estimates for the full interval problem~\eqref{hls-ibvp} by a careful use of the linear superposition principle (see Section \ref{dec-s}). Importantly, as noted in the introduction, in that process it will become necessary to establish new time regularity results for the half-line problem (which were not obtained in~\cite{amo2024}). 

In the case of the reduced problem \eqref{HNLSreduced}, the unified transform solution formula~\eqref{hls-sol} becomes
\eq{
\begin{split}
v(x, t) &= -\frac{i}{2 \pi} \int_{\partial \tilde D} \frac{e^{-i k \p{\ell - x} + i \omega t}}{\Delta(k)} \p{e^{-i \nu_+ \ell} - e^{-i \nu_- \ell}} \omega' \mathcal F\{\psi_1\} (\omega) dk\\
&\quad - \frac{1}{2 \pi} \int_{\partial \tilde D} \frac{e^{-i k \p{\ell - x} + i \omega t}}{\Delta(k)} \p{\nu_- e^{-i \nu_+ \ell} - \nu_+ e^{-i \nu_- \ell}} \omega' \mathcal F\{\psi_0\} (\omega) dk,
\end{split}\label{reducedsol}}
where $\mathcal F\{\cdot\}$ denotes the regular Fourier transform on the whole line, the dispersion relation $\omega$ is given by
\eq{\omega := \beta k^3 - \alpha k^2 - \delta k, \label{omega}}
the denominator $\Delta$ involved in the two integrands is
\eq{\label{Delta}
\Delta(k) := \p{\nu_+ - \nu_-} e^{-i k \ell} + \p{\nu_- - k} e^{-i \nu_+ \ell} + \p{k - \nu_+} e^{-i \nu_- \ell}
}
with the quantities $\nu_\pm$ defined by
\eqs{
\nu_\pm :=  -\frac{1}{2} \p{k - \frac{\alpha}{\beta}} \pm \frac{\sqrt 3}{2} i \b{\p{k - \frac{\alpha}{3 \beta}}^2 - \frac{4}{9 \beta^2} \p{\alpha^2 + 3 \beta \delta}}^{\frac 12}, \alignlabel{nupm}}
and the complex contour of integration $\partial \tilde D$ is defined as follows. Letting $D := D_0 \cup D_+ \cup D_-$, where the individual regions are defined by
\begin{equation}\label{dn-def}
\begin{aligned}
D_0 &:= \set{k \in \C : \Im(\omega) < 0,~ \Im(k) > 0},\\
D_+ &:= \set{k \in \C : \Im(\omega) < 0,~ \Im(k) < 0,~ \Re \p{k - \frac{\alpha}{3 \beta}} > 0},\\
D_- &:= \set{k \in \C : \Im(\omega) < 0,~ \Im(k) < 0,~ \Re \p{k - \frac{\alpha}{3 \beta}} < 0},
\end{aligned}
\end{equation}
we define the ``punctured'' regions
\begin{equation}\label{dntil-def}
\tilde D_n := D_n \setminus \left\{k\in\mathbb C: \left|k-\frac{\alpha}{3\beta}\right| \leq R_\Delta\right\}, \quad n \in \set{0, +, -},
\end{equation}
where the radius $R_\Delta$ (motivated by Lemma \ref{lemma:Deltabound})  is given by
\begin{equation}\label{rd-def}
R_\Delta := \max \set{\frac{2 \sqrt 2}{\sqrt 3 \beta} \sqrt{\abs{\alpha^2 + 3 \beta \delta}}, \frac{9}{\ell}},
\end{equation}
and then let $\partial \tilde D$ be the positively oriented boundary of the union
\begin{equation}\label{dtil-def}
\tilde D := \tilde D_0 \cup \tilde D_+ \cup \tilde D_-
=
D \setminus \left\{k\in\mathbb C: \left|k-\frac{\alpha}{3\beta}\right| \leq R_\Delta\right\},
\end{equation}
as shown in Figure \ref{fig:Dbranches}.

\begin{remark}\label{ft-til-r}
The presence of the regular Fourier transform of the data $\psi_0, \psi_1$  in formula \eqref{HNLSreduced}, instead of the time transform \eqref{tilde-transform} that normally appears in the general formula \eqref{hls-sol}, is possible thanks to the support condition~\eqref{supp-cond} and will turn out useful in the derivation of estimates given below.
\end{remark}

We will use the unified transform formula \eqref{reducedsol} in order to derive estimates for the solution $v(x, t)$ of the reduced interval problem \eqref{HNLSreduced} in both Sobolev and Strichartz-type spaces. We begin with the Sobolev estimate and then proceed to the Strichartz estimate, which is necessary in the low-regularity setting.

\subsection{Sobolev estimate}
Requiring that the solution of problem \eqref{HNLSreduced} belongs to the Sobolev space $H^s(0, \ell)$ as a function of $x$ and for each $t\in [0, T']$, we are led to the following result.
\begin{theorem}[Sobolev estimate]\label{thm:fi-se}
Let $s\geq 0$ and $\psi_0 \in H^{\frac{s+1}{3}}(\mathbb R)$, $\psi_1 \in H^{\frac{s}{3}}(\mathbb R)$ satisfy  the support condition~\eqref{supp-cond}. Then, the solution to the reduced finite interval problem \eqref{HNLSreduced}, as given by formula \eqref{reducedsol}, admits the estimate
\eq{\label{fi-se}
\norm{v(t)}_{H_x^s (0, \ell)} \le c_s  \max\left\{1, \sqrt{T'} e^{M_\Delta T'} \right\} \Big(\norm{\psi_0}_{H^{\frac{s + 1}{3}} (\R)} + \norm{\psi_1}_{H^{\frac{s}{3}} (\R)}\Big), 
\quad
t\in [0, T'],
}
where $c_s>0$ is a constant depending only on $s, \alpha, \beta, \delta$, and the constant $M_\Delta>0$ is given by \eqref{md-def}.
\end{theorem}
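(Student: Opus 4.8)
### Proof plan for Theorem \ref{thm:fi-se}

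The plan is to estimate the two contour integrals in the solution formula \eqref{reducedsol} directly, working on the punctured contour $\partial\tilde D$ where the denominator $\Delta(k)$ is under control (by the forthcoming Lemma \ref{lemma:Deltabound}). First I would deform and parametrize the three rays/arcs making up $\partial\tilde D$ so that on each piece $k$ is a smooth function of a real parameter, and then pull the $H^s_x(0,\ell)$-norm in $x$ inside the $k$-integral. Since the $x$-dependence of the integrands in \eqref{reducedsol} is only through the factor $e^{-ik(\ell-x)}$, differentiating $s$ times in $x$ (or using the $W^{s,2}$ characterization of $H^s$ mentioned in the introduction) brings down powers of $k$, so the $H^s_x$-norm of each integrand is controlled by $\langle k\rangle^s$ times the modulus of the rest. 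The upshot is a bound of the form
\eqs{
\norm{v(t)}_{H^s_x(0,\ell)} \lesssim \int_{\partial\tilde D} \langle k\rangle^s\, \frac{\abs{e^{-i\nu_+\ell}-e^{-i\nu_-\ell}}}{\abs{\Delta(k)}}\, \abs{\omega'}\, \abs{\mathcal F\{\psi_1\}(\omega)}\, \abs{dk} + (\text{analogous }\psi_0\text{ term}).
}

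Next I would change variables from $k$ to $\omega=\omega(k)=\beta k^3-\alpha k^2-\delta k$ along each branch of $\partial\tilde D$; the Jacobian $\omega'(k)$ cancels exactly the $\omega'$ appearing in \eqref{reducedsol}, and on the images of $\partial\tilde D$ one has $\Im(\omega)\le 0$ with $\Re(\omega)$ ranging over $\R$ (this is how the regions $D_0,D_\pm$ were defined). The key structural facts I need are: (i) on $\partial\tilde D$, $\abs{\nu_\pm - k}$ and $\abs{\nu_+-\nu_-}$ grow like $\langle k\rangle$, so the numerator $\abs{e^{-i\nu_+\ell}-e^{-i\nu_-\ell}}$ and the coefficient $\abs{\nu_-e^{-i\nu_+\ell}-\nu_+e^{-i\nu_-\ell}}$ are bounded by $C\langle k\rangle$ times $e^{M_\Delta T'}$-type factors once one accounts for the sign of $\Im(\nu_\pm)$; (ii) the lower bound $\abs{\Delta(k)}\gtrsim \langle k\rangle^2 \cdot(\text{something})$ from Lemma \ref{lemma:Deltabound}, valid precisely because the disk of radius $R_\Delta$ has been removed. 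Combining these, the integrand is bounded by $C\,\langle\omega\rangle^{(s+1)/3}\abs{\mathcal F\{\psi_1\}(\omega)}$ up to the Jacobian-cancelled $\omega'$ and the exponential-in-$T'$ factor, where I use that $\langle k\rangle \simeq \langle\omega\rangle^{1/3}$ since $\omega$ is cubic in $k$; the Dirichlet term $\psi_0$ carries one extra power of $\langle k\rangle\simeq\langle\omega\rangle^{1/3}$, matching its Sobolev index $(s+1)/3$ versus $s/3$ for $\psi_1$.

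Finally, after the change of variables the integral over each branch becomes (essentially) an integral over $\omega\in\R$ of $\langle\omega\rangle^{(s+1)/3}\abs{\mathcal F\{\psi_1\}(\omega)}$ against $\abs{d\omega}$. Here the support condition \eqref{supp-cond} is crucial: since $\psi_1$ is supported in $[0,T']$, Cauchy–Schwarz in $\omega$ over a window of length $O(1)$ — more precisely, writing the inverse transform evaluated at $t\in[0,T']$ and exploiting that $e^{i\omega t}\mathcal F\{\psi_1\}$ is the Fourier transform of a translate, so that $\norm{\cdot}_{L^1_\omega}$ can be traded for $\sqrt{T'}\,\norm{\cdot}_{L^2_\omega}$ via the Plancherel/Cauchy–Schwarz argument standard in this circle of ideas — produces the factor $\sqrt{T'}$ and converts the $L^1_\omega$ bound into $\norm{\psi_1}_{H^{s/3}(\R)}$ (and likewise $\sqrt{T'}\,\norm{\psi_0}_{H^{(s+1)/3}(\R)}$). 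The $e^{M_\Delta T'}$ factor is collected from the exponentials $e^{-i\nu_\pm\ell}$ on the parts of the contour where $\Im(\nu_\pm)$ is negative and bounded; the definition of $M_\Delta$ in \eqref{md-def} is designed exactly so that $\abs{e^{-i\nu_\pm\ell}}\le e^{M_\Delta}$ on $\partial\tilde D$ after the time-localization forces $\abs{t},\abs{t'}\le T'$. I expect the main obstacle to be step (ii), i.e. extracting a clean polynomial lower bound for $\abs{\Delta(k)}$ on the punctured contour: $\Delta(k)$ is a sum of three exponentials with $k$-dependent coefficients, and near the boundaries between $D_0$, $D_+$, $D_-$ (where $\Im(\omega)=0$) two of the exponentials can be comparable in size, so one must argue carefully — presumably case-splitting on which of $\Im(\nu_+),\Im(\nu_-),\Im(k)$ dominates — that the dominant term is never cancelled. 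This is precisely what Lemma \ref{lemma:Deltabound} and the choice of $R_\Delta$ in \eqref{rd-def} are there to supply, so modulo invoking that lemma the remaining steps are the routine bookkeeping sketched above.
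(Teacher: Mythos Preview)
Your plan has a genuine gap on the unbounded pieces of $\partial\tilde D$. Pulling the $H^s_x(0,\ell)$-norm inside the $k$-integral via Minkowski gives an $L^1$-in-$k$ (equivalently $L^1$-in-$\omega$) bound of the form $\int_{\mathbb R}\langle\omega\rangle^{s/3}\abs{\mathcal F\{\psi_1\}(\omega)}\,d\omega$, and this integral is \emph{not} controlled by $\sqrt{T'}\,\norm{\psi_1}_{H^{s/3}(\mathbb R)}$: compact support of $\psi_1$ in $[0,T']$ gives no decay of $\mathcal F\{\psi_1\}$ at infinity, and the ``Cauchy--Schwarz over a window of length $O(1)$'' argument you sketch does not close over all of $\mathbb R$. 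The paper avoids this by never passing through an $L^1_\omega$ bound on the unbounded contour pieces: on the complex rays $\Gamma_1,\Gamma_3$ (and their analogues) it keeps the $e^{-rx}$ factor and invokes the $L^2$-boundedness of the Laplace transform (Lemma~\ref{lemma:hardy}) to go directly to $L^2_r$, then changes to $\omega$; on the real rays $\Gamma_4,\Gamma_9$ it recognizes the expression as a genuine inverse Fourier transform and uses Plancherel in the $H^s(\mathbb R)$ characterization. These $L^2$-to-$L^2$ mechanisms are the core of the argument and are absent from your plan.

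A few secondary corrections. First, Lemma~\ref{lemma:Deltabound} gives $\abs{e^{i\nu_n\ell}\Delta(k)}\gtrsim \big|k-\tfrac{\alpha}{3\beta}\big|$, i.e.\ a \emph{linear} lower bound, not quadratic; your power counting is off. Second, the factor $\sqrt{T'}\,e^{M_\Delta T'}$ in \eqref{fi-se} arises \emph{only} from the bounded circular arcs $\Gamma_2,\Gamma_5,\Gamma_8$: there $\omega$ is not real, one reverts from $\mathcal F\{\psi_j\}$ to the time-transform $\tilde\psi_j(\omega,T')$, bounds $e^{\Im(\omega)(t'-t)}\le e^{M_\Delta T'}$ using \eqref{md-def} (which bounds $|\omega|$ on the arc, not $\Im(\nu_\pm)\ell$ as you wrote), and then Cauchy--Schwarz in $t'$ over $[0,T']$ produces $\sqrt{T'}$. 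On $\Gamma_1$ and $\Gamma_9$ the bounds are uniform in $T'$. Finally, $\abs{e^{-i\nu_+\ell}-e^{-i\nu_-\ell}}\le 2$ on $\overline{D_0}$ by Lemma~\ref{lemma:charDbar}, so this numerator contributes no growth in $k$.
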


\begin{remark}
The fact that $\psi_0 \in H^{\frac{s+1}{3}}(\mathbb R)$ must satisfy the support condition \eqref{supp-cond} imposes certain trace conditions on $\psi_0$ at the points $t=0, T'$. Specifically, according to Theorems 11.4 and 11.5 of \cite{lm1972}, it must be that $\partial^j \psi_0(0) = \partial^j \psi_0(T') = 0$ for all integers $0\leq j < \frac{s+1}{3}-\frac 12$. Similarly,   $\psi_1$ must satisfy $\partial^j \psi_1(0) = \partial^j \psi_1(T') = 0$ for all integers $0\leq j < \frac{s}{3}-\frac 12$. These conditions will be verified in Section \ref{dec-s} for the specific forms of $\psi_0, \psi_1$ that arise when employing Theorem \ref{thm:fi-se} in the decomposition of the full interval problem \eqref{hls-ibvp}. 
\end{remark}

\begin{proof}
Let $t\in [0, T']$. For any $s \in \N_0 := \N \cup \{0\}$, by the physical space characterization of the $H^s(0, \ell)$ norm (namely, the fact that, thanks to Plancherel's theorem, the spaces $H^s(0, \ell)$ and $W^{s, 2}(0, \ell)$ are equivalent for $s\geq 0$), 
\eq{\label{hs-l2-def}
\norm{v(t)}_{H_x^s(0, \ell)}
=
\sum_{j=0}^s \norm{\partial_x^j v(t)}_{L_x^2(0, \ell)},
\quad
s\in\N_0.
}

\begin{figure}[bt!]
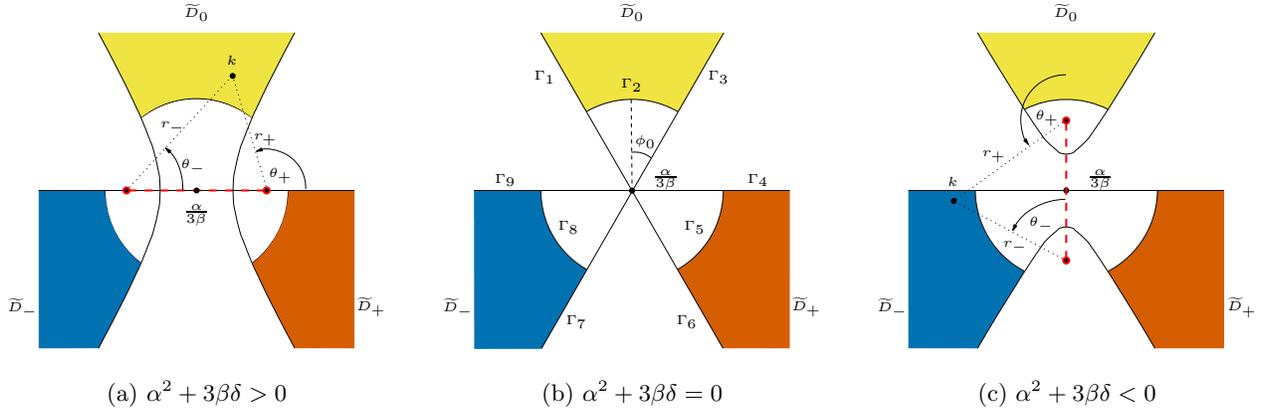

\centering
\begin{subfigure}{.33\textwidth}
\centering
%
%
{\pgfkeys{/pgf/fpu/.try=false}%
\ifx\XFigwidth\undefined\dimen1=0pt\else\dimen1\XFigwidth\fi
\divide\dimen1 by 6087
\ifx\XFigheight\undefined\dimen3=0pt\else\dimen3\XFigheight\fi
\divide\dimen3 by 5550
\ifdim\dimen1=0pt\ifdim\dimen3=0pt\dimen1=1657sp\dimen3\dimen1
  \else\dimen1\dimen3\fi\else\ifdim\dimen3=0pt\dimen3\dimen1\fi\fi
\tikzpicture[x=+\dimen1, y=+\dimen3]
{\ifx\XFigu\undefined\catcode`\@11
\def\temp{\alloc@1\dimen\dimendef\insc@unt}\temp\XFigu\catcode`\@12\fi}
\XFigu1657sp
\ifdim\XFigu<0pt\XFigu-\XFigu\fi
\pgfdeclarearrow{
  name = xfiga1,
  parameters = {
    \the\pgfarrowlinewidth \the\pgfarrowlength \the\pgfarrowwidth\ifpgfarrowopen o\fi},
  defaults = {
	  line width=+7.5\XFigu, length=+120\XFigu, width=+60\XFigu},
  setup code = {
    \dimen7 2.1\pgfarrowlength\pgfmathveclen{\the\dimen7}{\the\pgfarrowwidth}
    \dimen7 2\pgfarrowwidth\pgfmathdivide{\pgfmathresult}{\the\dimen7}
    \dimen7 \pgfmathresult\pgfarrowlinewidth
    \pgfarrowssettipend{+\dimen7}
    \pgfarrowssetbackend{+-\pgfarrowlength}
    \dimen9 -\pgfarrowlength\advance\dimen9 by-0.45\pgfarrowlinewidth
    \pgfarrowssetlineend{+\dimen9}
    \dimen9 -\pgfarrowlength\advance\dimen9 by-0.5\pgfarrowlinewidth
    \pgfarrowssetvisualbackend{+\dimen9}
    \pgfarrowshullpoint{+\dimen7}{+0pt}
    \pgfarrowsupperhullpoint{+-\pgfarrowlength}{+0.5\pgfarrowwidth}
    \pgfarrowssavethe\pgfarrowlinewidth
    \pgfarrowssavethe\pgfarrowlength
    \pgfarrowssavethe\pgfarrowwidth
  },
  drawing code = {\pgfsetdash{}{+0pt}
    \ifdim\pgfarrowlinewidth=\pgflinewidth\else\pgfsetlinewidth{+\pgfarrowlinewidth}\fi
    \pgfpathmoveto{\pgfqpoint{-\pgfarrowlength}{-0.5\pgfarrowwidth}}
    \pgfpathlineto{\pgfqpoint{0pt}{0pt}}
    \pgfpathlineto{\pgfqpoint{-\pgfarrowlength}{0.5\pgfarrowwidth}}
    \pgfpathclose
    \ifpgfarrowopen\pgfusepathqstroke\else\pgfsetfillcolor{.}
	\ifdim\pgfarrowlinewidth>0pt\pgfusepathqfillstroke\else\pgfusepathqfill\fi\fi
  }
}
\pgfdeclarearrow{
  name = xfiga5,
  parameters = {
    \the\pgfarrowlinewidth \the\pgfarrowlength\ifpgfarrowopen o\fi},
  defaults = {
	  line width=+7.5\XFigu, length=+120\XFigu},
  setup code = {
    \dimen7 0.5\pgfarrowlinewidth
    \pgfarrowssettipend{+\dimen7}
    \pgfarrowssetbackend{+-\pgfarrowlength}
    \dimen9 -\pgfarrowlength\advance\dimen9 by0.45\pgfarrowlinewidth
    \pgfarrowssetlineend{+\dimen9}
    \dimen9 -\pgfarrowlength\advance\dimen9 by-0.5\pgfarrowlinewidth
    \pgfarrowssetvisualbackend{+\dimen9}
    \pgfarrowshullpoint{+\dimen7}{+0pt}
    \pgfarrowsupperhullpoint{+-0.25\pgfarrowlength}{+0.35\pgfarrowwidth}\pgfarrowsupperhullpoint{+-0.5\pgfarrowlength}{+0.5\pgfarrowwidth}\pgfarrowsupperhullpoint{+-0.75\pgfarrowlength}{+0.35\pgfarrowwidth}
    \pgfarrowshullpoint{+\dimen9}{+0pt}
    \pgfarrowssavethe\pgfarrowlinewidth
    \pgfarrowssavethe\pgfarrowlength
  },
  drawing code = {\pgfsetdash{}{+0pt}
    \ifdim\pgfarrowlinewidth=\pgflinewidth\else\pgfsetlinewidth{+\pgfarrowlinewidth}\fi
    \dimen3 0.5\pgfarrowlength
    \pgfpathcircle{\pgfqpoint{-\dimen3}{0pt}}{+\dimen3}
    \ifpgfarrowopen\pgfusepathqstroke\else\pgfsetfillcolor{.}
	\ifdim\pgfarrowlinewidth>0pt\pgfusepathqfillstroke\else\pgfusepathqfill\fi\fi
  }
}
\definecolor{xfigc32}{rgb}{0.941,0.894,0.259}
\definecolor{xfigc33}{rgb}{0.000,0.447,0.698}
\definecolor{xfigc34}{rgb}{0.835,0.369,0.000}
\clip(1684,-7297) rectangle (7771,-1747);
\tikzset{inner sep=+0pt, outer sep=+0pt}
\pgfsetbeveljoin
\pgfsetfillcolor{xfigc32}
\fill (3255,-2362)--(3256,-2363)--(3257,-2366)--(3260,-2371)--(3264,-2379)--(3270,-2390)
  --(3277,-2403)--(3286,-2419)--(3296,-2438)--(3307,-2458)--(3318,-2480)--(3331,-2504)
  --(3344,-2529)--(3358,-2555)--(3373,-2583)--(3388,-2612)--(3405,-2643)--(3422,-2677)
  --(3441,-2713)--(3461,-2751)--(3482,-2792)--(3504,-2835)--(3526,-2877)--(3546,-2917)
  --(3564,-2953)--(3580,-2983)--(3592,-3007)--(3602,-3027)--(3610,-3041)--(3615,-3052)
  --(3619,-3060)--(3622,-3066)--(3625,-3071)--(3628,-3076)--(3631,-3082)--(3635,-3090)
  --(3640,-3101)--(3647,-3115)--(3657,-3135)--(3669,-3159)--(3684,-3189)--(3701,-3225)
  --(3720,-3265)--(3740,-3307)--(3760,-3350)--(3779,-3390)--(3796,-3426)--(3810,-3457)
  --(3822,-3483)--(3832,-3505)--(3840,-3522)--(3846,-3535)--(3851,-3546)--(3855,-3555)
  --(3858,-3563)--(3862,-3571)--(3865,-3580)--(3870,-3590)--(3875,-3602)--(3882,-3617)
  --(3890,-3636)--(3900,-3659)--(3911,-3685)--(3924,-3715)--(3938,-3747)--(3952,-3780)
  --(3972,-3828)--(3989,-3868)--(4002,-3900)--(4012,-3926)--(4020,-3947)--(4026,-3965)
  --(4031,-3979)--(4035,-3991)--(4038,-4000)--(4040,-4007)--(4041,-4012)--(4042,-4015)
  --(4042,-4016)--(4044,-4016)--(4048,-4016)--(4055,-4016)--(4067,-4016)--(4083,-4016)
  --(4105,-4016)--(4133,-4016)--(4168,-4016)--(4209,-4016)--(4256,-4016)--(4309,-4016)
  --(4368,-4016)--(4432,-4016)--(4501,-4016)--(4573,-4016)--(4647,-4016)--(4723,-4016)
  --(4798,-4016)--(4872,-4016)--(4944,-4016)--(5013,-4016)--(5077,-4016)--(5136,-4016)
  --(5189,-4016)--(5236,-4016)--(5277,-4016)--(5312,-4016)--(5340,-4016)--(5362,-4016)
  --(5378,-4016)--(5390,-4016)--(5397,-4016)--(5401,-4016)--(5403,-4016)--(5403,-4015)
  --(5404,-4012)--(5405,-4007)--(5407,-4000)--(5410,-3991)--(5414,-3979)--(5419,-3965)
  --(5425,-3947)--(5433,-3926)--(5443,-3900)--(5456,-3868)--(5473,-3828)--(5493,-3780)
  --(5507,-3747)--(5521,-3715)--(5534,-3685)--(5545,-3659)--(5555,-3636)--(5563,-3617)
  --(5570,-3602)--(5575,-3590)--(5580,-3580)--(5583,-3571)--(5587,-3563)--(5590,-3555)
  --(5594,-3546)--(5599,-3535)--(5605,-3522)--(5613,-3505)--(5623,-3483)--(5635,-3457)
  --(5649,-3426)--(5666,-3390)--(5685,-3350)--(5705,-3307)--(5725,-3265)--(5744,-3225)
  --(5761,-3189)--(5776,-3159)--(5788,-3135)--(5798,-3115)--(5805,-3101)--(5810,-3090)
  --(5814,-3082)--(5817,-3076)--(5820,-3071)--(5823,-3066)--(5826,-3060)--(5830,-3052)
  --(5835,-3041)--(5843,-3027)--(5853,-3007)--(5865,-2983)--(5881,-2953)--(5899,-2917)
  --(5919,-2877)--(5941,-2835)--(5963,-2792)--(5984,-2751)--(6004,-2713)--(6023,-2677)
  --(6040,-2643)--(6057,-2612)--(6072,-2583)--(6087,-2555)--(6101,-2529)--(6114,-2504)
  --(6127,-2480)--(6138,-2458)--(6149,-2438)--(6159,-2419)--(6168,-2403)--(6175,-2390)
  --(6181,-2379)--(6185,-2371)--(6188,-2366)--(6189,-2363)--(6190,-2362);
\pgfsetfillcolor{xfigc33}
\fill (2362,-7088)--(2362,-7087)--(2362,-7084)--(2362,-7078)--(2362,-7069)--(2362,-7056)
  --(2362,-7039)--(2362,-7017)--(2362,-6989)--(2362,-6955)--(2362,-6915)--(2362,-6869)
  --(2362,-6817)--(2362,-6759)--(2362,-6694)--(2362,-6624)--(2362,-6548)--(2362,-6467)
  --(2362,-6381)--(2362,-6291)--(2362,-6198)--(2362,-6102)--(2362,-6004)--(2362,-5906)
  --(2362,-5808)--(2362,-5710)--(2362,-5614)--(2362,-5521)--(2362,-5431)--(2362,-5345)
  --(2362,-5264)--(2362,-5188)--(2362,-5118)--(2362,-5053)--(2362,-4995)--(2362,-4943)
  --(2362,-4897)--(2362,-4857)--(2362,-4823)--(2362,-4795)--(2362,-4773)--(2362,-4756)
  --(2362,-4743)--(2362,-4734)--(2362,-4728)--(2362,-4725)--(2362,-4724)--(2363,-4724)
  --(2367,-4724)--(2373,-4724)--(2383,-4724)--(2398,-4724)--(2417,-4724)--(2442,-4724)
  --(2472,-4724)--(2509,-4724)--(2552,-4724)--(2601,-4724)--(2657,-4724)--(2718,-4724)
  --(2785,-4724)--(2858,-4724)--(2934,-4724)--(3015,-4724)--(3098,-4724)--(3183,-4724)
  --(3270,-4724)--(3356,-4724)--(3441,-4724)--(3524,-4724)--(3605,-4724)--(3681,-4724)
  --(3754,-4724)--(3821,-4724)--(3882,-4724)--(3938,-4724)--(3987,-4724)--(4030,-4724)
  --(4067,-4724)--(4097,-4724)--(4122,-4724)--(4141,-4724)--(4156,-4724)--(4166,-4724)
  --(4172,-4724)--(4176,-4724)--(4177,-4724)--(4177,-4726)--(4176,-4729)--(4175,-4735)
  --(4173,-4745)--(4171,-4759)--(4168,-4776)--(4164,-4797)--(4160,-4822)--(4155,-4850)
  --(4150,-4880)--(4145,-4912)--(4139,-4945)--(4133,-4980)--(4126,-5015)--(4120,-5051)
  --(4113,-5088)--(4106,-5125)--(4099,-5162)--(4091,-5200)--(4083,-5238)--(4075,-5277)
  --(4067,-5316)--(4059,-5356)--(4050,-5395)--(4041,-5432)--(4026,-5493)--(4014,-5539)
  --(4005,-5568)--(4001,-5582)--(4000,-5585)--(4001,-5579)--(4002,-5570)--(4004,-5560)
  --(4004,-5553)--(4002,-5553)--(3997,-5563)--(3986,-5585)--(3971,-5621)--(3951,-5668)
  --(3937,-5701)--(3923,-5733)--(3910,-5763)--(3899,-5789)--(3889,-5812)--(3881,-5831)
  --(3874,-5846)--(3869,-5858)--(3864,-5868)--(3861,-5877)--(3857,-5885)--(3854,-5893)
  --(3850,-5902)--(3845,-5913)--(3839,-5926)--(3831,-5943)--(3821,-5965)--(3809,-5991)
  --(3795,-6022)--(3778,-6058)--(3759,-6098)--(3739,-6141)--(3719,-6183)--(3700,-6223)
  --(3683,-6259)--(3668,-6289)--(3656,-6313)--(3646,-6333)--(3639,-6347)--(3634,-6358)
  --(3630,-6366)--(3627,-6372)--(3624,-6377)--(3621,-6382)--(3618,-6388)--(3614,-6396)
  --(3609,-6407)--(3601,-6421)--(3591,-6441)--(3579,-6465)--(3563,-6495)--(3545,-6531)
  --(3525,-6571)--(3503,-6613)--(3481,-6656)--(3460,-6697)--(3440,-6735)--(3421,-6771)
  --(3404,-6805)--(3387,-6836)--(3372,-6865)--(3357,-6893)--(3343,-6919)--(3330,-6944)
  --(3317,-6968)--(3306,-6990)--(3295,-7010)--(3285,-7029)--(3276,-7045)--(3269,-7058)
  --(3263,-7069)--(3259,-7077)--(3256,-7082)--(3255,-7085)--(3254,-7086);
\pgfsetfillcolor{xfigc34}
\fill (7086,-7088)--(7086,-7087)--(7086,-7084)--(7086,-7078)--(7086,-7069)--(7086,-7056)
  --(7086,-7039)--(7086,-7017)--(7086,-6989)--(7086,-6955)--(7086,-6915)--(7086,-6869)
  --(7086,-6817)--(7086,-6759)--(7086,-6694)--(7086,-6624)--(7086,-6548)--(7086,-6467)
  --(7086,-6381)--(7086,-6291)--(7086,-6198)--(7086,-6102)--(7086,-6004)--(7087,-5906)
  --(7087,-5808)--(7087,-5710)--(7087,-5614)--(7087,-5521)--(7087,-5431)--(7087,-5345)
  --(7087,-5264)--(7087,-5188)--(7087,-5118)--(7087,-5053)--(7087,-4995)--(7087,-4943)
  --(7087,-4897)--(7087,-4857)--(7087,-4823)--(7087,-4795)--(7087,-4773)--(7087,-4756)
  --(7087,-4743)--(7087,-4734)--(7087,-4728)--(7087,-4725)--(7087,-4724)--(7086,-4724)
  --(7082,-4724)--(7076,-4724)--(7066,-4724)--(7051,-4724)--(7032,-4724)--(7007,-4724)
  --(6977,-4724)--(6940,-4724)--(6897,-4724)--(6848,-4724)--(6792,-4724)--(6731,-4724)
  --(6664,-4724)--(6591,-4724)--(6515,-4724)--(6434,-4724)--(6351,-4724)--(6266,-4724)
  --(6179,-4724)--(6093,-4724)--(6008,-4724)--(5925,-4724)--(5844,-4724)--(5768,-4724)
  --(5695,-4724)--(5628,-4724)--(5567,-4724)--(5511,-4724)--(5462,-4724)--(5419,-4724)
  --(5382,-4724)--(5352,-4724)--(5327,-4724)--(5308,-4724)--(5293,-4724)--(5283,-4724)
  --(5277,-4724)--(5273,-4724)--(5272,-4724)--(5272,-4726)--(5273,-4729)--(5274,-4735)
  --(5276,-4745)--(5278,-4759)--(5281,-4776)--(5285,-4797)--(5289,-4822)--(5294,-4850)
  --(5299,-4880)--(5304,-4912)--(5310,-4945)--(5316,-4980)--(5323,-5015)--(5329,-5051)
  --(5336,-5088)--(5343,-5125)--(5350,-5162)--(5358,-5200)--(5366,-5238)--(5374,-5277)
  --(5382,-5316)--(5390,-5356)--(5399,-5395)--(5408,-5432)--(5423,-5493)--(5435,-5539)
  --(5444,-5568)--(5448,-5582)--(5449,-5585)--(5448,-5579)--(5447,-5570)--(5445,-5560)
  --(5445,-5553)--(5447,-5553)--(5452,-5563)--(5463,-5585)--(5478,-5621)--(5498,-5668)
  --(5512,-5701)--(5526,-5733)--(5539,-5763)--(5550,-5789)--(5560,-5812)--(5568,-5831)
  --(5575,-5846)--(5580,-5858)--(5585,-5868)--(5588,-5877)--(5592,-5885)--(5595,-5893)
  --(5599,-5902)--(5604,-5913)--(5610,-5926)--(5618,-5943)--(5628,-5965)--(5640,-5991)
  --(5654,-6022)--(5671,-6058)--(5690,-6098)--(5710,-6141)--(5730,-6183)--(5749,-6223)
  --(5766,-6259)--(5781,-6289)--(5793,-6313)--(5803,-6333)--(5810,-6347)--(5815,-6358)
  --(5819,-6366)--(5822,-6372)--(5825,-6377)--(5828,-6382)--(5831,-6388)--(5835,-6396)
  --(5840,-6407)--(5848,-6421)--(5858,-6441)--(5870,-6465)--(5886,-6495)--(5904,-6531)
  --(5924,-6571)--(5946,-6613)--(5968,-6656)--(5989,-6697)--(6009,-6735)--(6028,-6771)
  --(6045,-6805)--(6062,-6836)--(6077,-6865)--(6092,-6893)--(6106,-6919)--(6119,-6944)
  --(6132,-6968)--(6143,-6990)--(6154,-7010)--(6164,-7029)--(6173,-7045)--(6180,-7058)
  --(6186,-7069)--(6190,-7077)--(6193,-7082)--(6194,-7085)--(6195,-7086);
\pgfsetlinewidth{+7.5\XFigu}
\pgfsetdash{}{+0pt}
\pgfsetstrokecolor{black}
\pgfsetfillcolor{white}
\filldraw (5545,-3638) arc[start angle=+51.35, end angle=+128.65, radius=+1317.9];
\filldraw (6089,-4714) arc[start angle=+-0.50, end angle=+-51.49, radius=+1412.7];
\filldraw (3361,-4714) arc[start angle=+-179.50, end angle=+-128.51, radius=+1412.7];
\fill  (4724,-4724) circle [radius=+1362];
\draw (2362,-4724)--(7087,-4724);
\pgfsetlinewidth{+30\XFigu}
\pgfsetstrokecolor{red}
\pgfsetdash{{+120\XFigu}{+120\XFigu}}{++0pt}
\pgfsetarrows{[line width=30\XFigu, length=90\XFigu]}
\pgfsetarrows{xfiga5-xfiga5}
\draw (3615,-4724)--(5832,-4724);
\pgfsetarrows{-}
\pgfsetlinewidth{+7.5\XFigu}
\pgfsetdash{}{+0pt}
\pgfsetstrokecolor{black}
\draw (3255,-7087)--(3256,-7086)--(3257,-7083)--(3260,-7078)--(3264,-7070)--(3270,-7059)
  --(3277,-7046)--(3286,-7030)--(3296,-7011)--(3307,-6991)--(3318,-6969)--(3331,-6945)
  --(3344,-6920)--(3358,-6894)--(3373,-6866)--(3388,-6837)--(3405,-6806)--(3422,-6772)
  --(3441,-6736)--(3461,-6698)--(3482,-6657)--(3504,-6614)--(3526,-6572)--(3546,-6532)
  --(3564,-6496)--(3580,-6466)--(3592,-6442)--(3602,-6422)--(3610,-6408)--(3615,-6397)
  --(3619,-6389)--(3622,-6383)--(3625,-6378)--(3628,-6373)--(3631,-6367)--(3635,-6359)
  --(3640,-6348)--(3647,-6334)--(3657,-6314)--(3669,-6290)--(3684,-6260)--(3701,-6224)
  --(3720,-6184)--(3740,-6142)--(3760,-6100)--(3779,-6060)--(3795,-6024)--(3809,-5994)
  --(3821,-5969)--(3830,-5950)--(3837,-5935)--(3843,-5924)--(3847,-5916)--(3850,-5910)
  --(3852,-5905)--(3855,-5901)--(3858,-5895)--(3861,-5887)--(3866,-5876)--(3873,-5861)
  --(3881,-5842)--(3891,-5817)--(3904,-5787)--(3919,-5751)--(3935,-5711)--(3952,-5669)
  --(3969,-5627)--(3984,-5587)--(3998,-5551)--(4009,-5521)--(4019,-5497)--(4026,-5477)
  --(4032,-5463)--(4037,-5452)--(4040,-5444)--(4043,-5438)--(4045,-5433)--(4048,-5428)
  --(4050,-5422)--(4053,-5414)--(4057,-5403)--(4062,-5389)--(4068,-5369)--(4075,-5345)
  --(4083,-5315)--(4093,-5279)--(4103,-5239)--(4114,-5197)--(4125,-5150)--(4134,-5107)
  --(4142,-5070)--(4148,-5039)--(4153,-5014)--(4157,-4996)--(4160,-4982)--(4162,-4973)
  --(4164,-4966)--(4165,-4960)--(4166,-4955)--(4167,-4948)--(4168,-4939)--(4169,-4925)
  --(4171,-4907)--(4173,-4882)--(4174,-4851)--(4176,-4814)--(4177,-4771)--(4178,-4724)
  --(4177,-4677)--(4176,-4634)--(4174,-4597)--(4173,-4566)--(4171,-4541)--(4169,-4523)
  --(4168,-4510)--(4167,-4500)--(4166,-4493)--(4165,-4488)--(4164,-4482)--(4162,-4476)
  --(4160,-4466)--(4157,-4453)--(4153,-4435)--(4148,-4410)--(4142,-4379)--(4134,-4342)
  --(4125,-4299)--(4114,-4252)--(4103,-4210)--(4093,-4170)--(4083,-4134)--(4075,-4104)
  --(4068,-4080)--(4062,-4060)--(4057,-4046)--(4053,-4035)--(4050,-4027)--(4048,-4021)
  --(4045,-4016)--(4043,-4011)--(4040,-4005)--(4037,-3997)--(4032,-3986)--(4026,-3972)
  --(4019,-3952)--(4009,-3928)--(3998,-3898)--(3984,-3862)--(3969,-3822)--(3952,-3780)
  --(3935,-3738)--(3919,-3698)--(3904,-3662)--(3891,-3632)--(3881,-3607)--(3873,-3588)
  --(3866,-3573)--(3861,-3562)--(3858,-3554)--(3855,-3548)--(3852,-3543)--(3850,-3539)
  --(3847,-3533)--(3843,-3525)--(3837,-3514)--(3830,-3499)--(3821,-3480)--(3809,-3455)
  --(3795,-3425)--(3779,-3389)--(3760,-3349)--(3740,-3307)--(3720,-3265)--(3701,-3225)
  --(3684,-3189)--(3669,-3159)--(3657,-3135)--(3647,-3115)--(3640,-3101)--(3635,-3090)
  --(3631,-3082)--(3628,-3076)--(3625,-3071)--(3622,-3066)--(3619,-3060)--(3615,-3052)
  --(3610,-3041)--(3602,-3027)--(3592,-3007)--(3580,-2983)--(3564,-2953)--(3546,-2917)
  --(3526,-2877)--(3504,-2835)--(3482,-2792)--(3461,-2751)--(3441,-2713)--(3422,-2677)
  --(3405,-2643)--(3388,-2612)--(3373,-2583)--(3358,-2555)--(3344,-2529)--(3331,-2504)
  --(3318,-2480)--(3307,-2458)--(3296,-2438)--(3286,-2419)--(3277,-2403)--(3270,-2390)
  --(3264,-2379)--(3260,-2371)--(3257,-2366)--(3256,-2363)--(3255,-2362);
\draw (6193,-7087)--(6192,-7086)--(6191,-7083)--(6188,-7078)--(6184,-7070)--(6178,-7059)
  --(6171,-7046)--(6162,-7030)--(6152,-7011)--(6141,-6991)--(6130,-6969)--(6117,-6945)
  --(6104,-6920)--(6090,-6894)--(6075,-6866)--(6060,-6837)--(6043,-6806)--(6026,-6772)
  --(6007,-6736)--(5987,-6698)--(5966,-6657)--(5944,-6614)--(5922,-6572)--(5902,-6532)
  --(5884,-6496)--(5868,-6466)--(5856,-6442)--(5846,-6422)--(5838,-6408)--(5833,-6397)
  --(5829,-6389)--(5826,-6383)--(5823,-6378)--(5820,-6373)--(5817,-6367)--(5813,-6359)
  --(5808,-6348)--(5801,-6334)--(5791,-6314)--(5779,-6290)--(5764,-6260)--(5747,-6224)
  --(5728,-6184)--(5708,-6142)--(5688,-6100)--(5669,-6060)--(5653,-6024)--(5639,-5994)
  --(5627,-5969)--(5618,-5950)--(5611,-5935)--(5605,-5924)--(5601,-5916)--(5598,-5910)
  --(5596,-5905)--(5593,-5901)--(5590,-5895)--(5587,-5887)--(5582,-5876)--(5575,-5861)
  --(5567,-5842)--(5557,-5817)--(5544,-5787)--(5529,-5751)--(5513,-5711)--(5496,-5669)
  --(5479,-5627)--(5464,-5587)--(5450,-5551)--(5439,-5521)--(5429,-5497)--(5422,-5477)
  --(5416,-5463)--(5411,-5452)--(5408,-5444)--(5405,-5438)--(5403,-5433)--(5400,-5428)
  --(5398,-5422)--(5395,-5414)--(5391,-5403)--(5386,-5389)--(5380,-5369)--(5373,-5345)
  --(5365,-5315)--(5355,-5279)--(5345,-5239)--(5334,-5197)--(5323,-5150)--(5314,-5107)
  --(5306,-5070)--(5300,-5039)--(5295,-5014)--(5291,-4996)--(5288,-4982)--(5286,-4973)
  --(5284,-4966)--(5283,-4960)--(5282,-4955)--(5281,-4948)--(5280,-4939)--(5279,-4925)
  --(5277,-4907)--(5275,-4882)--(5274,-4851)--(5272,-4814)--(5271,-4771)--(5270,-4724)
  --(5271,-4677)--(5272,-4634)--(5274,-4597)--(5275,-4566)--(5277,-4541)--(5279,-4523)
  --(5280,-4510)--(5281,-4500)--(5282,-4493)--(5283,-4488)--(5284,-4482)--(5286,-4476)
  --(5288,-4466)--(5291,-4453)--(5295,-4435)--(5300,-4410)--(5306,-4379)--(5314,-4342)
  --(5323,-4299)--(5334,-4252)--(5345,-4210)--(5355,-4170)--(5365,-4134)--(5373,-4104)
  --(5380,-4080)--(5386,-4060)--(5391,-4046)--(5395,-4035)--(5398,-4027)--(5400,-4021)
  --(5403,-4016)--(5405,-4011)--(5408,-4005)--(5411,-3997)--(5416,-3986)--(5422,-3972)
  --(5429,-3952)--(5439,-3928)--(5450,-3898)--(5464,-3862)--(5479,-3822)--(5496,-3780)
  --(5513,-3738)--(5529,-3698)--(5544,-3662)--(5557,-3632)--(5567,-3607)--(5575,-3588)
  --(5582,-3573)--(5587,-3562)--(5590,-3554)--(5593,-3548)--(5596,-3543)--(5598,-3539)
  --(5601,-3533)--(5605,-3525)--(5611,-3514)--(5618,-3499)--(5627,-3480)--(5639,-3455)
  --(5653,-3425)--(5669,-3389)--(5688,-3349)--(5708,-3307)--(5728,-3265)--(5747,-3225)
  --(5764,-3189)--(5779,-3159)--(5791,-3135)--(5801,-3115)--(5808,-3101)--(5813,-3090)
  --(5817,-3082)--(5820,-3076)--(5823,-3071)--(5826,-3066)--(5829,-3060)--(5833,-3052)
  --(5838,-3041)--(5846,-3027)--(5856,-3007)--(5868,-2983)--(5884,-2953)--(5902,-2917)
  --(5922,-2877)--(5944,-2835)--(5966,-2792)--(5987,-2751)--(6007,-2713)--(6026,-2677)
  --(6043,-2643)--(6060,-2612)--(6075,-2583)--(6090,-2555)--(6104,-2529)--(6117,-2504)
  --(6130,-2480)--(6141,-2458)--(6152,-2438)--(6162,-2419)--(6171,-2403)--(6178,-2390)
  --(6184,-2379)--(6188,-2371)--(6191,-2366)--(6192,-2363)--(6193,-2362);
\pgfsetfillcolor{black}
\pgftext[base,at=\pgfqpointxy{4725}{-2127}] {\fontsize{5}{6}\usefont{T1}{ptm}{m}{n}$\tilde D_0$}
\pgftext[base,right,at=\pgfqpointxy{7558}{-6518}] {\fontsize{5}{6}\usefont{T1}{ptm}{m}{n}$\tilde D_+$}
\pgftext[base,left,at=\pgfqpointxy{1897}{-6529}] {\fontsize{5}{6}\usefont{T1}{ptm}{m}{n}$\tilde D_-$}
\pgfsetdash{{+15\XFigu}{+60\XFigu}}{+15\XFigu}
\draw (3678,-4720)--(5263,-3000);
\draw (5780,-4720)--(5263,-3008);
\pgfsetdash{}{+0pt}
\filldraw  (4724,-4724) circle [radius=+37];
\pgfsetarrows{[line width=7.5\XFigu, width=45\XFigu]}
\pgfsetarrowsend{xfiga1}
\draw (4517,-4720) arc[start angle=+-2.40, end angle=+48.05, radius=+819.7];
\draw (6356,-4703) arc[start angle=+-2.3, end angle=+109.2, radius=+567.7];
\filldraw  (5264,-3009) circle [radius=+37];
\pgftext[base,at=\pgfqpointxy{4722}{-5098}] {\fontsize{5}{6}\usefont{T1}{ptm}{m}{n}$\frac{\alpha}{3 \beta}$}
\pgftext[base,at=\pgfqpointxy{5254}{-2841}] {\fontsize{5}{6}\usefont{T1}{ptm}{m}{n}$k$}
\pgftext[base,left,at=\pgfqpointxy{4492}{-4392}] {\fontsize{5}{6}\usefont{T1}{ptm}{m}{n}$\theta_-$}
\pgftext[base,at=\pgfqpointxy{5983}{-4519}] {\fontsize{5}{6}\usefont{T1}{ptm}{m}{n}$\theta_+$}
\pgftext[base,right,at=\pgfqpointxy{4551}{-3774}] {\fontsize{5}{6}\usefont{T1}{ptm}{m}{n}$r_-$}
\pgftext[base,left,at=\pgfqpointxy{5576}{-3960}] {\fontsize{5}{6}\usefont{T1}{ptm}{m}{n}$r_+$}
\endtikzpicture}%
\caption{$\alpha^2 + 3 \beta \delta > 0$}
\end{subfigure}%
\begin{subfigure}{.33\textwidth}
\centering
%
%
{\pgfkeys{/pgf/fpu/.try=false}%
\ifx\XFigwidth\undefined\dimen1=0pt\else\dimen1\XFigwidth\fi
\divide\dimen1 by 6087
\ifx\XFigheight\undefined\dimen3=0pt\else\dimen3\XFigheight\fi
\divide\dimen3 by 5550
\ifdim\dimen1=0pt\ifdim\dimen3=0pt\dimen1=1657sp\dimen3\dimen1
  \else\dimen1\dimen3\fi\else\ifdim\dimen3=0pt\dimen3\dimen1\fi\fi
\tikzpicture[x=+\dimen1, y=+\dimen3]
{\ifx\XFigu\undefined\catcode`\@11
\def\temp{\alloc@1\dimen\dimendef\insc@unt}\temp\XFigu\catcode`\@12\fi}
\XFigu1657sp
\ifdim\XFigu<0pt\XFigu-\XFigu\fi
\definecolor{xfigc32}{rgb}{0.941,0.894,0.259}
\definecolor{xfigc33}{rgb}{0.000,0.447,0.698}
\definecolor{xfigc34}{rgb}{0.835,0.369,0.000}
\clip(1684,-7297) rectangle (7771,-1747);
\tikzset{inner sep=+0pt, outer sep=+0pt}
\pgfsetbeveljoin
\pgfsetfillcolor{xfigc32}
\fill (3361,-2362)--(3362,-2363)--(3363,-2365)--(3366,-2370)--(3370,-2377)--(3376,-2387)
  --(3384,-2401)--(3394,-2419)--(3407,-2441)--(3423,-2469)--(3441,-2501)--(3463,-2538)
  --(3487,-2581)--(3515,-2629)--(3546,-2682)--(3580,-2740)--(3616,-2804)--(3656,-2873)
  --(3698,-2946)--(3743,-3023)--(3789,-3104)--(3838,-3188)--(3888,-3275)--(3939,-3363)
  --(3992,-3454)--(4044,-3545)--(4096,-3635)--(4149,-3726)--(4200,-3814)--(4250,-3901)
  --(4299,-3985)--(4345,-4066)--(4390,-4143)--(4432,-4216)--(4472,-4285)--(4508,-4349)
  --(4542,-4407)--(4573,-4460)--(4601,-4508)--(4625,-4551)--(4647,-4588)--(4665,-4620)
  --(4681,-4648)--(4694,-4670)--(4704,-4688)--(4712,-4702)--(4718,-4712)--(4722,-4719)
  --(4725,-4724)--(4726,-4726)--(4727,-4727)--(4728,-4726)--(4729,-4724)--(4732,-4719)
  --(4736,-4712)--(4742,-4702)--(4750,-4688)--(4760,-4670)--(4773,-4648)--(4789,-4620)
  --(4807,-4588)--(4829,-4551)--(4854,-4508)--(4882,-4460)--(4913,-4407)--(4947,-4349)
  --(4983,-4285)--(5023,-4216)--(5066,-4143)--(5110,-4066)--(5157,-3985)--(5206,-3901)
  --(5256,-3814)--(5308,-3726)--(5360,-3635)--(5413,-3544)--(5466,-3454)--(5518,-3363)
  --(5570,-3275)--(5620,-3188)--(5669,-3104)--(5716,-3023)--(5760,-2946)--(5803,-2873)
  --(5843,-2804)--(5879,-2740)--(5913,-2682)--(5944,-2629)--(5972,-2581)--(5997,-2538)
  --(6019,-2501)--(6037,-2469)--(6053,-2441)--(6066,-2419)--(6076,-2401)--(6084,-2387)
  --(6090,-2377)--(6094,-2370)--(6097,-2365)--(6098,-2363)--(6099,-2362);
\pgfsetfillcolor{xfigc34}
\fill (7086,-7088)--(7086,-7087)--(7086,-7084)--(7086,-7078)--(7086,-7069)--(7086,-7056)
  --(7086,-7039)--(7086,-7017)--(7086,-6989)--(7086,-6955)--(7086,-6915)--(7086,-6869)
  --(7086,-6817)--(7086,-6759)--(7086,-6694)--(7086,-6624)--(7086,-6548)--(7086,-6467)
  --(7086,-6381)--(7086,-6291)--(7086,-6198)--(7086,-6102)--(7086,-6004)--(7087,-5906)
  --(7087,-5808)--(7087,-5710)--(7087,-5614)--(7087,-5521)--(7087,-5431)--(7087,-5345)
  --(7087,-5264)--(7087,-5188)--(7087,-5118)--(7087,-5053)--(7087,-4995)--(7087,-4943)
  --(7087,-4897)--(7087,-4857)--(7087,-4823)--(7087,-4795)--(7087,-4773)--(7087,-4756)
  --(7087,-4743)--(7087,-4734)--(7087,-4728)--(7087,-4725)--(7087,-4724)--(7086,-4724)
  --(7083,-4724)--(7077,-4724)--(7068,-4724)--(7055,-4724)--(7038,-4724)--(7016,-4724)
  --(6988,-4724)--(6954,-4724)--(6915,-4724)--(6869,-4724)--(6817,-4724)--(6758,-4724)
  --(6694,-4724)--(6624,-4725)--(6548,-4725)--(6467,-4725)--(6381,-4725)--(6291,-4725)
  --(6198,-4725)--(6103,-4725)--(6005,-4725)--(5907,-4726)--(5809,-4726)--(5711,-4726)
  --(5616,-4726)--(5523,-4726)--(5433,-4726)--(5347,-4726)--(5266,-4726)--(5190,-4726)
  --(5120,-4727)--(5056,-4727)--(4997,-4727)--(4945,-4727)--(4899,-4727)--(4860,-4727)
  --(4826,-4727)--(4798,-4727)--(4776,-4727)--(4759,-4727)--(4746,-4727)--(4737,-4727)
  --(4731,-4727)--(4728,-4727)--(4727,-4727)--(4728,-4728)--(4729,-4730)--(4731,-4735)
  --(4736,-4742)--(4741,-4752)--(4749,-4766)--(4760,-4784)--(4773,-4806)--(4788,-4833)
  --(4807,-4865)--(4828,-4903)--(4853,-4945)--(4880,-4993)--(4911,-5046)--(4945,-5105)
  --(4981,-5168)--(5021,-5236)--(5063,-5309)--(5107,-5386)--(5154,-5467)--(5202,-5551)
  --(5252,-5638)--(5303,-5726)--(5355,-5816)--(5408,-5907)--(5460,-5998)--(5512,-6088)
  --(5563,-6176)--(5613,-6263)--(5661,-6347)--(5708,-6428)--(5752,-6505)--(5794,-6578)
  --(5834,-6646)--(5870,-6709)--(5904,-6768)--(5935,-6821)--(5962,-6869)--(5987,-6911)
  --(6008,-6949)--(6027,-6981)--(6042,-7008)--(6055,-7030)--(6066,-7048)--(6074,-7062)
  --(6079,-7072)--(6084,-7079)--(6086,-7084)--(6087,-7086)--(6088,-7087);
\pgfsetfillcolor{xfigc33}
\fill (2365,-7097)--(2365,-7096)--(2365,-7093)--(2365,-7087)--(2365,-7078)--(2365,-7065)
  --(2365,-7048)--(2365,-7025)--(2365,-6997)--(2365,-6964)--(2365,-6924)--(2365,-6878)
  --(2365,-6825)--(2365,-6767)--(2365,-6702)--(2364,-6631)--(2364,-6555)--(2364,-6473)
  --(2364,-6387)--(2364,-6297)--(2364,-6203)--(2364,-6107)--(2364,-6009)--(2363,-5910)
  --(2363,-5812)--(2363,-5714)--(2363,-5618)--(2363,-5524)--(2363,-5434)--(2363,-5348)
  --(2363,-5266)--(2363,-5190)--(2362,-5119)--(2362,-5054)--(2362,-4996)--(2362,-4943)
  --(2362,-4897)--(2362,-4857)--(2362,-4824)--(2362,-4796)--(2362,-4773)--(2362,-4756)
  --(2362,-4743)--(2362,-4734)--(2362,-4728)--(2362,-4725)--(2362,-4724)--(2363,-4724)
  --(2366,-4724)--(2372,-4724)--(2381,-4724)--(2394,-4724)--(2411,-4724)--(2433,-4724)
  --(2461,-4724)--(2495,-4724)--(2535,-4724)--(2581,-4724)--(2633,-4724)--(2691,-4724)
  --(2756,-4724)--(2826,-4725)--(2902,-4725)--(2984,-4725)--(3070,-4725)--(3160,-4725)
  --(3253,-4725)--(3349,-4725)--(3446,-4725)--(3545,-4726)--(3643,-4726)--(3740,-4726)
  --(3836,-4726)--(3929,-4726)--(4019,-4726)--(4105,-4726)--(4187,-4726)--(4263,-4726)
  --(4333,-4727)--(4398,-4727)--(4456,-4727)--(4508,-4727)--(4554,-4727)--(4594,-4727)
  --(4628,-4727)--(4656,-4727)--(4678,-4727)--(4695,-4727)--(4708,-4727)--(4717,-4727)
  --(4723,-4727)--(4726,-4727)--(4727,-4727)--(4726,-4728)--(4725,-4730)--(4723,-4735)
  --(4718,-4742)--(4713,-4752)--(4705,-4766)--(4694,-4784)--(4682,-4807)--(4666,-4834)
  --(4648,-4866)--(4626,-4903)--(4602,-4946)--(4575,-4994)--(4544,-5047)--(4511,-5106)
  --(4474,-5170)--(4435,-5238)--(4393,-5312)--(4349,-5389)--(4303,-5470)--(4255,-5554)
  --(4205,-5641)--(4154,-5730)--(4103,-5821)--(4051,-5912)--(3999,-6003)--(3948,-6093)
  --(3897,-6182)--(3847,-6269)--(3799,-6353)--(3753,-6434)--(3709,-6511)--(3667,-6585)
  --(3628,-6653)--(3591,-6717)--(3558,-6776)--(3527,-6829)--(3500,-6877)--(3476,-6920)
  --(3454,-6957)--(3436,-6989)--(3420,-7016)--(3408,-7039)--(3397,-7057)--(3389,-7071)
  --(3384,-7081)--(3379,-7088)--(3377,-7093)--(3376,-7095)--(3375,-7096);
\pgfsetfillcolor{white}
\fill  (4724,-4724) circle [radius=+1362];
\fill  (4730,-4724) circle [radius=+1362];
\pgfsetlinewidth{+7.5\XFigu}
\pgfsetdash{}{+0pt}
\pgfsetstrokecolor{black}
\filldraw (3363,-4724) arc[start angle=+179.86, end angle=+240.21, radius=+1359.3];
\filldraw (6091,-4724) arc[start angle=+0.14, end angle=+-60.21, radius=+1359.3];
\filldraw (5413,-3545) arc[start angle=+59.82, end angle=+120.10, radius=+1365.3];
\draw (2362,-4724)--(7087,-4724);
\draw (3372,-7087)--(6099,-2362);
\draw (6088,-7087)--(3361,-2362);
\pgfsetfillcolor{black}
\pgftext[base,at=\pgfqpointxy{4725}{-2127}] {\fontsize{5}{6}\usefont{T1}{ptm}{m}{n}$\tilde D_0$}
\pgftext[base,right,at=\pgfqpointxy{7558}{-6518}] {\fontsize{5}{6}\usefont{T1}{ptm}{m}{n}$\tilde D_+$}
\pgftext[base,left,at=\pgfqpointxy{1897}{-6529}] {\fontsize{5}{6}\usefont{T1}{ptm}{m}{n}$\tilde D_-$}
\filldraw  (4727,-4727) circle [radius=+37];
\pgftext[base,left,at=\pgfqpointxy{5034}{-4570}] {\fontsize{5}{6}\usefont{T1}{ptm}{m}{n}$\frac{\alpha}{3 \beta}$}
\pgfsetdash{{+60\XFigu}{+60\XFigu}}{++0pt}
\draw (4714,-3360)--(4727,-4727);
\pgfsetdash{}{+0pt}
\draw (4718,-4153) arc[start angle=+98.5, end angle=+49.0, radius=+366.5];
\pgftext[base,at=\pgfqpointxy{3425}{-3071}] {\fontsize{5}{6}\usefont{T1}{ptm}{m}{n}$\Gamma_1$}
\pgftext[base,at=\pgfqpointxy{6024}{-3071}] {\fontsize{5}{6}\usefont{T1}{ptm}{m}{n}$\Gamma_3$}
\pgftext[base,at=\pgfqpointxy{4724}{-3189}] {\fontsize{5}{6}\usefont{T1}{ptm}{m}{n}$\Gamma_2$}
\pgftext[base,at=\pgfqpointxy{6614}{-4606}] {\fontsize{5}{6}\usefont{T1}{ptm}{m}{n}$\Gamma_4$}
\pgftext[base,at=\pgfqpointxy{5669}{-5315}] {\fontsize{5}{6}\usefont{T1}{ptm}{m}{n}$\Gamma_5$}
\pgftext[base,at=\pgfqpointxy{2835}{-4606}] {\fontsize{5}{6}\usefont{T1}{ptm}{m}{n}$\Gamma_9$}
\pgftext[base,at=\pgfqpointxy{3780}{-5315}] {\fontsize{5}{6}\usefont{T1}{ptm}{m}{n}$\Gamma_8$}
\pgftext[base,at=\pgfqpointxy{5551}{-6732}] {\fontsize{5}{6}\usefont{T1}{ptm}{m}{n}$\Gamma_6$}
\pgftext[base,at=\pgfqpointxy{3898}{-6732}] {\fontsize{5}{6}\usefont{T1}{ptm}{m}{n}$\Gamma_7$}
\pgftext[base,at=\pgfqpointxy{4927}{-4030}] {\fontsize{5}{6}\usefont{T1}{ptm}{m}{n}$\phi_0$}
\endtikzpicture}%
\caption{$\alpha^2 + 3 \beta \delta = 0$}
\end{subfigure}%
\begin{subfigure}{.33\textwidth}
\centering
%
%
{\pgfkeys{/pgf/fpu/.try=false}%
\ifx\XFigwidth\undefined\dimen1=0pt\else\dimen1\XFigwidth\fi
\divide\dimen1 by 6104
\ifx\XFigheight\undefined\dimen3=0pt\else\dimen3\XFigheight\fi
\divide\dimen3 by 5548
\ifdim\dimen1=0pt\ifdim\dimen3=0pt\dimen1=1657sp\dimen3\dimen1
  \else\dimen1\dimen3\fi\else\ifdim\dimen3=0pt\dimen3\dimen1\fi\fi
\tikzpicture[x=+\dimen1, y=+\dimen3]
{\ifx\XFigu\undefined\catcode`\@11
\def\temp{\alloc@1\dimen\dimendef\insc@unt}\temp\XFigu\catcode`\@12\fi}
\XFigu1657sp
\ifdim\XFigu<0pt\XFigu-\XFigu\fi
\pgfdeclarearrow{
  name = xfiga1,
  parameters = {
    \the\pgfarrowlinewidth \the\pgfarrowlength \the\pgfarrowwidth\ifpgfarrowopen o\fi},
  defaults = {
	  line width=+7.5\XFigu, length=+120\XFigu, width=+60\XFigu},
  setup code = {
    \dimen7 2.1\pgfarrowlength\pgfmathveclen{\the\dimen7}{\the\pgfarrowwidth}
    \dimen7 2\pgfarrowwidth\pgfmathdivide{\pgfmathresult}{\the\dimen7}
    \dimen7 \pgfmathresult\pgfarrowlinewidth
    \pgfarrowssettipend{+\dimen7}
    \pgfarrowssetbackend{+-\pgfarrowlength}
    \dimen9 -\pgfarrowlength\advance\dimen9 by-0.45\pgfarrowlinewidth
    \pgfarrowssetlineend{+\dimen9}
    \dimen9 -\pgfarrowlength\advance\dimen9 by-0.5\pgfarrowlinewidth
    \pgfarrowssetvisualbackend{+\dimen9}
    \pgfarrowshullpoint{+\dimen7}{+0pt}
    \pgfarrowsupperhullpoint{+-\pgfarrowlength}{+0.5\pgfarrowwidth}
    \pgfarrowssavethe\pgfarrowlinewidth
    \pgfarrowssavethe\pgfarrowlength
    \pgfarrowssavethe\pgfarrowwidth
  },
  drawing code = {\pgfsetdash{}{+0pt}
    \ifdim\pgfarrowlinewidth=\pgflinewidth\else\pgfsetlinewidth{+\pgfarrowlinewidth}\fi
    \pgfpathmoveto{\pgfqpoint{-\pgfarrowlength}{-0.5\pgfarrowwidth}}
    \pgfpathlineto{\pgfqpoint{0pt}{0pt}}
    \pgfpathlineto{\pgfqpoint{-\pgfarrowlength}{0.5\pgfarrowwidth}}
    \pgfpathclose
    \ifpgfarrowopen\pgfusepathqstroke\else\pgfsetfillcolor{.}
	\ifdim\pgfarrowlinewidth>0pt\pgfusepathqfillstroke\else\pgfusepathqfill\fi\fi
  }
}
\pgfdeclarearrow{
  name = xfiga5,
  parameters = {
    \the\pgfarrowlinewidth \the\pgfarrowlength\ifpgfarrowopen o\fi},
  defaults = {
	  line width=+7.5\XFigu, length=+120\XFigu},
  setup code = {
    \dimen7 0.5\pgfarrowlinewidth
    \pgfarrowssettipend{+\dimen7}
    \pgfarrowssetbackend{+-\pgfarrowlength}
    \dimen9 -\pgfarrowlength\advance\dimen9 by0.45\pgfarrowlinewidth
    \pgfarrowssetlineend{+\dimen9}
    \dimen9 -\pgfarrowlength\advance\dimen9 by-0.5\pgfarrowlinewidth
    \pgfarrowssetvisualbackend{+\dimen9}
    \pgfarrowshullpoint{+\dimen7}{+0pt}
    \pgfarrowsupperhullpoint{+-0.25\pgfarrowlength}{+0.35\pgfarrowwidth}\pgfarrowsupperhullpoint{+-0.5\pgfarrowlength}{+0.5\pgfarrowwidth}\pgfarrowsupperhullpoint{+-0.75\pgfarrowlength}{+0.35\pgfarrowwidth}
    \pgfarrowshullpoint{+\dimen9}{+0pt}
    \pgfarrowssavethe\pgfarrowlinewidth
    \pgfarrowssavethe\pgfarrowlength
  },
  drawing code = {\pgfsetdash{}{+0pt}
    \ifdim\pgfarrowlinewidth=\pgflinewidth\else\pgfsetlinewidth{+\pgfarrowlinewidth}\fi
    \dimen3 0.5\pgfarrowlength
    \pgfpathcircle{\pgfqpoint{-\dimen3}{0pt}}{+\dimen3}
    \ifpgfarrowopen\pgfusepathqstroke\else\pgfsetfillcolor{.}
	\ifdim\pgfarrowlinewidth>0pt\pgfusepathqfillstroke\else\pgfusepathqfill\fi\fi
  }
}
\definecolor{xfigc32}{rgb}{0.941,0.894,0.259}
\definecolor{xfigc33}{rgb}{0.000,0.447,0.698}
\definecolor{xfigc34}{rgb}{0.835,0.369,0.000}
\clip(1684,-7298) rectangle (7788,-1750);
\tikzset{inner sep=+0pt, outer sep=+0pt}
\pgfsetbeveljoin
\pgfsetfillcolor{xfigc32}
\fill (3397,-2362)--(3398,-2363)--(3399,-2365)--(3401,-2369)--(3405,-2375)--(3409,-2384)
  --(3416,-2395)--(3424,-2408)--(3433,-2424)--(3444,-2443)--(3455,-2463)--(3468,-2485)
  --(3482,-2509)--(3497,-2534)--(3513,-2561)--(3530,-2589)--(3547,-2619)--(3566,-2651)
  --(3587,-2684)--(3608,-2721)--(3632,-2759)--(3658,-2801)--(3685,-2846)--(3715,-2894)
  --(3747,-2946)--(3780,-2999)--(3809,-3046)--(3838,-3092)--(3866,-3136)--(3891,-3176)
  --(3915,-3213)--(3936,-3247)--(3955,-3276)--(3971,-3302)--(3985,-3325)--(3998,-3344)
  --(4009,-3362)--(4018,-3376)--(4026,-3390)--(4034,-3402)--(4042,-3414)--(4049,-3425)
  --(4057,-3437)--(4065,-3450)--(4074,-3464)--(4084,-3480)--(4096,-3497)--(4109,-3518)
  --(4124,-3541)--(4141,-3567)--(4160,-3595)--(4181,-3627)--(4203,-3661)--(4227,-3696)
  --(4251,-3732)--(4275,-3768)--(4312,-3822)--(4345,-3869)--(4372,-3907)--(4395,-3938)
  --(4413,-3963)--(4429,-3983)--(4442,-3998)--(4452,-4010)--(4461,-4020)--(4468,-4027)
  --(4474,-4032)--(4479,-4036)--(4483,-4039)--(4485,-4040)--(4487,-4041)--(4488,-4042)
  --(4491,-4042)--(4499,-4042)--(4512,-4042)--(4532,-4042)--(4559,-4042)--(4593,-4042)
  --(4633,-4042)--(4678,-4042)--(4725,-4042)--(4771,-4042)--(4816,-4042)--(4856,-4042)
  --(4890,-4042)--(4917,-4042)--(4937,-4042)--(4950,-4042)--(4958,-4042)--(4961,-4042)
  --(4962,-4042)--(4963,-4041)--(4965,-4039)--(4969,-4036)--(4974,-4031)--(4981,-4025)
  --(4988,-4018)--(4998,-4008)--(5009,-3996)--(5021,-3982)--(5036,-3964)--(5053,-3942)
  --(5074,-3915)--(5098,-3883)--(5126,-3843)--(5159,-3796)--(5197,-3741)--(5223,-3703)
  --(5249,-3664)--(5275,-3624)--(5301,-3585)--(5326,-3546)--(5350,-3508)--(5374,-3471)
  --(5398,-3434)--(5421,-3398)--(5443,-3363)--(5465,-3328)--(5487,-3293)--(5508,-3259)
  --(5529,-3226)--(5549,-3194)--(5568,-3163)--(5586,-3134)--(5603,-3107)--(5618,-3083)
  --(5631,-3061)--(5642,-3043)--(5651,-3028)--(5658,-3017)--(5663,-3008)--(5666,-3003)
  --(5668,-3000)--(5669,-2999)--(5670,-2997)--(5673,-2992)--(5678,-2984)--(5686,-2970)
  --(5697,-2952)--(5712,-2927)--(5730,-2897)--(5751,-2861)--(5776,-2821)--(5802,-2777)
  --(5831,-2729)--(5860,-2680)--(5889,-2632)--(5918,-2584)--(5944,-2540)--(5969,-2500)
  --(5990,-2464)--(6008,-2434)--(6023,-2409)--(6034,-2391)--(6042,-2377)--(6047,-2369)
  --(6050,-2364)--(6051,-2362);
\pgfsetfillcolor{xfigc33}
\fill (2362,-7087)--(2362,-7086)--(2362,-7083)--(2362,-7077)--(2362,-7068)--(2362,-7055)
  --(2362,-7038)--(2362,-7016)--(2362,-6988)--(2362,-6954)--(2362,-6914)--(2362,-6869)
  --(2362,-6816)--(2362,-6758)--(2362,-6693)--(2362,-6623)--(2362,-6547)--(2362,-6466)
  --(2362,-6380)--(2362,-6290)--(2362,-6197)--(2362,-6101)--(2362,-6004)--(2362,-5905)
  --(2362,-5807)--(2362,-5710)--(2362,-5614)--(2362,-5521)--(2362,-5431)--(2362,-5345)
  --(2362,-5264)--(2362,-5188)--(2362,-5118)--(2362,-5053)--(2362,-4995)--(2362,-4942)
  --(2362,-4897)--(2362,-4857)--(2362,-4823)--(2362,-4795)--(2362,-4773)--(2362,-4756)
  --(2362,-4743)--(2362,-4734)--(2362,-4728)--(2362,-4725)--(2362,-4724)--(2364,-4724)
  --(2368,-4724)--(2375,-4724)--(2387,-4724)--(2404,-4724)--(2426,-4724)--(2455,-4724)
  --(2489,-4724)--(2530,-4724)--(2577,-4724)--(2630,-4724)--(2687,-4724)--(2750,-4724)
  --(2815,-4724)--(2883,-4724)--(2953,-4724)--(3022,-4724)--(3090,-4724)--(3155,-4724)
  --(3218,-4724)--(3275,-4724)--(3328,-4724)--(3375,-4724)--(3416,-4724)--(3450,-4724)
  --(3479,-4724)--(3501,-4724)--(3518,-4724)--(3530,-4724)--(3537,-4724)--(3541,-4724)
  --(3543,-4724)--(3544,-4725)--(3548,-4727)--(3554,-4732)--(3563,-4739)--(3577,-4748)
  --(3594,-4761)--(3617,-4778)--(3645,-4798)--(3677,-4821)--(3715,-4848)--(3757,-4879)
  --(3803,-4912)--(3853,-4948)--(3906,-4986)--(3960,-5026)--(4016,-5066)--(4071,-5106)
  --(4125,-5146)--(4178,-5184)--(4228,-5220)--(4274,-5253)--(4316,-5284)--(4354,-5311)
  --(4386,-5334)--(4414,-5354)--(4437,-5371)--(4454,-5384)--(4468,-5393)--(4477,-5400)
  --(4483,-5405)--(4487,-5407)--(4488,-5408)--(4487,-5408)--(4486,-5409)--(4484,-5411)
  --(4480,-5414)--(4475,-5419)--(4468,-5425)--(4461,-5432)--(4451,-5442)--(4440,-5454)
  --(4428,-5468)--(4413,-5486)--(4396,-5508)--(4375,-5535)--(4351,-5567)--(4323,-5607)
  --(4290,-5654)--(4252,-5709)--(4226,-5747)--(4201,-5785)--(4176,-5823)--(4153,-5858)
  --(4132,-5890)--(4113,-5918)--(4097,-5944)--(4083,-5966)--(4070,-5985)--(4060,-6002)
  --(4051,-6016)--(4043,-6029)--(4035,-6041)--(4028,-6052)--(4021,-6063)--(4014,-6076)
  --(4005,-6089)--(3996,-6105)--(3985,-6123)--(3972,-6144)--(3956,-6168)--(3938,-6197)
  --(3918,-6230)--(3894,-6267)--(3869,-6309)--(3840,-6354)--(3811,-6402)--(3780,-6451)
  --(3748,-6503)--(3716,-6554)--(3687,-6602)--(3660,-6646)--(3635,-6688)--(3611,-6726)
  --(3590,-6762)--(3569,-6796)--(3550,-6828)--(3532,-6858)--(3516,-6886)--(3500,-6913)
  --(3484,-6939)--(3470,-6963)--(3457,-6985)--(3445,-7006)--(3434,-7024)--(3424,-7041)
  --(3416,-7055)--(3410,-7066)--(3405,-7075)--(3401,-7081)--(3399,-7085)--(3398,-7087)
  --(3397,-7088);
\pgfsetfillcolor{xfigc34}
\fill (5906,-4720)--(5908,-4720)--(5912,-4720)--(5919,-4720)--(5931,-4720)--(5948,-4720)
  --(5970,-4720)--(5999,-4720)--(6033,-4721)--(6074,-4721)--(6121,-4721)--(6174,-4721)
  --(6231,-4721)--(6294,-4722)--(6359,-4722)--(6427,-4722)--(6497,-4723)--(6566,-4723)
  --(6634,-4723)--(6699,-4723)--(6762,-4724)--(6819,-4724)--(6872,-4724)--(6919,-4724)
  --(6960,-4724)--(6994,-4725)--(7023,-4725)--(7045,-4725)--(7062,-4725)--(7074,-4725)
  --(7081,-4725)--(7085,-4725)--(7087,-4725)--(7087,-4726)--(7087,-4729)--(7087,-4735)
  --(7087,-4744)--(7087,-4757)--(7087,-4774)--(7087,-4796)--(7087,-4824)--(7087,-4858)
  --(7087,-4898)--(7087,-4943)--(7087,-4996)--(7087,-5054)--(7087,-5119)--(7087,-5189)
  --(7087,-5265)--(7087,-5346)--(7087,-5432)--(7087,-5522)--(7087,-5615)--(7087,-5711)
  --(7087,-5808)--(7087,-5907)--(7087,-6005)--(7087,-6102)--(7087,-6198)--(7087,-6291)
  --(7087,-6381)--(7087,-6467)--(7087,-6548)--(7087,-6624)--(7087,-6694)--(7087,-6759)
  --(7087,-6817)--(7087,-6870)--(7087,-6915)--(7087,-6955)--(7087,-6989)--(7087,-7017)
  --(7087,-7039)--(7087,-7056)--(7087,-7069)--(7087,-7078)--(7087,-7084)--(7087,-7087)
  --(7087,-7088)--(7085,-7088)--(7081,-7088)--(7073,-7088)--(7061,-7088)--(7043,-7088)
  --(7020,-7088)--(6991,-7088)--(6956,-7088)--(6914,-7088)--(6867,-7088)--(6815,-7088)
  --(6758,-7088)--(6697,-7088)--(6634,-7088)--(6569,-7088)--(6504,-7088)--(6441,-7088)
  --(6380,-7088)--(6323,-7088)--(6271,-7088)--(6224,-7088)--(6182,-7088)--(6147,-7088)
  --(6118,-7088)--(6095,-7088)--(6077,-7088)--(6065,-7088)--(6057,-7088)--(6053,-7088)
  --(6051,-7088)--(6050,-7087)--(6049,-7085)--(6047,-7081)--(6043,-7075)--(6038,-7066)
  --(6032,-7055)--(6024,-7041)--(6014,-7024)--(6003,-7006)--(5991,-6985)--(5978,-6963)
  --(5964,-6939)--(5949,-6913)--(5933,-6886)--(5916,-6858)--(5898,-6828)--(5879,-6796)
  --(5859,-6762)--(5837,-6726)--(5814,-6688)--(5789,-6646)--(5762,-6602)--(5732,-6554)
  --(5701,-6503)--(5669,-6451)--(5638,-6402)--(5609,-6354)--(5581,-6309)--(5555,-6267)
  --(5531,-6230)--(5511,-6197)--(5493,-6168)--(5478,-6144)--(5464,-6123)--(5453,-6105)
  --(5444,-6089)--(5435,-6076)--(5428,-6063)--(5421,-6052)--(5414,-6041)--(5406,-6029)
  --(5398,-6016)--(5389,-6002)--(5379,-5985)--(5367,-5966)--(5352,-5944)--(5336,-5918)
  --(5317,-5890)--(5296,-5858)--(5273,-5823)--(5248,-5785)--(5223,-5747)--(5197,-5709)
  --(5159,-5654)--(5126,-5607)--(5098,-5567)--(5074,-5535)--(5053,-5508)--(5036,-5486)
  --(5021,-5468)--(5009,-5454)--(4998,-5442)--(4988,-5432)--(4981,-5425)--(4974,-5419)
  --(4969,-5414)--(4965,-5411)--(4963,-5409)--(4962,-5408)--(4961,-5408);
\pgfsetfillcolor{white}
\fill  (4727,-4726) circle [radius=+1362];
\pgfsetlinewidth{+7.5\XFigu}
\pgfsetstrokecolor{black}
\pgfsetdash{}{+0pt}
\draw (2362,-4724)--(7087,-4724);
\draw (3397,-2362)--(3398,-2363)--(3399,-2365)--(3401,-2369)--(3405,-2375)--(3410,-2384)
  --(3416,-2395)--(3424,-2409)--(3434,-2426)--(3445,-2444)--(3457,-2465)--(3470,-2487)
  --(3484,-2511)--(3500,-2537)--(3516,-2564)--(3532,-2592)--(3550,-2622)--(3569,-2654)
  --(3590,-2688)--(3611,-2724)--(3635,-2762)--(3660,-2804)--(3687,-2848)--(3716,-2896)
  --(3748,-2947)--(3780,-2999)--(3811,-3048)--(3840,-3096)--(3869,-3141)--(3894,-3183)
  --(3918,-3220)--(3938,-3253)--(3956,-3282)--(3972,-3306)--(3985,-3327)--(3996,-3345)
  --(4005,-3361)--(4014,-3374)--(4021,-3387)--(4028,-3398)--(4035,-3409)--(4043,-3421)
  --(4051,-3434)--(4060,-3448)--(4070,-3465)--(4083,-3484)--(4097,-3506)--(4113,-3532)
  --(4132,-3560)--(4153,-3592)--(4176,-3627)--(4201,-3665)--(4226,-3703)--(4252,-3741)
  --(4289,-3796)--(4321,-3842)--(4347,-3878)--(4365,-3905)--(4378,-3923)--(4385,-3934)
  --(4389,-3940)--(4390,-3942)--(4389,-3941)--(4389,-3942)--(4392,-3944)--(4397,-3950)
  --(4406,-3960)--(4420,-3975)--(4439,-3994)--(4462,-4017)--(4488,-4042)--(4514,-4065)
  --(4537,-4085)--(4557,-4102)--(4573,-4115)--(4585,-4125)--(4593,-4133)--(4599,-4139)
  --(4603,-4143)--(4606,-4147)--(4609,-4150)--(4613,-4153)--(4619,-4157)--(4627,-4161)
  --(4639,-4165)--(4655,-4170)--(4675,-4174)--(4698,-4178)--(4724,-4179)--(4750,-4178)
  --(4773,-4174)--(4793,-4170)--(4809,-4165)--(4821,-4161)--(4830,-4157)--(4835,-4153)
  --(4839,-4150)--(4843,-4147)--(4846,-4143)--(4850,-4139)--(4855,-4133)--(4864,-4125)
  --(4876,-4115)--(4892,-4102)--(4912,-4085)--(4935,-4065)--(4961,-4042)--(4987,-4017)
  --(5010,-3994)--(5029,-3975)--(5043,-3960)--(5052,-3950)--(5057,-3944)--(5060,-3942)
  --(5060,-3941)--(5059,-3942)--(5060,-3940)--(5064,-3934)--(5071,-3923)--(5084,-3905)
  --(5102,-3878)--(5128,-3842)--(5160,-3796)--(5197,-3741)--(5223,-3703)--(5248,-3665)
  --(5273,-3627)--(5296,-3592)--(5317,-3560)--(5336,-3532)--(5352,-3506)--(5367,-3484)
  --(5379,-3465)--(5389,-3448)--(5398,-3434)--(5406,-3421)--(5414,-3409)--(5421,-3398)
  --(5428,-3387)--(5435,-3374)--(5444,-3361)--(5453,-3345)--(5464,-3327)--(5478,-3306)
  --(5493,-3282)--(5511,-3253)--(5531,-3220)--(5555,-3183)--(5581,-3141)--(5609,-3096)
  --(5638,-3048)--(5669,-2999)--(5701,-2947)--(5732,-2896)--(5762,-2848)--(5789,-2804)
  --(5814,-2762)--(5837,-2724)--(5859,-2688)--(5879,-2654)--(5898,-2622)--(5916,-2592)
  --(5933,-2564)--(5949,-2537)--(5964,-2511)--(5978,-2487)--(5991,-2465)--(6003,-2444)
  --(6014,-2426)--(6024,-2409)--(6032,-2395)--(6038,-2384)--(6043,-2375)--(6047,-2369)
  --(6049,-2365)--(6050,-2363)--(6051,-2362);
\draw (3397,-7088)--(3398,-7087)--(3399,-7085)--(3401,-7081)--(3405,-7075)--(3410,-7066)
  --(3416,-7055)--(3424,-7041)--(3434,-7024)--(3445,-7006)--(3457,-6985)--(3470,-6963)
  --(3484,-6939)--(3500,-6913)--(3516,-6886)--(3532,-6858)--(3550,-6828)--(3569,-6796)
  --(3590,-6762)--(3611,-6726)--(3635,-6688)--(3660,-6646)--(3687,-6602)--(3716,-6554)
  --(3748,-6503)--(3780,-6451)--(3811,-6402)--(3840,-6354)--(3869,-6309)--(3894,-6267)
  --(3918,-6230)--(3938,-6197)--(3956,-6168)--(3972,-6144)--(3985,-6123)--(3996,-6105)
  --(4005,-6089)--(4014,-6076)--(4021,-6063)--(4028,-6052)--(4035,-6041)--(4043,-6029)
  --(4051,-6016)--(4060,-6002)--(4070,-5985)--(4083,-5966)--(4097,-5944)--(4113,-5918)
  --(4132,-5890)--(4153,-5858)--(4176,-5823)--(4201,-5785)--(4226,-5747)--(4252,-5709)
  --(4289,-5654)--(4321,-5608)--(4347,-5572)--(4365,-5545)--(4378,-5527)--(4385,-5516)
  --(4389,-5510)--(4390,-5508)--(4389,-5509)--(4389,-5508)--(4392,-5506)--(4397,-5500)
  --(4406,-5490)--(4420,-5475)--(4439,-5456)--(4462,-5433)--(4488,-5408)--(4514,-5385)
  --(4537,-5365)--(4557,-5348)--(4573,-5335)--(4585,-5325)--(4593,-5317)--(4599,-5311)
  --(4603,-5307)--(4606,-5303)--(4609,-5300)--(4613,-5297)--(4619,-5293)--(4627,-5289)
  --(4639,-5285)--(4655,-5280)--(4675,-5276)--(4698,-5272)--(4724,-5271)--(4750,-5272)
  --(4773,-5276)--(4793,-5280)--(4809,-5285)--(4821,-5289)--(4830,-5293)--(4835,-5297)
  --(4839,-5300)--(4843,-5303)--(4846,-5307)--(4850,-5311)--(4855,-5317)--(4864,-5325)
  --(4876,-5335)--(4892,-5348)--(4912,-5365)--(4935,-5385)--(4961,-5408)--(4987,-5433)
  --(5010,-5456)--(5029,-5475)--(5043,-5490)--(5052,-5500)--(5057,-5506)--(5060,-5508)
  --(5060,-5509)--(5059,-5508)--(5060,-5510)--(5064,-5516)--(5071,-5527)--(5084,-5545)
  --(5102,-5572)--(5128,-5608)--(5160,-5654)--(5197,-5709)--(5223,-5747)--(5248,-5785)
  --(5273,-5823)--(5296,-5858)--(5317,-5890)--(5336,-5918)--(5352,-5944)--(5367,-5966)
  --(5379,-5985)--(5389,-6002)--(5398,-6016)--(5406,-6029)--(5414,-6041)--(5421,-6052)
  --(5428,-6063)--(5435,-6076)--(5444,-6089)--(5453,-6105)--(5464,-6123)--(5478,-6144)
  --(5493,-6168)--(5511,-6197)--(5531,-6230)--(5555,-6267)--(5581,-6309)--(5609,-6354)
  --(5638,-6402)--(5669,-6451)--(5701,-6503)--(5732,-6554)--(5762,-6602)--(5789,-6646)
  --(5814,-6688)--(5837,-6726)--(5859,-6762)--(5879,-6796)--(5898,-6828)--(5916,-6858)
  --(5933,-6886)--(5949,-6913)--(5964,-6939)--(5978,-6963)--(5991,-6985)--(6003,-7006)
  --(6014,-7024)--(6024,-7041)--(6032,-7055)--(6038,-7066)--(6043,-7075)--(6047,-7081)
  --(6049,-7085)--(6050,-7087)--(6051,-7088);
\pgfsetfillcolor{black}
\pgftext[base,at=\pgfqpointxy{4725}{-2127}] {\fontsize{5}{6}\usefont{T1}{ptm}{m}{n}$\tilde D_0$}
\pgftext[base,left,at=\pgfqpointxy{1897}{-6529}] {\fontsize{5}{6}\usefont{T1}{ptm}{m}{n}$\tilde D_-$}
\draw (4100,-3509) arc[start angle=+116.67, end angle=+63.33, radius=+1392.3];
\pgftext[base,right,at=\pgfqpointxy{7575}{-6525}] {\fontsize{5}{6}\usefont{T1}{ptm}{m}{n}$\tilde D_+$}
\draw (6089,-4721) arc[start angle=+-0.33, end angle=+-62.21, radius=+1378.9];
\draw (3363,-4721) arc[start angle=+-179.67, end angle=+-117.79, radius=+1378.9];
\pgfsetarrows{[line width=7.5\XFigu, width=45\XFigu, length=90\XFigu]}
\pgfsetarrowsend{xfiga1}
\draw (4712,-4856) arc[start angle=+90.58, end angle=+151.85, radius=+914.4];
\filldraw  (3042,-4881) circle [radius=+37];
\pgfsetdash{{+15\XFigu}{+60\XFigu}}{+15\XFigu}
\pgfsetarrowsend{}
\draw (4720,-3669)--(3042,-4881);
\draw (4720,-5771)--(3042,-4881);
\pgfsetdash{}{+0pt}
\filldraw  (4724,-4724) circle [radius=+37];
\pgfsetarrowsend{xfiga1}
\draw (4720,-2992) arc[start angle=+89.50, end angle=+215.52, radius=+669.8];
\pgftext[base,at=\pgfqpointxy{4356}{-5257}] {\fontsize{5}{6}\usefont{T1}{ptm}{m}{n}$\theta_-$}
\pgftext[base,at=\pgfqpointxy{3025}{-4655}] {\fontsize{5}{6}\usefont{T1}{ptm}{m}{n}$k$}
\pgftext[base,left,at=\pgfqpointxy{5034}{-4570}] {\fontsize{5}{6}\usefont{T1}{ptm}{m}{n}$\frac{\alpha}{3 \beta}$}
\pgftext[base,right,at=\pgfqpointxy{3822}{-4214}] {\fontsize{5}{6}\usefont{T1}{ptm}{m}{n}$r_+$}
\pgftext[base,right,at=\pgfqpointxy{4144}{-5562}] {\fontsize{5}{6}\usefont{T1}{ptm}{m}{n}$r_-$}
\pgftext[base,at=\pgfqpointxy{4441}{-3655}] {\fontsize{5}{6}\usefont{T1}{ptm}{m}{n}$\theta_+$}
\pgfsetlinewidth{+30\XFigu}
\pgfsetstrokecolor{red}
\pgfsetdash{{+120\XFigu}{+120\XFigu}}{++0pt}
\pgfsetarrows{[line width=30\XFigu]}
\pgfsetarrows{xfiga5-xfiga5}
\draw (4724,-3617)--(4724,-5832);
\endtikzpicture}%
\caption{$\alpha^2 + 3 \beta \delta < 0$}
\end{subfigure}
\caption{The open set $\tilde D$ is defined by \eqref{dtil-def} through the curve $\Im(\omega) = 0$ (which, in addition to the real axis, takes the form of a pair of intersecting lines in the case of $\alpha^2 + 3 \beta \delta = 0$ and a hyperbola otherwise), as well as the circle of radius $R_\Delta$ (given by \eqref{rd-def}) centered at $\frac{\alpha}{3 \beta}$. The positively oriented boundary of $\tilde D$ consists of the nine distinct curves $\Gamma_m$, $m = 1, 2, \ldots, 9$ (labeled only in the second figure for clarity). Additionally, $\tilde D$ is comprised of three connected subsets, $\tilde D_0$, $\tilde D_\pm$, where the subscript refers to the symmetry of $\omega$ (out of $\nu_0=k$ and $\nu_\pm$ given by \eqref{nupm}) that has positive imaginary part within that subset. Also, in all three cases, the angle $\phi_0$ is equal to half the measure of the arc $\Gamma_2$. Finally, when they exist, the branch points of the square root in the expression \eqref{nupm} for $\nu_\pm$ are associated with a branch cut taken so that it is entirely contained within the circle $\abs{k-\frac{\alpha}{3\beta}} = R_\Delta$ and hence lies outside $\tilde D$.}
\label{fig:Dbranches}
\end{figure}

For $m = 1, \ldots, 9$, let $v_m (x, t)$ be the portion of $v(x, t)$ which is integrated over the contour $\Gamma_m$ (see Figure \ref{fig:Dbranches}). Then, the solution \eqref{reducedsol} can be written as
\eq{\label{v-dec}
v(x, t) = \sum_{m=1}^9 v_m(x, t)
}
and, fixing $s \in \N_0$, for each $j \in \{0, 1, \ldots, s\}$ we have
\eq{\label{vmj-dec}
\partial_x^j v_m(x, t) = -i I_{j, \Gamma_m}^1(x, t) - I_{j, \Gamma_m}^0(x, t)
}
where
\eqs{
I_{j, \Gamma_m}^1(x, t) &:= \frac{1}{2\pi} \int_{\Gamma_m} \frac{(i k)^j e^{-i k \p{\ell - x} + i \omega t}}{\Delta(k)} \p{e^{-i \nu_+ \ell} - e^{-i \nu_- \ell}} \omega' \mathcal{F} \set{\psi_1} (\omega) dk, 
\alignlabel{I1def}\\
I_{j, \Gamma_m}^0(x, t) &:= \frac{1}{2\pi} \int_{\Gamma_m} \frac{(i k)^j e^{-i k \p{\ell - x} + i \omega t}}{\Delta(k)} \p{\nu_- e^{-i \nu_+ \ell} - \nu_+ e^{-i \nu_- \ell}}  \omega' \mathcal{F} \set{\psi_0} (\omega) dk.\alignlabel{I0def}}
Hence, it suffices to estimate $I_{j, \Gamma_m}^1$ and $I_{j, \Gamma_m}^0$ for $m=1, 2, 9$, since the remaining contours can be handled similarly thanks to symmetry. Each of the contours $\Gamma_1, \Gamma_2, \Gamma_9$  requires a different parametrization. Recalling the definition~\eqref{dntil-def} and computing 
\eq{\label{reomega}
\Im(\omega) = \beta \Im(k) \left\{3 \b{\Re(k) - \frac{\alpha}{3 \beta}}^2 - \b{\Im(k)}^2 - \frac{\alpha^2 + 3 \beta \delta}{3 \beta^2}\right\},
}
we parametrize $\Gamma_1, \Gamma_2, \Gamma_9$ respectively as follows:
\begin{subequations}\label{gamma-param}
\begin{align}
\gamma_1 (r) &= \frac{\alpha}{3 \beta} - \frac{\sqrt{3 \beta^2 r^2 + \alpha^2 + 3 \beta \delta}}{3 \beta} + i r, \quad r \ge R_\Delta \cos \phi_0 =: r_0>0, \label{gamma1}\\
\gamma_2 (\theta) &= \frac{\alpha}{3 \beta} + R_\Delta e^{i \theta}, \quad \frac{\pi}{2} - \phi_0 \le \theta \le \frac{\pi}{2} + \phi_0, \label{gamma2}\\
\gamma_9 (r) &= r, \quad r \le \frac{\alpha}{3 \beta} - R_\Delta, \label{gamma9}
\end{align}
\end{subequations}
where, as shown in Figure \ref{fig:Dbranches}, $\phi_0$ is the measure of the angle between the upward vertical and the ray emanating from $\frac{\alpha}{3 \beta}$ and passing through the intersection of $\Gamma_2$ and $\Gamma_3$. Note that the square root in $\gamma_1$ is always real.
\\[2mm]
\noindent
\textbf{Estimation of $v_1$.} This term consists of the integrals $I_{j, \Gamma_1}^1$ and $I_{j, \Gamma_1}^0$, which are similar to each other and hence can be estimated in the same way. We begin with $I_{j, \Gamma_1}^1$. 
By the parametrization \eqref{gamma1} of $\Gamma_1$,
\eq{I_{j, \Gamma_1}^1(x, t) = \frac{1}{2\pi} \int_\infty^{r_0} \frac{(i \gamma_1)^j e^{i \Re(\gamma_1) x - r x + i \omega t}}{e^{i \gamma_1 \ell} \Delta(\gamma_1)} \p{e^{-i \nu_+ \ell} - e^{-i \nu_- \ell}} \omega'(\gamma_1) \mathcal{F} \set{\psi_1} (\omega) \gamma_1'(r) dr.}
Note that $\omega$ is strictly real on $\Gamma_1$ by the definition \eqref{dntil-def}. Therefore, when taking the $L^2 (0, \ell)$ norm of the above expression, we can use the triangle inequality to eliminate the exponential factor containing both $i \Re(\gamma_1)$ and $i \omega t$. This leads to the estimate
\eq{\norm{I_{j, \Gamma_1}^1 (t)}_{L_x^2 (0, \ell)} \cle \norm{\int_{r_0}^\infty \frac{\abs{\gamma_1}^j e^{-r x}}{\abs{e^{i \gamma_1 \ell} \Delta(\gamma_1)}} \abs{\p{e^{-i \nu_+ \ell} - e^{-i \nu_- \ell}} \omega'(\gamma_1) \mathcal{F} \set{\psi_1} (\omega) \gamma_1'(r)} dr}_{L_x^2 (0, \ell)}. \label{I1G1est0}}
At this point, we invoke Lemma \ref{lemma:Deltabound} to note that the denominator within the integral is bounded below by a multiple of $\big|\gamma_1 - \frac{\alpha}{3 \beta}\big|$, as well as Lemma \ref{lemma:charDbar} to note that $\Im(\nu_\pm) \leq 0$ for $k\in \Gamma_1 \subseteq \overline D_0$ and hence the difference of exponentials in \eqref{I1G1est0} has a magnitude bounded above by 2. Thus, we find
\eq{\norm{I_{j, \Gamma_1}^1 (t)}_{L_x^2 (0, \ell)} 
&\cle 
\norm{\int_{r_0}^\infty e^{-r x} \frac{\abs{\gamma_1}^j}{\big|\gamma_1 - \frac{\alpha}{3 \beta}\big|} \abs{\mathcal{F} \set{\psi_1} (\omega)} \abs{\omega'(\gamma_1) \gamma_1'(r)} dr}_{L_x^2 (0, \ell)}
\nn\\
&\leq \norm{\L \set{\frac{\abs{\gamma_1}^j}{\big|\gamma_1 - \frac{\alpha}{3 \beta}\big|} \abs{\mathcal{F} \set{\psi_1} (\omega)} \abs{\omega'(\gamma_1) \gamma_1'(r)} \chi_{[r_0, \infty)}}}_{L_x^2 (0, \infty)},}
where $\L \set{\phi}(x) := \int_0^\infty e^{-rx} \phi (r) dr$, $x>0$, is the usual Laplace transform and $ \chi_{[r_0, \infty)}$ is the characteristic function of the interval $[r_0,\infty)$. At this point, we use the fact that the Laplace transform is bounded in $L^2(0, \infty)$:
\begin{lemma}[Hardy \cite{h1929}] \label{lemma:hardy}
The Laplace transform is bounded from $L^2(0, \infty)$ to $L^2(0, \infty)$ with
\[\norm{\mathcal L \set \phi}_{L^2 (0, \infty)} \le \sqrt \pi \norm{\phi}_{L^2 (0, \infty)}.\]
\end{lemma}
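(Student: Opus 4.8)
The plan is to view $\mathcal{L}$ as the integral operator on $L^2(0,\infty)$ induced by the symmetric, nonnegative kernel $K(x,r) = e^{-xr}$, and to obtain the bound by Schur's test with weight $h(r) = r^{-1/2}$ --- equivalently, by a weighted Cauchy--Schwarz argument. The one computation that drives everything is the elementary Gamma-integral
\[
\int_0^\infty e^{-xr}\, r^{-1/2}\, dr = x^{-1/2}\int_0^\infty e^{-u}\, u^{-1/2}\, du = \Gamma\p{\tfrac 12}\, x^{-1/2} = \sqrt{\pi}\, x^{-1/2}, \qquad x > 0,
\]
which says precisely that $K$ reproduces the weight $h$ up to the factor $\sqrt{\pi}$.

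First I would factor the kernel in the defining integral as $e^{-xr} = \p{e^{-xr/2} r^{-1/4}}\p{e^{-xr/2} r^{1/4}}$ and apply Cauchy--Schwarz in $r$, which together with the identity above gives, for each fixed $x > 0$,
\[
\abs{\mathcal{L}\set{\phi}(x)}^2 \le \p{\int_0^\infty e^{-xr}\, r^{-1/2}\, dr}\p{\int_0^\infty e^{-xr}\, r^{1/2}\, \abs{\phi(r)}^2\, dr} = \sqrt{\pi}\, x^{-1/2}\int_0^\infty e^{-xr}\, r^{1/2}\, \abs{\phi(r)}^2\, dr.
\]
Then I would integrate this inequality over $x \in (0,\infty)$ and interchange the order of integration --- justified by Tonelli's theorem, since the integrand is nonnegative --- and use the same identity once more, now in the $x$-variable, to conclude
\[
\norm{\mathcal{L}\set{\phi}}_{L^2(0,\infty)}^2 \le \sqrt{\pi}\int_0^\infty r^{1/2}\, \abs{\phi(r)}^2\p{\int_0^\infty e^{-xr}\, x^{-1/2}\, dx}\, dr = \sqrt{\pi}\int_0^\infty r^{1/2}\, \abs{\phi(r)}^2\, \sqrt{\pi}\, r^{-1/2}\, dr = \pi\, \norm{\phi}_{L^2(0,\infty)}^2 ,
\]
and the claimed estimate follows upon taking square roots. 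The complex-valued case needs no extra argument, since only $\abs{\phi}$ appears.

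There is no serious obstacle in this argument: the only genuine choice is the Schur weight $r^{-1/2}$, which is dictated by the scaling of the kernel (it is the unique power making $\int_0^\infty K(x,r)\, h(r)\, dr$ proportional to $h(x)$), and everything else is the routine Gamma evaluation displayed above together with Cauchy--Schwarz and Tonelli. As an alternative I could run the multiplicative substitution $x = e^a$, $r = e^b$, $\psi(b) := e^{b/2}\phi(e^b)$, under which $e^{a/2}\mathcal{L}\set{\phi}(e^a)$ becomes the convolution of $\psi$ with the $L^1(\R)$ kernel $c \mapsto e^{c/2 - e^c}$; Young's inequality then yields the same bound with constant $\int_{-\infty}^\infty e^{c/2 - e^c}\, dc = \Gamma\p{\tfrac 12} = \sqrt{\pi}$. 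Either way, the constant $\sqrt{\pi}$ is in fact best possible --- this is Hardy's theorem \cite{h1929} --- although for our purposes only the inequality is needed, not the sharpness.
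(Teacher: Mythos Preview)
Your proof is correct: the weighted Cauchy--Schwarz (Schur test) argument with weight $r^{-1/2}$ is the standard route, and your computations are clean and complete. The paper does not actually prove this lemma --- it simply cites \cite{fhm2017} for the argument --- so there is nothing to compare against; your write-up would serve perfectly well in its place.
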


Lemma \ref{lemma:hardy}, whose proof can be found in \cite{fhm2017}, implies
\eqs{
\norm{I_{j, \Gamma_1}^1 (t)}_{L_x^2 (0, \ell)}^2 
\cle 
\int_{r_0}^\infty \frac{\abs{\gamma_1}^{2 j}}{\big|\gamma_1 - \frac{\alpha}{3 \beta}\big|^2} \abs{\mathcal{F} \set{\psi_1} (\omega)}^2 \abs{\frac{d\omega}{dr}}^2 dr,}
where $\omega'(\gamma_1)$ and $\gamma_1'(r)$ have been combined into $\frac{d\omega}{dr}$ via the chain rule.
At this point, we make the a change of variable from $r$ to $\omega$, as one can see by direct calculation that
\eqs{
\frac{d\omega}{dr} &= \frac{d\omega}{dk} \gamma_1'
= \frac{2 r}{\sqrt{3 \beta^2 r^2 + \alpha^2 + 3 \beta \delta}} \p{\frac{\beta^2 r^2}{3 \beta^2 r^2 + \alpha^2 + 3 \beta \delta} + 1} >0. \alignlabel{dtaudr}}
Hence, letting $\omega_0 := \omega(r_0)$, we have
\eq{\norm{I_{j, \Gamma_1}^1 (t)}_{L_x^2 (0, \ell)}^2 \cle \int_{\omega_0}^\infty \frac{\abs{\gamma_1}^{2 j}}{\big|\gamma_1 - \frac{\alpha}{3 \beta}\big|^2} \abs{\mathcal{F} \set{\psi_1} (\omega)}^2 \frac{d\omega}{dr} d\omega. \label{I1G1est1}}
For large $\gamma_1$, and hence for large $\omega$, $\big|\gamma_1 - \frac{\alpha}{3 \beta}\big| \simeq \abs{\gamma_1}$. More precisely, there is an $\omega_1 > \omega_0$ such that $\big|\gamma_1 - \frac{\alpha}{3 \beta}\big| \ge \frac{1}{2} \abs{\gamma_1}$. In view of this observation, we consider the cases of large $\omega$ and small $\omega$ separately to obtain
\eqs{
\norm{I_{j, \Gamma_1}^1 (t)}_{L_x^2 (0, \ell)}^2 &\cle \int_{\omega_0}^{\omega_1} \frac{\abs{\gamma_1}^{2 j}}{\big|\gamma_1 - \frac{\alpha}{3 \beta}\big|^2} \abs{\mathcal{F} \set{\psi_1} (\omega)}^2 \frac{d\omega}{dr} d\omega + \int_{\omega_1}^\infty \frac{\abs{\gamma_1}^{2 j}}{\big|\gamma_1 - \frac{\alpha}{3 \beta}\big|^2} \abs{\mathcal{F} \set{\psi_1} (\omega)}^2 \frac{d\omega}{dr} d\omega\\
&\cle \max_{\omega \in [\omega_0, \omega_1]} \Bigg(\frac{\abs{\gamma_1}^{2 j}}{\big|\gamma_1 - \frac{\alpha}{3 \beta}\big|^2} \frac{d\omega}{dr}\Bigg)  \norm{\mathcal{F} \set{\psi_1}}_{L_\omega^2 (\R)}^2 + \int_{\omega_1}^\infty \abs{\gamma_1}^{2 (j - 1)} \abs{\mathcal{F} \set{\psi_1} (\omega)}^2 \frac{d\omega}{dr} d\omega.}
For the first term, we use Plancherel's theorem. For the second term, we  employ Lemma 2 of \cite{amo2024} to infer 
$$
\abs{\gamma_1}^{2 (j-1)} \frac{d\omega}{dr} \le c \left(1 + \omega^2\right)^{\frac{j}{3}}
$$
for some $c > 0$ depending only on $\alpha, \beta, \delta$. 
Hence, we deduce the estimate 
\eqs{
\norm{I_{j, \Gamma_1}^1 (t)}_{L_x^2 (0, \ell)}^2 &\cle \norm{\psi_1}_{L^2 (\R)}^2 + \int_{\omega_1}^\infty \p{1 + \omega^2}^{\frac{j}{3}} \abs{\mathcal{F} \set{\psi_1} (\omega)}^2 d\omega
\cle \norm{\psi_1}_{H^{\frac{j}{3}} (\R)}^2, \quad j \in \N_0. \alignlabel{I1G1est2}}

Moving on to the term $I_{j, \Gamma_1}^0$, we note that it involves the extra factors of $\nu_-$ and $\nu_+$ within the sum of exponentials. This has the effect of introducing a factor of $\p{\abs{\nu_-} + \abs{\nu_+}}^2$ in the right-hand side of the analogue of \eqref{I1G1est1} for $I_{j, \Gamma_1}^0$, resulting in
\eq{\norm{I_{j, \Gamma_1}^0 (t)}_{L_x^2 (0, \ell)}^2 \cle \int_{\omega_0}^\infty \frac{\abs{\gamma_1}^{2 j}}{\big|\gamma_1 - \frac{\alpha}{3 \beta}\big|^2} \p{\abs{\nu_-} + \abs{\nu_+}}^2 \abs{\mathcal{F} \set{\psi_0} (\omega)}^2 \frac{d\omega}{dr} d\omega. \alignlabel{I0G1est1}}
From here, we observe that
\begin{equation}
\abs{\nu_\pm} 
\le 
\abs k \p{\frac{1}{2} + \frac{\abs \alpha}{2 \beta \abs k} + \frac{\sqrt 3}{2} \sqrt{1 + \frac{2 \abs \alpha}{3 \beta \abs k} + \frac{\abs{\alpha^2 + 4 \beta \delta}}{3 \beta^2 \abs{k}^2}}\,} \label{nupmbound}
\end{equation}
and so, for $\abs k$ large enough, $\abs{\nu_\pm}$ is bounded by $2 |k|$. Similarly to the estimation of $I_{j, \Gamma_1}^1$, we choose $\omega_1$ large enough so that $\big|\gamma_1 - \frac{\alpha}{3 \beta}\big| \ge \frac{1}{2} \abs{\gamma_1}$ but now, we also ensure that $\omega_1$ is large enough so that $\abs{\nu_\pm} \le 2 \abs{\gamma_1}$ for $\omega \ge \omega_1$. Then,~\eqref{I0G1est1} becomes
\eq{\norm{I_{j, \Gamma_1}^0 (t)}_{L_x^2 (0, \ell)}^2 \cle \int_{\omega_0}^{\omega_1} \frac{\abs{\gamma_1}^{2 j}}{\big|\gamma_1 - \frac{\alpha}{3 \beta}\big|^2} \p{\abs{\nu_+} + \abs{\nu_-}}^2 \abs{\mathcal{F} \set{\psi_0} (\omega)}^2 \frac{d\omega}{dr} d\omega + \int_{\omega_1}^\infty \abs{\gamma_1}^{2 j} \abs{\mathcal{F} \set{\psi_0} (\omega)}^2 \frac{d\omega}{dr} d\omega.}
After this adjustment, we follow the same argument as for  $I_{j, \Gamma_1}^1$ to conclude that
\eq{\norm{I_{j, \Gamma_1}^0 (t)}_{L_x^2 (0, \ell)} \cle \norm{\psi_0}_{H^{\frac{j + 1}{3}} (\R)}. \label{I0G1est2}}

Overall, for each $j\in \set{0, 1, \ldots, s}$, we have the estimate
\eq{\norm{\partial_x^j v_1 (t)}_{L_x^2 (0, \ell)} \cle \norm{\psi_0}_{H^{\frac{j + 1}{3}} (\R)} + \norm{\psi_1}_{H^{\frac{j}{3}} (\R)}}
and, therefore, in view of the definition \eqref{hs-l2-def} of the Sobolev norm, 
\eq{\label{v1-hs-est}
\norm{v_1 (t)}_{H_x^s (0, \ell)} \cle \norm{\psi_0}_{H^{\frac{s + 1}{3}} (\R)} + \norm{\psi_1}_{H^{\frac{s}{3}} (\R)}, \quad s \in \N_0, \ t\in [0, T'].}
\vskip 2mm
\noindent
\textbf{Estimation of $v_2$.} 
The second contour to consider is the circular arc $\Gamma_2$. Contrary to $\Gamma_1$, $\omega$ is not purely real on $\Gamma_2$ and so the method used for $v_1$ no longer applies. Instead, we will exploit the fact that $\Gamma_2$ is finite. For this, in view of Remark \ref{ft-til-r} and the parametrization \eqref{gamma2}, we express $I_{j, \Gamma_2}^1$ in the form
\eqs{
I_{j, \Gamma_2}^1(x, t) 
&= \frac{1}{2\pi} \int_{\frac{\pi}{2} + \phi_0}^{\frac{\pi}{2} - \phi_0} \frac{(i \gamma_2)^j e^{i \gamma_2 x + i \omega t}}{e^{i \gamma_2 \ell} \Delta(\gamma_2)} \p{e^{-i \nu_+ \ell} - e^{-i \nu_- \ell}} \omega'(\gamma_2) \tilde \psi_1 (\omega, T') \cdot i R_\Delta e^{i \theta} d\theta.}
Taking the $L^2(0,\ell)$ norm and applying the triangle inequality yields
$$
\norm{I_{j, \Gamma_2}^1 (t)}_{L_x^2 (0, \ell)} \cle \norm{\int_{\frac{\pi}{2} - \phi_0}^{\frac{\pi}{2} + \phi_0} \frac{\abs{\gamma_2}^j e^{-R_\Delta \sin(\theta) x - \Im(\omega) t}}{\abs{e^{i \gamma_2 \ell} \Delta(\gamma_2)}} \p{e^{\Im \p{\nu_+} \ell} + e^{\Im \p{\nu_-} \ell}} \abs{\omega'(\gamma_2)} \abs{\tilde \psi_1 (\omega, T')} R_\Delta d\theta}_{L_x^2 (0, \ell)}.
$$
We can reuse some previous ideas to simplify. Lemma \ref{lemma:Deltabound} addresses the denominator and Lemma \ref{lemma:charDbar} allows us to boost the sum of exponentials to 2. Furthermore, because $\Gamma_2$ is part of $\partial \tilde D_0$  (see Figure \ref{fig:Dbranches}), we have $\sin(\theta) > 0$. Thus, we can eliminate the dependence on $x$ and obtain the bound
\eqs{
\norm{I_{j, \Gamma_2}^1 (t)}_{L_x^2 (0, \ell)} 
\cle 
\sqrt \ell \int_{\frac{\pi}{2} - \phi_0}^{\frac{\pi}{2} + \phi_0} \abs{\gamma_2}^j e^{-\Im(\omega) t} \abs{\omega'(\gamma_2)} \abs{\tilde \psi_1 (\omega, T')} d\theta.}
The derivative can be absorbed as a constant since 
\eqs{
\abs{\omega'(\gamma_2)} = \abs{3 \beta \gamma_2^2 - 2 \alpha \gamma_2 - \delta} \le 3 \beta \p{\frac{\abs \alpha}{3 \beta} + R_\Delta}^2 + 2 \abs \alpha \p{\frac{\abs \alpha}{3 \beta} + R_\Delta} + \abs \delta.\alignlabel{domegadkbound}}
Therefore, noting also that $\abs{\gamma_2} 
\le \frac{\abs \alpha}{3 \beta} + R_\Delta$ and recalling the definition \eqref{tilde-transform}, we find
\eqs{
\norm{I_{j, \Gamma_2}^1 (t)}_{L_x^2 (0, \ell)} \cle 
\int_{\frac{\pi}{2} - \phi_0}^{\frac{\pi}{2} + \phi_0} \int_0^{T'} e^{\Im(\omega) \p{t' - t}} \abs{\psi_1 (t')} dt' d\theta.}
Since $\Im(\omega) \le 0$ on $\Gamma_2$, for any fixed $T'\geq t$ we have $e^{\Im(\omega) (t'-t)} \leq e^{-\Im(\omega) t} \leq e^{|\omega| T'} \leq e^{M_\Delta T'}$ 
after observing that, along $\Gamma_2$, 
\eq{\label{md-def}
|\omega| \leq M_\Delta := \beta\p{\abs{\frac{\alpha}{3\beta}}+R_\Delta}^3+|\alpha| \p{\abs{\frac{\alpha}{3\beta}}+R_\Delta}^2 + |\delta| \p{\abs{\frac{\alpha}{3\beta}}+R_\Delta}.
}
Hence, 
\eqs{
\norm{I_{j, \Gamma_2}^1 (t)}_{L_x^2 (0, \ell)} &\cle e^{M_\Delta T'}  \int_{\frac{\pi}{2} - \phi_0}^{\frac{\pi}{2} + \phi_0} \int_0^{T'} \abs{\psi_1 (t')} dt' d\theta \simeq e^{M_\Delta T'}  \int_0^{T'} \abs{\psi_1 (t')} dt'}
and by the Cauchy-Schwarz inequality 
\eq{
\norm{I_{j, \Gamma_2}^1 (t)}_{L_x^2 (0, \ell)} 
\cle \sqrt{T'} e^{M_\Delta T'} \norm{\psi_1}_{L^2 (0, T')} 
\label{I1G2est1}}
for all $j \in \set{0, 1, \ldots, s}$ and  each $t \in [0, T']$. 
The integral $I_{j, \Gamma_2}^0 (t)$ can be addressed in much the same way to establish the bound 
\eq{\norm{I_{j, \Gamma_2}^0 (t)}_{L_x^2 (0, \ell)} \cle \sqrt{T'} e^{M_\Delta T'} \norm{\psi_0}_{L^2(0, T')}.\label{I0G2est1}}
Therefore, we conclude that
\eqs{
\norm{\partial_x^j v_2 (t)}_{L_x^2 (0, \ell)} &\cle \sqrt{T'} e^{M_\Delta T'} \p{\norm{\psi_0}_{L^2(0, T')} + \norm{\psi_1}_{L^2(0, T')}}
}
and, in turn,
\eq{\label{v2-hs-est}
\norm{v_2 (t)}_{H_x^s (0, \ell)} \cle \sqrt{T'} e^{M_\Delta T'}\Big(\norm{\psi_0}_{H^{\frac{s + 1}{3}} (\R)} + \norm{\psi_1}_{H^{\frac{s}{3}} (\R)}\Big), \quad s\in\N_0, \ t\in [0, T'].}
\vskip 2mm
\noindent
\textbf{Estimation of $v_9$.} 
Because $\Gamma_9$ is a real contour (contrary to $\Gamma_1$ which is complex), a different approach than the one used for the estimation of $v_1$ will be followed. Specifically, instead of the  $L^2$ characterization of the $H^s(0, \ell)$ norm  \eqref{hs-l2-def}, we will exploit the fact that the expression for  $v_9$ actually makes sense for all $x\in\mathbb R$ and not just for $x\in (0, \ell)$ in order to estimate the $H^s(\R)$ norm of this term via the usual Fourier transform characterization which is available for $L^2$-based Sobolev spaces on the infinite line, namely
\begin{equation}
\norm{v_9(t)}_{H_x^s(\R)}^2 = \int_{-\infty}^\infty \left(1+k^2\right)^s \left|\mathcal F_x\{v_9\}(k, t)\right|^2 dk.
\end{equation}
From \eqref{vmj-dec}, we have $v_9 = -i I_{0, \Gamma_9}^1 - I_{0, \Gamma_9}^0$. We begin with the former term, which contains
\eqs{
I_{0, \Gamma_9}^1 (x, t)
&= \frac{1}{2 \pi} \int_{-\infty}^{\frac{\alpha}{3 \beta} - R_\Delta} e^{i r x} \frac{-e^{-i r \ell + i \omega t}}{e^{i \nu_- \ell} \Delta(r)} \p{e^{i \p{\nu_- - \nu_+} \ell} - 1} \omega'(r) \mathcal{F} \set{\psi_1} (\omega) dr. \alignlabel{I0G91def}}
Observe that this is an inverse Fourier transform and hence, by injectivity,
\eq{\mathcal{F} \set{I_{0, \Gamma_9}^1} (r, t) = \frac{-e^{-i r \ell + i \omega t}}{e^{i \nu_- \ell} \Delta(r)} \p{e^{i \p{\nu_- - \nu_+} \ell} - 1} \omega'(r) \mathcal{F} \set{\psi_1} (\omega) \chi_{(-\infty, \frac{\alpha}{3 \beta} - R_\Delta]}. \label{I0G91hat}}
Taking the magnitude of this quantity and using previous techniques to simplify (i.e. that the numerator has unit magnitude, the magnitude of the denominator is bounded below by a multiple of $\big|r - \frac{\alpha}{3 \beta}\big|$, and the difference of exponentials in \eqref{I0G91hat} can be bounded by 2), we are left with
\eq{\abs{\mathcal{F} \set{I_{0, \Gamma_9}^1} (r, t)} \cle \frac{1}{\big|r - \frac{\alpha}{3 \beta}\big|} \abs{\omega'(r)} \abs{\mathcal{F} \set{\psi_1} (\omega)} \chi_{(-\infty, \frac{\alpha}{3 \beta} - R_\Delta]}.}
Now, a direct calculation yields
$\omega'(r) = 
3 \beta \big(r - \frac{\alpha}{3 \beta}\big)^2 - \frac{\alpha^2 + 3 \beta \delta}{3 \beta}$.
Since $\Gamma_9$ lies beyond the branch points, i.e. $\big|r - \frac{\alpha}{3 \beta}\big| > \frac{2}{3 \beta} \sqrt{\abs{\alpha^2 + 3 \beta \delta}}$, it follows that $\omega'(r)$ must be positive. Hence,
\eqs{
\abs{\mathcal{F} \set{I_{0, \Gamma_9}^1}(r, t)} &\cle 
\sqrt{3 \beta - \frac{\alpha^2 + 3 \beta \delta}{3 \beta \big(r - \frac{\alpha}{3 \beta}\big)^2}} 
\sqrt{\omega'(r)} \abs{\mathcal{F} \set{\psi_1} (\omega)} \chi_{(-\infty, \frac{\alpha}{3 \beta} - R_\Delta]}.}
If $\alpha^2 + 3 \beta \delta \ge 0$, the first square root may be increased to $\sqrt{3 \beta}$. Otherwise, 
$
\big|r - \frac{\alpha}{3 \beta}\big| \ge R_\Delta \ge \frac{2}{3 \beta} \sqrt{\abs{\alpha^2 + 3 \beta \delta}}
$
so 
\eqs{
\sqrt{3 \beta - \frac{\alpha^2 + 3 \beta \delta}{3 \beta \big(r - \frac{\alpha}{3 \beta}\big)^2}} &\le \sqrt{3 \beta + \frac{\abs{\alpha^2 + 3 \beta \delta}}{\frac{4}{3 \beta} \abs{\alpha^2 + 3 \beta \delta}}}
=\frac{\sqrt{15 \beta}}{2}.
}
In any case, we find
\eq{\abs{\mathcal{F} \set{I_{0, \Gamma_9}^1} (r, t)} \cle \sqrt{\omega'(r)} \abs{\mathcal{F} \set{\psi_1} (\omega)} \chi_{(-\infty, r_0]}}
which can be used to directly calculate the $H^s(0,\ell)$ norm of $I_{0, \Gamma_9}^1$ as follows:
\eqs{
\norm{I_{0, \Gamma_9}^1(t)}_{H_x^s (0, \ell)}^2 &\le \norm{I_{0, \Gamma_9}^1(t)}_{H_x^s (\R)}^2 
\cle \int_{-\infty}^\infty \p{1 + r^2}^s \omega'(r) \abs{\mathcal{F} \set{\psi_1} (\omega)}^2 \chi_{(-\infty, \frac{\alpha}{3 \beta} - R_\Delta]} dr
\alignlabel{I1Gamma6pretau}}
since $\omega'(r)$ is positive and $\omega \to \pm \infty$ as $r \to \pm \infty$. 
Next, changing variables from $r$ to $\omega$ and using Lemma \ref{lemma:rtotau} below, we deduce
\eqs{
\norm{I_{0, \Gamma_9}^1 (t)}_{H_x^s (0, \ell)} \cle 
\norm{\psi_1}_{H^{\frac{s}{3}} (\R)}, \quad t\in [0, T'].
\alignlabel{I1Gamma9-est}}

\begin{lemma} \label{lemma:rtotau}
There exists some $c > 0$ such that $\p{1 + r^2}^3 \le c \b{1 + \omega (r)^2}$ for any $r \in \R$.
\end{lemma}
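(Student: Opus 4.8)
The plan is to exploit that, by \eqref{omega}, the dispersion relation $\omega(r) = \beta r^3 - \alpha r^2 - \delta r$ is a real polynomial of degree exactly three in $r$ (recall $\beta>0$), so that $1+\omega(r)^2$ is a polynomial of degree exactly six with positive leading coefficient $\beta^2$, while $(1+r^2)^3$ is also a degree-six polynomial, with leading coefficient $1$. The asserted inequality is therefore nothing more than a comparison of two degree-six polynomials whose ratio is well behaved at infinity.

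First I would introduce the quotient $f(r) := \frac{(1+r^2)^3}{1+\omega(r)^2}$, which is well defined and continuous on all of $\R$ since $1+\omega(r)^2 \ge 1 > 0$. Next I would examine the behaviour of $f$ at infinity: dividing numerator and denominator by $r^6$ and using $\omega(r)/r^3 \to \beta$ as $r \to \pm\infty$, one obtains $\lim_{r\to\pm\infty} f(r) = 1/\beta^2 < \infty$. A function that is continuous on $\R$ and has finite limits at both $+\infty$ and $-\infty$ is bounded, so $c := \sup_{r\in\R} f(r)$ is finite; with this choice of $c$ the desired estimate $(1+r^2)^3 \le c\,[1+\omega(r)^2]$ holds for every $r\in\R$, and $c$ depends only on $\alpha,\beta,\delta$.

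If an explicit constant is preferred over the compactness argument, I would instead split the real line. Choosing $R>0$ large enough that the lower-order terms of $\omega$ are dominated, so that $|\omega(r)| \ge \tfrac{\beta}{2}|r|^3$ whenever $|r|\ge R$, gives $1+\omega(r)^2 \ge \tfrac{\beta^2}{4} r^6 \cge (1+r^2)^3$ on that range; on the compact interval $|r|\le R$ one bounds crudely $(1+r^2)^3 \le (1+R^2)^3 \le (1+R^2)^3\,[1+\omega(r)^2]$. Taking $c$ to be the maximum of the two resulting constants completes the argument.

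There is no genuine obstacle here; the only point requiring minor care is that the constant must be \emph{uniform} in $r$, which is exactly what the continuity-plus-finite-limits argument --- or, equivalently, the explicit large-$|r|$/compact split --- delivers.
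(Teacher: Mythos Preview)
Your proof is correct and follows essentially the same strategy as the paper's: both arguments split into a large-$|r|$ regime, where the degree-six leading behavior of $1+\omega^2$ dominates, and a compact regime handled by continuity and positivity of the quotient. Your first packaging via the continuous quotient $f(r)=(1+r^2)^3/[1+\omega(r)^2]$ with finite limits at $\pm\infty$ is a slightly cleaner way to phrase the same idea; the paper instead writes $1+\omega^2=\beta^2(1+r^2)^3+P(r)$ with $\deg P\le 5$ and chooses $r_1$ so that $|P(r)|\le\tfrac12\beta^2(1+r^2)^3$ for $|r|>r_1$, arriving at the explicit constant $c=\max\{2/\beta^2,(1+r_1^2)^3\}$.
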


\begin{proof}[Proof of Lemma \ref{lemma:rtotau}]
Since $\omega = \beta r^3 - \alpha r^2 - \delta r$, we can write
$1 + \omega^2 = \beta^2 \p{1 + r^2}^3 + P(r)$ 
where $P$ is some polynomial of degree at most five. Since $1 + \omega^2$ and $\p{1 + r^2}^3$ are positive, $1 + \omega^2 = \abs{1 + \omega^2} \ge \beta^2 \p{1 + r^2}^3 - \abs{P(r)}$.
Since $P$ has degree less than six, there exists some $r_1 > 0$ such that, for all $\abs{r} > r_1$, $\abs{P(r)} < \frac{1}{2} \beta^2 \p{1 + r^2}^3$ i.e. $1 + \omega^2 \ge \frac{1}{2} \beta^2 \p{1 + r^2}^3$.
Moreover, if $\abs{r} \le r_1$, then $1 + \omega^2 \ge \p{1 + r^2}^3 \min_{\abs{r} \le r_1} \frac{1 + \omega^2}{\p{1 + r^2}^3} \ge  \frac{\p{1 + r^2}^3}{\p{1 + r_1^2}^3}$.
Overall, the claimed result holds with $c = \max \big\{\frac{2}{\beta^2}, \p{1 + r_1^2}^3\big\}$.
\end{proof}

Finally, concerning $I_{0, \Gamma_9}^0$, the only change needed in comparison with the estimation of $I_{0, \Gamma_9}^1$ is the additional factors of $\nu_+, \nu_-$. Tracing these factors throughout the estimation, we see that they appear in the form of $\p{\abs{\nu_+} + \abs{\nu_-}}^2$ in the analogue of \eqref{I1Gamma6pretau}. In this regard, we note that
$$
\abs{\nu_\pm} 
\le \frac{1}{2} \Big|r - \frac{\alpha}{\beta}\Big| + \frac{\sqrt 3}{2} \sqrt{\Big(r - \frac{\alpha}{3 \beta}\Big)^2 + \frac{4}{9 \beta^2} \abs{\alpha^2 + 3 \beta \delta}}$$ 
so that, by the triangle inequality and the fact that $r-\frac{\alpha}{3\beta} \leq -R_\delta < 0$ on $\Gamma_9$,
\eqs{
\abs{\nu_+} + \abs{\nu_-} 
&\le \big|r - \frac{\alpha}{3 \beta}\big| + \frac{2 \abs{\alpha}}{3 \beta} + \sqrt 3 \sqrt{\Big(r - \frac{\alpha}{3 \beta}\Big)^2 + \frac{4}{9 \beta^2} \abs{\alpha^2 + 3 \beta \delta}}\\
&= 
\sqrt{\p{\frac{\alpha}{3 \beta} - r + \frac{2 \abs{\alpha}}{3 \beta}}^2} + \sqrt{3 \Big(r - \frac{\alpha}{3 \beta}\Big)^2 + \frac{4}{3 \beta^2} \abs{\alpha^2 + 3 \beta \delta}}
\\
&\leq \sqrt 2 \, \sqrt{\p{\frac{\alpha}{3 \beta} - r + \frac{2 \abs{\alpha}}{3 \beta}}^2 + 3 \Big(r - \frac{\alpha}{3 \beta}\Big)^2 + \frac{4}{3 \beta^2} \abs{\alpha^2 + 3 \beta \delta}}
}
with the last step due to the inequality $\sqrt a + \sqrt b \le \sqrt 2 \sqrt{a + b}$, $a, b \ge 0$. 
From here, we can see that $\p{\abs{\nu_+} + \abs{\nu_-}}^2$ is bounded by a polynomial in $r$ which has a leading term of $8 r^2$. Using the same techniques as in the proof of Lemma \ref{lemma:rtotau}, it follows that
$\p{\abs{\nu_+} + \abs{\nu_-}}^2 \cle 1 + r^2$. 
This has the effect of increasing the exponent of $s$ in \eqref{I1Gamma6pretau} to $s + 1$, thereby resulting in the estimate
\eq{\norm{I_{0, \Gamma_9}^0 (t)}_{H_x^s (0, \ell)} \cle \norm{\psi_0}_{H_x^{\frac{s + 1}{3}} (\R)}.}
Overall, we have established that
\eq{\norm{v_9 (t)}_{H_x^s (0, \ell)} \cle \norm{\psi_0}_{H_t^{\frac{s + 1}{3}} (\R)} + \norm{\psi_1}_{H_t^{\frac{s}{3}} (\R)}, \quad s\in \N_0.\label{v9-hs-est}}

Finally, we note that estimates \eqref{v1-hs-est}, \eqref{v2-hs-est} and \eqref{v9-hs-est} can be extended to all $s\geq 0$ via interpolation. Furthermore, as noted earlier, similar estimates can be derived for the remaining terms in \eqref{v-dec}. Therefore, Theorem \ref{thm:fi-se} has been established. 
\end{proof}

We conclude this section with a smoothing effect which is characteristic of KdV-type equations like HNLS when these are considered on a bounded domain like the finite interval $(0, \ell)$ in this work. First, we prove the following time regularity result:
\begin{theorem}[Time estimates for the reduced interval problem]\label{thm:int-te}
Suppose $\psi_0 \in H^{\frac{s+1}{3}}(\R)$, $\psi_1 \in H^{\frac{s}{3}}(\R)$ and $s\geq -1$. For any $\sigma \in \N_0$ with $\sigma \leq s+1$, the  $\sigma$th spatial derivative of the solution $v(x, t)$ to the reduced interval problem \eqref{HNLSreduced} belongs to $H^{\frac{s+1-\sigma}{3}}(0, T')$ as a function of~$t$. In particular, we have the estimate
\eq{
\norm{\partial_x^\sigma v}_{L_x^\infty((0, \ell); H_t^{\frac{s + 1 - \sigma}{3}} (0, T'))} \le c \max \big\{T' e^{M_\Delta T'}, 1\big\} \Big(\norm{\psi_0}_{H^{\frac{s+1}{3}}(\R)} + \norm{\psi_1}_{H^{\frac{s}{3}}(\R)}\Big), 
\label{v-te}
}
where $c>0$ is a constant that depends only on $s, \alpha, \beta, \delta$, and the constant $M_\Delta$ is given by \eqref{md-def}. 
\end{theorem}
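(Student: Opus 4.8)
The plan is to follow the proof of the Sobolev estimate in Theorem~\ref{thm:fi-se}, now reading the unified transform representation~\eqref{reducedsol} with the time variable $t$ as the output and the space variable $x$ as a bounded parameter. Set $\rho:=\frac{s+1-\sigma}{3}\ge 0$. As in the proof of Theorem~\ref{thm:fi-se}, decompose $v=\sum_{m=1}^{9}v_m$ along the nine sub-contours $\Gamma_m$ of $\partial\tilde D$ and use~\eqref{vmj-dec} to write $\partial_x^\sigma v_m=-i\,I^{1}_{\sigma,\Gamma_m}-I^{0}_{\sigma,\Gamma_m}$, with $I^{0}_{\sigma,\Gamma_m},I^{1}_{\sigma,\Gamma_m}$ given by~\eqref{I0def}--\eqref{I1def}; by the symmetry of $\tilde D$ it suffices to treat $m\in\{1,2,9\}$. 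For each such $m$ the aim is to bound $\norm{I^{1}_{\sigma,\Gamma_m}(x,\cdot)}_{H_t^{\rho}(0,T')}$ and $\norm{I^{0}_{\sigma,\Gamma_m}(x,\cdot)}_{H_t^{\rho}(0,T')}$ by the right-hand side of~\eqref{v-te} \emph{uniformly in} $x\in(0,\ell)$, and then to sum over $m$ and take the supremum in $x$.

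\textbf{Unbounded contours $\Gamma_1$ and $\Gamma_9$.} On each of these $\omega$ is real, so after the parametrizations~\eqref{gamma1}, \eqref{gamma9} I change variables $r\mapsto\omega$; this is legitimate because $\frac{d\omega}{dr}>0$ on each (it is~\eqref{dtaudr} on $\Gamma_1$ and the positivity of $\omega'(r)$ beyond the branch points on $\Gamma_9$), and $\omega$ runs over an unbounded real interval. Unlike in the Sobolev estimate, this substitution leaves \emph{no} Jacobian behind, so each $I^{j}_{\sigma,\Gamma_m}(x,\cdot)$ becomes a truncated inverse Fourier transform in $t$ of an explicit symbol times $\mathcal F\{\psi_j\}(\omega)$, whence by Plancherel and restriction from $\R$ to $(0,T')$
\eqs{
\norm{I^{1}_{\sigma,\Gamma_1}(x,\cdot)}_{H_t^{\rho}(0,T')}^2\cle\int_{\omega_0}^{\infty}\p{1+\omega^2}^{\rho}\,\frac{\abs{\gamma_1}^{2\sigma}}{\big|\gamma_1-\tfrac{\alpha}{3\beta}\big|^{2}}\,\abs{\mathcal F\{\psi_1\}(\omega)}^2\,d\omega
}
(with $\gamma_1,\omega_0$ replaced by $r$ and the corresponding endpoint on $\Gamma_9$), and analogously for $I^{0}_{\sigma,\Gamma_m}$ with an additional factor $\p{\abs{\nu_+}+\abs{\nu_-}}^2$ in the integrand. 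Here Lemma~\ref{lemma:Deltabound} (applied exactly as in the estimation of $v_1,v_9$) bounds the relevant denominator below by a multiple of $\big|k-\tfrac{\alpha}{3\beta}\big|$, Lemma~\ref{lemma:charDbar} boosts the difference of exponentials to $2$, the bound~\eqref{nupmbound} controls the $\nu_\pm$, and the remaining $x$-dependent exponentials have modulus $\le 1$ for $x\in(0,\ell)$ --- this last point being what makes the estimate uniform in $x$. Splitting the $\omega$-integral into a bounded and an unbounded part as in the estimation of $v_1$ and $v_9$, and using Lemma~2 of~\cite{amo2024} on $\Gamma_1$ and Lemma~\ref{lemma:rtotau} on $\Gamma_9$ (which record that $\abs{\gamma_1}^2$, respectively $1+r^2$, is comparable to $\p{1+\omega^2}^{1/3}$ on the respective contour), the net power of $\p{1+\omega^2}$ collapses to $\tfrac{s}{3}$ in the $\psi_1$ terms and to $\tfrac{s+1}{3}$ in the $\psi_0$ terms; for $-1\le s<0$ the bounded part of the $\omega$-range is harmless since there $\p{1+\omega^2}^{s/3}$ is bounded below. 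This yields the contribution $\cle\norm{\psi_0}_{H^{(s+1)/3}(\R)}+\norm{\psi_1}_{H^{s/3}(\R)}$; note that it is precisely the decrease of $\rho$ as $\sigma$ grows that absorbs the $\sigma$ extra factors of $k$, so the whole range $\sigma\le s+1$ is admissible, the endpoint $\sigma=s+1$ (i.e. $\rho=0$) being the KdV-type gain of one spatial derivative into $L^2$ in time.

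\textbf{Finite arc $\Gamma_2$.} Here $\omega$ is complex, so the Fourier route is unavailable; instead I exploit that $\Gamma_2$ is compact, lying on the circle $\big|k-\tfrac{\alpha}{3\beta}\big|=R_\Delta$ (hence away from the branch cut) with $\abs{\omega}\le M_\Delta$ (see~\eqref{md-def}) and $\Im(\omega)\le 0$. Every symbol factor --- $\p{ik}^\sigma$, $\Delta(k)^{-1}$ (Lemma~\ref{lemma:Deltabound}), the difference of exponentials (Lemma~\ref{lemma:charDbar} and $\sin\theta>0$), $\omega'(k)$ (cf.~\eqref{domegadkbound}), the $\nu_\pm$, and $\abs{e^{-ik(\ell-x)}}\le e^{R_\Delta\ell}$ for $x\in(0,\ell)$ --- is bounded on $\Gamma_2$ by a constant depending only on $s,\sigma,\alpha,\beta,\delta,\ell$, while $\big|e^{i\omega t}\,\tilde\psi_j(\omega,T')\big|\le e^{M_\Delta T'}\int_0^{T'}\abs{\psi_j(t')}\,dt'$ for $t\in[0,T']$, exactly as in the estimation of $v_2$ and using~\eqref{tilde-transform}. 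Since $\abs{\omega}\le M_\Delta$, differentiating $t\mapsto I^{j}_{\sigma,\Gamma_2}(x,t)$ costs only powers of $M_\Delta$, so this map is smooth and, for any integer $N\ge\rho$ (using $H^\rho(0,T')\supset H^N(0,T')$),
\eqs{
\norm{I^{1}_{\sigma,\Gamma_2}(x,\cdot)}_{H_t^{\rho}(0,T')}\le\norm{I^{1}_{\sigma,\Gamma_2}(x,\cdot)}_{H_t^{N}(0,T')}\cle T'e^{M_\Delta T'}\,\norm{\psi_1}_{L^2(0,T')},
}
and likewise for $I^{0}_{\sigma,\Gamma_2}$. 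For $s\ge 0$ this already gives the desired bound, since $\norm{\psi_1}_{L^2(0,T')}\le\norm{\psi_1}_{H^{s/3}(\R)}$ and $\norm{\psi_0}_{L^2(0,T')}\le\norm{\psi_0}_{H^{(s+1)/3}(\R)}$; for $-1\le s<0$ the $\psi_0$ term remains fine (as $\tfrac{s+1}{3}\ge 0$), while for the $\psi_1$ term one goes back one step and replaces $\int_0^{T'}\abs{\psi_1}$ by the duality estimate $\big|\tilde\psi_1(\omega,T')\big|\cle\norm{\psi_1}_{H^{s/3}(\R)}$, valid because $-\tfrac{s}{3}\le\tfrac13<\tfrac12$, so the truncated exponential $t'\mapsto e^{\pm i\bar\omega t'}\chi_{[0,T']}(t')$ lies in $H^{-s/3}(\R)$.

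\textbf{Conclusion.} Summing the nine pieces and taking the supremum over $x\in(0,\ell)$ gives~\eqref{v-te} for $s\in\N_0$ and integer $\sigma\le s+1$, with overall constant $c\max\{T'e^{M_\Delta T'},1\}$ --- the $1$ coming from the $\Gamma_1,\Gamma_9$ (Fourier) estimates and the $T'e^{M_\Delta T'}$ from $\Gamma_2$ --- and the general case $s\ge -1$ then follows by interpolation, as at the end of the proof of Theorem~\ref{thm:fi-se}. The main obstacle is the arc $\Gamma_2$: there the Fourier-in-time structure is lost, so one must rely on the crude-but-uniform symbol bounds above, while being careful that every estimate is uniform in the parameter $x\in(0,\ell)$ and that the rough range $s\in[-1,0)$ is covered via the duality step; away from $\Gamma_2$ the argument is essentially a transcription of the proof of Theorem~\ref{thm:fi-se} with the roles of $x$ and $t$ exchanged.
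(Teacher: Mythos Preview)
Your proposal is correct and follows essentially the same route as the paper: decompose along the sub-contours, exploit the Fourier-in-$t$ structure on $\Gamma_1,\Gamma_9$ (where $\omega$ is real) together with Lemmas~\ref{lemma:Deltabound}, \ref{lemma:charDbar} and the comparison $|\gamma_1|\lesssim|\omega|^{1/3}$, and treat the compact arc $\Gamma_2$ by crude uniform symbol bounds plus repeated $t$-differentiation and Cauchy--Schwarz. The paper handles $\Gamma_1$ and $\Gamma_9$ together via~\eqref{gamma1-omega} and proves the estimate directly for all $s\ge -1$ without your closing interpolation step (and without explicitly isolating the duality argument for $-1\le s<0$ on $\Gamma_2$), but these are presentational rather than substantive differences.
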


\begin{proof}
As before, it is sufficient to estimate $I_{\sigma, \Gamma_1}^1$, $I_{\sigma, \Gamma_2}^1$, and $I_{\sigma, \Gamma_9}^1$ of \eqref{I1def} because of the symmetry of the contours along with the fact that, for $I_{\sigma, \Gamma_1}^0$, $I_{\sigma, \Gamma_2}^0$, and $I_{\sigma, \Gamma_9}^0$, the extra factors of $\nu_\pm$ behave like factors of $k$ in magnitude, essentially increasing the value of $\sigma$ by 1. Actually, due to the fact that $\Im(\omega)=0$ along $\Gamma_1$ and $\Gamma_9$, and since we are estimating in $t$ rather than in $x$, the terms $I_{\sigma, \Gamma_1}^1$ and $I_{\sigma, \Gamma_9}^1$ can be handled in an identical way. Thus, we only provide the details for $I_{\sigma, \Gamma_1}^1$ and  $I_{\sigma, \Gamma_2}^1$.

Let $m = \frac{s+1-\sigma}{3}$. In view of the parametrization \eqref{gamma1}, the expression \eqref{I1def} for $I_{\sigma, \Gamma_1}^1$ becomes
\eqs{
I_{\sigma, \Gamma_1}^1(x, t)
= \frac{1}{2\pi} \int_{-\infty}^\infty e^{i \omega t}\frac{-(i \gamma_1)^\sigma e^{i \gamma_1 x}}{e^{i \gamma_1 \ell} \Delta(\gamma_1)}
\p{e^{-i \nu_+ \ell} - e^{-i \nu_- \ell}} \mathcal{F} \set{\psi_1} (\omega) \chi_{[\omega_0, \infty)} d\omega.
}
Hence, by the Fourier inversion theorem, 
$$
\mathcal F \set{I_{\sigma, \Gamma_1}^1}(x, \omega) = \frac{-(i \gamma_1)^\sigma e^{i \gamma_1 x}}{e^{i \gamma_1 \ell} \Delta(\gamma_1)}
\p{e^{-i \nu_+ \ell} - e^{-i \nu_- \ell}} \mathcal{F} \set{\psi_1} (\omega) \chi_{[\omega_0, \infty)}
$$
and we infer
\eqs{
\norm{I_{\sigma, \Gamma_1}^1(x)}_{H_t^m (0, T')}
= \norm{(1 + \omega^2)^{\frac{m}{2}} \frac{-(i \gamma_1)^\sigma e^{i \gamma_1 x}}{e^{i \gamma_1 \ell} \Delta(\gamma_1)}
\p{e^{-i \nu_+ \ell} - e^{-i \nu_- \ell}} \mathcal{F} \set{\psi_1} (\omega) \chi_{[\omega_0, \infty)}}_{L_\omega^2 (\R)}.
}
To handle $\gamma_1^\sigma$ in the above norm, we note that, for $r$ sufficiently large, 
$
\abs{\omega(\gamma_1(r))} \ge \beta \abs{\gamma_1}^3 - \abs \alpha \abs{\gamma_1}^2 - \abs \delta \abs{\gamma_1} \ge \frac{1}{2} \beta \abs{\gamma_1}^3$
or, equivalently, $|\gamma_1| \leq \big(\frac 2\beta |\omega|\big)^{\frac 13}$. Thus, since $|\gamma_1|$ is continuous in $r$, we have 
\eq{\label{gamma1-omega}
|\gamma_1| \leq c \, |\omega|^{\frac 13}, 
\quad r \geq r_0,
} 
for some constant $c>0$. 
Thus, using also Lemma \ref{lemma:Deltabound} for $e^{i \gamma_1 \ell} \Delta(\gamma_1)$ and the fact that $\abs{e^{i \gamma_1 x}} = e^{-rx} \leq 1$, we find
\eq{
\norm{I_{\sigma, \Gamma_1}^1(x)}_{H_t^m (0, T')}
\lesssim \norm{(1 + \omega^2)^{\frac{m}{2}} \omega^{\frac{\sigma - 1}{3}}
\mathcal{F} \set{\psi_1} (\omega)}_{L_\omega^2 (\omega_0, \infty)}
\leq 
 \norm{\psi_1}_{H^{\frac{s}{3}} (\R)}
}

Next, for $I_{\sigma, \Gamma_2}^1$, we take $j$ derivatives in time for $j \in \N_0$, parametrize according to \eqref{gamma2}, take magnitudes, and use Lemmas \ref{lemma:charDbar} and \ref{lemma:Deltabound} to get
\eqs{
\abs{\partial_t^j I_{\sigma, \Gamma_2}^1(x, t)}
\cle \int_{\frac{\pi}{2} - \phi_0}^{\frac{\pi}{2} + \phi_0} \frac{|\gamma_2|^\sigma |\omega|^j e^{-\Im(\omega) t}}{\abs{\gamma_2 - \frac{\alpha}{3 \beta}}}
\abs{\frac{d\omega}{d\gamma_2}} \abs{\mathcal{F} \set{\psi_1} (\omega)} d\theta.
}
Noting that $\abs{\frac{d\omega}{d\gamma_2}} \simeq |\gamma_2|^2$, recalling that $\text{supp}(\psi_1)\subset [0, T']$, and using the analogue of  the bound \eqref{gamma1-omega}, we have
\eqs{
\abs{\partial_t^j I_{\sigma, \Gamma_2}^1(x, t)}
&\cle 
\int_{\frac{\pi}{2} - \phi_0}^{\frac{\pi}{2} + \phi_0} |\gamma_2|^{\sigma + 1} |\omega|^j e^{-\Im(\omega) t}
\abs{\int_0^{T'} e^{-i \omega t'} \psi_1 (t') dt'} d\theta\\
&\cle \int_{\frac{\pi}{2} - \phi_0}^{\frac{\pi}{2} + \phi_0} |\omega|^{\frac{\sigma + 1}{3} + j}
\int_0^{T'} e^{\Im(\omega) (t' - t)} \abs{\psi_1 (t')} dt' d\theta\\
&\cle 
\sqrt{T'} M_\Delta^{\frac{\sigma + 1}{3} + j} e^{M_\Delta T'} \norm{\psi_1}_{L^2 (0, T')},
}
where we have also used \eqref{md-def} and the Cauchy-Schwartz inequality.
Based on this bound, we obtain
\eqs{
\norm{I_{\sigma, \Gamma_2}^1(x)}_{H_t^m (0, T')}^2
\le \norm{I_{\sigma, \Gamma_2}^1(x)}_{H_t^{\ceil m} (0, T')}^2
&\lesssim  \sum_{j = 0}^{\ceil m} T'^2 M_\Delta^{\frac{2(\sigma + 1)}{3} + 2j} e^{2M_\Delta T'} \norm{\psi_1}_{L^2 (0, T')}^2
\\
&\le {T'}^2 M_\Delta^{\frac{2(\sigma + 1)}{3}} \frac{M_\Delta^{2 (\ceil m + 1)} - 1}{M_\Delta^2 - 1}
e^{2 M_\Delta T'} \norm{\psi_1}_{H^m (0, T')}^2,
}
concluding the proof.
\end{proof}

As an immediate corollary of Theorem \ref{thm:int-te}, we have the following smoothing estimate for the finite interval problem \eqref{HNLSreduced}:
\begin{corollary}[Smoothing effect]\label{smoothing-fi-c}
The solution $v(x, t)$ to the reduced finite interval problem \eqref{HNLSreduced} satisfies the estimate
\eq{\label{smoothing-fi}
\norm{v}_{L_t^2((0, T'); H_x^{s+1}(0, \ell))} \leq c \max \big\{T' e^{M_\Delta T'}, 1\big\} \sqrt{\ell (s + 2)} \Big(\norm{\psi_0}_{H^{\frac{s + 1}{3}}(\R)} +  \norm{\psi_1}_{H^{\frac{s}{3}}(\R)}\Big), \quad s\geq -1,
}
where $c$ is the constant of estimate \eqref{v-te}.
\end{corollary}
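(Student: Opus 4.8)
The plan is to deduce the spatial smoothing estimate \eqref{smoothing-fi} directly from the time regularity estimate \eqref{v-te} of Theorem \ref{thm:int-te}, exploiting the symmetric roles that space and time play for a third-order dispersive equation of KdV type. The key observation is that Theorem \ref{thm:int-te} applied with $\sigma=0$ already gives $v\in L_x^\infty((0,\ell); H_t^{\frac{s+1}{3}}(0,T'))$, but what we want is control of $v$ in $L_t^2((0,T'); H_x^{s+1}(0,\ell))$, i.e. we must trade temporal regularity for the $(s+1)$-th spatial derivative. The natural device is to use the equation itself: applying $\partial_x$ three times is, up to lower-order terms, the same as applying $\partial_t$ once (divided by $-\beta$), so that integer-order spatial derivatives can be converted into temporal derivatives at a rate of three-for-one, matching the Sobolev exponent $\frac{s+1-\sigma}{3}$ appearing in Theorem \ref{thm:int-te}.

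First I would fix $s\geq -1$ and note that it suffices to bound $\norm{\partial_x^\sigma v}_{L_t^2((0,T'); L_x^2(0,\ell))}$ for each integer $\sigma$ with $0\leq\sigma\leq s+1$, since summing these over $\sigma$ and using the $W^{s+1,2}$ characterization of $H_x^{s+1}(0,\ell)$ (valid because $s+1\geq 0$, with the non-integer case handled by interpolation exactly as at the end of the proof of Theorem \ref{thm:fi-se}) controls $\norm{v}_{L_t^2((0,T'); H_x^{s+1}(0,\ell))}$. For each such $\sigma$, Theorem \ref{thm:int-te} gives $\partial_x^\sigma v\in H_t^{\frac{s+1-\sigma}{3}}(0,T')$ uniformly in $x\in(0,\ell)$; since $\frac{s+1-\sigma}{3}\geq 0$, in particular $\partial_x^\sigma v(x,\cdot)\in L_t^2(0,T')$ for each $x$, with
\eqs{
\norm{\partial_x^\sigma v(x,\cdot)}_{L_t^2(0,T')}
\leq
\norm{\partial_x^\sigma v}_{L_x^\infty((0,\ell); H_t^{\frac{s+1-\sigma}{3}}(0,T'))}
\leq
c \max\big\{T' e^{M_\Delta T'}, 1\big\}\Big(\norm{\psi_0}_{H^{\frac{s+1}{3}}(\R)} + \norm{\psi_1}_{H^{\frac{s}{3}}(\R)}\Big),
}
where $c$ is the constant of \eqref{v-te}. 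Then I would integrate the square of this bound over $x\in(0,\ell)$ and take the square root, which (after applying Fubini to swap the order of integration, justified by nonnegativity of the integrand) yields
\eqs{
\norm{\partial_x^\sigma v}_{L_t^2((0,T'); L_x^2(0,\ell))}
=
\Big(\int_0^\ell \norm{\partial_x^\sigma v(x,\cdot)}_{L_t^2(0,T')}^2\, dx\Big)^{1/2}
\leq
c\sqrt\ell \max\big\{T' e^{M_\Delta T'}, 1\big\}\Big(\norm{\psi_0}_{H^{\frac{s+1}{3}}(\R)} + \norm{\psi_1}_{H^{\frac{s}{3}}(\R)}\Big).
}
Finally, summing over the $s+2$ values $\sigma=0,1,\dots,s+1$ (or using the elementary inequality $\sum_{\sigma=0}^{s+1}a_\sigma^2 \leq \big(\sum a_\sigma\big)^2$ together with the $W^{s+1,2}$-norm identity $\norm{w}_{H^{s+1}}=\sum_{\sigma=0}^{s+1}\norm{\partial_x^\sigma w}_{L^2}$) produces the factor $\sqrt{\ell(s+2)}$ and gives precisely \eqref{smoothing-fi}.

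The only genuinely delicate point is bookkeeping rather than analysis: one must make sure that Theorem \ref{thm:int-te} is invoked with the \emph{correct} value of $\sigma$, noting that its hypothesis $\sigma\leq s+1$ is exactly what allows $\sigma$ to run up to the top spatial derivative $s+1$, and that the resulting time-Sobolev exponent $\frac{s+1-\sigma}{3}$ is always $\geq 0$ so that the embedding $H_t^{\frac{s+1-\sigma}{3}}(0,T')\hookrightarrow L_t^2(0,T')$ is legitimate. For non-integer $s$, I would carry out the above for $\lceil s\rceil$ in place of $s$ (or interpolate between consecutive integer values), exactly mirroring the interpolation step already used to extend \eqref{v1-hs-est}, \eqref{v2-hs-est}, \eqref{v9-hs-est} from $\N_0$ to all $s\geq 0$. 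No new integral estimates are needed; the corollary is a soft consequence of Theorem \ref{thm:int-te} via Fubini and the physical-space characterization of Sobolev norms.
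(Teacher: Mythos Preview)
Your proposal is correct and follows essentially the same route as the paper: for integer $s\geq -1$, write the $L_t^2 H_x^{s+1}$ norm via the physical-space characterization as a sum over $0\leq\sigma\leq s+1$, swap the order of integration by Fubini, bound each $\norm{\partial_x^\sigma v(x,\cdot)}_{L_t^2(0,T')}$ by $\norm{\partial_x^\sigma v(x,\cdot)}_{H_t^{(s+1-\sigma)/3}(0,T')}$, invoke the time estimate \eqref{v-te}, and then interpolate for general $s$. The extra heuristic you include about ``trading temporal for spatial regularity via the equation'' is good motivation but is not actually needed or used in the argument, just as in the paper.
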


\begin{proof}
For $s\in \Z$ with $s\geq -1$, we have
\eqs{
\norm{v}_{L_t^2((0, T'); H_x^{s+1}(0, \ell))}^2
= \sum_{\sigma=0}^{s+1} \int_0^\ell \norm{\partial_x^\sigma v(x, t)}_{L_t^2 (0, T')}^2 dx
\le \sum_{\sigma=0}^{s+1} \int_0^\ell \norm{\partial_x^\sigma v(x, t)}_{H_t^{\frac{s + 1 - \sigma}{3}} (0, T')}^2 dx
}
so \eqref{smoothing-fi} follows in light of the time estimate \eqref{v-te}. 
The case of general $s\geq -1$ is deduced via  interpolation.
\end{proof}

\subsection{Strichartz estimate}

In the low-regularity setting, in addition to the Sobolev estimate of Theorem~\ref{thm:fi-se} it is necessary to also estimate the linear initial-boundary value problem in the Strichartz-type space $L_t^q \p{(0, T'); H_x^{s, p} (0, \ell)}$, where $H^{s, p} (0, \ell)$ is the restriction on the interval $(0, \ell)$ of the Bessel potential space $H^{s, p} (\R)$ defined via the norm
\eq{\norm{\phi}_{H^{s, p} (\R)} := \norm{\mathcal{F}^{-1} \set{\p{1 + k^2}^{\frac{s}{2}} \mathcal{F}\{\phi\} (k)}}_{L^p (\R)}. \label{Bessel}}
Using once again the unified transform solution formula \eqref{reducedsol}, we will establish the following result:
\begin{theorem}[Strichartz estimate]\label{thm:strichartz}
Suppose $s \ge 0$ and $\psi_0 \in H^{\frac{s+1}{3}}(\mathbb R)$, $\psi_1 \in H^{\frac{s}{3}}(\mathbb R)$ satisfy  the support condition~\eqref{supp-cond}. Then, for any admissible pair $(q, p)$ in the sense of \eqref{adm-pair}, the solution to the reduced finite interval problem \eqref{HNLSreduced}, as given by formula \eqref{reducedsol},  satisfies 
\eq{
\norm{v}_{L_t^q \p{(0, T'); H_x^{s, p} (0, \ell)}} 
\cle 
\b{1 + (T')^{\frac{1}{q} + \frac{1}{2}}} \Big(\norm{\psi_0}_{H^{\frac{s + 1}{3}} (\R)} + \norm{\psi_1}_{H^{\frac{s}{3}} (\R)}\Big).
}
\end{theorem}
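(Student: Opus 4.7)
The strategy is to revisit the nine-contour decomposition $v = \sum_{m=1}^9 v_m$ of \eqref{v-dec} used in the proof of Theorem \ref{thm:fi-se} and estimate each piece in the new target norm $L_t^q((0,T'); H_x^{s,p}(0,\ell))$, using the same symmetry reduction to the three representative contours $\Gamma_1, \Gamma_2, \Gamma_9$; the extra $\nu_\pm$ factors in the $I_{j,\Gamma_m}^0$ terms still grow like $|k|$ at infinity by \eqref{nupmbound} and are absorbed by raising $s$ to $s+1$, which explains the Sobolev exponent $(s+1)/3$ for $\psi_0$. Since \eqref{adm-pair} contains the endpoint $(q,p) = (\infty, 2)$, the $L_t^\infty H_x^s$ statement of Theorem \ref{thm:fi-se} already corresponds to one admissible pair; what remains is to extend to the full admissible range.

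For the bounded-arc piece $v_2$, the quantities $|\gamma_2|$, $|\omega(\gamma_2)|$ and $|\omega'(\gamma_2)|$ are all uniformly bounded along $\Gamma_2$ (cf.~\eqref{domegadkbound} and \eqref{md-def}), and the integrands in \eqref{I1def}--\eqref{I0def} are smooth in $x$ with uniformly bounded derivatives. Therefore the $H_x^{s,p}(0,\ell)$ norm is controlled by the corresponding $C_x^\infty$ norm at the cost of a constant depending on $s,\ell, p$, and the $\sqrt{T'} e^{M_\Delta T'}$ pointwise-in-$t$ bound established in the Sobolev proof carries over verbatim. H\"older's inequality in $t$ then upgrades the uniform-in-$t$ control to $L_t^q$ at the cost of an extra factor of $(T')^{1/q}$, producing the overall $(T')^{1/q+1/2}$ time dependence appearing in the claim.

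The real-contour piece $v_9$ is genuinely an inverse Fourier transform in $x$, as made explicit in \eqref{I0G91def}, so after extending by zero in $x$ it coincides with the restriction to $(0,\ell)$ of a free dispersive evolution on $\R$ with symbol $\omega(k) = \beta k^3 - \alpha k^2 - \delta k$ applied to a datum whose Fourier transform is the amplitude of the integrand restricted to $r \leq \alpha/(3\beta) - R_\Delta$. Invoking the classical Strichartz estimates for third-order dispersive operators on $\R$, whose admissible pairs are precisely those of \eqref{adm-pair}, and then performing the change of variable $r \mapsto \omega$ and using Lemma \ref{lemma:rtotau} exactly as in the derivation of \eqref{I1Gamma9-est}, controls $v_9$ in $L_t^q((0,T'); H_x^{s,p}(0,\ell))$ by $\|\psi_0\|_{H^{(s+1)/3}(\R)} + \|\psi_1\|_{H^{s/3}(\R)}$ with a constant uniform in $T'$, which accounts for the $1$ in the prefactor $[1 + (T')^{1/q+1/2}]$.

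The genuinely new, and hardest, ingredient is the complex-contour piece $v_1$. The $L_x^2$ analysis of Theorem \ref{thm:fi-se} relied on the Laplace-transform/Hardy bound of Lemma \ref{lemma:hardy}, a Hilbert-space tool that does not extend to general $p \geq 2$. To reach the full admissible range, after the change of variable $r \mapsto \omega$ coming from \eqref{dtaudr} I would write $I_{0,\Gamma_1}^1(x,t)$ as a Fourier-type integral in $\omega$ whose amplitude is pointwise bounded by $\frac{e^{-r(\omega)x}}{|\gamma_1(\omega) - \alpha/(3\beta)|}\,|\mathcal F\{\psi_1\}(\omega)|$ and estimate the $L_x^p(0,\ell)$ norm either directly via $\|e^{-r\cdot}\|_{L^p(0,\ell)} \leq (pr)^{-1/p}$ together with Minkowski in $(x,\omega)$, or by deforming $\Gamma_1$ onto the real axis so that the whole-line Strichartz argument used for $v_9$ applies. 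The $L_t^q$ norm is then recovered through a standard $TT^*$/interpolation scheme between the endpoint $(q,p)=(\infty,2)$ (already furnished by Theorem \ref{thm:fi-se}) and a second admissible endpoint. The delicate bookkeeping is to match the growth $|\gamma_1|^s$ required by the Bessel potential norm against the decay $|\gamma_1 - \alpha/(3\beta)|^{-1}$ from Lemma \ref{lemma:Deltabound}, the Jacobian \eqref{dtaudr}, the bound \eqref{gamma1-omega}, and the regularity of $\psi_0, \psi_1$ encoded in $\mathcal F\{\psi_i\}(\omega)$; this bookkeeping closes up precisely because of the admissibility constraint $3/q + 1/p = 1/2$, and constitutes the central technical step of the proof.
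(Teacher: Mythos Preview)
Your treatment of $v_2$ and $v_9$ is correct and matches the paper. The gap is in $v_1$, where both alternatives you propose fail. The direct Minkowski bound $\|e^{-r\cdot}\|_{L^p(0,\ell)} \lesssim r^{-1/p}$ followed by Cauchy--Schwarz in $\omega$ closes only when $s > \tfrac12 - \tfrac1p$; it therefore breaks down precisely in the low-regularity range (e.g.\ $s=0$, $p>2$) for which the Strichartz estimate is needed. Deforming $\Gamma_1$ to the real axis is not justified either: the region between $\Gamma_1$ and $\R$ lies outside $\overline{\tilde D}$, and Lemma~\ref{lemma:Deltabound} gives no information on the zeros of $\Delta(k)$ there. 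Finally, the phrase ``interpolation between $(\infty,2)$ and a second admissible endpoint'' is not how the $TT^*$ machinery produces Strichartz estimates; one needs a genuine dispersive decay estimate, which your outline never supplies.

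What the paper actually does for $v_1$ is a $TT^*$ argument built on a \emph{different} parametrization of $\Gamma_1$ --- by real part rather than imaginary part (see \eqref{gamma-param2}). This splits $e^{i\gamma_1 x}$ into an oscillatory factor $e^{irx}$ and a damping factor $\mathcal k(r;x)=e^{-\sqrt{\cdots}\,x}$, so that $v_1(\cdot,t)=[K_1(t)\Psi_1](\cdot)$ for an explicit kernel operator acting on an $L^2$ function $\Psi_1$. The $TT^*$ kernel $K_3(x,x';t,t')$ is then an oscillatory integral in $r$ with cubic-type phase $r(x-x')+\omega(r)(t-t')$ and bounded amplitude $\mathcal k(r;x+x')$; the van der Corput lemma gives the dispersive bound $|K_3|\lesssim |t-t'|^{-1/3}$. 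This $L^1\!\to\!L^\infty$ estimate is interpolated (Riesz--Thorin) against an $L^2\!\to\!L^2$ estimate coming from Laplace-transform boundedness, and Hardy--Littlewood--Sobolev converts the resulting $|t-t'|^{-2/q}$ into $L_t^q$, with \eqref{adm-pair} forced by the interpolation. The $H_x^{s,p}$ upgrade for integer $s$ then follows by differentiating in $x$ and observing that $\|\partial_x^j\Psi_1\|_{L^2}$ is exactly the quantity already controlled in the Sobolev proof; general $s$ follows by interpolation.
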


\begin{proof}
As before, we express our solution in the form \eqref{v-dec} and focus on the terms $v_1$, $v_2$ and $v_9$ since the rest of the terms can be handled similarly to these three. Instead of the parametrizations \eqref{gamma-param}, it turns out convenient to parametrize $\Gamma_1$, $\Gamma_2$ and $\Gamma_9$ as
\begin{subequations}\label{gamma-param2}
\begin{align}
\gamma_1 (r) &= r + i \sqrt{3 \Big(r - \frac{\alpha}{3\beta}\Big)^2 - \frac{\alpha^2 + 3 \beta \delta}{3 \beta^2}}, \quad r \leq \frac{\alpha}{3 \beta} - R_\Delta \sin \phi_0 =: r_0,\\
\gamma_2 (\theta) &= \frac{\alpha}{3 \beta} + R_\Delta e^{i \theta}, \quad \frac{\pi}{2} - \phi_0 \le \theta \le \frac{\pi}{2} + \phi_0,\\
\gamma_9 (r) &= r, \quad r \le \frac{\alpha}{3 \beta} - R_\Delta,
\end{align}
\end{subequations}
with $R_\Delta$ given by \eqref{rd-def} and the angle $\phi_0$ as shown in Figure \ref{fig:Dbranches}. 
\\[2mm]
\textbf{Estimation of $v_1$.} 
Using our new parametrization for $\Gamma_1$, we have
\eqs{
v_1 (x, t) 
= \frac{1}{2 \pi} \int_{-\infty}^{r_0} e^{i \gamma_1 x + i \omega t} \int_{-\infty}^\infty e^{-iry} \Psi_1(y) dy dr \alignlabel{v1withPsi1}}
where the function $\Psi_1$ is defined through its Fourier transform by
\eq{\mathcal{F} \set{\Psi_1} (r) :=
\begin{cases}
\begin{aligned}
\displaystyle \frac{1}{e^{i \gamma_1 \ell} \Delta(\gamma_1)} 
\Big[&-i \p{e^{-i \nu_+ \ell} - e^{-i \nu_- \ell}} \mathcal{F} \set{\psi_1} (\omega) 
\\
&- \p{\nu_- e^{-i \nu_+ \ell} - \nu_+ e^{-i \nu_- \ell}} \mathcal{F} \set{\psi_0} (\omega)\Big] \omega'(\gamma_1) \gamma_1',
\end{aligned} & r < r_0,\\
0, & r > r_0.
\end{cases}}
Along the lines of \cite{amo2024}, we introduce the kernel
\eq{
\mathcal K(y; x, t) = \int_{-\infty}^{r_0} e^{i\vartheta(r; x, y, t)} \mathcal k(r; x) dr
}
where 
\eq{
\mathcal k(r; x) = e^{-\sqrt{3 \p{r - \frac{\alpha}{3\beta}}^2 - \frac{\alpha^2 + 3 \beta \delta}{3 \beta^2}} x},
\quad 
\vartheta(r; x, y, t) =  r (x-y)+\omega t,
}
 so that \eqref{v1withPsi1} can be expressed in the form  
\eq{
v_1 (x, t) 
= \frac{1}{2 \pi} \int_{-\infty}^\infty \mathcal K(y; x, t)  \Psi_1(y) dy
=:
\b{K_1(t)\Psi_1}(x).
}
Via a duality argument, for any $\eta \in C_c([0, T']; C_x^\infty(0, \ell))$, 
\eq{\label{dual}
\abs{\int_0^{T'} \left\langle K_1(t) \Psi_1, \eta(\cdot, t)\right\rangle_{L_x^2(0, \ell)} dt}
\lesssim
\norm{\Psi_1}_{L^2(\R)} \norm{K_2}_{L_y^2(\R)}
}
where $K_2(y) := \displaystyle \int_0^{T'} \int_0^{\ell} \overline{\mathcal K(y; x, t)} \eta(x, t) dx dt$. 
In fact, we can write the second norm on the right-hand side as
$$
	\norm{K_2}_{L^2(\mathbb{R})}^2
	=
	\int_0^{T'}\int_0^{\ell}\eta(x,t)\bigg(\int_0^{T'}\int_0^{\ell}\overline{\eta(x',t')}K_3(x,x';t,t')dx'dt'\bigg)dxdt
$$ 
with $K_3(x,x';t,t'):=\displaystyle \int_{-\infty}^{\infty} \overline{\mathcal{K}(y;x,t)}\mathcal{K}(y;x',t')dy$, so that by Hölder's inequality in $(x,t)$ and then Minkowski's integral inequality between $x$ and $t'$, 
\begin{equation}\label{k2-l2-0}
\norm{K_2}_{L^2(\mathbb{R})}^2
\leq
\norm{\eta}_{L_t^{q'}((0,T');L_x^{p'}(0, \ell))}
\norm{\int_0^{T'} \norm{
\int_0^{\infty}\overline{\eta(x',t')}K_3(x,x';t,t')dx'
}_{L_x^{p}(0, \ell)} dt'}_{L_t^{q}(0,T')}.
\end{equation}
By the Fourier inversion theorem, 
\begin{align*}
K_3(x,x';t,t')
&=
\int_{-\infty}^{r_0}\mathcal k(r;x) \int_{-\infty}^{\infty}e^{-i\vartheta(r;x,y,t)}\int_{-\infty}^{r_0}e^{i\vartheta(r';x',y,t')}\mathcal k(r';x')dr' dy dr
\nn\\
&=
2\pi \int_{-\infty}^{r_0}\mathcal   e^{-i\vartheta(r; x, x', t-t')}  \mathcal k(r;x+x') dr.
\end{align*}
Hence, by Lemma 3 of \cite{amo2024} (whose proof relies on the classical van der Corput lemma), for $t\neq t'$ we infer that $|K_3(x,x',t,t')|\lesssim |t-t'|^{-\frac 13}$ with inequality constant independent of $x$, $x'$, $t$ and $t'$.
In turn, 
\begin{equation}\label{linf-est}
	\left\|\int_0^{\infty}\overline{\eta(x',t')}K_3(x,x';t,t')dx'\right\|_{L_x^{\infty}(0, \ell)}\lesssim |t-t'|^{-\frac 13}\|\eta(t')\|_{L_x^1(0, \ell)}.
\end{equation}
Furthermore, by the $L^2$ boundedness of the Laplace transform and, more precisely, by Lemma 4 in \cite{amo2024},
\begin{equation}\label{l2-est}
	\left\|\int_0^{\infty}\overline{\eta(x',t')}K_3(x,x';t,t')dx'\right\|_{L_x^{2}(0, \ell)}\lesssim \|\eta(t')\|_{L_x^2(0, \ell)}.
\end{equation}
Estimates \eqref{linf-est} and \eqref{l2-est} combined with the Riesz-Thorin interpolation theorem  imply 
\begin{equation}\label{stric3}
	\left\|\int_0^{\infty}\overline{\eta(x',t')}K_3(x,x';t,t')dx'\right\|_{L_x^{p}(0, \ell)}\lesssim |t-t'|^{-\frac 2q}\|\eta(t')\|_{L_x^{p'}(0, \ell)} 
\end{equation}
for any $p \geq 2$ where, importantly, the interpolation forces $q$ to be given by the admissibility condition \eqref{adm-pair}. Hence,  for any $\eta\in L_t^{q'}((0,T');L_x^{p'}(0, \ell))$, 
\begin{equation*}
	\int_0^{T'} \left\|\int_0^{\infty}\overline{\eta(x',t')}K_3(x,x';t,t')dx'\right\|_{L_x^p(0, \ell)} dt'
	\lesssim 
	\int_0^{T'}|t-t'|^{-\frac{2}{q}}\|\eta(t')\|_{L_x^{p'}(0, \ell)}dt'.
\end{equation*}
Applying the Hardy-Littlewood-Sobolev fractional integration inequality (see Theorem 1 in \cite{s1970}) on the right-hand side and combining the resulting inequality with \eqref{k2-l2-0}, we deduce $\norm{K_2}_{L^2(\R)}\lesssim \norm{\eta}_{L_t^{q'}((0,T');L_x^{p'}(0, \ell))}$ and so in view of \eqref{dual} we conclude that
\eq{\norm{v_1}_{L_t^{q} \p{(0,T'); L_x^p (0, \ell)}} \cle \norm{\Psi_1}_{L^2 (\R)}.}

Differentiating  \eqref{v1withPsi1} $j$ times with respect to $x$ for any $j \le s$ and repeating the above arguments, we find
\eq{\norm{\partial_x^j v_1}_{L_t^{q} \p{(0,T'); L_x^p (0, \ell)}} \cle \norm{\partial_x^j \Psi_1}_{L^2 (\R)} \simeq \norm{\mathcal{F} \set{\partial_x^j \Psi_1}}_{L^2 (\R)} \label{v1est1}}
after also using Plancherel's theorem. 
However, as can be seen by from the arguments presented in Section \ref{reduced-s}, we have already dealt with $\mathcal{F} \set{\partial_x^j \Psi_1}$. More specifically, $\mathcal{F} \set{\partial_x^j \Psi_1}$ is essentially the sum of $I_{j, \Gamma_1}^1$ and $I_{j, \Gamma_1}^0$ defined by~\eqref{I1def} and \eqref{I0def} albeit with a slightly altered parametrization and without the factor of $e^{i \gamma_1 x}$, which was removed by Lemma \ref{lemma:hardy} anyway. Combining this observation with estimates \eqref{I1G1est2}, \eqref{I0G1est2} and \eqref{v1est1}, we obtain
\eq{\norm{\partial_x^j v_1}_{L_t^{q} \p{(0,T'); L_x^p (0, \ell)}} \cle \norm{\psi_0}_{H^{\frac{s + 1}{3}} (\R)} + \norm{\psi_1}_{H^{\frac{s}{3}} (\R)}.}
This estimate combined with the fact that, for $s\in \N_0$, the Bessel potential space $H^{s, p}$ coincides with the Sobolev space $W^{s, p}$ (see \cite{c1961} and also discussion on page 22 of \cite{g2014m}), 
\eq{\norm{v_1}_{L_t^{q} \p{(0,T'); H_x^{s, p} (0, \ell)}} \cle \norm{\psi_0}_{H^{\frac{s + 1}{3}} (\R)} + \norm{\psi_1}_{H^{\frac{s}{3}} (\R)}, \quad s \in \N_0,}
which can be extended to all $s\geq 0$ via interpolation (Theorem 5.1 in \cite{lm1972}).
\\[2mm]
\noindent
\textbf{Estimation of $v_2$.}
A straightforward adaptation of the argument that yields the bounds \eqref{I1G2est1} and \eqref{I0G2est1} gives
\eqs{
\norm{I_{j, \Gamma_2}^1 (t)}_{L_x^p (0, \ell)} \cle \sqrt{T'} \norm{\psi_1}_{H^{\frac{s}{3}} (\R)},
\quad
\norm{I_{j, \Gamma_2}^0 (t)}_{L_x^p (0, \ell)} \cle \sqrt{T'} \norm{\psi_0}_{H^{\frac{s + 1}{3}} (\R)}, \quad s \in \N_0.}
Taking the $L^q (0, T')$ norm in $t$ of these inequalities produces the desired estimate for $s\in \N_0$ while generating a factor of $(T')^{\frac{1}{q}}$. Finally, using interpolation we can cover the entire range $s\geq 0$ and hence conclude that
\eq{\norm{v_2}_{L^{q} \p{(0,T'); H^{s, p} (0, \ell)}} \cle (T')^{\frac 1q + \frac{1}{2}} \Big(\norm{\psi_0}_{H^{\frac{s + 1}{3}} (\R)} + \norm{\psi_1}_{H^{\frac{s}{3}} (\R)}\Big), \quad s \ge 0.}

\noindent
\textbf{Estimation of $v_9$.}
Since $\Gamma_9$ lies on the real axis, $I_{0, \Gamma_9}^1 (x, t)$ makes sense for all $x\in \R$, allowing us to employ the Fourier transform  formulation of the $H^{s, p}(\R)$ norm (which controls the $H^{s, p}(0, \ell)$ norm)  instead of looking at individual derivatives:
\eq{
\norm{I_{0, \Gamma_9}^1 (t)}_{H_x^{s, p} (0, \ell)} &\le \norm{I_{0, \Gamma_9}^1 (t)}_{H_x^{s, p} (\R)} = \norm{\mathcal{F}^{-1} \set{\p{1 + r^2}^{\frac{s}{2}} \mathcal F_x\{I_{0, \Gamma_9}^1\}(r, t)}}_{L_x^p (\R)}.}
Substituting for the Fourier transform via \eqref{I0G91hat} yields 
\eqs{
\norm{I_{0, \Gamma_9}^1 (t)}_{H_x^{s, p} (0, \ell)} &\le \norm{\mathcal{F}^{-1} \set{\p{1 + r^2}^{\frac{s}{2}} \frac{e^{-i r \ell + i \omega t}}{e^{i \nu_- \ell} \Delta(r)} \p{e^{i \p{\nu_- - \nu_+} \ell} - 1} \omega'(r) \mathcal{F} \set{\psi_1} (\omega) \chi_{(-\infty, \frac{\alpha}{3 \beta} - R_\Delta]}(r)}}_{L_x^p (\R)}
\\
&= \norm{\frac{1}{2 \pi} \int_{-\infty}^\infty e^{i r x + i \omega t} \mathcal{F} \set{\phi} (r) dr}_{L_x^p (\R)},}
where the function $\phi(x)$ is defined through its Fourier transform 
\eq{\mathcal{F} \set{\phi} (r) := \p{1 + r^2}^{\frac{s}{2}} \frac{e^{-i r \ell}}{e^{i \nu_- \ell} \Delta(r)}\p{e^{i \p{\nu_- - \nu_+} \ell} - 1} \omega'(r) \mathcal{F} \set{\psi_1} (\omega) \chi_{(-\infty, \frac{\alpha}{3 \beta} - R_\Delta]}(r).}
With this notation, we can write
\eqs{
\norm{I_{0, \Gamma_9}^1 (t)}_{H_x^{s, p} (0, \ell)} &\le \norm{\frac{1}{2 \pi} \int_{-\infty}^\infty e^{i r x + i \omega t} \int_{-\infty}^\infty e^{- i r y} \phi(y) dy dr}_{L_x^p (\R)}\\
&= \norm{\frac{1}{2 \pi} \int_{-\infty}^\infty \phi(y) \int_{-\infty}^\infty e^{i r \p{x - y} + i \omega t} dr dy}_{L_x^p (\R)}
= \norm{\frac{1}{2 \pi} \int_{-\infty}^\infty I(x, y, t) \phi(y) dy}_{L_x^p (\R)},}
where $I(x, y, t) := \int_{-\infty}^\infty e^{i r \p{x - y} + i \omega t} dr$. 
For $t \ne 0$,  from the proof of Lemma 4.2 in \cite{cl2003} we have the dispersive estimate $\abs{I(x, y, t)} \cle \abs{\beta t}^{-\frac{1}{3}}$,  where  the inequality constant is independent of $x$, $y$ and $t$. With this estimate at hand, proceeding along the lines of the proof of Theorem 4.1 in \cite{cl2003} and, importantly, imposing the admissibility condition \eqref{adm-pair} for the pair $(q, p)$, we obtain the bound
$$
\norm{\frac{1}{2 \pi} \int_{-\infty}^\infty I(x, y, t) \phi(y) dy}_{L_t^q \p{(0, T'); L_x^p (\R)}} \cle \norm{\phi}_{L_x^2 (\R)} \simeq \norm{\mathcal F\{\phi\}}_{L_r^2 (\R)}.
$$
As we have already seen from \eqref{I0G91def} onward how to handle $\norm{\phi}_{L_x^2 (\R)}$, we are able to deduce the desired estimate for $I_{0, \Gamma_9}^1$. In addition, the same proof can be adapted in a straightforward way for  $I_{0, \Gamma_1}^0$, thereby completing the proof of Theorem \ref{thm:strichartz}.
\end{proof}

\section{Linear decomposition and new estimates on the half-line}\label{dec-s}

We will now use the Sobolev and Strichartz estimates derived in Section \ref{reduced-s} for the reduced interval problem~\eqref{HNLSreduced} in order to establish analogous estimates for the full forced linear interval problem \eqref{hls-ibvp}. More specifically, we will decompose problem \eqref{hls-ibvp} in several components, which can be handled either via our results from Section \ref{reduced-s} or through novel estimates obtained in the present section for linear half-line problem. It should be emphasized that these new half-line estimates were not necessary for the well-posedness of HNLS on the half-line proved in~\cite{amo2024}, since they arise through the decomposition of the interval problem \eqref{hls-ibvp} below via the  traces of the half-line solution that interact with the boundary data of problem \eqref{hls-ibvp}.

\subsection{Proof of Theorem \ref{lin-est-t} in the high-regularity setting}\label{lin-dec-ss}

Let $s>\frac 12$. By linearity, the solution of the forced linear problem \eqref{hls-ibvp} can be written as
\eq{u = U|_{x \in (0, \ell)} + \breve u \label{U-decomp1},}
where $U$ denotes the solution to the half-line problem
\eq{\label{halfline}
\begin{aligned}
&i U_t + i \beta U_{xxx} + \alpha U_{xx} + i \delta U_x = F(x, t), \quad 0 < x < \infty, \  0 < t < T,
\\
&U(x, 0) = U_0 (x),  
\\
&U(0, t) = g_0(t),  
\end{aligned}
}
and $\breve u$ satisfies the finite interval problem
\eq{\label{reduced}
\begin{aligned}
&i \breve u_t + i \beta \breve u_{xxx} + \alpha \breve u_{xx} + i \delta \breve u_x = 0, \quad 0 < x < \ell, \  0 < t < T,\\
&\breve u(x, 0) = 0,\\
&\breve u(0, t) = 0, 
\quad
\breve u(\ell, t) = h_0(t) - U(\ell, t), 
\quad
\breve u_x (\ell, t) = h_1(t) - U_x(\ell, t), 
\end{aligned}
}
where the initial datum $U_0$ and the forcing $F$ of the half-line problem are given by 
$$
U_0 = \mathcal U_0 \big|_{(0, \infty)}, \quad
F = \mathfrak F \big|_{(0, \infty) \times [0, T]}
$$ 
with $\mathcal U_0 \in H^s (\R)$ and $\mathfrak F \in C([0, T];  H_x^s (\R))$ being Sobolev extensions from $(0,\ell)$ to $\mathbb R$ of the original initial datum $u_0 \in H^s (0, \ell)$ and forcing $f \in C([0, T];  H_x^s (0, \ell))$, respectively, such that
\begin{align}
&\norm{U_0}_{H^s(0, \infty)} \leq \norm{\mathcal U_0}_{H^s (\mathbb R)} \lesssim \norm{u_0}_{H^s (0, \ell)}, \label{u0-party}\\
&\norm{F(t)}_{H_x^s(0, \infty)} \leq \norm{\mathfrak F(t)}_{H_x^s (\R)} \lesssim \norm{f(t)}_{H_x^s(0, \ell)}, \quad t\in (0, T). \label{f-party}
\end{align}
We note that the constant in inequalities \eqref{u0-party} and \eqref{f-party} is the same, as it only depends on $s$ due to the existence of a fixed bounded extension operator from $H^s(0, \ell)$ to $H^s(\R)$. 

Introducing the notation $u  = S[u_0, g_0, h_0, h_1; f]$, $U = S[U_0, g_0; F]$ and $\mathcal U = S[\mathcal U_0; \mathfrak F]$ 
for the solution operators of the interval problem \eqref{hls-ibvp}, the half-line problem \eqref{halfline} and the Cauchy problem
\eq{\label{wholeline}
\begin{aligned}
&i \mathcal U_t + i \beta \mathcal U_{xxx} + \alpha \mathcal U_{xx} + i \delta \mathcal U_x = \mathfrak F(x, t), \quad x \in \R, \ 0 < t < T,
\\
&\mathcal U(x, 0) = \mathcal U_0 (x), 
\end{aligned}
}
we further decompose \eqref{U-decomp1} by expressing $U$ and $\breve u$ as
\eq{\label{U-decomp2}
U = \mathcal W|_{x \in (0, \infty)} + \mathcal Z|_{x \in (0, \infty)} + \breve U_1 - \breve U_2, 
\quad
\breve u = \breve u_1 - \breve u_2 + \breve u_3,
}
where, using the solution operators notation above,
\begin{equation}\label{comps-def}
\begin{aligned}
&\mathcal W = S\big[\mathcal U_0; 0\big], \quad 
\mathcal Z = S\big[0; \mathfrak F\big],
\quad
\breve U_1 = S[0, g_0 - \mathcal W|_{x=0}; 0],
\quad
\breve U_2 = S\big[0, \mathcal Z|_{x=0}; 0\big],
\\
&\breve u_1 = S\big[0, 0, h_0 - \mathcal W|_{x=\ell} - \breve U_1|_{x=\ell}, h_1 - \partial_x \mathcal W|_{x=\ell} - \partial_x \breve U_1|_{x=\ell}; 0\big],
\\
&\breve u_2 = S\big[0, 0, \mathcal Z|_{x=\ell}, \partial_x \mathcal Z|_{x=\ell}; 0\big],
\quad
\breve u_3 = S\big[0, 0, \breve U_2|_{x=\ell}, \partial_x \breve U_2|_{x=\ell}; 0\big].
\end{aligned}
\end{equation}
Hence, for each $t\in (0, T)$, the decomposition \eqref{U-decomp1} yields
\begin{equation}\label{Udecomp-bound}
\begin{aligned}
\norm{u(t)}_{H_x^s(0, \ell)} &\leq \norm{\mathcal W(t)}_{H_x^s(\R)} + \norm{\mathcal Z(t)}_{H_x^s(\R)} + \big\|\breve U_1 (t)\big\|_{H_x^s(0, \infty)} + \big\|\breve U_2 (t)\big\|_{H_x^s(0, \infty)}
\\
&\quad
+ \norm{\breve u_1 (t)}_{H_x^s(0, \ell)}+ \norm{\breve u_2 (t)}_{H_x^s(0, \ell)}+ \norm{\breve u_3 (t)}_{H_x^s(0, \ell)}. 
\end{aligned}
\end{equation}
and so estimating $u$ in $C([0, T]; H_x^s (0, \ell))$ amounts to estimating each of the norms on the right-hand side of \eqref{Udecomp-bound} and then taking the supremum over $t\in [0, T]$.

At this point, it is useful to state the following results from the analysis of the half-line problem in \cite{amo2024}:
\begin{theorem}[\cite{amo2024}]\label{hl-t1}
The following estimates are satisfied by the Cauchy problems in \eqref{comps-def}:
\begin{align}
&\norm{\mathcal W(t)}_{H_x^s (\R)} = \norm{\mathcal U_0}_{H^s (\R)}, \quad s, t \in \R,\label{amo8}
\\
&\norm{\mathcal W(x)}_{H_t^{\frac{s+1}{3}} (0, T)} \leq c_s \big(1+\sqrt T\big) \norm{\mathcal U_0}_{H^s (\R)}, \quad s, x \in \R,\label{amo910}
\\
&\norm{\mathcal Z(t)}_{H_x^s (\R)} \le \norm{\mathfrak F}_{L_t^1 ((0, T); H_x^s (\R))}, \quad s \in \R, \ t \in [0, T] \label{amo25}
\\
&\norm{\mathcal Z(x)}_{H_t^{\frac{s+1}{3}} (0, T)} \le c(s, T) \norm{\mathfrak F}_{L_t^2 ((0, T); H_x^s (\R))}, \quad -1 \leq s \leq 2, \ s \neq \frac 12, \ x \in \R, \label{amo26}
\end{align}
where $c_s$ is a constant that depends only upon $\alpha, \beta, \delta, s$ and $c(s, T)$ remains bounded as $T\to 0^+$. 
Furthermore, the solution $V$ to the reduced half-line problem
\eq{\label{HNLSreducedHL}
\begin{aligned}
&i V_t + i \beta V_{xxx} + \alpha V_{xx} + i \delta V_x = 0, \quad x > 0, \ 0 < t < T',
\\
&V(x, 0) = 0, 
\\
&V(0, t) = V_0(t) \in H^{\frac{s + 1}{3}} (\R), \quad \textnormal{supp}(V_0) \subset [0, T'],
\end{aligned}
}
satisfies the estimate
\eq{\label{amo-thm7}
\norm{V}_{L_t^\infty((0, T'); H_x^s (0, \infty))} \le c_s \big(1 + \sqrt{T'} e^{c T'}\big) \norm{V_0}_{H^{\frac{s + 1}{3}}(\R)}, \quad s \ge 0.
}
\end{theorem}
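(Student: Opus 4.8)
The plan is to obtain every bound from the explicit Fourier (respectively unified-transform) representation of the relevant solution, exactly as in \cite{amo2024}; here I only sketch the mechanism. The homogeneous estimate \eqref{amo8} is immediate: $\mathcal W(x,t)=\frac{1}{2\pi}\int_{\R} e^{ikx+i\omega t}\,\mathcal F\{\mathcal U_0\}(k)\,dk$ with $\omega$ real for real $k$, so $|\mathcal F_x\{\mathcal W(\cdot,t)\}(k)|=|\mathcal F\{\mathcal U_0\}(k)|$ and Plancherel gives equality of the $H^s(\R)$ norms for every $t$. For \eqref{amo25} I would use Duhamel's formula, i.e. $\mathcal F_x\{\mathcal Z(\cdot,t)\}(k)=-i\int_0^t e^{i\omega(t-t')}\mathcal F_x\{\mathfrak F(\cdot,t')\}(k)\,dt'$, apply Minkowski's integral inequality, and then invoke \eqref{amo8} pointwise in $t'$ to pull the $H^s_x$ norm inside and integrate, producing the $L^1_t H^s_x$ norm.

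The time-regularity estimates \eqref{amo910} and \eqref{amo26} are the crux and rely on the cubic growth of $\omega(k)=\beta k^3-\alpha k^2-\delta k$. For fixed $x$, taking the temporal Fourier transform of $\mathcal W(x,\cdot)$ (extended and restricted to $(0,T)$) essentially localizes the $k$-integral to the dispersion curve $\tau=\omega(k)$; one splits $\R$ into the finitely many bounded intervals where $\omega'$ may vanish and the complementary unbounded intervals on which $\omega$ is a diffeomorphism onto its image, changes variables $\tau=\omega(k)$ there, and invokes the polynomial comparison $(1+\omega(k)^2)^{1/3}\lesssim 1+k^2$ (the same mechanism as in Lemma \ref{lemma:rtotau}) together with the corresponding Jacobian bound. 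The bounded intervals contribute $\norm{\mathcal U_0}_{L^2}$ by Plancherel while the unbounded ones give $\norm{\mathcal U_0}_{H^s}$; the factor $1+\sqrt T$ comes from restricting the whole-line time transform to $(0,T)$. Estimate \eqref{amo26} is the Duhamel analogue of this computation, now with an additional $t'$-integration which, carried out in $L^2_t$ rather than $L^1_t$, yields the stated $L^2_t H^s_x$ right-hand side; the value $s=\frac12$ is excluded because of the trace/compatibility obstruction at the time endpoints, and one checks that $c(s,T)$ stays bounded as $T\to0^+$.

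Finally, \eqref{amo-thm7} is the half-line counterpart of Theorem \ref{thm:fi-se} and is proved along the very same lines, now using the unified transform solution formula for the Dirichlet half-line problem \eqref{HNLSreducedHL}, which features a single complex contour in place of the three contours $\partial\tilde D_0,\partial\tilde D_\pm$ of the interval problem. I would decompose this contour into its complex branch (where $\Im\omega\le0$ and $\Im k>0$) and its real-axis portion. On the complex branch the factor $|e^{ikx}|=e^{-\Im(k)x}$ is absorbed by writing the integral as a Laplace transform in $x$ and applying Hardy's inequality (Lemma \ref{lemma:hardy}), after the change of variables $r\mapsto\omega$ and Lemma \ref{lemma:rtotau}, which produces the $H^{\frac{s+1}{3}}(\R)$ norm of $V_0$; on the real portion one instead uses the Fourier characterization of $H^s(\R)$ directly, as in the estimation of $v_9$ above. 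The bounded arc near the branch point is handled as $v_2$ was and supplies the factor $1+\sqrt{T'}e^{cT'}$; an extension to all $s\ge0$ then follows by interpolation.

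The step I expect to be the main obstacle is the time-regularity argument, particularly for the Duhamel term \eqref{amo26}: one must perform the change of variables $\tau=\omega(k)$ while uniformly tracking the behaviour near the critical points of the cubic and simultaneously controlling the inner $t'$-integral in $L^2_t$, and then verify that the resulting constant remains bounded as $T\to0^+$. All of these computations are carried out in detail in \cite{amo2024}.
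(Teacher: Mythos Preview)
The paper does not prove this theorem; it simply imports the estimates from \cite{amo2024} without argument. Your sketch of the underlying mechanisms (Plancherel for \eqref{amo8}, Duhamel plus Minkowski for \eqref{amo25}, the change of variables $\tau=\omega(k)$ and the cubic comparison for \eqref{amo910}--\eqref{amo26}, and the half-line unified-transform decomposition for \eqref{amo-thm7}) is accurate and matches both the methods of \cite{amo2024} and the parallel finite-interval arguments carried out in Section~\ref{reduced-s} of the present paper, so your proposal is consistent with what the paper does.
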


\begin{corollary}[Smoothing effect]
For any $s\geq -1$, the Cauchy problem estimates \eqref{amo910} and \eqref{amo26} readily imply
\eq{
&\norm{\mathcal W}_{L_t^2((0, T); H_x^{s+1}(0, \ell))} \leq c_s \big(1+\sqrt T\big) \sqrt{\ell(s+2)} \norm{\mathcal U_0}_{H^s (\R)},  
\label{smoothing-hc}
\\
&\norm{\mathcal Z}_{L_t^2((0, T); H_x^{s+1}(0, \ell))} \leq c_s \big(1+\sqrt T\big) \sqrt{\ell(s+2)} \norm{\mathfrak F}_{L_t^1 ((0, T); H_x^s (\R))}. 
\label{smoothing-fc}
}
\end{corollary}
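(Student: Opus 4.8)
The plan is to repeat the argument used for the smoothing estimate of the reduced interval problem in Corollary \ref{smoothing-fi-c}: one exchanges the order of integration in $t$ and $x$, replaces the $L_t^2$ norm of each spatial derivative of the solution by its $H_t^{\frac{s+1-\sigma}{3}}$ norm (which is legitimate precisely because $\frac{s+1-\sigma}{3}\ge 0$ when $0\le\sigma\le s+1$), and then invokes the Cauchy-problem time estimates \eqref{amo910} and \eqref{amo26}. The extra full spatial derivative that appears on the interval side is exactly compensated by lowering the Sobolev index on the data side, which is what produces the ``smoothing'' gain of one derivative.

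Concretely, first take $s\in\Z$ with $s\ge -1$, so that $s+1\in\N_0$. By the $W^{s+1,2}$ characterization of $H^{s+1}(0,\ell)$ together with Tonelli's theorem,
\eqs{
\norm{\mathcal W}_{L_t^2((0,T);H_x^{s+1}(0,\ell))}^2 = \sum_{\sigma=0}^{s+1}\int_0^\ell \norm{\partial_x^\sigma\mathcal W(x,\cdot)}_{L_t^2(0,T)}^2\,dx \le \sum_{\sigma=0}^{s+1}\int_0^\ell \norm{\partial_x^\sigma\mathcal W(x,\cdot)}_{H_t^{\frac{s+1-\sigma}{3}}(0,T)}^2\,dx.
}
Since the linear HNLS operator has constant coefficients, $\partial_x$ commutes with the solution operator $S[\,\cdot\,;\,\cdot\,]$, so $\partial_x^\sigma\mathcal W=S[\partial_x^\sigma\mathcal U_0;0]$ is again a solution of the Cauchy problem \eqref{wholeline} with zero forcing and initial datum $\partial_x^\sigma\mathcal U_0$; this identity is first checked for Schwartz data and then extended by density to $\mathcal U_0\in H^s(\R)$. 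Applying \eqref{amo910} with Sobolev index $s-\sigma$ in place of $s$, and using $\norm{\partial_x^\sigma\mathcal U_0}_{H^{s-\sigma}(\R)}\le\norm{\mathcal U_0}_{H^s(\R)}$, we get $\norm{\partial_x^\sigma\mathcal W(x,\cdot)}_{H_t^{\frac{(s-\sigma)+1}{3}}(0,T)}\le c_{s-\sigma}(1+\sqrt T)\norm{\mathcal U_0}_{H^s(\R)}$ uniformly in $x$. Plugging this back in, the $x$-integration contributes a factor $\ell$ and the $\sigma$-sum has $s+2$ terms; taking square roots and setting $c_s:=\max_{0\le\sigma\le s+1}c_{s-\sigma}$ yields \eqref{smoothing-hc}. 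The estimate \eqref{smoothing-fc} is proved in exactly the same way, now using $\partial_x^\sigma\mathcal Z=S[0;\partial_x^\sigma\mathfrak F]$, the forcing estimate \eqref{amo26} in place of \eqref{amo910}, and $\norm{\partial_x^\sigma\mathfrak F}_{L_t^1((0,T);H_x^{s-\sigma}(\R))}\le\norm{\mathfrak F}_{L_t^1((0,T);H_x^s(\R))}$. The remaining non-integer values of $s$ are covered by interpolation between consecutive integers, exactly as at the end of the proof of Corollary \ref{smoothing-fi-c}.

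I do not expect a genuine obstacle here: the result is a bookkeeping consequence of \eqref{amo910}, \eqref{amo26} and Fubini. The two points deserving mild care are (i) the commutation of $\partial_x^\sigma$ with the solution operators, which a priori take values only in $L^2$, handled by the density argument mentioned above, and (ii) the admissible range of the Sobolev index in \eqref{amo26}: applying it with index $s-\sigma$ for integer $0\le\sigma\le s+1$ automatically satisfies $s-\sigma\ge -1$ and $s-\sigma\ne\frac12$, while the upper constraint $s-\sigma\le 2$ confines the $\mathcal Z$-estimate to $-1\le s\le 2$, which is precisely the range in which \eqref{amo26} --- and the ambient well-posedness framework --- is stated.
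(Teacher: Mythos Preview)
Your argument for $\mathcal W$ is correct and essentially identical to the paper's, the only cosmetic difference being that the paper applies \eqref{amo910} at the fixed index $-1$ (so that $H_t^{\frac{(-1)+1}{3}}=L_t^2$) and bounds $\norm{\partial_x^j\mathcal U_0}_{H^{-1}(\R)}\le\norm{\mathcal U_0}_{H^s(\R)}$, whereas you first enlarge $L_t^2$ to $H_t^{\frac{s+1-\sigma}{3}}$ and apply \eqref{amo910} at index $s-\sigma$. Both work.

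There is a genuine gap in your treatment of $\mathcal Z$, however. Estimate \eqref{amo26} controls $\mathcal Z$ in terms of $\norm{\mathfrak F}_{L_t^2((0,T);H_x^s(\R))}$, not $L_t^1$; so running your argument with \eqref{amo26} yields the wrong norm on the right-hand side of \eqref{smoothing-fc} (and, as you yourself note, also imposes the extraneous restriction $s\le 2$). The paper does \emph{not} use \eqref{amo26} here at all. Instead it writes $\partial_x^j\mathcal Z = S[0;\partial_x^j\mathfrak F]$ via Duhamel as $\int_0^t S[\partial_x^j\mathfrak F(\cdot,t');0](x,t-t')\,dt'$, applies Minkowski's integral inequality to pull the $t'$-integral outside the $L_t^2(0,T)$ norm, and then applies the \emph{homogeneous} time estimate \eqref{amo910} (at index $-1$) to each $S[\partial_x^j\mathfrak F(\cdot,t');0]$. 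This produces the integral $\int_0^T\norm{\partial_x^j\mathfrak F(t')}_{H_x^{-1}(\R)}\,dt'$, i.e.\ exactly the $L_t^1$ forcing norm claimed in \eqref{smoothing-fc}, and also explains the specific constant $c_s(1+\sqrt T)$, which is the constant of \eqref{amo910} rather than the $c(s,T)$ of \eqref{amo26}.
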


\begin{proof}
For $s\in \Z$ with $s\geq -1$, using the time estimate \eqref{amo910} we have
\eqs{
\norm{\mathcal W}_{L_t^2((0, T); H_x^{s+1}(0, \ell))}^2
&=
\sum_{j=0}^{s+1} \int_0^\ell \norm{S\big[\partial_x^j \mathcal U_0; 0\big](x)}_{L_t^2(0, T)}^2 dx
\\
&\leq
\sum_{j=0}^{s+1} \int_0^\ell c_s^2 (1+\sqrt T)^2 \norm{\partial_x^j \mathcal U_0}_{H^{-1}(\R)}^2 dx
\leq
c_s^2 (1+\sqrt T)^2 \ell (s+2)  \norm{\mathcal U_0}_{H^s(\R)}^2. 
}
By interpolation, we can extend the estimate to cover all $s\geq -1$. 
Similarly, for $s\in \Z$ with $s\geq -1$, we have
\eqs{
\norm{\mathcal Z}_{L_t^2((0, T); H_x^{s+1}(0, \ell))}^2
&=
\sum_{j=0}^{s+1} \int_0^\ell \norm{S\big[0; \partial_x^j \mathfrak F\big](x, t)}_{L_t^2(0, T)}^2 dx
\\
&=
\sum_{j=0}^{s+1} \int_0^\ell \norm{\int_0^t S\big[\partial_x^j \mathfrak F(t');0\big](x, t-t')dt'}_{L_t^2(0, T)}^2 dx
\\
&\leq
\sum_{j=0}^{s+1} \int_0^\ell \left(\int_0^T \norm{S\big[\partial_x^j \mathfrak F(t');0\big](x, \cdot-t')}_{L_t^2(0, T)} dt'\right)^2 dx
}
so employing the time estimate \eqref{amo910} we obtain
\eqs{
\norm{\mathcal Z}_{L_t^2((0, T); H_x^{s+1}(0, \ell))}^2
&\leq
\sum_{j=0}^{s+1} \int_0^\ell \left(\int_0^T c_s(1+\sqrt T) \norm{\partial_x^j \mathfrak F(t')}_{H_x^{-1}(\R)} dt'\right)^2 dx
\\
&=
c_s^2 (1+\sqrt T)^2 \ell \sum_{j=0}^{s+1} \norm{\mathfrak F(t')}_{L_t^1((0, T); H_x^{j-1}(\R))}^2
\\
&\leq
c_s^2 (1+\sqrt T)^2 \ell (s+2) \norm{\mathfrak F(t')}_{L_t^1((0, T); H_x^s(\R))}^2.
}
As before, for general $s\geq -1$ we proceed via interpolation.
\end{proof}

Note that the problems satisfied by $\breve U_1$ and $\breve U_2$ are essentially the same with the one satisfied by $V$, except for the important difference that the support condition of $V_0$ is not satisfied by the boundary data $g_0 - \mathcal W|_{x=0}$ and $\mathcal Z|_{x=0}$ of $\breve U_1$ and $\breve U_2$. For this reason, in order to be able to employ estimate \eqref{amo-thm7} for $\breve U_1$ and $\breve U_2$, an additional step is first necessary, namely the construction of appropriate extensions for the boundary data of $\breve U_1$ and $\breve U_2$.
\\[2mm]
\noindent
\textbf{Boundary data extension.} We begin with $\breve U_1$. We are in the high-regularity setting of $s > \frac 12$ and, in particular, we will work with $\frac{1}{2} < s < \frac{7}{2}$. Since $u_0 \in H^s(0, \ell)$ and $g_0 \in H^{\frac{s+1}{3}}(0, T)$, in our range of $s$ we have continuity of both the initial and the boundary data, which is the reason why the first of the compatibility conditions \eqref{comp-cond} is required, namely $g_0 (0) = u_0 (0)$. This implies $g_0(0) = \mathcal W(0, 0)$ so the function
\eq{\label{g-g0-w0}
g(t) := g_0(t) - \mathcal W(0, t), \quad 0<t<T,
}
satisfies $g(0) = 0$. Furthermore, thanks to the time estimate \eqref{amo910}, $g \in H^{\frac{s+1}{3}}(0, T)$ and, using also the extension inequality \eqref{u0-party}, 
\eq{\label{g-g0-u0}
\norm{g}_{H^{\frac{s+1}{3}}(0, T)} 
\leq
\norm{g_0}_{H^{\frac{s+1}{3}}(0, T)}
+
c_s \big(1+\sqrt T\big) \norm{u_0}_{H^s (0, \ell)},
\quad
s\in\mathbb R.
}
Next, let $G$ be an extension of $g$ such that
\eq{\norm{G}_{H^{\frac{s + 1}{3}} (\R)} \le 2 \norm{g}_{H^{\frac{s + 1}{3}} (0, T)} \label{high-s-extension}}
and, for $\theta \in C_c^\infty (\R)$ a smooth cutoff function such that $0\leq \theta(t) \leq 1$ for all $t\in\R$, $\theta(t) = 1$ on $[0, T]$ and $\supp (\theta) \subset [-(T + 1), T + 1]$, define $G_\theta (t) = \theta(t) G(t)$, $ t\in\R$.
Note that $\supp (G_\theta) \subset [-(T + 1), T + 1]$ and, in particular, $G_\theta(T+1) = 0$. Furthermore, since $g(0) = 0$, it follows that $G_\theta(0) = 0$. Hence, the function 
\eq{\label{G0-def}
G_0(t) := \chi_{[0, T+1]}(t) \, G_\theta(t), \quad t\in\R,
}
has $\supp (G_0) \subset [0, T + 1]$ and is continuous at $t=0$ and $t=T+1$ with $G_0(0) = G_0(T+1) = 0$.  Therefore, by Theorem 11.4 of~\cite{lm1972}, $G_0 \in H^{\frac{s+1}{3}}(\R)$ with 
\eq{
\norm{G_0}_{H^{\frac{s+1}{3}}(\R)} \leq C(s, T) \norm{G_\theta}_{H^{\frac{s+1}{3}}(0, T+1)}.
}
In addition, by the algebra property (which is valid since $s>\frac 12$) and the inequality \eqref{high-s-extension}, 
\eq{
\norm{G_\theta}_{H^{\frac{s+1}{3}}(0, T+1)}
\leq 
\norm{\theta}_{H^{\frac{s + 1}{3}} (0, T+1)} \norm{G}_{H^{\frac{s + 1}{3}} (\R)}
\leq
\tilde C(s, T) \norm{g}_{H^{\frac{s + 1}{3}} (0, T)},
}
where $\tilde C(s, T) := 2 \norm{\theta}_{H^{\frac{s + 1}{3}} (0, T+1)}$ depends on $s$ and $T$ and remains bounded as $T\to 0^+$. Therefore,
\eq{\norm{G_0}_{H^{\frac{s + 1}{3}} (\R)} \le c(s, T) \norm{g}_{H^{\frac{s + 1}{3}} (0,T)} \label{G0bound}}
where the constant $c(s, T)$ remains bounded as $T\to 0^+$. 

\vspace*{2mm}

With the above construction at hand, combining \eqref{g-g0-u0} and \eqref{G0bound} with \eqref{amo-thm7} in the case of $V_0=G_0$ and $T' = T + 1$, for $\frac{1}{2} < s < \frac{7}{2}$ we  obtain
\eq{\label{U1b-se}
\big\|\breve U_1\big\|_{L_t^\infty((0, T); H_x^s (0, \infty))} 
\leq
c(s, T) \Big(\norm{u_0}_{H^s(0,\ell)} + \norm{g_0}_{H^{\frac{s + 1}{3}} (0, T)}\Big),}
where $c(s, T)$ is a constant (different from the one in \eqref{G0bound}) that  remains bounded as $T\to 0^+$.
Estimate \eqref{U1b-se} will be used to bound the corresponding term in \eqref{Udecomp-bound}. 

The norm of $\breve U_2$  in \eqref{Udecomp-bound} can be handled via the analogue of \eqref{U1b-se} obtained for $g(t) = \mathcal Z(0, t)$ (instead of \eqref{g-g0-w0}). Indeed, for $-1\leq s \leq 2$ and $s\neq \frac 12$, thanks to the time estimate \eqref{amo26} we have $\mathcal Z|_{x=0} \in H^{\frac{s+1}{3}}(0, T)$. Moreover,  $\mathcal Z(0, 0) = 0$. Thus, $\mathcal Z(0, t)$ has the same properties with the function in \eqref{g-g0-w0} and so, proceeding as before and using successively \eqref{amo-thm7}, \eqref{amo26} and \eqref{f-party}, for $0 \leq s \le 2$ with $s \neq \frac{1}{2}$ we find
\eq{\label{U2b-se}
\big\|\breve U_2\big\|_{L_t^\infty((0, T); H_x^s (0, \infty))} 
\leq c(s, T)  \norm{f}_{L_t^2((0, T); H_x^s(0, \ell))}.
}

The three remaining terms in \eqref{Udecomp-bound}, namely the norms of the interval solutions $\breve u_1$, $\breve u_2$, $\breve u_3$, can be handled similarly to each other. Starting with $\breve u_1$, we extend our Dirichlet boundary datum at $x=\ell$ from $H^{\frac{s+1}{3}}(0, T)$ to $H^{\frac{s+1}{3}}(\R)$ using the same methodology described above for the Dirichlet datum of $\breve U_1$. This step is possible thanks to the second of the compatibility conditions \eqref{comp-cond}, namely $h_0(0) = u_0 (\ell)$. In addition, for $s < \frac{3}{2}$, the Neumann datum of $\breve u_1$ can simply be extended by zero from $H^{\frac s3}(0, T)$ to $H^{\frac s3}(\R)$. On the other hand, for $s>\frac 32$ continuity is required, leading to the third of the compatibility conditions \eqref{comp-cond}, i.e. $h_1 (0) =  u_0' (\ell)$.\footnote{Of course, additional compatibility conditions are needed as the value of $s$ increases, the next one being on the Dirichlet datum when $s > \frac{7}{2}$. However, these conditions are not relevant in the range of $0\leq s \leq 2$ considered in this work.}
Then, we can employ Theorem~\ref{thm:fi-se} with $T' = T + 1$ and with $\psi_0$ and $\psi_1$ being the extensions of $h_0 - \mathcal W|_{x=\ell} - \breve U_1|_{x=\ell}$ and  $h_1 - \partial_x \mathcal W|_{x=\ell} - \partial_x \breve U_1|_{x=\ell}$ constructed similarly to \eqref{G0-def} and hence satisfying 
\eq{\label{psi0-psi1-bound}
\begin{split}
&\norm{\psi_0}_{H^{\frac{s + 1}{3}} (\R)} 
\le 
c(s, T) 
\norm{h_0 - \mathcal W|_{x=\ell} - \breve U_1|_{x=\ell}}_{H^{\frac{s+1}{3}} (0,T)}, 
\\
&\norm{\psi_1}_{H^{\frac{s}{3}} (\R)} 
\le 
c(s, T) 
\norm{h_1 - \partial_x \mathcal W|_{x=\ell} - \partial_x \breve U_1|_{x=\ell}}_{H^{\frac{s}{3}} (0,T)},
\end{split}
\quad
\frac 12 < s < \frac 72, \ s \neq \frac 32,
}
to deduce, for the above specified range of $s$, 
\eq{\label{ub1-bound0}
\begin{split}
\norm{\breve u_1}_{L_t^\infty((0, T); H_x^s (0, \ell)} \leq
c_s \sqrt{T+1} e^{M_\Delta (T+1)}  
\Big(
&\norm{h_0}_{H^{\frac{s + 1}{3}} (0, T)} + \norm{\mathcal W(\ell)}_{H_t^{\frac{s + 1}{3}} (0, T)} + \big\|\breve U_1 (\ell)\big\|_{H_t^{\frac{s + 1}{3}} (0, T)}\\
&+ \norm{h_1}_{H^{\frac{s}{3}} (0, T)} + \norm{\partial_x \mathcal W(\ell)}_{H_t^{\frac{s}{3}} (0, T)} + \big\|\partial_x \breve U_1 (\ell)\big\|_{H_t^{\frac{s}{3}} (0, T)}
\Big).
\end{split}
}
The norms of $\mathcal W(\ell, t)$ and $\partial_x \mathcal W(\ell, t)$ on the right-hand side of \eqref{ub1-bound0} can be handled via the time estimate \eqref{amo910}, which as noted earlier was derived in \cite{amo2024} for the well-posedness of HNLS on the half-line. On the other hand, the corresponding norms of $\breve U_1(\ell, t)$ and $\partial_x \breve U_1(\ell, t)$ were not estimated in \cite{amo2024}, as that was not necessary there. In our case, however, in order to use the bound \eqref{ub1-bound0} we must study the time regularity of the half-line solution $\breve U_1$ and its spatial derivative $\partial_x \breve U_1$. This is done in Theorem \ref{thm:hl-te} below.  Using the corresponding estimate \eqref{V-te} for $\sigma = 0, 1$, as well as inequality \eqref{G0bound}, we obtain 
\eq{\label{ub1-bound1}
\begin{split}
\norm{\breve u_1}_{L_t^\infty((0, T); H_x^s (0, \ell))} 
\leq
c_s \sqrt{T+1} e^{M_\Delta (T+1)}  
\Big[
&c_s \big(1+\sqrt T\big) \norm{\mathcal U_0}_{H^s(\R)} 
+ \norm{h_0}_{H^{\frac{s + 1}{3}} (0, T)} + \norm{h_1}_{H^{\frac{s}{3}} (0, T)} 
\\
&
+ c_s \max \set{(T+1) e^{M_\Delta(T+1)}, 1} c(s, T) \norm{g}_{H^{\frac{s + 1}{3}} (0,T)}
\Big].
\end{split}
}
Then, in view of \eqref{u0-party} and \eqref{g-g0-u0}, for $\frac{1}{2} < s < \frac{7}{2}$ with $s \ne \frac{3}{2}$ we deduce
\eq{\label{ub1-bound2}
\norm{\breve u_1}_{L_t^\infty((0, T); H_x^s (0, \ell))} 
 \leq
c(s, T) 
\Big(\norm{u_0}_{H_x^s (0, \ell)} + \norm{g_0}_{H_t^{\frac{s + 1}{3}} (0, T)} + \norm{h_0}_{H_t^{\frac{s + 1}{3}} (0, T)} + \norm{h_1}_{H_t^{\frac{s}{3}} (0, T)}\Big), 
}
where the constant $c(s, T)$ remains bounded as $T\to 0^+$. 

For $\breve u_2$ and $\breve u_3$, the procedure is identical  except that  we now use \eqref{amo26}  and \eqref{f-party} to obtain
\eq{\label{ub2-bound2}
\norm{\breve u_2}_{L_t^\infty((0, T); H_x^s (0, \ell))} 
+
\norm{\breve u_3}_{L_t^\infty((0, T); H_x^s (0, \ell))} 
 \leq 
 c(s, T)  \norm{f}_{L_t^2 ( (0, T); H_x^s (\R))}, 
}
where now $\frac{1}{2} < s \leq 2$, $s \ne \frac{3}{2}$ (this restriction is the intersection of the restrictions $-1\leq s \leq 2$, $s \ne \frac{1}{2}$ and $0\leq s \leq 3$, $s \ne \frac{3}{2}$ that are needed for using \eqref{amo26} for $\mathcal Z$ and $\partial_x \mathcal Z$ in the high-regularity setting $s>\frac 12$) and once again the constant $c(s, T)$ is bounded  as $T\to 0^+$.

Overall, returning to the decomposition inequality \eqref{Udecomp-bound} and combining the estimates \eqref{amo8}, \eqref{amo25}, \eqref{U1b-se}, \eqref{U2b-se}, \eqref{ub1-bound2} and \eqref{ub2-bound2} with the inequalities \eqref{u0-party} and \eqref{f-party}, for all $\frac 12 < s \leq 2$ with $s\neq \frac 32$ we obtain the Sobolev linear estimate \eqref{sob-est}.
In addition, starting from the analogue of the decomposition \eqref{Udecomp-bound} for the space $L_t^2((0, T); H_x^{s+1}(0, \ell))$ and  employing the smoothing estimates \eqref{smoothing-hc}, \eqref{smoothing-fc}, \eqref{smoothing-hl}, \eqref{smoothing-fi}, the extension inequalities~\eqref{u0-party} and \eqref{f-party} for the initial data and the forcing, inequality \eqref{g-g0-u0}  and the extension inequality~\eqref{G0bound} for the boundary datum $g_0(t)$, the analogue of the latter inequality for $\mathcal Z(0, t)$ instead of $g_0(t)$,  and the extension inequalities \eqref{psi0-psi1-bound} together with the time estimates \eqref{amo910}, \eqref{amo26} and \eqref{V-te}, we obtain the smoothing estimate~\eqref{smooth-est} in high-regularity setting of $\frac 12 < s \leq 2$ with $s\neq \frac 32$.
As the Strichartz estimate \eqref{strich-est} is only valid in the low-regularity setting of $0\leq s<\frac 12$, the proof of Theorem \ref{lin-est-t} in the high-regularity setting is complete.

\subsection{Proof of Theorem \ref{lin-est-t} in the low-regularity setting}

For $s<\frac 12$, in addition to the Sobolev estimate \eqref{sob-est} we must establish the linear estimate \eqref{strich-est} in the family of  Strichartz spaces $L_t^q([0, T]; H_x^{s, p} (0, \ell))$ with $(q, p)$ satisfying the admissibility condition \eqref{adm-pair}. The decompositions~\eqref{U-decomp1} and \eqref{U-decomp2} yield the following low-regularity counterpart of \eqref{Udecomp-bound}:
\begin{align}
\norm{u}_{L_t^q((0, T); H_x^{s, p}(0, \ell))} &\leq \norm{\mathcal W}_{L_t^q((0, T); H_x^{s, p}(\R))} + \norm{\mathcal Z}_{L_t^q((0, T); H_x^{s, p}(\R))} + \big\|\breve U_1\big\|_{L_t^q((0, T); H_x^{s, p}(0, \infty))} + \big\|\breve U_2\big\|_{L_t^q((0, T); H_x^{s, p}(0, \infty))} \nonumber\\
&\quad 
+ \norm{\breve u_1 }_{L_t^q((0, T); H_x^{s, p}(0, \ell))}+ \norm{\breve u_2 }_{L_t^q((0, T); H_x^{s, p}(0, \ell))}+ \norm{\breve u_3 }_{L_t^q((0, T); H_x^{s, p}(0, \ell))}. \label{Udecomp-bound-Strich}
\end{align}
As in the case of high regularity, it is useful to recall the following results from the analysis of the HNLS half-line problem in \cite{amo2024}. 
\begin{theorem}[\cite{amo2024}]
For any admissible pair $(q, p)$ in the sense of \eqref{adm-pair}, the Cauchy problems in \eqref{comps-def} admit the estimates
\begin{align}
&\norm{\mathcal W}_{L_t^q ((0, T); H_x^{s, p} (\R))} \cle \norm{\mathcal U_0}_{H^s (\R)}, \quad s \in \R, \label{amo2.17}
\\
&\norm{\mathcal Z}_{L_t^q ((0, T); H_x^{s, p} (\R))} \cle \norm{\mathfrak F}_{L_t^1 ((0, T); H_x^s (\R))}, \quad s \in \R. \label{amo2.38}
\end{align}
Moreover, the solution to the reduced half-line problem \eqref{HNLSreducedHL} satisfies the estimate
\begin{equation}
\norm{V}_{L_t^q ((0, T'); H_x^{s, p} (0, \infty))} \cle \big[1 + \p{T'}^{\frac 1q + \frac 12}\big] \norm{V_0}_{H^{\frac{s + 1}{3}} (\R)}, \quad s \ge 0. \label{amo2.87}
\end{equation}
\end{theorem}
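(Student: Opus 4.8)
The plan is to establish the three bounds exactly as in the half-line analysis of \cite{amo2024}, since they are simply the higher-order Schr\"odinger analogues of the classical Strichartz estimates for the Airy group; here I describe that argument. The only ingredient specific to the equation is the non-degeneracy of the third $k$-derivative of the dispersion relation \eqref{omega}, namely $\omega''' = 6\beta \neq 0$ identically, which is precisely what produces the $|t|^{-\frac 13}$ dispersive decay already exploited (via \cite{cl2003}) in the proof of Theorem \ref{thm:strichartz}.

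For \eqref{amo2.17} I would first write $\mathcal W(t) = S[\mathcal U_0;0](t)$ as the free evolution $\mathcal W(x,t) = \frac{1}{2\pi}\int_{\R} e^{ikx+i\omega t}\,\mathcal F\{\mathcal U_0\}(k)\,dk$ and record the two endpoint bounds: Plancherel gives $\norm{\mathcal W(t)}_{L_x^2(\R)} = \norm{\mathcal U_0}_{L^2(\R)}$, while a single application of the van der Corput lemma gives the dispersive bound $\norm{\mathcal W(t)}_{L_x^\infty(\R)} \cle |\beta t|^{-\frac 13}\norm{\mathcal U_0}_{L_x^1(\R)}$ (the same estimate $|I(x,y,t)|\cle|\beta t|^{-\frac 13}$ used for the term $v_9$ in Theorem \ref{thm:strichartz}). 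Running the standard $TT^*$ argument from these two endpoints and then the Hardy--Littlewood--Sobolev inequality (Theorem 1 of \cite{s1970}) — which, exactly as in the proof of Theorem \ref{thm:strichartz}, forces the admissibility relation \eqref{adm-pair} — yields $\norm{\mathcal W}_{L_t^q(\R;L_x^p(\R))}\cle\norm{\mathcal U_0}_{L^2(\R)}$ for every admissible pair; commuting the Fourier multiplier $(1+k^2)^{s/2}$ with the group upgrades this to the $H^{s,p}_x$ statement, and restricting $t$ to $(0,T)$ only decreases the norm. Estimate \eqref{amo2.38} is then its inhomogeneous companion: by Duhamel, $\mathcal Z(t) = \int_0^t S[\mathfrak F(t');0](t-t')\,dt'$, so Minkowski's integral inequality in $t'$ followed by \eqref{amo2.17} applied slicewise gives $\norm{\mathcal Z}_{L_t^q((0,T);H_x^{s,p}(\R))}\cle\int_0^T\norm{\mathfrak F(t')}_{H_x^s(\R)}\,dt'$, which is the claim; the cheap $L^1$-in-time right-hand side makes the Christ--Kiselev lemma unnecessary.

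For \eqref{amo2.87} the plan is to repeat, on the half-line, the proof of Theorem \ref{thm:strichartz}. I would start from the unified transform representation of the solution of the reduced half-line problem \eqref{HNLSreducedHL}, which in \cite{amo2024} is a single contour integral $V(x,t) = \frac{1}{2\pi}\int_{\partial D}e^{ikx+i\omega t}\,\rho(k)\,\mathcal F\{V_0\}(\omega)\,dk$ over the positively oriented boundary of the region $D_0$, with $\rho$ a rational-exponential multiplier whose denominator is a \emph{single} exponential rather than the combination $\Delta(k)$ of the interval problem. Splitting $\partial D$ into (i) the piece on which $\Im(\omega)=0$, where $e^{ikx}$ decays in $x$ and the Laplace-transform bound of Lemma \ref{lemma:hardy} applies exactly as for $v_1$; (ii) the bounded arc near the branch region, handled via the support condition $\supp(V_0)\subset[0,T']$ as for $v_2$ — this is the only source of the $(T')^{\frac 1q+\frac 12}$ factor, the $\sqrt{T'}$ coming from Cauchy--Schwarz in the inner time integral and the $(T')^{1/q}$ from the outer $L^q_t((0,T'))$; and (iii) the real piece, where one uses the Fourier characterization of $H^{s,p}(\R)$, the dispersive estimate $|I(x,y,t)|\cle|\beta t|^{-\frac 13}$, and $TT^*$ plus Hardy--Littlewood--Sobolev as for $v_9$ — one obtains the bound first for $s\in\N_0$ and extends it to all $s\ge 0$ by interpolation (Theorem 5.1 of \cite{lm1972}). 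The "$1+$" in the prefactor, as in \eqref{amo-thm7}, simply collects the contributions of the two unbounded pieces.

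I expect the main technical point to be the same bookkeeping issue that already appears for $v_1$ and $v_9$ in Theorem \ref{thm:strichartz}: controlling the multiplier $\rho(k)$ along the complex contour so that the derivative loss transferred onto $V_0$ is exactly $\frac{s+1}{3}$ (using the analogue of Lemma \ref{lemma:rtotau} and the change of variables $r\mapsto\omega$), together with a justification of the contour deformation underlying the representation of \eqref{HNLSreducedHL}. Because the half-line denominator is a single exponential — so the analogue of Lemma \ref{lemma:Deltabound} is elementary — this step is genuinely easier than in the finite-interval setting, and I anticipate no difficulty beyond carefully tracking the factors $\nu_\pm$, which behave like a single power of $k$ in magnitude and hence shift $s$ by $1$ on the Dirichlet datum.
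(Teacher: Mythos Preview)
The paper does not prove this theorem at all: it is quoted verbatim from \cite{amo2024} and used as a black box in the linear decomposition of Section~\ref{dec-s}. There is therefore no ``paper's own proof'' to compare against.

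That said, your sketch is a faithful reconstruction of the argument one expects in \cite{amo2024}, and it mirrors exactly the template the present paper uses for the finite-interval Strichartz estimate (Theorem~\ref{thm:strichartz}): the $L^2$ conservation plus $|t|^{-1/3}$ van der Corput dispersive decay feeding a $TT^*$/Hardy--Littlewood--Sobolev argument for \eqref{amo2.17}; Duhamel plus Minkowski for \eqref{amo2.38}; and, for \eqref{amo2.87}, the three-piece contour decomposition (complex piece via Laplace-transform boundedness, bounded arc via the support condition, real piece via the whole-line Strichartz machinery) with the bookkeeping that converts powers of $k$ into the $H^{\frac{s+1}{3}}$ norm of $V_0$. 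Your observation that the half-line denominator is a single exponential --- so no analogue of Lemma~\ref{lemma:Deltabound} is needed --- is exactly the simplification the paper itself flags in point (i) of the introduction. Nothing in your outline is wrong or missing.
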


Estimates \eqref{amo2.17} and \eqref{amo2.38} can be directly employed for handling the first two terms in \eqref{Udecomp-bound-Strich}. The remaining five terms are associated with the half-line solutions $\breve U_1, \breve U_2$ and the finite interval solutions $\breve u_1, \breve u_2, \breve u_3$.  We begin with the estimation of the half-line solutions and, in particular,  with $\breve U_1 = S[0, g; 0]$ where
$$
g(t) = g_0(t) - \mathcal W(0, t)
$$
in line with \eqref{comps-def}. Recall that $g \in H^{\frac{s+1}{3}}(0, T)$ with estimate \eqref{g-g0-u0} in place. Hence, for $-1\leq s < \frac 12$ or, equivalently, $0 \le \frac{s + 1}{3} < \frac{1}{2}$,  Theorem 11.4 of~\cite{lm1972} implies that the extension $G_0$ of $g$ by zero outside the interval $[0, T]$ belongs to $H^{\frac{s + 1}{3}} (\R)$ with the estimate
\eq{\norm{G_0}_{H^{\frac{s + 1}{3}} (\R)} \leq c(s, T) \norm{g}_{H^{\frac{s + 1}{3}} (0, T)}.\label{low-s-extension}}
This fact allows us to identify $\breve U_1$  as the solution $V$ of the reduced half-line problem \eqref{HNLSreducedHL} with $T'=T$ and $V_0 = G_0$. Hence, we can employ estimate \eqref{amo2.87} and then use successively inequalities \eqref{low-s-extension} and \eqref{g-g0-u0} to infer
\eq{\label{U1breve-est}
\big\|\breve U_1\big\|_{L_t^q ((0, T); H_x^{s, p} (0, \infty))} \leq
c(s, T) \Big( \norm{u_0}_{H^s(0,\ell)} + \norm{g_0}_{H^{\frac{s + 1}{3}} (0, T)}\Big), \quad 0\le s < \frac{1}{2},}
where $(q, p)$ is any pair satisfying \eqref{adm-pair} and the constant $c(s, T)$ remains bounded as $T\to 0^+$. 

We proceed to $\breve U_2=S[0, \mathcal Z(0, \cdot); 0]$.  For this term,  simply repeating the steps used for $\breve U_1$ above would yield the  forcing in the norm of the space $L_t^2((0, T); H_x^s(0, \ell))$. However, via Lemma \ref{lemma:alt-breveU2} below (see also expression (6.11) in~\cite{hm2020}), it is possible to make the forcing appear in the norm of the space $L_t^1((0, T); H_x^s(0, \ell))$ instead, thereby allowing us to establish the low-regularity well-posedness of Theorem \ref{low-wp-t} for a broader range of nonlinearities.
\begin{lemma} \label{lemma:alt-breveU2}
The solution $\breve U_2 = S\big[0, \mathcal Z|_{x=0}; 0\big]$ can  be expressed in the form
\eq{\label{alt-breveU2}
\breve U_2 (x, t) = -i \int_{t' = 0}^T S \b{0, S[\mathfrak F(\cdot, t'); 0] (0, \cdot); 0} (x, t - t') dt'.}
\end{lemma}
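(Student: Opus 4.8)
The plan is to prove Lemma \ref{lemma:alt-breveU2} by exploiting the linearity and time-translation structure of the solution operator $S$ together with Duhamel's principle for the forced Cauchy problem. Recall that $\breve U_2 = S[0, \mathcal Z|_{x=0}; 0]$ is the solution of the homogeneous (forcing-free) half-line problem with zero initial data and Dirichlet boundary datum $\mathcal Z(0, t)$, where $\mathcal Z = S[0; \mathfrak F]$ is the whole-line solution of the Cauchy problem \eqref{wholeline} with zero initial data and forcing $\mathfrak F$. The first step is to write $\mathcal Z$ explicitly via Duhamel: $\mathcal Z(x, t) = -i \int_0^t S[\mathfrak F(\cdot, t'); 0](x, t - t') \, dt'$, so that in particular the boundary trace is $\mathcal Z(0, t) = -i \int_0^t S[\mathfrak F(\cdot, t'); 0](0, t - t') \, dt'$.

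Next I would substitute this representation of the boundary datum into the half-line solution operator and use its linearity to pull the $t'$-integral outside. The key structural point is that $S[0, \phi; 0]$ depends linearly on the boundary datum $\phi$, and it interacts correctly with time translations: if $\phi(t) = \chi_{\{t > t'\}} \psi(t - t')$ is a datum switched on at time $t'$, then $S[0, \phi; 0](x, t) = \chi_{\{t > t'\}} S[0, \psi; 0](x, t - t')$. Applying this (formally: the datum $\mathcal Z(0, \cdot)$ is a superposition over $t' \in [0, t]$ of the delayed boundary profiles $S[\mathfrak F(\cdot, t'); 0](0, \cdot)$ switched on at $t'$) gives
\[
\breve U_2(x, t) = -i \int_{t'=0}^{t} S\big[0, S[\mathfrak F(\cdot, t'); 0](0, \cdot); 0\big](x, t - t') \, dt',
\]
and since the integrand vanishes for $t' > t$ (the boundary datum being switched on at $t'$ produces zero output before time $t'$), the upper limit may be extended to $T$, which is exactly \eqref{alt-breveU2}. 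The cleanest way to make this rigorous, avoiding delta-function manipulations, is to verify directly that the right-hand side of \eqref{alt-breveU2} solves the defining problem for $\breve U_2$: it has zero initial data (each term has, and the $t'=t$ contribution integrates to nothing), it satisfies the homogeneous equation (each $S[0, \cdot; 0](\cdot, t - t')$ does, and differentiating the $t$-integral produces a boundary term at $t' = t$ of the form $S[0, \psi; 0](x, 0) = 0$ by the zero initial condition of the inner operator), and its trace at $x = 0$ equals $-i\int_0^T S[0, S[\mathfrak F(\cdot,t');0](0,\cdot);0](0, t-t')\,dt' = -i\int_0^t S[\mathfrak F(\cdot,t');0](0, t-t')\,dt' = \mathcal Z(0,t)$, using that the half-line operator reproduces its Dirichlet datum at $x=0$. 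By uniqueness of solutions to the linear half-line problem, the identity follows.

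The main obstacle I anticipate is making the interchange of integration and the application of the solution operator fully rigorous at the regularity available — that is, justifying that $S[0, \cdot; 0]$ commutes with the $t'$-integral and that the trace computation is valid when $\mathfrak F$ only lies in $L_t^1((0,T); H_x^s(\R))$ (or $L_t^2$) with $s$ possibly below $\tfrac12$, where traces are delicate. The verification-by-uniqueness route sidesteps the worst of this, but one still needs the boundary trace of the inner Cauchy solution $S[\mathfrak F(\cdot, t'); 0](0, \cdot)$ to be a well-defined element of the appropriate time-Sobolev space so that the outer operator $S[0, \cdot; 0]$ makes sense; this is supplied by the time-regularity estimates \eqref{amo26} for $\mathcal Z$ (applied to each frozen-time forcing $\mathfrak F(\cdot, t')$) together with Minkowski's integral inequality to handle the $t'$-integration. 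Once these mapping properties are in place, the identity \eqref{alt-breveU2} is then used precisely to route the forcing contribution of $\breve U_2$ through the $L_t^1$-based estimates rather than the $L_t^2$-based ones, as the statement preceding the lemma indicates.
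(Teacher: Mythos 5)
Your argument is correct in substance but follows a genuinely different route from the paper's. The paper works entirely inside the explicit unified transform representation: it writes $\breve U_2$ via formula (65) of \cite{amo2024} with boundary datum $\mathcal Z(0,\cdot)$ expressed by Duhamel, interchanges the $t'$ and $t''$ integrals, shifts $t''\mapsto t''+t'$, and then uses analyticity and Cauchy's theorem in $\overline{\tilde D_0}$ (where $\Im(k)>0$, $\Im(\omega)\le 0$) to extend the inner time integral from $[0,T-t']$ to $[0,T]$, which is exactly what turns the bracketed quantity into $S\b{0, S[\mathfrak F(\cdot,t');0](0,\cdot);0}(x,t-t')$. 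You instead package the same content abstractly: linearity of $S[0,\cdot\,;0]$ in the boundary datum, time-translation invariance plus causality of the half-line solution operator, and a verification-by-uniqueness of the defining initial-boundary value problem. Both are legitimate; the paper's version has the advantage that the two properties your argument takes as given --- that $S[0,\psi;0]$ commutes with the $t'$-superposition and, more importantly, that $S[0,\psi;0](x,\tau)=0$ for $\tau<0$ so that the $t'>t$ portion of the integral contributes nothing --- are precisely what the contour-deformation step proves concretely for the unified-transform-defined operator. In particular, be aware that in \eqref{alt-breveU2} the inner datum $S[\mathfrak F(\cdot,t');0](0,\cdot)$ is \emph{not} supported in positive times (it is the trace of a whole-line Cauchy evolution, entering the outer operator only through its time transform over $[0,T]$), so the vanishing of the integrand for $t'>t$ is not an off-the-shelf support statement but is itself the Cauchy's theorem argument; if you invoke causality and uniqueness as black boxes, you should say where they come from (the linear theory of \cite{amo2024} and the estimates of Section \ref{dec-s}), since otherwise your proof defers rather than discharges the key analytic step. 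Your closing remarks about justifying the interchange of $S[0,\cdot\,;0]$ with the $t'$-integral via the time-regularity estimate \eqref{amo26} and Minkowski's inequality are the right way to handle the low-regularity setting in which the lemma is actually used.
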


\begin{proof}
The expression (65) in \cite{amo2024} provides the unified transform solution to the reduced half-line problem~\eqref{HNLSreducedHL}. Combining the specific form of that solution with the Duhamel's principle and the homogeneous Cauchy problem solution, we arrive at the formula
\eq{\breve U_2 (x, t) = \frac{i}{2 \pi} \int_{\partial \tilde D_0} e^{i k x + i \omega t} \omega' \int_{t'' = 0}^T e^{-i \omega t''} \int_{t' = 0}^{t''} S[\mathfrak F(\cdot, t'); 0] (0, t'' - t') dt' dt'' dk,}
where $\tilde D_0$ and  $\omega$ are defined by \eqref{dntil-def} and \eqref{omega}, respectively. 
Interchanging the integrals with respect to $t'$ and $t''$ and then making the change of variable $t'' \mapsto t'' + t'$ yields
\eqs{
\breve U_2 (x, t) 
= -\frac{1}{2 \pi}\int_{t' = 0}^T  \int_{\partial \tilde D_0} e^{i k x + i \omega (t - t')} \left(-i \omega'\right) \int_{t'' = 0}^{T - t'} e^{-i \omega t''} S[\mathfrak F(\cdot, t'); 0] (0, t'') dt'' dk dt'.}
Recalling that $\Im(k) > 0$ and $\Im(\omega) \le 0$ for $k \in \overline{\tilde D_0}$, we observe that for $t'' \ge T - t' \ge t - t'$ the exponential $e^{i k x + i \omega (t - t' - t'')}$ decays as $|k| \to \infty$ within $\overline{\tilde D_0}$. Therefore, along the lines of the argument used in Appendix \ref{app-s},  we can use analyticity and Cauchy's theorem to augment the integration range of the  inner integral  from $[0, T-t']$ to $[0, T]$, resulting in the desired representation \eqref{alt-breveU2}.
\end{proof}

Starting from formula \eqref{alt-breveU2} and applying Minkowski's integral inequality twice, we find
\eq{
\big\|\breve U_2\big\|_{L_t^q ((0, T); H_x^{s, p} (0, \infty))} 
\leq \int_{t' = 0}^T \norm{S \b{0, S[\mathfrak F(\cdot, t'); 0] (0, \cdot); 0} (\cdot, \cdot - t')}_{L_t^q ((0, T); H_x^{s, p} (0, \infty))} dt'. \label{breveU2-bound1}}
Now, observe that the reduced half-line Strichartz estimate \eqref{amo2.87} with $T' = T$ allows us to readily estimate this Strichartz norm on the right-hand side. This is because, due to the fact that $0 \le s < \frac{1}{2}$, the relevant boundary data $S[\mathfrak F(\cdot, t'); 0] (0, \cdot)$ can readily be extended by zero ourside $(0, T)$, thus fulfilling the compact support requirement of~\eqref{HNLSreducedHL}. Hence, using also the time estimate \eqref{amo910}, we have
\eq{
\norm{S \b{0, S[\mathfrak F(\cdot, t'); 0] (0, \cdot); 0} (\cdot, \cdot - t')}_{L_t^q ((0, T); H_x^{s, p} (0, \infty))} 
\leq 
c_s \big(1+\sqrt T e^{cT}\big) \big(1+T^{\frac 1q+\frac 12}\big) \norm{\mathfrak F(t')}_{H_x^s (\R)}.}
In turn, for any $0 \le s < \frac{1}{2}$ and any admissible pair $(q, p)$ in the sense of \eqref{adm-pair}, inequality \eqref{breveU2-bound1} yields
\eq{\big\|\breve U_2\big\|_{L_t^q ((0, T); H_x^{s, p} (0, \infty))} 
\leq
c_s \big(1+\sqrt T e^{cT}\big) \big(1+T^{\frac 1q+\frac 12}\big) \norm{\mathfrak F}_{L_t^1((0, T); H_x^s (\R))}. \label{breveU2-bound2}}

Back to \eqref{Udecomp-bound-Strich}, having completed the estimation of the half-line solutions $\breve U_1, \breve U_2$, we turn out attention to the finite interval solutions $\breve u_1, \breve u_2, \breve u_3$.  The procedure for $\breve u_1$ is identical to the one used for $\breve U_1$. Specifically, we let $\psi_0 = h_0 - \mathcal W|_{x=\ell} - \breve U_1|_{x=\ell}$, $\psi_1 = h_1 - \partial_x \mathcal W|_{x=\ell} - \partial_x \breve U_1|_{x=\ell}$ and then employ Theorem 11.4 of \cite{lm1972} to extend these functions by zero outside $(0, T)$ to produce the extensions $\Psi_0, \Psi_1$ supported in $[0, T]$ and satisfying the bounds
\eq{
\norm{\Psi_0}_{H^{\frac{s + 1}{3}} (\R)} \leq c(s, T) \norm{\psi_0}_{H^{\frac{s + 1}{3}} (0, T)},
\quad
\norm{\Psi_1}_{H^{\frac{s + 1}{3}} (\R)} \leq c(s, T) \norm{\psi_1}_{H^{\frac{s + 1}{3}} (0, T)}.
\label{psi-extensions}}
Next, we let $v(x, t)$ be the solution to the reduced finite interval problem \eqref{HNLSreduced} with $T' = T$ and the extensions $\Psi_0$, $\Psi_1$ as boundary conditions. Then, combining Theorem \ref{thm:strichartz} with inequalities \eqref{psi-extensions}, the time estimate \eqref{amo910} (used twice, once for $\mathcal W$ and once for $\partial_x \mathcal W$ after shifting $s$ to $s-1$ in the latter case) and inequality \eqref{u0-party}, we obtain
\eqs{
\norm{\breve u_1}_{L_t^q ((0, T); H_x^{s, p} (0, \ell))} 
\leq
\big(1+T^{\frac 1q+\frac 12}\big) c(s, T) \Big(&\norm{h_0}_{H^{\frac{s + 1}{3}} (0, T)} + \norm{h_1}_{H^{\frac{s}{3}} (0, T)} + \norm{u_0}_{H^s (0,\ell)}  
\\
&+\big\|\breve U_1(\ell)\big\|_{H^{\frac{s + 1}{3}} (0, T)} + \big\|\partial_x \breve U_1(\ell)\big\|_{H^{\frac{s}{3}} (0, T)}\Big).
\alignlabel{breveu1-bound1}
}
At this point, as in the high-regularity case, it becomes evident that novel time estimates are needed for the half-line solution $\breve U_1 = S[0, g_0 - \mathcal W|_{x=0}; 0]$. These estimates are established in Theorem \ref{thm:hl-te} below and can be readily employed after simply extending the boundary datum $g_0 - \mathcal W|_{x=0}$ by zero outside $(0, T)$ to yield
\eqs{\big\|\breve U_1(\ell)\big\|_{H_t^{\frac{s + 1}{3}} (0, T)} + \big\|\partial_x \breve U_1(\ell)\big\|_{H_t^{\frac{s}{3}} (0, T)} 
\leq
c_s \max\set{T e^{M_\Delta T}, 1} \Big(\norm{g_0}_{H^{\frac{s + 1}{3}} (0, T)} + \norm{\mathcal W(0)}_{H^{\frac{s + 1}{3}} (0, T)}\Big),
\quad
s\geq 0.
}
In view of this estimate, the time estimate \eqref{amo910}, and inequality \eqref{u0-party}, we deduce via inequality \eqref{breveu1-bound1} that
\eq{\label{u1breve-est}
\norm{\breve u_1}_{L_t^q ((0, T); H_x^{s, p} (0, \ell))} 
\leq
\big(1+T^{\frac 1q+\frac 12}\big) c(s, T)
\Big(
\norm{u_0}_{H^s (0, \ell)} + \norm{g_0}_{H^{\frac{s + 1}{3}} (0, T)} 
+ \norm{h_0}_{H^{\frac{s + 1}{3}} (0, T)} + \norm{h_1}_{H^{\frac{s}{3}} (0, T)}\Big)}
for any $s\geq 0$ and any admissible pair \eqref{adm-pair}.

Regarding the term $\breve u_2 = S\big[0, 0, \mathcal Z|_{x=\ell}, \partial_x \mathcal Z|_{x=\ell}; 0\big]$, since its boundary data are traces of the forced Cauchy problem solution $\mathcal Z$, we face a similar issue as with $\breve U_2$ earlier. Namely, following the method used for $\breve u_1$ will eventually result in an $L_t^2(0, T)$ norm of the forcing  (through the time estimate \eqref{amo26}), while it is actually possible to instead have an $L_t^2(0, T)$ norm instead, provided we can establish the following  following finite interval analogue of Lemma \ref{lemma:alt-breveU2}.

\begin{lemma} \label{lemma:alt-breveu2}
The solution $\breve u_2 = S[0, 0, \mathcal Z(\ell, \cdot), \partial_x \mathcal Z(\ell, \cdot); 0]$ can be expressed in the form
\eq{\breve u_2 (x, t) = -i \int_{t' = 0}^T S \big[0, 0, S\b{\mathfrak F(\cdot, t'); 0] (\ell, \cdot),  S[\partial_x \mathfrak F(\cdot, t'); 0} (\ell, \cdot); 0\big] (x, t - t') dt'.\label{alt-breveu2}}
\end{lemma}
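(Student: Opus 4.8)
The plan is to mirror the proof of Lemma \ref{lemma:alt-breveU2} almost verbatim, replacing the single Dirichlet boundary condition at $x=0$ by the pair of Dirichlet and Neumann conditions at $x=\ell$. First I would invoke the unified transform solution formula \eqref{hls-sol} specialized to the reduced finite interval problem \eqref{HNLSreduced}; since $\breve u_2$ has zero initial data, zero forcing, zero Dirichlet datum at $x=0$, and boundary data $\mathcal Z(\ell,\cdot)$, $\partial_x \mathcal Z(\ell,\cdot)$ at $x=\ell$, its representation is the sum of the two contour integrals in \eqref{reducedsol} with $\psi_0 = \mathcal Z(\ell,\cdot)$ and $\psi_1 = \partial_x \mathcal Z(\ell,\cdot)$, except that here these data do not have compact support so the time transform \eqref{tilde-transform} (rather than the Fourier transform, cf. Remark \ref{ft-til-r}) appears.

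Next I would substitute the Duhamel representation of $\mathcal Z = S[0;\mathfrak F]$, namely $\mathcal Z(\cdot,t'') = \int_0^{t''} S[\mathfrak F(\cdot,t');0](\cdot, t''-t')\, dt'$, into the time transforms $\tilde{\mathcal Z}(\ell,\cdot)$ and $\widetilde{\partial_x \mathcal Z}(\ell,\cdot)$ that occur in the integrands. This yields a triple time integral over $t'$, $t''$ with an outer $k$-contour integral, exactly as in the proof of Lemma \ref{lemma:alt-breveU2}. I would then interchange the $t'$ and $t''$ integrations and perform the change of variable $t'' \mapsto t'' + t'$, which collapses the formula into $\breve u_2(x,t) = -i\int_0^T (\cdots)(x, t-t')\, dt'$ where the inner expression is a contour integral over $\partial\tilde D$ of $e^{-ik(\ell-x)+i\omega(t-t')}$ times the relevant $\Delta(k)$-factors, integrated against $\int_0^{T-t'} e^{-i\omega t''} S[\mathfrak F(\cdot,t');0](\ell,t'')\,dt''$ for the Dirichlet part and the analogous expression with $\partial_x S[\mathfrak F(\cdot,t');0](\ell,\cdot)$ for the Neumann part.

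The final step is to extend the inner time integration range from $[0, T-t']$ back to $[0,T]$. This is where the main (and in fact the only genuinely delicate) point lies: one must argue, exactly as in Appendix \ref{app-s} and in the proof of Lemma \ref{lemma:alt-breveU2}, that for $t'' \ge T - t' \ge t - t'$ the exponential $e^{-ik(\ell-x) + i\omega(t - t' - t'')}$ decays as $|k|\to\infty$ along each of the three contour pieces $\partial\tilde D_0$, $\partial\tilde D_\pm$, so that analyticity of the integrand in the appropriate region together with Cauchy's theorem and a Jordan-type estimate lets one add in the tail $[T-t',T]$ at no cost. Here one uses that $\Im(\omega)\le 0$ on $\overline{\tilde D}$ (Lemma \ref{lemma:charDbar}) and that $0 \le x \le \ell$ makes $e^{-ik(\ell-x)}$ bounded on the relevant contours; the $\Delta(k)$ denominator is controlled below by Lemma \ref{lemma:Deltabound}, so no spurious poles are introduced. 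Once the range is augmented, the inner integral is precisely $-i\, \widetilde{S[\mathfrak F(\cdot,t');0](\ell,\cdot)}(\omega, T)$ (resp. its Neumann counterpart), and recognizing the resulting contour integral as $S[0,0,S[\mathfrak F(\cdot,t');0](\ell,\cdot), S[\partial_x\mathfrak F(\cdot,t');0](\ell,\cdot);0](x, t-t')$ via \eqref{reducedsol} gives exactly \eqref{alt-breveu2}. The obstacle, such as it is, is purely the contour-deformation bookkeeping across the three branches; everything else is a direct transcription of the half-line argument.
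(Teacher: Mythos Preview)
Your overall strategy is correct and mirrors the paper's proof: write $\breve u_2$ via the reduced interval formula \eqref{reducedsol}, insert the Duhamel representation of $\mathcal Z$, interchange $t'$ and $t''$, shift $t''\mapsto t''+t'$, and then augment the inner integration range from $[0,T-t']$ to $[0,T]$ using analyticity and decay at infinity.

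There is, however, a genuine gap in your decay argument. You claim that ``$0\le x\le \ell$ makes $e^{-ik(\ell-x)}$ bounded on the relevant contours.'' This is false on $\partial\tilde D_0$: there $\Im(k)>0$, so $|e^{-ik(\ell-x)}|=e^{\Im(k)(\ell-x)}$ grows exponentially. The paper handles this by rewriting the full exponential factor $\frac{1}{\Delta(k)}\,e^{-ik(\ell-x)}(e^{-i\nu_+\ell}-e^{-i\nu_-\ell})$ in three algebraically equivalent forms (equation \eqref{u2b-nn-exp}), one adapted to each region $\tilde D_n$. On $\tilde D_0$, for instance, the factor becomes $\frac{1}{e^{i\nu_0\ell}\Delta(k)}\,e^{ikx}(e^{-i\nu_+\ell}-e^{-i\nu_-\ell})$: now $e^{ikx}$ decays since $\Im(k)>0$ and $x\ge 0$, the differences of exponentials are bounded because $\Im(\nu_\pm)\le 0$ there (Lemma~\ref{lemma:charDbar}), and crucially the growing $e^{-ik\ell}$ has been absorbed into $e^{i\nu_0\ell}\Delta(k)$, which Lemma~\ref{lemma:Deltabound} bounds \emph{below} by a multiple of $|k-\tfrac{\alpha}{3\beta}|$. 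Your invocation of Lemma~\ref{lemma:Deltabound} for ``$\Delta(k)$'' alone misses that the lemma controls $e^{i\nu_n\ell}\Delta(k)$, not $\Delta(k)$, and it is precisely this extra exponential that compensates the growth you overlooked. A similar region-specific rewriting is needed on $\tilde D_\pm$ for the second (Dirichlet) term. Once this manipulation is in place, your argument closes exactly as you describe.
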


\begin{proof}
Starting from the unified transform formula \eqref{hls-sol} and recalling the definition \eqref{tilde-transform} along with the representation for $\mathcal Z(x, t)$ obtained via Duhamel's principle and the homogeneous Cauchy problem solution, we have
\eq{
\begin{split}
\breve u_2 (x, t) =
\frac{i}{2 \pi} \int_{\partial \tilde D} &\frac{e^{-i k \p{\ell - x} + i \omega t}}{\Delta(k)}  \bigg[ \p{e^{-i \nu_+ \ell} - e^{-i \nu_- \ell}} \omega' \int_{t'' = 0}^T e^{-i \omega t''} \int_{t' = 0}^{t''} \partial_x S[\mathfrak F(\cdot, t'); 0] (x, t'' - t') dt' dt''\\
&- i \p{\nu_- e^{-i \nu_+ \ell} - \nu_+ e^{-i \nu_- \ell}} \omega' \int_{t'' = 0}^T e^{-i \omega t''} \int_{t' = 0}^{t''} S[\mathfrak F(\cdot, t'); 0] (\ell, t'' - t') dt' dt'' \bigg] dk.
\end{split}
}
Interchanging the integrals with respect to $t'$ and $t''$  and then making the substitution $t'' \mapsto t'' + t'$ yields
\eq{\label{u2b-nn}
\begin{split}
\breve u_2 (x, t) 
= \frac{i}{2 \pi} \int_{t' = 0}^{T} \int_{\partial \tilde D} &\frac{e^{-i k \p{\ell - x} + i \omega (t - t')}}{\Delta(k)}  \bigg[ \p{e^{-i \nu_+ \ell} - e^{-i \nu_- \ell}} \omega' \int_{t'' = 0}^{T - t'} e^{-i \omega t''} \partial_x S[\mathfrak F(\cdot, t'); 0] (x, t'') dt''\\
&- i \p{\nu_- e^{-i \nu_+ \ell} - \nu_+ e^{-i \nu_- \ell}} \omega' \int_{t'' = 0}^{T - t'} e^{-i \omega t''} S[\mathfrak F(\cdot, t'); 0] (\ell, t'') dt'' \bigg] dk dt'.
\end{split}
}
The exponentials involved in the first half of the above integrand combine to
\eq{\label{u2b-nn-exp}
\begin{aligned}
\frac{1}{\Delta (k)} \, e^{-i k (\ell - x) + -i \omega (t'' + t' - t)} \p{e^{-i \nu_+ \ell} - e^{-i \nu_- \ell}} 
&= \frac{1}{e^{i \nu_0 \ell} \Delta (k)} \, e^{i k x + -i \omega (t'' + t' - t)} \p{e^{-i \nu_+ \ell} - e^{-i \nu_- \ell}}\\
&= \frac{1}{e^{i \nu_+ \ell} \Delta (k)} \, e^{-i \nu_0 (\ell - x) + -i \omega (t'' + t' - t)} \p{1 - e^{i \mu_0 \ell}}\\
&= \frac{1}{e^{i \nu_- \ell} \Delta (k)} \, e^{-i \nu_0 (\ell - x) + -i \omega (t'' + t' - t)} \p{e^{-i \mu_0 \ell} - 1},
\end{aligned}
}
where we recall that $\nu_0 = k$ and $\mu_0 = \nu_+ - \nu_-$. Each of the three forms on the right-hand side is suitable for working in each of the three regions $\tilde D_n \subset \tilde D$, $n\in\set{0, +, -}$. Importantly, by Lemma \ref{lemma:Deltabound},
$\frac{1}{e^{i \nu_n \ell} \Delta (k)} \longrightarrow 0$ as $|k| \to \infty$ with $k \in \tilde D_n$.
For $k \in \tilde D_0$ we have $\Im(\nu_0) > 0$, while for $k \in \tilde D_{\pm}$ we have $\Im(\nu_0) < 0$, so the factor containing $\nu_0$ also decays. Also, $\Im(\omega) < 0$ inside $\tilde D$, so for $t'' \ge T - t'$ the exponential $e^{-i \omega (t'' + t' - t)}$ decays as well. Finally, by the characterization of each $\tilde D_n$ provided by Lemma \ref{lemma:charDbar}, the differences of exponentials are bounded. Therefore, the entire exponential factor given by \eqref{u2b-nn-exp} decays as $|k| \to \infty$ in $\tilde D$. A similar argument can be made for the exponential emerging from the second half of  \eqref{u2b-nn}. Thus,  by analyticity and Cauchy's theorem, the  range of the integral in $t''$ can be augmented up to $T$,  resulting in the desired representation \eqref{alt-breveu2}.
\end{proof}

Proceeding similarly to the derivation of the half-line estimate \eqref{breveU2-bound1}, we use the representation \eqref{alt-breveu2} to obtain
\eq{
\norm{\breve u_2}_{L_t^q ((0, T); H_x^{s, p} (0, \ell))} 
\leq \int_{t' = 0}^T \norm{S \big[0, 0, S[\mathfrak F(\cdot, t'); 0] (\ell, \cdot),  S[\partial_x\mathfrak F(\cdot, t'); 0] (\ell, \cdot); 0\big] (\cdot, \cdot - t')}_{L_t^q ((0, T); H_x^{s, p} (0, \ell))} dt'. \label{breveu2-bound1}}
Following the same reasoning as below \eqref{breveU2-bound1}, we can estimate the Strichartz norm on the right-hand side of \eqref{breveu2-bound1} by combining Theorem \ref{thm:strichartz} for the reduced interval problem (with $T'=T$, $\psi_0 = S[\mathfrak F(\cdot, t'); 0] (\ell, \cdot)$ and $\psi_1 = S[\partial_x \mathfrak F(\cdot, t'); 0] (\ell, \cdot)$) with the time estimate \eqref{amo910} (for $\mathcal W = S[\mathfrak F(\cdot, t'); 0] (\ell, \cdot)$ and $s$, as well as for $\mathcal W = S[\partial_x  \mathfrak F(\cdot, t'); 0] (\ell, \cdot)$ and $s-1$) to conclude that
\eq{
\norm{\breve u_2}_{L_t^q ((0, T); H_x^{s, p} (0, \ell))}
&\cle
 \int_{t' = 0}^T \Big(
\norm{S[\mathfrak F(\cdot, t'); 0] (\ell, \cdot)}_{H_t^{\frac{s+1}{3}} (0, T)}
+
\norm{S[\partial_x \mathfrak F(\cdot, t'); 0] (\ell, \cdot)}_{H_t^{\frac{s}{3}} (0, T)}\Big) dt'
\nn\\
&\lesssim 
 \norm{\mathfrak F}_{L_t^1((0, T); H_x^s(0, \ell))}
\label{breveu2-bound2}
}
for any $0 \leq s < \frac 12$ and any admissible pair \eqref{adm-pair}.
We remark that Theorem \ref{thm:strichartz} does apply to \eqref{breveu2-bound1} despite the time shift that the theorem does not consider. Indeed, the factor induced by the shift is purely oscillatory and so it disappears once norms are taken, leaving the proof of the theorem unaffected.

Our final task in regard to the decomposition inequality \eqref{Udecomp-bound-Strich} is to estimate the Strichartz norm of the term $\breve u_3 = S\big[0, 0, \breve U_2|_{x=\ell}, \partial_x \breve U_2|_{x=\ell}; 0\big]$. Recalling that $\breve U_2 = S\big[0, \mathcal Z|_{x=0}; 0\big]$ and $\mathcal Z = S\big[0; \mathfrak F\big]$, we realize that we once again face the same issue as with $\breve U_2$ and $\breve u_2$, except that now the forced Cauchy solution $\mathcal Z$ is nested one level deeper within the finite interval solution (through the traces of the half-line solution $\breve U_2$, whose boundary datum is a trace of $\mathcal Z$). The remedy here is a simple application of Lemma \ref{lemma:alt-breveU2}. 
Specifically, extending the relevant boundary data by zero outside $(0, T)$ via the reasoning given below \eqref{breveU2-bound1}, we use Theorem \ref{thm:strichartz} to infer
\eq{\norm{\breve u_3}_{L_t^q ((0, T); H_x^{s, p} (0, \ell))} \leq c_s \big(1+T^{\frac 1q+\frac 12}\big) \Big(\big\|\breve U_2|_{x=\ell}\big\|_{H_t^{\frac{s + 1}{3}} (0, T)} + \big\|\partial_x \breve U_2|_{x=\ell}\big\|_{H_t^{\frac{s}{3}} (0, T)}\Big).}
We then apply Lemma \ref{lemma:alt-breveU2} and Minkowski's inequality to obtain
\eq{
\begin{split}
\norm{\breve u_3}_{L_t^q ((0, T); H_x^{s, p} (0, \ell))} 
\leq
c_s \big(1+T^{\frac 1q+\frac 12}\big)
\bigg(&\int_{t' = 0}^T \norm{S \b{0, S[\mathfrak F(\cdot, t'); 0] (0, \cdot); 0} (\ell, \cdot - t')}_{H_t^{\frac{s + 1}{3}} (0, T)} dt'\\
&+ \int_{t' = 0}^T \norm{\partial_x S \b{0, S[\mathfrak F(\cdot, t'); 0] (0, \cdot); 0} (\ell, \cdot - t')}_{H_t^{\frac{s}{3}} (0, T)} dt'
\bigg).
\end{split}}
At this point, Theorem \ref{thm:hl-te} implies
\eq{\norm{\breve u_3}_{L_t^q ((0, T); H_x^{s, p} (0, \ell))} 
\leq
c_s \big(1+T^{\frac 1q+\frac 12}\big)  \max\set{T e^{M_\Delta T}, 1}
\int_{t' = 0}^T \norm{S[\mathfrak F(\cdot, t'); 0] (0, \cdot)}_{H_t^{\frac{s + 1}{3}} (0, T)} dt', \quad s\geq 0,}
again considering the fact that translations in time have no effect on the proof of the theorem. After this step, we invoke the time estimate \eqref{amo910} to deduce
\eq{\label{u3breve-est}
\norm{\breve u_3}_{L_t^q ((0, T); H_x^{s, p} (0, \ell))} 
\leq
c_s \big(1+T^{\frac 1q+\frac 12}\big)  \max\set{T e^{M_\Delta T}, 1} \norm{\mathfrak F}_{L_t^1((0, T); H_x^s (\R))}}
for any $s\geq 0$ and and admissible pair \eqref{adm-pair}.

\vspace*{2mm}

We are now ready to finalize the estimation of the Strichartz norm of the forced linear interval problem \eqref{hls-ibvp}. Indeed, the decomposition inequality \eqref{Udecomp-bound-Strich} combined with the component estimates \eqref{amo2.17}, \eqref{amo2.38},  \eqref{U1breve-est}, \eqref{breveU2-bound2}, \eqref{u1breve-est}, \eqref{breveu2-bound2}, \eqref{u3breve-est} and the extension inequalities \eqref{u0-party}, \eqref{f-party}, readily yields the Strichartz estimate \eqref{strich-est}. 
Moreover, combining the analogue of the decomposition \eqref{Udecomp-bound} for the space $L_t^2((0, T); H_x^{s+1}(0, \ell))$  with the smoothing estimates \eqref{smoothing-hc}, \eqref{smoothing-fc}, \eqref{smoothing-hl}, \eqref{smoothing-fi}, the extension inequalities \eqref{u0-party}, \eqref{f-party} for the initial data and the forcing, inequalities \eqref{g-g0-u0} and \eqref{low-s-extension} for $g_0(t)$, the analogue of the latter inequality for $\mathcal Z(0, t)$ instead of $g_0(t)$,  and the extension inequalities~\eqref{psi-extensions} together with the time estimates \eqref{amo910} and \eqref{V-te}, we obtain the smoothing estimate~\eqref{smooth-est} in the low-regularity range $0\leq s< \frac 12$. 
This step completes the proof of Theorem \ref{lin-est-t} for the forced linear interval problem \eqref{hls-ibvp} in the low-regularity setting. 

\subsection{Time estimates for the half-line problem}\label{hl-te-ss}

As noted earlier, the decomposition inequalities \eqref{Udecomp-bound} and \eqref{Udecomp-bound-Strich} introduce the need for studying the time regularity of the solution to the reduced half-line problem \eqref{HNLSreducedHL}. In particular,  it is necessary to estimate the norms $\norm{V(x)}_{H_t^{\frac{s+1}{3}} (0, T')}$ and  $\norm{\partial_x V(x)}_{H_t^{\frac s3} (0, T')}$ for each $x\in (0, \infty)$. Furthermore, analogously to the finite interval, we wish to establish the analogue of the smoothing effect given in Corollary \ref{smoothing-fi-c}.
This latter task will require us to estimate, for all $\sigma \le s + 1$, the $\sigma$th spatial derivative of $V$ in $H_t^{\frac{s + 1}{3}} (0, T')$. Our results can be combined into the following theorem:
\begin{theorem}[Time estimates for the reduced half-line problem]\label{thm:hl-te}
Suppose $V_0 \in H^{\frac{s+1}{3}}(\R)$ and $s\geq -1$. For any $\sigma \in \N_0$ such that $\sigma \leq s + 1$, the $\sigma$th spatial derivative of the solution $V(x, t)$ to the reduced half-line problem \eqref{HNLSreducedHL} belongs to $H^{\frac{s+1-\sigma}{3}}(0, T')$ as a function of~$t$. In particular, we have the estimate
\eq{
\norm{\partial_x^\sigma V}_{L_x^\infty((0, \infty); H_t^{\frac{s + 1 - \sigma}{3}} (0, T'))} \le c \max \big\{T' e^{M_\Delta T'}, 1\big\} \norm{V_0}_{H^{\frac{s+1}{3}}(\R)}, 
\label{V-te}
}
where $c>0$ is a constant that depends only on $s, \alpha, \beta, \delta$, and the constant $M_\Delta$ is given by \eqref{md-def}. 
\end{theorem}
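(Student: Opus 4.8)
The plan is to mirror, on the half-line, the argument proving Theorem~\ref{thm:int-te}, now starting from the unified transform representation of the solution of the reduced half-line problem~\eqref{HNLSreducedHL} given by expression~(65) of~\cite{amo2024}. In analogy with~\eqref{reducedsol}, and using the support condition $\supp(V_0)\subset[0,T']$ to replace the time transform by the Fourier transform $\mathcal F\set{V_0}(\omega)$ (cf. Remark~\ref{ft-til-r}), this representation takes the form
\eq{\label{V-rep-plan}
V(x,t)=\frac{i}{2\pi}\int_{\partial\tilde D_0}e^{ikx+i\omega t}\,\omega'(k)\,\mathcal F\set{V_0}(\omega)\,dk,
}
with $\omega$ as in~\eqref{omega} (cf. the formula displayed in the proof of Lemma~\ref{lemma:alt-breveU2}; if a denominator proportional to $\nu_+-\nu_-$ is retained, it is bounded below away from the branch points by the explicit expression for $\nu_\pm$, leaving the argument unchanged). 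As in Figure~\ref{fig:Dbranches}, the contour $\partial\tilde D_0$ splits into the two unbounded complex curve branches $\Gamma_1$, $\Gamma_3$ along which $\Im(\omega)=0$ and the bounded circular arc $\Gamma_2$; by the symmetry of the two branches it suffices to estimate the $\Gamma_1$- and $\Gamma_2$-pieces, which I call $V_1$ and $V_2$. Since $\partial_x^\sigma$ merely inserts a factor $(ik)^\sigma$ in~\eqref{V-rep-plan}, it remains to bound $\partial_x^\sigma V_1$ and $\partial_x^\sigma V_2$ in $H_t^{m}(0,T')$, $m:=\tfrac{s+1-\sigma}{3}$, uniformly in $x\in(0,\infty)$.

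For $V_1$, I would parametrize the curve by~\eqref{gamma1}, change variables from $r$ to $\omega$ — legitimate since $\tfrac{d\omega}{dr}>0$, exactly as in~\eqref{dtaudr} — and note that $\omega'(k)\,dk=d\omega$ turns $V_1(x,\cdot)$ into an inverse Fourier transform in $t$, so that $\mathcal F_t\set{\partial_x^\sigma V_1(x,\cdot)}(\omega)=i(i\gamma_1)^\sigma e^{i\gamma_1 x}\mathcal F\set{V_0}(\omega)\chi_{[\omega_0,\infty)}(\omega)$. Three uniform bounds then finish the estimate: the growth inequality $|\gamma_1|\le c\,|\omega|^{\frac13}$ for $r\ge r_0$, proved exactly as~\eqref{gamma1-omega}; the bound $|e^{i\gamma_1 x}|=e^{-\Im(\gamma_1)x}\le 1$ for every $x\ge0$, valid because $\Gamma_1\subset\overline{D_0}\subset\set{\Im k\ge 0}$; and Plancherel's theorem. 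Together with the identity $m+\tfrac{\sigma}{3}=\tfrac{s+1}{3}$, these give $\norm{\partial_x^\sigma V_1(x,\cdot)}_{H_t^m(0,T')}\cle\norm{V_0}_{H^{\frac{s+1}{3}}(\R)}$ with a constant depending only on $s,\alpha,\beta,\delta$ and, in particular, independent of $T'$.

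For $V_2$, I would instead exploit that $\Gamma_2$ is bounded: take $j\in\N_0$ time derivatives with $j\le\ceil m$, parametrize by~\eqref{gamma2}, and pass to absolute values. On $\Gamma_2$ one has $\Im(\gamma_2)=R_\Delta\sin\theta\ge0$ (hence $|e^{i\gamma_2 x}|\le1$ uniformly in $x\ge0$), $\Im(\omega)\le 0$ with $|\omega|\le M_\Delta$ as in~\eqref{md-def}, while $|\gamma_2|$ and $|\omega'(\gamma_2)|$ are bounded by fixed constants; combining this with $\supp(V_0)\subset[0,T']$ and the Cauchy--Schwarz inequality — writing $e^{i\omega t}\mathcal F\set{V_0}(\omega)=\int_0^{T'}e^{i\omega(t-t')}V_0(t')\,dt'$ and estimating $e^{-\Im(\omega)(t-t')}\le e^{M_\Delta T'}$ — yields $|\partial_t^j\partial_x^\sigma V_2(x,t)|\cle M_\Delta^{\,j}e^{M_\Delta T'}\sqrt{T'}\,\norm{V_0}_{L^2(0,T')}$ uniformly in $x\ge0$ and $t\in[0,T']$. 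Summing over $j=0,\dots,\ceil m$ then gives $\norm{\partial_x^\sigma V_2(x,\cdot)}_{H_t^m(0,T')}\cle T'e^{M_\Delta T'}\norm{V_0}_{L^2(0,T')}\le T'e^{M_\Delta T'}\norm{V_0}_{H^{\frac{s+1}{3}}(\R)}$, the last step using $\supp(V_0)\subset[0,T']$ together with $s\ge-1$. Adding the contributions of $V_1$, $V_2$ and the symmetric $V_3$, taking the supremum over $x\in(0,\infty)$, and interpolating to cover non-integer $s\ge-1$ then delivers~\eqref{V-te}; the corresponding smoothing effect on the half-line follows by summing~\eqref{V-te} over $\sigma=0,\dots,s+1$, exactly as in Corollary~\ref{smoothing-fi-c}.

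The work here is mostly bookkeeping rather than a single hard estimate: one must check that the contour decomposition and the change of variables used for $V_1$ commute with differentiation in both $x$ and $t$, keep careful track of how $\ceil m$ enters the geometric sum over $j$ in the $\Gamma_2$-piece, and verify that $\sigma\le s+1$ is exactly the range for which $m\ge0$. The one genuinely new point compared to~\cite{amo2024} — where these time estimates were not needed — is the uniformity in $x\in(0,\infty)$; this is obtained essentially for free, since every model contour lies in the closed upper half-plane and hence $e^{ikx}$ acts as a contraction in $x$.
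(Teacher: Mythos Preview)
Your proposal is correct and follows essentially the same approach as the paper: the same unified transform representation~\eqref{V-rep-plan}, the same decomposition $\partial\tilde D_0=\Gamma_1\cup\Gamma_2\cup\Gamma_3$, the Fourier inversion/Plancherel argument on $\Gamma_1$ together with the growth bound~\eqref{gamma1-omega}, and the direct pointwise-in-$j$ estimate via Cauchy--Schwarz on the bounded arc $\Gamma_2$. The only (harmless) deviation is your appeal to interpolation for non-integer $s$, which the paper does not need since the Plancherel argument on $\Gamma_1$ already yields the $H_t^m$ bound for every real $m\ge 0$ and the $\Gamma_2$ piece is controlled via the embedding $H_t^{\ceil m}\hookrightarrow H_t^m$.
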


\begin{proof}
Let $m=\frac{s+1-\sigma}{3}$, and start by assuming $\sigma = 0$. By the expression (65) in \cite{amo2024} and the support condition for $V_0$, the solution to problem \eqref{HNLSreducedHL} is given by the unified transform formula
\eq{V(x, t) = -\frac{1}{2 \pi} \int_{\partial \tilde D_0} e^{i k x + i \omega t} \omega' \mathcal{F} \{V_0\} (\omega) dk,}
where the contour $\partial \tilde D_0$ is defined by \eqref{dntil-def}. 
Recalling that $\partial \tilde D_0 = \Gamma_1 \cup \Gamma_2 \cup \Gamma_3$ (see Figure \ref{fig:Dbranches}), we consider each contour separately and write
\eq{\label{HLsoldecomp}
V(x, t) = -\frac{1}{2 \pi} \sum_{n = 1}^3 I_{\Gamma_n}(x, t),
\quad
I_{\Gamma_n}(x, t) := \int_{\Gamma_n} e^{i k x + i \omega t} \omega' \mathcal{F} \{V_0\} (\omega) dk.
}

Since $\Gamma_1$ and $\Gamma_3$ are similar, we will only examine $I_{\Gamma_1}$, which when parametrized by \eqref{gamma1} becomes
\eq{
I_{\Gamma_1}(x, t)
= \int_\infty^{r_0} e^{i \gamma_1 x + i \omega t} \mathcal{F} \{V_0\} (\omega) \frac{d\omega}{dr} dr.}
Making the change of variables from $r$ to $\omega$, we have
\eq{
I_{\Gamma_1}(x, t) 
=  -\int_{\omega_0}^\infty e^{i \omega t} \cdot e^{i \gamma_1 x} \mathcal{F} \{V_0\} (\omega)  d\omega
}
which implies
$\mathcal{F}_t \set{I_{\Gamma_1}} (x, \omega) = -2\pi e^{i \gamma_1 x} \mathcal{F} \{V_0\} (\omega) \chi_{[\omega_0, \infty)}$.
In turn, substituting for $\gamma_1$ via \eqref{gamma1} and using the fact that $r, x>0$, we find
\eq{
\norm{I_{\Gamma_1} (x)}_{H_t^m (0, T')}^2 
\le \norm{I_{\Gamma_1} (x)}_{H_t^m (\R)}^2
\cle
\int_{\omega_0}^\infty \p{1 + \omega^2}^m \abs{\mathcal{F} \{V_0\} (\omega)}^2  d\omega 
\leq \norm{V_0}_{H^m (\R)}^2.
 \label{G1-est}
}

For $I_{\Gamma_2}$, the parametrization \eqref{gamma2} yields
\eq{I_{\Gamma_2}(x, t) = \int_{\frac{\pi}{2} + \phi_0}^{\frac{\pi}{2} - \phi_0} e^{i \gamma_2 x + i \omega t} \frac{d\omega}{d\gamma_2} \mathcal{F} \{V_0\} (\omega) i R_\Delta e^{i \theta} d\theta.}
Differentiating in $t$, taking the magnitude and noting that $\sin(\theta) \ge 0$ and $\Im(\omega) \le 0$ on $\Gamma_2$, we find
\eqs{
\big|\partial_t^j I_{\Gamma_2}(x, t)\big| &\le \int_{\frac{\pi}{2} - \phi_0}^{\frac{\pi}{2} + \phi_0} e^{-R_\Delta \sin(\theta) x - \Im(\omega) t} \abs{\omega}^j \abs{\frac{d\omega}{d\gamma_2}} \abs{\mathcal{F} \{V_0\} (\omega)} R_\Delta d\theta\\
&\le \int_{\frac{\pi}{2} - \phi_0}^{\frac{\pi}{2} + \phi_0} e^{-\Im(\omega) t} \abs{\omega}^j \abs{\frac{d\omega}{d\gamma_2}} \abs{\mathcal{F} \{V_0\} (\omega)} R_\Delta d\theta}
for each $j\in\N_0$. Using the bound \eqref{md-def} for $\omega$ and the bound \eqref{domegadkbound} for the derivative of $\omega$ (which allows us to absorb the derivative as a constant), we further obtain
\eq{\big|\partial_t^j I_{\Gamma_2}(x, t)\big| \cle M_\Delta^j \int_{\frac{\pi}{2} - \phi_0}^{\frac{\pi}{2} + \phi_0} e^{-\Im(\omega) t} \abs{\mathcal{F} \{V_0\} (\omega)} d\theta.
}
Next, thanks to the support condition for $V_0$, we switch from $\mathcal{F} \{V_0\} (\omega)$ to $\tilde V_0 (\omega, T')$ defined by \eqref{tilde-transform}, so that 
\eqs{
\big|\partial_t^j I_{\Gamma_2}(x, t)\big| &\cle M_\Delta^j  \int_{\frac{\pi}{2} - \phi_0}^{\frac{\pi}{2} + \phi_0} e^{-\Im(\omega) t} \ \bigg|\int_0^{T'} e^{-i \omega t'} V_0 (t') dt'\bigg| d\theta
\le M_\Delta^j \int_{\frac{\pi}{2} - \phi_0}^{\frac{\pi}{2} + \phi_0}   \int_0^{T'} e^{\Im(\omega)(t'-t)} \abs{V_0 (t')} dt' d\theta.
}
Hence, using the bound $e^{\Im(\omega) (t'-t)} \leq e^{M_\Delta T'}$ with  $M_\Delta$ given by \eqref{md-def}, we have
\eqs{
\big|\partial_t^j I_{\Gamma_2}(x, t)\big| \cle M_\Delta^j e^{M_\Delta T'} \int_{\frac{\pi}{2} - \phi_0}^{\frac{\pi}{2} + \phi_0} \int_0^{T'} \abs{V_0 (t')} dt' d\theta
\lesssim
\sqrt {T'} M_\Delta^j  e^{M_\Delta T'} \norm{V_0}_{L^2 (0, T')}
}
with the last step due to the Cauchy-Schwarz inequality. Since the right-hand side does not depend on $t$, for each $x\in (0, \infty)$ and any $m\geq 0$ (or, equivalently, $s\geq -1$) we have
\eq{\label{G2-est}
\norm{I_{\Gamma_2} (x)}_{H_t^m (0, T')}
\leq
\norm{I_{\Gamma_2} (x)}_{H_t^{\left\lceil m\right\rceil} (0, T')}
=
\sum_{j=0}^{\left\lceil m\right\rceil} \big\|\partial_t^j I_{\Gamma_2} (x)\big\|_{L_t^2(0, T')}
\lesssim 
T'   e^{M_\Delta T'} 
\frac{M_\Delta^{\left\lceil m\right\rceil+1}-1}{M_\Delta-1} \norm{V_0}_{L^2 (0, T')}.
}

Also, as noted earlier, the estimation of $I_{\Gamma_3}$ is similar to $I_{\Gamma_1}$ and leads to the analogue of \eqref{G1-est}. Thus,  \eqref{G1-est} and \eqref{G2-est} combined with the decomposition \eqref{HLsoldecomp} yield the time estimate \eqref{V-te} for $V$ when $\sigma = 0$.

The case where $\sigma \ne 0$ can be established in a similar fashion, since differentiating \eqref{HLsoldecomp} $\sigma$ times in $x$ only results in additional factors of $k$ inside the integrand. 
Along $\Gamma_1$, these factors can be handled via \eqref{gamma1-omega}. 
A similar argument can be employed along $\Gamma_3$. Finally, on $\Gamma_2$, the relevant integrand gains a factor of $\big(\frac{\alpha}{3 \beta} + R_\Delta e^{i \theta}\big)^\sigma$, which has a magnitude bounded by the constant $\big(\frac{\abs \alpha}{3 \beta} + R_\Delta\big)^\sigma$. Hence, the argument for $I_{\Gamma_2}$ also follows through for $\partial_x^\sigma I_{\Gamma_2}$ provided that $m \ge 0$ i.e. $s \ge \sigma - 1$.
\end{proof}

An immediate consequence of estimate \eqref{V-te} is the following corollary, which provides the half-line analogue of the smoothing effect of Corollary \ref{smoothing-fi-c} and can be established in the same way.
\begin{corollary}[Smoothing effect]
The solution $V(x, t)$ to the reduced half-line problem \eqref{HNLSreducedHL} satisfies the estimate
\eq{\label{smoothing-hl}
\norm{V}_{L_t^2((0, T'); H_x^{s+1}(0, \ell))} \leq c \max \big\{T' e^{M_\Delta T'}, 1\big\} \sqrt{\ell (s + 2)} \norm{V_0}_{H^{\frac{s + 1}{3}}(\R)}, \quad s\geq -1,
}
where $c$ is the constant of estimate \eqref{V-te}.
\end{corollary}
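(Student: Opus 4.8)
The plan is to mirror the proof of Corollary \ref{smoothing-fi-c}, now feeding in the half-line time estimate \eqref{V-te} in place of the finite-interval estimate \eqref{v-te}: the smoothing norm is a finite sum of $L_t^2$ norms of spatial derivatives, each of which is dominated by an $H_t^m$ norm with $m\geq 0$, and those are precisely what Theorem \ref{thm:hl-te} controls.

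First I would handle an integer $s\geq -1$. Using the $W^{s+1,2}$ characterization of $H_x^{s+1}(0,\ell)$ together with Fubini--Tonelli to interchange the $x$- and $t$-integrations, I write
\eqs{
\norm{V}_{L_t^2((0, T'); H_x^{s+1}(0, \ell))}^2
= \sum_{\sigma=0}^{s+1} \int_0^\ell \norm{\partial_x^\sigma V(x, \cdot)}_{L_t^2(0, T')}^2 \, dx.
}
Because $\sigma\leq s+1$ the exponent $\frac{s+1-\sigma}{3}$ is nonnegative, so on the bounded interval $(0,T')$ each inner norm is at most $\norm{\partial_x^\sigma V(x,\cdot)}_{H_t^{(s+1-\sigma)/3}(0,T')}$, hence at most the spatial supremum $\norm{\partial_x^\sigma V}_{L_x^\infty((0,\infty);H_t^{(s+1-\sigma)/3}(0,T'))}$. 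Applying \eqref{V-te} for each $\sigma\in\{0,\dots,s+1\}$ bounds this uniformly in $x$ by $c\max\{T'e^{M_\Delta T'},1\}\,\norm{V_0}_{H^{(s+1)/3}(\R)}$; summing over the $s+2$ values of $\sigma$ and integrating the resulting constant over $x\in(0,\ell)$ contributes a factor $\ell(s+2)$, and taking square roots gives exactly \eqref{smoothing-hl} for integer $s\geq -1$.

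For a general real $s>-1$ I would pass from the integer case by complex interpolation of the solution operator $V_0\mapsto V$: choosing integers $s_1,s_2\geq -1$ with $s_1<s<s_2$ and $\theta\in(0,1)$ with $s=(1-\theta)s_1+\theta s_2$, the operator is bounded from $H^{(s_j+1)/3}(\R)$ into $L_t^2((0,T');H_x^{s_j+1}(0,\ell))$ for $j=1,2$ by what was just proved, and interpolating yields the estimate at the intermediate exponent. The extra interpolation constant and the $s$-dependent factor $\sqrt{\ell(s+2)}\,\max\{T'e^{M_\Delta T'},1\}$ vary continuously in $s$ and are absorbed into $c$, so the stated form of \eqref{smoothing-hl} is recovered.

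This is essentially bookkeeping, so no serious obstacle is expected; the only points deserving care are that the source space $H^{(s+1)/3}(\R)$ and the vector-valued Sobolev target $L_t^2((0,T');H_x^{s+1}(0,\ell))$ interpolate in the expected way (standard for Bochner--Sobolev scales), and that the range $\sigma\leq s+1$ required by Theorem \ref{thm:hl-te} matches exactly the derivatives appearing in the $H_x^{s+1}$ norm, which it does.
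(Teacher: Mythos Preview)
Your proposal is correct and follows essentially the same route as the paper: the paper states that this corollary ``can be established in the same way'' as Corollary~\ref{smoothing-fi-c}, i.e., expand the $H_x^{s+1}(0,\ell)$ norm as a sum of $L_x^2$ norms of $\partial_x^\sigma V$ for integer $s$, swap the $x$- and $t$-integrations, dominate each $L_t^2$ norm by the corresponding $H_t^{(s+1-\sigma)/3}$ norm, invoke \eqref{V-te}, and then interpolate for non-integer $s$. Your write-up matches this exactly.
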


Finally, for completeness of presentation and since these were not derived in the half-line work \cite{amo2024}, below we provide time estimates for the full forced linear half-line problem \eqref{halfline}. We emphasize that these are not needed for the proof of Theorem \ref{lin-est-t} given earlier. Indeed, the only half-line time estimates used in that proof are those of Theorem \ref{thm:hl-te} for the reduced half-line problem \eqref{HNLSreducedHL}.

\begin{theorem}[Time estimates for the full half-line problem]\label{thm:hl-te2}
Suppose $0 \le s \le 2$ with $s \ne \frac{1}{2}, \frac{3}{2}$. Then,  the solution $U(x, t)$ to the forced linear half-line problem \eqref{halfline} belongs to $H^{\frac{s + 1}{3}}(0, T)$ as a function of $t$, and the derivative $\partial_x U(x, t)$ belongs to $H^{\frac{s}{3}}(0, T)$ as a function of $t$. In particular, we have the estimate
\eq{
\begin{aligned}
&\quad
\norm{U}_{L_x^\infty((0, \infty); H_t^{\frac{s + 1}{3}} (0, T))}  + \norm{\partial_x U}_{L_x^\infty((0, \infty); H_t^{\frac{s}{3}} (0, T))} 
\\
&\cle
\max \set{T e^{2 M_\Delta T}, 1} \max \set{T^{\frac{\p{1 - 2 s}}{6}}, \sqrt T, 1} \Big[ \big(1+\sqrt T\,\big) \norm{U_0}_{H^{s}(0, \infty)}
+ \norm{g_0 (x)}_{H^{\frac{s + 1}{3}}(0, T)}
\\
&\quad
+ A(s, T) \norm{F}_{L_t^2 \p{(0, T); H_x^{s} (0, \infty)}}
 \Big]
\end{aligned}}
where
\eq{\label{ast-def}
A(s, T) = \begin{cases} \max \set{T^{\frac{1}{2}} \big(1 + T^{\frac{1}{2}}\big), T^{a(s)}, T^{a(s - 1)}}, &0\leq s < 2, \\ 1 + T^{\frac{1}{2}} \big(1 + T^{\frac{1}{2}}\big), &s=2, 
\end{cases}
\quad
a(s) =
\begin{cases}
\frac{1 - 2 s}{6}, & -1 \le s < \frac{1}{2}\\
\frac{2 - s}{3}, & \frac{1}{2} \le s < 2.
\end{cases}}
\end{theorem}

\begin{proof}
Theorem \ref{thm:hl-te2} follows by combining the linear decompositions \eqref{U-decomp1} and \eqref{U-decomp2} with Theorem \ref{hl-t1} on the Cauchy problem estimates proved in \cite{amo2024} and Theorem \ref{thm:hl-te} on the new time estimates for the reduced half-line problem derived above. 
\end{proof}

\section{Nonlinear theory: proofs of Theorems \ref{high-wp-t} and \ref{low-wp-t}}\label{wp-s}

\subsection{High-regularity well-posedness}
Suppose $\frac 12 < s \leq 2$ with $s\neq \frac 32$. For lifespan $T>0$ yet to be determined, let 
$$
X_T^s := C_t([0, T]; H_x^s(0, \ell)) \cap L_t^2((0, T); H_x^{s+1}(0, \ell))
$$ 
with norm equal to the maximum of the norms of the two spaces involved. 
Consider the iteration map $u \mapsto \Phi(u)$ on $X_T^s$, where 
\eq{
\Phi(u) := S\big[u_0, g_0, h_0, h_1; \kappa |u|^{\lambda-1}u\big]
\label{Phi}
}
and $S\big[u_0, g_0, h_0, h_1; f\big]$ is the solution to the forced linear problem \eqref{hls-ibvp}. Then, the Sobolev estimates \eqref{sob-est} and~\eqref{smooth-est} imply
\eq{\label{fest}
\begin{split}
\norm{\Phi(u)}_{X_T^s} 
\leq
c(s, T) \Big(
\norm{u_0}_{H^s(0, \ell)}  
+
\norm{g_0}_{H^{\frac{s+1}{3}}(0, T)}
&+
\norm{h_0}_{H^{\frac{s+1}{3}}(0, T)}
+
\norm{h_1}_{H^{\frac{s}{3}}(0, T)}
\\
&
+ 
\sqrt T \norm{\kappa |u|^{\lambda-1}u}_{L_t^\infty((0, T); H_x^s(0, \ell))} 
\Big),
\end{split}
}
where $c(s, T) := \max\big\{c_1(s, T), c_2(s, T), c_2(s, T) \sqrt T\big\}$. 
Since $s>\frac 12$, by the $H_x^s(0, \ell)$ algebra property for $\lambda$ odd or by Lemma 3.1 in \cite{bo2016} otherwise (which generalizes the algebra property to any $\lambda$ satisfying \eqref{l-cond}), estimate \eqref{fest} becomes
\eq{
\norm{\Phi(u)}_{X_T^s}
\leq
c(s, T) \Big(
\norm{u_0}_{H^s(0, \ell)}  
+
\norm{g_0}_{H^{\frac{s+1}{3}}(0, T)}
+
\norm{h_0}_{H^{\frac{s+1}{3}}(0, T)}
+
\norm{h_1}_{H^{\frac{s}{3}}(0, T)}
+ 
|\kappa| c_s \sqrt T \norm{u}_{X_T^s}^\lambda\Big).
}
Defining the radius
\eq{
\varrho = \varrho(T) := 2 c(s, T) \Big(
\norm{u_0}_{H^s(0, \ell)}  
+
\norm{g_0}_{H^{\frac{s+1}{3}}(0, T)}
+
\norm{h_0}_{H^{\frac{s+1}{3}}(0, T)}
+
\norm{h_1}_{H^{\frac{s}{3}}(0, T)}\Big)
}
and assuming $\varrho>0$ (if not, the zero solution certainly exists and is unique as a result of the separate uniqueness argument below), we let $B_\varrho(0) \subset X_T^s$ be the ball of radius $\varrho$ centered at the origin. Then, for any $u \in \overline{B_\varrho(0)}$, 
\eq{
\norm{\Phi(u)}_{X_T^s}
\leq
\frac{\varrho}{2}
+ 
 |\kappa| c_s c(s, T) \sqrt T \varrho^\lambda,
}
therefore, if $T>0$ satisfies the condition
\eq{\label{T-into}
|\kappa| c_s c(s, T) \sqrt T \varrho^{\lambda-1} \leq \frac12
}
it follows that $\Phi(u) \in \overline{B_\varrho(0)}$, i.e. $u\mapsto \Phi(u)$ maps $\overline{B_\varrho(0)}$ into $\overline{B_\varrho(0)}$. 

Similarly, for any two $u_1, u_2 \in \overline{B_\varrho(0)}$, the Sobolev estimates \eqref{sob-est} and \eqref{smooth-est} imply
\eq{\label{diffs}
\norm{\Phi(u_1)-\Phi(u_2)}_{X_T^s} 
\leq
|\kappa| c(s, T) \sqrt T
\norm{|u_1|^{\lambda-1}u_1-|u_2|^{\lambda-1}u_2}_{L_t^\infty((0, T); H_x^s(0, \ell))}
}
with $c(s, T)$ as above. The norm of the difference of nonlinearities on the right hand side will be handled via the following lemma.

\begin{lemma} \label{lemma:H-pow-diff}
Suppose that $s > \frac{1}{2}$, $\lambda > 1$ satisfies the conditions \eqref{l-cond} of Theorem \ref{high-wp-t}, and $u_1, u_2 \in H^s (0, \ell)$. Then,
\eqs{ 
\norm{|u_1|^{\lambda - 1} u_1 - |u_2|^{\lambda - 1} u_2}_{H^s (0, \ell)}
\leq c(s, \lambda) \p{\norm{u_1}^{\lambda - 1} + \norm{u_2}^{\lambda - 1}} \norm{u_1 - u_2}_{H^s (0, \ell)}.
}
\end{lemma}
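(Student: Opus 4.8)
The statement is a near-immediate reduction to Lemma 3.1 of \cite{bo2016}, so the plan is to extract exactly the hypotheses that lemma requires and verify they follow from $s>\tfrac12$ together with the conditions \eqref{l-cond}. First I would recall the content of that cited lemma: for a power nonlinearity $F(z)=|z|^{\lambda-1}z$ on a bounded interval, one has a difference estimate of the form $\norm{F(u_1)-F(u_2)}_{H^s} \lesssim (\norm{u_1}_{H^s}^{\lambda-1}+\norm{u_2}_{H^s}^{\lambda-1})\norm{u_1-u_2}_{H^s}$, valid provided $F$ has enough smoothness relative to $s$ to permit the chain/Leibniz rule estimates in $W^{s,2}$, which is precisely the role of \eqref{l-cond} (the conditions $\lambda\in 2\N\!+\!1$ giving a polynomial, or $\floor{\lambda}\ge$ a threshold controlled by $\floor{s}$, guaranteeing $F\in C^{\lceil s\rceil}$ with the right growth). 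So the first step is purely bookkeeping: state the lemma from \cite{bo2016}, note that $H^s(0,\ell)=W^{s,2}(0,\ell)$ for $s\ge0$ (already recalled in the paper), and observe that the conditions \eqref{l-cond} are exactly the smoothness/integer-threshold conditions under which \cite{bo2016} proves its estimate.

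\textbf{Key steps in order.} (1) Fundamental-theorem-of-calculus representation: write $F(u_1)-F(u_2)=\int_0^1 \frac{d}{d\tau}F\big(u_2+\tau(u_1-u_2)\big)\,d\tau = \int_0^1 \big[F_z(w_\tau)(u_1-u_2)+F_{\bar z}(w_\tau)\overline{(u_1-u_2)}\big]\,d\tau$ with $w_\tau:=u_2+\tau(u_1-u_2)$, where $F_z,F_{\bar z}$ denote the Wirtinger derivatives of $|z|^{\lambda-1}z$. (2) Apply the $H^s$-norm, pull it inside the $\tau$-integral by Minkowski, and reduce to bounding $\norm{G(w_\tau)\cdot h}_{H^s(0,\ell)}$ where $G$ is one of the homogeneous-degree-$(\lambda-1)$ functions $F_z,F_{\bar z}$ and $h=u_1-u_2$. (3) Use the algebra-type / product estimate in $H^s$ (here $s>\tfrac12$ is essential, so $H^s(0,\ell)$ is an algebra and also a multiplier algebra on itself) to get $\norm{G(w_\tau)h}_{H^s}\lesssim \norm{G(w_\tau)}_{H^s}\norm{h}_{H^s}$. (4) Apply the composition estimate for $G$ — a function positively homogeneous of degree $\lambda-1$ and $C^{\lceil s\rceil}$ away from $0$ (this is where \eqref{l-cond} enters) — to obtain $\norm{G(w_\tau)}_{H^s(0,\ell)}\lesssim \norm{w_\tau}_{H^s(0,\ell)}^{\lambda-1}$; this composition/Moser-type estimate is exactly what \cite{bo2016} supplies. (5) Bound $\norm{w_\tau}_{H^s}\le \norm{u_1}_{H^s}+\norm{u_2}_{H^s}$ uniformly in $\tau\in[0,1]$, integrate in $\tau$, and conclude; the constant $c(s,\lambda)$ is the one produced by the cited lemma. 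Since the paper only needs the statement and explicitly says it is a direct consequence, I would keep the write-up short: cite \cite{bo2016}, note the hypotheses match, and indicate steps (1)–(5) in a sentence or two rather than reproducing the composition estimate.

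\textbf{Main obstacle.} The only genuine subtlety — and it is entirely absorbed into the citation — is the composition step (4): controlling $\norm{\,|w|^{\lambda-1}\,}$-type quantities in $H^s$ when $\lambda-1$ is not an even integer, since then $z\mapsto|z|^{\lambda-1}z$ is only finitely differentiable at the origin and one must track how many derivatives are available versus how many $s$ demands. The conditions \eqref{l-cond} are precisely calibrated so that $\lceil s\rceil$ (or $\lceil s\rceil+1$ in the non-integer case, matching the split between $s\in\N$ and $s\notin\N$ in \eqref{l-cond}) derivatives of the nonlinearity exist with the correct homogeneous growth, and \cite{bo2016} carries out the resulting Gagliardo–Nirenberg / Moser interpolation. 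Thus in the proof I would simply verify that the pair $(s,\lambda)$ satisfies the hypotheses of Lemma 3.1 of \cite{bo2016} under \eqref{l-cond} and invoke it; no new estimate is required.

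\textbf{Proof.} Since $s>\tfrac12$, the space $H^s(0,\ell)$ is a Banach algebra, and by the conditions \eqref{l-cond} the map $z\mapsto |z|^{\lambda-1}z$ together with its Wirtinger derivatives $z\mapsto F_z(z),\ z\mapsto F_{\bar z}(z)$ (each positively homogeneous of degree $\lambda-1$) is of class $C^{\lceil s\rceil}$ away from the origin with the growth needed for the composition estimates in $W^{s,2}(0,\ell)=H^s(0,\ell)$. Writing $w_\tau := u_2 + \tau(u_1-u_2)$ and $h:=u_1-u_2$, the fundamental theorem of calculus gives
\eq{\label{ftc-pow}
|u_1|^{\lambda-1}u_1 - |u_2|^{\lambda-1}u_2 = \int_0^1 \Big[F_z(w_\tau)\,h + F_{\bar z}(w_\tau)\,\overline h\Big]\,d\tau.
}
Taking the $H^s(0,\ell)$ norm of \eqref{ftc-pow}, applying Minkowski's integral inequality, then the algebra property of $H^s(0,\ell)$ followed by the Moser-type composition estimate for homogeneous functions (Lemma 3.1 of \cite{bo2016}), we obtain
\eqs{
\norm{|u_1|^{\lambda-1}u_1 - |u_2|^{\lambda-1}u_2}_{H^s(0,\ell)}
&\lesssim \int_0^1 \norm{w_\tau}_{H^s(0,\ell)}^{\lambda-1}\,\norm{h}_{H^s(0,\ell)}\,d\tau.
}
Since $\norm{w_\tau}_{H^s(0,\ell)} \le \norm{u_1}_{H^s(0,\ell)} + \norm{u_2}_{H^s(0,\ell)}$ for every $\tau\in[0,1]$, carrying out the $\tau$-integration and using $(a+b)^{\lambda-1}\lesssim a^{\lambda-1}+b^{\lambda-1}$ yields a constant $c(s,\lambda)>0$ with
\eqs{
\norm{|u_1|^{\lambda-1}u_1 - |u_2|^{\lambda-1}u_2}_{H^s(0,\ell)}
\le c(s,\lambda)\Big(\norm{u_1}_{H^s(0,\ell)}^{\lambda-1} + \norm{u_2}_{H^s(0,\ell)}^{\lambda-1}\Big)\norm{u_1-u_2}_{H^s(0,\ell)},
}
which is the claimed estimate.
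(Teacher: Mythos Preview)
Your argument is correct in substance but takes a different route from the paper. The paper does not redo the FTC/Minkowski/algebra/composition chain on the interval; instead it uses an extend--apply--restrict strategy: pick a \emph{fixed bounded linear} extension operator $E:H^s(0,\ell)\to H^s(\R)$, set $\mathcal U_j=Eu_j$, and apply Lemma~3.1 of \cite{bo2016} directly on $\R$ (where that lemma already gives the full difference estimate, not just a composition bound). The crucial observation is linearity of $E$, so that $\mathcal U_1-\mathcal U_2=E(u_1-u_2)$ and hence $\norm{\mathcal U_1-\mathcal U_2}_{H^s(\R)}\lesssim\norm{u_1-u_2}_{H^s(0,\ell)}$ with the \emph{same} constant as for the individual $\mathcal U_j$; restriction then finishes the proof in two lines. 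Your approach is more self-contained and makes the mechanism visible, but it slightly mis-cites \cite{bo2016}: that lemma is on $\R$ and gives the difference estimate itself, not the interval composition bound $\norm{G(w_\tau)}_{H^s(0,\ell)}\lesssim\norm{w_\tau}_{H^s(0,\ell)}^{\lambda-1}$ you invoke in step~(4). To make your step~(4) rigorous you would still need an extension to $\R$ (or an interval version of the Moser estimate), at which point the paper's shorter route becomes available anyway.
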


\begin{proof}
If $\lambda$ is odd, then the result follows via straightforward manipulations and the algebra property in $H_x^s(0, \ell)$. Otherwise, let $\mathcal U_1, \mathcal U_2$ be the usual global extensions of $u_1, u_2$ satisfying the analogue of \eqref{u0-party} and note that, due to the existence of a fixed bounded extension operator from $H^s(0, \ell)$ to $H^s(\R)$, their difference also satisfies the same inequality, namely
\eqs{
\norm{\mathcal U_j}_{H^s (\R)}
\lesssim \norm{u_j}_{H^s (0, \ell)},
\ j = 1, 2,
\quad
\norm{\mathcal U_1 - \mathcal U_2}_{H^s (\R)}
\cle \norm{u_1 - u_2}_{H^s (0, \ell)}.
}
Then, by Lemma 3.1 in \cite{bo2016}, 
\eqs{
\norm{|u_1|^{\lambda - 1} u_1 - |u_2|^{\lambda - 1} u_2}_{H^s (0, \ell)}
&\le \norm{|\mathcal U_1|^{\lambda - 1} \mathcal U_1 - |\mathcal U_2|^{\lambda - 1} \mathcal U_2}_{H^s (\R)}\\
&\cle \p{\norm{\mathcal U_1}_{H^s (\R)}^{\lambda - 1} + \norm{\mathcal U_2}_{H^s (\R)}^{\lambda - 1}} \norm{\mathcal U_1 - \mathcal U_2}_{H^s (\R)}
}
which in view of the above extension inequalities yields the desired inequality.
\end{proof}

Combining \eqref{diffs} with Lemma \ref{lemma:H-pow-diff} and the fact that $u_1, u_2 \in \overline{B_\varrho(0)}$ yields
\eq{
\norm{\Phi(u_1)-\Phi(u_2)}_{X_T^s} 
\leq
|\kappa| c(s, \lambda, T) \sqrt T
\p{\varrho^{\lambda-1}+\varrho^{\lambda-1}}
\norm{u_1-u_2}_{X_T^s}
}
where $c(s, \lambda, T) := c(s, \lambda) c(s, T)$. Hence, for $T>0$ satisfying the condition \eqref{T-into} as well as the condition (note that the two constants involved are not the same)
\eq{\label{T-contr}
|\kappa| c(s, \lambda, T) \sqrt T \varrho^{\lambda-1} < \frac 12
}
we conclude that $u\mapsto \Phi(u)$ is a contraction on $\overline{B_\varrho(0)}$. Therefore, by the Banach fixed point theorem, the map $u\mapsto \Phi(u)$ has a unique fixed point in $\overline{B_\varrho(0)}$, implying the existence of a unique solution  to the HNLS finite interval problem \eqref{hnls-ibvp} in $\overline{B_\varrho(0)}\subset X_T^s$.
\\[2mm]
\noindent
\textbf{Extending uniqueness to the whole of $C_t([0, T]; H_x^s(0, \ell))$.}
Suppose $u_1, u_2 \in C_t([0, T]; H_x^s(0, \ell))$ are two solutions of \eqref{hnls-ibvp} corresponding to the same set of initial and boundary data. Then, their difference $w = u_1-u_2$ satisfies the problem
\begin{equation}
\begin{aligned}\label{w-ibvp}
&i w_t + i \beta w_{xxx} + \alpha w_{xx} + i \delta w_x = \kappa \p{|u_1|^{\lambda - 1} u_1-|u_2|^{\lambda - 1} u_2}, \quad 0 < x < \ell, \ 0 < t < T,
\\
&w(x, 0) = 0,\\
&w(0, t) = w(\ell, t) = w_x (\ell, t) = 0. 
\end{aligned}
\end{equation}
Multiplying the equation for $w$ by $\overline w$ and integrating in $x$, we have
\eqs{
i \int_0^\ell \overline w w_t dx + i \beta \int_0^\ell \overline w w_{xxx} dx + \alpha \int_0^\ell \overline w w_{xx} dx + i \delta \int_0^\ell \overline w w_x dx = \kappa \int_0^\ell \overline w \p{|u_1|^{\lambda - 1} u_1-|u_2|^{\lambda - 1} u_2} dx
}
so integrating by parts on the left-hand side while employing the boundary conditions for $w$ yields
\eqs{
i \int_0^\ell \overline w w_t dx - i \beta \int_0^\ell \overline w_x w_{xx} dx - \alpha \int_0^\ell |w_{x}|^2 dx + i \delta \int_0^\ell \overline w w_x dx = \kappa \int_0^\ell \overline w \p{|u_1|^{\lambda - 1} u_1-|u_2|^{\lambda - 1} u_2} dx.
}
Taking imaginary parts, we obtain
\eq{
\frac 12 \frac{d}{dt}  \int_0^\ell |w|^2 dx
+  \frac{\beta}{2} |w_x(0, t)|^2
=
\Im\Big[\kappa \int_0^\ell \overline w \p{|u_1|^{\lambda - 1} u_1-|u_2|^{\lambda - 1} u_2} dx\Big].
}
Therefore, since $\beta > 0$, 
\begin{align}
\frac 12 \frac{d}{dt}  \norm{w}_{L_x^2(0, \ell)}^2
&\leq
\Im\Big[\kappa \int_0^\ell \overline w \p{|u_1|^{\lambda - 1} u_1-|u_2|^{\lambda - 1} u_2} dx\Big]
\nn\\
&\leq
|\kappa| \int_0^\ell |w| \abs{|u_1|^{\lambda - 1} u_1-|u_2|^{\lambda - 1} u_2} dx.
\label{en-est-0}
\end{align}

At this point, we employ the following standard result (for a proof, see e.g. Lemma 3.3 in \cite{hkmms2024}):
\begin{lemma}\label{mvt-l-w}
For any pair of complex numbers $u_1, u_2$,
\begin{equation}
|u_1|^{\lambda-1} u_1 - |u_2|^{\lambda-1} u_2 
=
\frac{\lambda+1}{2} \left(\int_0^1  |Z_\tau|^{\lambda-1} d\tau\right) \left(u_1-u_2\right)
+
\frac{\lambda-1}{2} \left(\int_0^1 |Z_\tau|^{\lambda-3}
Z_\tau^2 \, d\tau\right)  \overline{\left(u_1-u_2\right)},
\end{equation}
where $Z_\tau := \tau u_1 + \left(1-\tau\right) u_2$, $\tau\in [0, 1]$.
\end{lemma}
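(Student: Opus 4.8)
\textbf{Proof proposal for Lemma \ref{mvt-l-w}.}

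The plan is to treat this as an elementary calculus exercise: write the difference $|u_1|^{\lambda-1}u_1 - |u_2|^{\lambda-1}u_2$ as the integral of the derivative along the straight-line path $Z_\tau = \tau u_1 + (1-\tau) u_2$ connecting $u_2$ (at $\tau=0$) to $u_1$ (at $\tau=1$), and then carry out the differentiation carefully, being mindful of the fact that the map $z \mapsto |z|^{\lambda-1} z$ is not holomorphic in $z$, so one must differentiate it as a function of the real and imaginary parts (equivalently, in terms of $z$ and $\bar z$). First I would set $G(z) := |z|^{\lambda-1} z = z^{(\lambda+1)/2} \bar z^{(\lambda-1)/2}$ (interpreting the powers via $|z|^2 = z\bar z$), so that the Wirtinger derivatives are $\partial_z G = \tfrac{\lambda+1}{2} |z|^{\lambda-1}$ and $\partial_{\bar z} G = \tfrac{\lambda-1}{2} |z|^{\lambda-3} z^2$. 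Then by the fundamental theorem of calculus along the path $\tau \mapsto Z_\tau$, noting that $\tfrac{d}{d\tau} Z_\tau = u_1 - u_2$ and $\tfrac{d}{d\tau} \overline{Z_\tau} = \overline{u_1 - u_2}$, the chain rule gives
\[
|u_1|^{\lambda-1}u_1 - |u_2|^{\lambda-1}u_2 = \int_0^1 \frac{d}{d\tau} G(Z_\tau)\, d\tau = \int_0^1 \left[ \partial_z G(Z_\tau) (u_1 - u_2) + \partial_{\bar z} G(Z_\tau) \overline{(u_1 - u_2)} \right] d\tau,
\]
which upon substituting the expressions for the Wirtinger derivatives is exactly the claimed identity.

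The one genuine subtlety — the step I expect to be the main obstacle — is the regularity of $G$ near $z = 0$: when $1 < \lambda < 3$ the factor $|Z_\tau|^{\lambda-3} Z_\tau^2$ in the second Wirtinger derivative can be singular (indeed $|Z_\tau|^{\lambda-3} Z_\tau^2$ behaves like $|Z_\tau|^{\lambda-1}$, which vanishes at the origin but whose derivative blows up), and the path $Z_\tau$ may pass through $0$. I would handle this by first establishing the identity for $u_1, u_2$ both nonzero and with $0$ not on the segment, where $G$ is $C^1$ on a neighborhood of the path and the computation above is rigorous; then I would note that both sides of the claimed identity are continuous functions of $(u_1, u_2) \in \mathbb{C}^2$ (the integrands $|Z_\tau|^{\lambda-1}$ and $|Z_\tau|^{\lambda-3}Z_\tau^2 = |Z_\tau|^{\lambda-1}(Z_\tau/|Z_\tau|)^2$ are bounded by $|Z_\tau|^{\lambda-1}$ and dominated convergence applies since $\lambda > 1$), so the identity extends to all of $\mathbb{C}^2$ by density and continuity. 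Alternatively, since the statement is cited as standard (Lemma 3.3 of \cite{hkm2024}), I could simply invoke that reference; but the self-contained argument above is short enough to include, and I would present it in the condensed form: parametrize, differentiate via Wirtinger calculus, integrate, and remark that the passage to the boundary case $Z_\tau = 0$ follows by a routine density/continuity argument using $\lambda > 1$.
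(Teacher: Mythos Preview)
Your proof is correct and complete. The paper does not actually prove this lemma; it simply cites Lemma~3.3 of \cite{hkm2024} and moves on. Your self-contained argument via Wirtinger derivatives and the fundamental theorem of calculus along the segment $\tau\mapsto Z_\tau$ is precisely the standard proof one would expect to find in the cited reference, and your computation of $\partial_z G = \tfrac{\lambda+1}{2}|z|^{\lambda-1}$ and $\partial_{\bar z} G = \tfrac{\lambda-1}{2}|z|^{\lambda-3}z^2$ is correct.

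One minor remark: your density/continuity argument for the case where the segment passes through the origin is valid, but in fact slightly more than needed. Since $\lambda>1$ (implicit throughout the paper), both Wirtinger derivatives extend continuously to $z=0$ (with value $0$ there, as you note $\left||z|^{\lambda-3}z^2\right|=|z|^{\lambda-1}\to 0$), so $G$ is $C^1$ on all of $\mathbb{C}$ as a map $\mathbb{R}^2\to\mathbb{R}^2$, and the fundamental theorem of calculus applies directly along any segment without a separate limiting argument. Either way, the conclusion stands.
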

Using Lemma \ref{mvt-l-w} and the fact that $\lambda \ge 2$ and $w=u_1-u_2$, we find
\eqs{
\abs{|u_1|^{\lambda - 1} u_1 - |u_2|^{\lambda - 1} u_2}
&\le c \int_0^1 \p{\abs{\tau u_1}^{\lambda - 1} + \abs{(1 - \tau) u_2}^{\lambda - 1}} d\tau \abs{w}
\\
&\le c \p{|u_1|^{\lambda - 1} + |u_2|^{\lambda - 1}} \abs{w}, 
}
where $c > 0$ depends on $\lambda$. Hence, \eqref{en-est-0} becomes
\eq{
\frac{1}{2} \frac{d}{dt} \norm{w(t)}_{L_x^2(0, \ell)}^2
&\le c \abs \kappa \p{\norm{u_1(t)}_{L_x^\infty (0, \ell)}^{\lambda - 1} + \norm{u_2(t)}_{L_x^\infty (0, \ell)}^{\lambda - 1}} \norm{w(t)}_{L^2_x (0, \ell)}^2
\nn\\
&\le c \abs \kappa \p{\norm{u_1}_{X_T^s}^{\lambda - 1} + \norm{u_2}_{X_T^s}^{\lambda - 1}} \norm{w(t)}_{L^2_x (0, \ell)}^2
\label{en-est}
}
with the last inequality thanks to the embedding $H^s(0, \ell) \hookrightarrow L^\infty(0, \ell)$ (which applies since $s>\frac 12$).
Since the constant on the right-hand side is positive, the differential inequality \eqref{en-est} can be solved  via a simple application of Gr\"onwall's inequality to yield
\eq{
\norm{w(t)}_{L_x^2(0, \ell)}^2 \leq \norm{w(0)}_{L_x^2(0, \ell)}^2 e^{2c |\kappa| \p{\norm{u_1}_{X_T^s} + \norm{u_2}_{X_T^s}}^{\lambda-1} t}.
}
As the right-hand side is zero in view of the initial condition $w(x, 0) = 0$, we conclude that $u_1 \equiv u_2$ in all of $C_t([0, T]; H_x^s(0, \ell))$.
\\[2mm]
\noindent
\textbf{Continuous dependence on the data.} The final task for showing Hadamard well-posedness is to prove that the solution depends continuously on the data. More precisely, we will show that the data-to-solution map is locally Lipschitz. For some $T > 0$, let $d \in \mathcal D_T := H^s (0, \ell) \times  H^{\frac{s + 1}{3}} (0, T) \times  H^{\frac{s + 1}{3}} (0, T) \times H^{\frac{s}{3}} (0, T)$ be an arbitrary data point  with corresponding norm
\eq{\label{d-norm-def}
\norm{d}_{\mathcal D_T} := \norm{u_0}_{H^s(0, \ell)} + \norm{g_0}_{H^{\frac{s+1}{3}}(0, T)} + \norm{h_0}_{H^{\frac{s+1}{3}}(0, T)} +\norm{h_1}_{H^{\frac{s}{3}}(0, T)}.
}
Select $R > 0$ and define $B_R (d)$ to be the open ball of radius $R$ centered at $d$. Given two data points $d_1 = \big(u_0^{(1)}, g_0^{(1)}, h_0^{(1)}, h_1^{(1)}\big)$ and $d_2 = \big(u_0^{(2)}, g_0^{(2)}, h_0^{(2)}, h_1^{(2)}\big)$ in $B_R(d)$, let $u_1$ and $u_2$ be the solutions obtained as fixed points of the maps $u\mapsto \Phi_1(u)$ and $u \mapsto \Phi_2(u)$ respectively, which are defined by \eqref{Phi} but with data $d_1$ and $d_2$ respectively. Note that these solutions exist somewhere within the balls $\overline{B_{\varrho_1}(0)}$ and $\overline{B_{\varrho_2}(0)}$, respectively, where
\eq{
\varrho_j = 2 c(s, T_j) \norm{d_j}_{\mathcal D_{T_j}},
\quad
j=1, 2,
} 
and the lifespans $T_j \in (0, T]$ satisfy the conditions \eqref{T-into} and \eqref{T-contr}. Since $d_j \in B_R (d)$, 
\eqs{\norm{d_j}_{\mathcal D_{T_j}} = \big\|u_0^{(j)}\big\|_{H^s(0, \ell)} + \big\|g_0^{(j)}\big\|_{H^{\frac{s+1}{3}}(0, T_j)} + \big\|h_0^{(j)}\big\|_{H^{\frac{s+1}{3}}(0, T_j)} + \big\|h_1^{(j)}\big\|_{H^{\frac{s}{3}}(0, T_j)} < \norm{d}_{\mathcal D_T} + R,}
or, in other words, using the boundedness of $c(s, T_j)$ as $T_j \to 0^+$,
\eq{\varrho_j < 2 \sup_{t \in (0, T]} \b{c(s, t)} \p{\norm{d}_{\mathcal D_T} + R} = 2 c(s) \p{\norm{d}_{\mathcal D_T} + R} =: \varrho_c.}
Now, let $T_c \in (0, T]$ satisfy (referencing the constants in \eqref{T-into} and \eqref{T-contr})
\eq{
T_c < \frac{1}{4|\kappa|^2 C(s, \lambda)^2 \varrho_c^{2(\lambda-1)}}, \label{Tc-1}
\quad
C(s, \lambda) := \sup_{t \in (0, T]} \b{c_s c(s, t) + c(s, \lambda, t)},
}
and possibly other restrictions determined below. Since $\varrho_c > \max\{\varrho_1, \varrho_2\}$ and $\lambda > 1$, it follows that $T_c \le \min\{T_1, T_2\}$ so that $u_1, u_2$ exist for all $t \in [0, T_c]$ and, crucially, $T_c$ is independent of $d_1, d_2 \in B_R (d)$. Consider the associated solution space $X_{T_c}^s := C_t([0, T_c]; H_x^s(0, \ell)) \cap L_t^2((0, T_c); H_x^{s+1}(0, \ell))$ and note that $X_{T_1}^s, X_{T_2}^s \subset X_{T_c}^s$. 
Then, since the solutions $u_1$ and $u_2$ have been obtained as fixed points of $u\mapsto \Phi_1(u)$ and $u\mapsto \Phi_2(u)$ in the balls $\overline{B_{\varrho_1}(0)} \subset X_{T_1}^s$ and $\overline{B_{\varrho_2}(0)} \subset X_{T_2}^s$, respectively, we have
\eqs{
\norm{u_1-u_2}_{X_{T_c}^s}
&=
\norm{\Phi_1(u_1)-\Phi_2(u_2)}_{X_{T_c}^s}
\nn\\
&=
\norm{S\big[u_0^{(1)}-u_0^{(2)}, g_0^{(1)}-g_0^{(2)}, h_0^{(1)}-h_0^{(2)}, h_1^{(1)}-h_1^{(2)}; \kappa \p{|u_1|^{\lambda-1}u_1-|u_2|^{\lambda-1}u_2}\big]}_{X_{T_c}^s}.
}
At this point, we employ the Sobolev estimates \eqref{sob-est} and \eqref{smooth-est} together with Lemma \ref{lemma:H-pow-diff} to infer
\eq{
\begin{aligned}
\norm{u_1-u_2}_{X_{T_c}^s}
\leq
c(s, \lambda, T_c) \Big[
&\big\|u_0^{(1)}-u_0^{(2)}\big\|_{H^s(0, \ell)}  
+ \big\|g_0^{(1)}-g_0^{(2)}\big\|_{H^{\frac{s+1}{3}}(0, T_c)}
+ \big\|h_0^{(1)}-h_0^{(2)}\big\|_{H^{\frac{s+1}{3}}(0, T_c)}
\\
&+ \big\|h_1^{(1)}-h_1^{(2)}\big\|_{H^{\frac{s}{3}}(0, T_c)}
+ \abs \kappa \sqrt{T_c} \p{\norm{u_1}_{X_{T_c}^s}^{\lambda-1} + \norm{u_2}_{X_{T_c}^s}^{\lambda-1}} \norm{u_1 - u_2}_{X_{T_c}^s}
\Big].
\end{aligned}
}
By rearranging and considering the fact that $u_1, u_2 \in B_{\varrho_c}(0) \subset X_{T_c}^s$, we deduce (after recalling the definition \eqref{d-norm-def})
\eq{\label{u1-u2-ineq}
\b{1 - 2 \abs \kappa c(s, \lambda, T_c) \sqrt{T_c} \varrho_c^{\lambda - 1}} \norm{u_1 - u_2}_{X_{T_c}^s}
\leq
c(s, \lambda, T_c) \norm{d_1-d_2}_{\mathcal D_{T_c}}.
}
Thus, if $T_c$ is small enough such that it not only satisfies \eqref{Tc-1} but also
\eq{
\abs \kappa c(s, \lambda, T_c) \sqrt{T_c} \varrho_c^{\lambda - 1}
&< \frac{1}{2},
\label{Tc-cts}
}
then we can rearrange inequality \eqref{u1-u2-ineq} to
\eq{
\norm{u_1 - u_2}_{X_{T_c}^s}
\leq
\frac{c(s, \lambda, T_c)}{1 - 2 \abs \kappa c(s, \lambda, T_c) \sqrt{T_c} \varrho_c^{\lambda - 1}} \norm{d_1-d_2}_{\mathcal D_{T_c}},
}
which shows that the unique solution $u$ to \eqref{hnls-ibvp} is locally Lipschitz and so, in particular, it depends continuously on the data. Hence, we conclude that the HNLS finite interval problem \eqref{hnls-ibvp} is well-posed in the Hadamard sense, completing the proof of the high-regularity  Theorem \ref{high-wp-t}.

\subsection{Low-regularity well-posedness}

For $0\leq s < \frac 12$ and lifespan $T>0$ yet to be determined, define the solution space
\eq{\label{Y-def}
Y_T^{s, \lambda} := C_t([0, T]; H_x^s(0, \ell)) \cap L_t^2((0, T); H_x^{s+1}(0, \ell)) \cap L_t^{\frac{6\lambda}{(1-2s)(\lambda-1)}}((0, T); H_x^{s, \frac{2\lambda}{1+2(\lambda-1)s}}(0, \ell))
}
with norm equal to the maximum of the norms of the three spaces involved. 
Note that the particular choice of $(q, p) = \big(\frac{6\lambda}{(1-2s)(\lambda-1)}, \frac{2\lambda}{1+2(\lambda-1)s}\big)$, which is indeed admissible in the sense of~\eqref{adm-pair}, is dictated by the nonlinear estimates of Lemma \ref{qnls-non-l} below.
Using the Sobolev estimate \eqref{smooth-est}, as well as the Strichartz estimate \eqref{strich-est} twice, once for $(q, p) = (\infty, 2)$ (this choice gives the analogue of the Sobolev estimate \eqref{sob-est} for $0\leq s < \frac 12$)  and once more for $(q, p) = \big(\frac{6\lambda}{(1-2s)(\lambda-1)}, \frac{2\lambda}{1+2(\lambda-1)s}\big)$, we infer that the iteration map \eqref{Phi} satisfies
\eq{
\begin{split}
\norm{\Phi(u)}_{Y_T^{s, \lambda}}
\leq c(s, \lambda, T) \Big(&\norm{u_0}_{H^s (0, \ell)} + \norm{g_0}_{H^{\frac{s + 1}{3}} (0, T)} + \norm{h_0}_{H^{\frac{s + 1}{3}} (0, T)} + \norm{h_1}_{H^{\frac{s}{3}} (0, T)}
\\
&+ \abs \kappa \norm{|u|^{\lambda - 1} u}_{L_t^1 ((0, T); H_x^s (0,\ell))}
\Big),
\end{split}
\label{lin-est-low3}
}
where $c(s, \lambda, T) := \max\big\{c_2(s, T), c_3(s, 2, T), c_3(s, \frac{2\lambda}{1+2(\lambda-1)s}, T)\big\}$.
At this point, we remark that the algebra property in $H_x^s(0, \ell)$, which was employed for simplifying the nonlinear term in the high-regularity case, is no longer available (since $s<\frac 12$). Instead, we resort to the following fundamental nonlinear estimates:
\begin{lemma}\label{qnls-non-l}
Suppose $0\leq s < \frac{1}{2}$, $2 \leq \lambda \leq \frac{7-2s}{1-2s}$ and $(q, p) = \big(\frac{6 \lambda}{(1 - 2 s) (\lambda - 1)}, \frac{2 \lambda}{1 + 2 (\lambda - 1) s}\big)$. Then, 
\begin{align}\label{non-est}
\left\| |\varphi|^{\lambda-1} \varphi \right\|_{L_t^1((0, T); H_x^s(\R))}
&\leq 
c(s, \lambda)
T^{\frac{q-\lambda}{q}} \left\| \varphi \right\|_{L_t^q((0, T); H_x^{s, p}(\R))}^\lambda,
\\
\label{non-diff-est}
\left\| |\varphi|^{\lambda-1} \varphi - |\psi|^{\lambda-1} \psi \right\|_{L_t^1((0, T); H_x^s(\R))}
&\leq
c(s, \lambda)
T^{\frac{q-\lambda}{q}}
\left(
\left\| \varphi \right\|_{L_t^q((0, T); H_x^{s, p}(\R))}^{\lambda-1}
+
\left\| \psi \right\|_{L_t^q((0, T); H_x^{s, p}(\R))}^{\lambda-1}
\right) 
\nonumber\\
&\quad
\cdot
\left\| \varphi - \psi \right\|_{L_t^q((0, T); H_x^{s, p}(\R))}.
\end{align}
\end{lemma}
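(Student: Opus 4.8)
The plan is to establish the two nonlinear estimates \eqref{non-est} and \eqref{non-diff-est} by combining a Hölder inequality in time with a fractional Leibniz-type (Kato--Ponce) estimate in space adapted to the Bessel potential spaces $H^{s,p}(\R)$. First I would observe that the exponents have been chosen precisely so that the scaling works out: with $(q,p) = \big(\frac{6\lambda}{(1-2s)(\lambda-1)}, \frac{2\lambda}{1+2(\lambda-1)s}\big)$ one has $\lambda/p = 1/2 + (\lambda-1)s/\lambda \cdot \lambda/(2\lambda)\cdots$ — more usefully, $\frac{1}{p'} = \frac{\lambda-1}{p} + \big(\frac1p - s\big)$ via Sobolev embedding $H^{s,p}(\R) \hookrightarrow L^{r}(\R)$ with $\frac1r = \frac1p - s$ (valid since $0\le s < 1/p$), and $\lambda < q$ so that $T^{(q-\lambda)/q}$ is a genuine positive power. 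So for fixed $t$, I would bound $\big\| |\varphi(t)|^{\lambda-1}\varphi(t)\big\|_{H^{s}(\R)}$. Since $s \ge 0$ and the $H^s(\R) = H^{s,2}(\R)$ norm is at issue, I'd use the fractional Leibniz rule for the map $z \mapsto |z|^{\lambda-1}z$ (a $C^1$, in fact Hölder-differentiable, nonlinearity for $\lambda \ge 2$, so it has enough smoothness for $s<1$), obtaining $\big\| |\varphi|^{\lambda-1}\varphi\big\|_{H^s(\R)} \lesssim \|\varphi\|_{L^{r_1}(\R)}^{\lambda-1}\|\varphi\|_{H^{s,p}(\R)}$ where $\frac12 = \frac{\lambda-1}{r_1} + \frac1p$ and $r_1$ is chosen to match the Sobolev embedding $H^{s,p}\hookrightarrow L^{r_1}$ — here is where $\lambda \le \frac{7-2s}{1-2s}$ enters, guaranteeing $r_1 \ge p$ (equivalently that the embedding exponent is admissible and $s < 1/p$).

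Next I would integrate in time. Having $\big\| |\varphi(t)|^{\lambda-1}\varphi(t)\big\|_{H^s(\R)} \lesssim \|\varphi(t)\|_{H^{s,p}(\R)}^{\lambda}$ pointwise in $t$, I apply Hölder's inequality in $t$ on $(0,T)$ with exponents $\frac{q}{\lambda}$ and $\frac{q}{q-\lambda}$ to get $\big\| |\varphi|^{\lambda-1}\varphi\big\|_{L_t^1((0,T);H^s_x)} \lesssim T^{\frac{q-\lambda}{q}} \|\varphi\|_{L_t^q((0,T);H^{s,p}_x)}^\lambda$, which is \eqref{non-est}. For the difference estimate \eqref{non-diff-est}, I would use the pointwise bound $\big| |u_1|^{\lambda-1}u_1 - |u_2|^{\lambda-1}u_2 \big| \lesssim \big(|u_1|^{\lambda-1} + |u_2|^{\lambda-1}\big)|u_1-u_2|$ (for the $L^\infty$-type part) together with the fractional-difference version: writing $G(z) = |z|^{\lambda-1}z$, one has $G(u_1) - G(u_2) = \big(\int_0^1 DG(u_2 + \tau(u_1-u_2))\,d\tau\big)(u_1-u_2)$ in the appropriate real-bilinear sense, so the $H^{s,2}$ norm of the difference is controlled, via fractional Leibniz applied to this product, by $\big\| \int_0^1 DG(Z_\tau)d\tau\big\|$ in a suitable $H^{s,p}$- or $L^r$-based norm times $\|u_1-u_2\|$ in the complementary norm, and $\|DG(Z_\tau)\| \lesssim |Z_\tau|^{\lambda-1}$ with the $H^{s,p}$ norm of $|Z_\tau|^{\lambda-1}$ handled again by fractional Leibniz since $\lambda - 1 \ge 1$. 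Collecting the $\tau$-integral and then running the same time-Hölder step gives \eqref{non-diff-est}.

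The main obstacle I anticipate is the space-estimate step: establishing the fractional Leibniz / chain rule bounds for the nonlinearity $z\mapsto |z|^{\lambda-1}z$ in Bessel potential spaces $H^{s,p}(\R)$ with $0 \le s < 1/2$ and the precise Hölder/Sobolev bookkeeping that shows the product exponents close exactly under the constraint $2\le \lambda \le \frac{7-2s}{1-2s}$. This requires either citing a Kato--Ponce-type commutator estimate and a fractional chain rule for non-smooth power nonlinearities (e.g. as in Christ--Weinstein, or the Moser-type estimates used in the NLS literature), or reducing to the whole-line case and invoking an existing lemma; the role of the restriction $\lambda\le\frac{7-2s}{1-2s}$ is exactly to keep the Sobolev embedding exponent $r_1$ finite and $\ge p$, i.e. to keep $s$ below the relevant threshold $1/p$ while allowing the nonlinearity to be absorbed. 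The time-integration step and the reduction from $H^s(0,\ell)$ to $H^s(\R)$ via bounded extension (as in the proof of Lemma \ref{lemma:H-pow-diff}) are routine by comparison. I would present the whole-line estimates \eqref{non-est}--\eqref{non-diff-est} as stated and then note, as in Lemma \ref{lemma:H-pow-diff}, that the finite-interval versions follow immediately by composing with a fixed bounded extension operator.
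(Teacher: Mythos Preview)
Your approach is correct and is precisely the standard route the paper has in mind: the paper does not actually prove this lemma but simply states that ``the techniques leading to the proof of Lemma~\ref{qnls-non-l} are standard in the literature related to the NLS Cauchy problem'' and cites the two-dimensional analogue in \cite{mo2024}. Your outline --- fractional chain rule / Kato--Ponce in $x$ to get $\||\varphi|^{\lambda-1}\varphi\|_{H^s_x}\lesssim \|\varphi\|_{H^{s,p}_x}^\lambda$ via the Sobolev embedding $H^{s,p}\hookrightarrow L^{r_1}$ with $\tfrac{1}{r_1}=\tfrac{1}{p}-s$, followed by H\"older in $t$ with exponents $q/\lambda$ and $q/(q-\lambda)$, and the integral representation of Lemma~\ref{mvt-l-w} for the difference estimate --- is exactly that standard argument.

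One small correction in your bookkeeping: the upper bound $\lambda\le\tfrac{7-2s}{1-2s}$ does \emph{not} enter in the space step. With your choice of $r_1$ one checks directly that $\tfrac{1}{r_1}=\tfrac{1-2s}{2\lambda}=\tfrac{1}{p}-s$, so the critical Sobolev embedding $H^{s,p}\hookrightarrow L^{r_1}$ holds as soon as $s<\tfrac{1}{p}$, which is equivalent simply to $s<\tfrac12$ (and $r_1\ge p$ is automatic for $s\ge 0$, $\lambda\ge 1$). The constraint $\lambda\le\tfrac{7-2s}{1-2s}$ is exactly the condition $q\ge\lambda$, which is what makes the time-H\"older step produce the nonnegative power $T^{(q-\lambda)/q}$; you already noted this earlier in your first paragraph, so just relocate the attribution.
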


The techniques leading to the proof of Lemma \ref{qnls-non-l} are standard in the literature related to the NLS Cauchy problem (see, for example, the proof of Proposition 3.1 in \cite{mo2024} for the two-dimensional analogue of this result). As noted above, the particular choice of  exponents in the Strichartz space component of \eqref{Y-def} is dictated precisely by Lemma \ref{qnls-non-l}. 

Thanks to the nonlinear estimate \eqref{non-est}, estimate \eqref{lin-est-low3} yields
\eq{\label{424f}
\begin{aligned}
\norm{\Phi(u)}_{Y_T^{s, \lambda}}
\leq c(s, \lambda, T) \Big(&\norm{u_0}_{H^s (0, \ell)} + \norm{g_0}_{H^{\frac{s + 1}{3}} (0, T)} + \norm{h_0}_{H^{\frac{s + 1}{3}} (0, T)} + \norm{h_1}_{H^{\frac{s}{3}} (0, T)}
\\
&+ \abs \kappa c(s, \lambda) T^{\frac{7 - \lambda + 2 s (\lambda - 1)}{6}} \norm{u}_{Y_T^{s, \lambda}}^\lambda
\Big).
\end{aligned}
}
Defining the radius
\eq{\varrho = \varrho(T) := 2 c(s, \lambda, T) \Big(\norm{u_0}_{H^s (0, \ell)} + \norm{g_0}_{H^{\frac{s + 1}{3}} (0, T)} + \norm{h_0}_{H^{\frac{s + 1}{3}} (0, T)} + \norm{h_1}_{H^{\frac{s}{3}} (0, T)}\Big), \label{low-rho}}
and assuming that $\varrho>0$ (if $\varrho=0$, then we have the zero solution), we consider the ball $B_\varrho (0) \subset Y_T^{s, \lambda}$ of radius $\varrho$ centered at the origin. For any $u \in \overline{B_\varrho (0)}$, estimate \eqref{424f} implies
\eq{
\norm{\Phi(u)}_{Y_T^{s, \lambda}}
\le \frac{\varrho}{2} + \abs \kappa c(s, \lambda) c(s, \lambda, T) T^{\frac{7 - \lambda + 2 s (\lambda - 1)}{6}} \varrho^\lambda.
}
Thus, as long as $\lambda < \frac{7 - 2 s}{1 - 2 s}$, we can always choose $T > 0$ small enough such that
\eq{
\abs \kappa c(s, \lambda) c(s, \lambda, T) T^{\frac{7 - \lambda + 2 s (\lambda - 1)}{6}} \varrho^{\lambda - 1} \le \frac{1}{2},
\label{low-T-into}
}
a condition which guarantees that $\Phi$ maps $\overline{B_\varrho (0)}$ into itself.
On the other hand, in the critical case $\lambda = \frac{7 - 2 s}{1 - 2 s}$ the power of $T$ disappears from~\eqref{low-T-into}. In that case, in order to ensure that $\Phi(u)$ remains in $\overline{B_\varrho (0)}$, we make $\varrho$ small enough by taking sufficiently small initial data norm $\norm{u_0}_{H^s (0, \ell)}$ and also by choosing $T > 0$ small enough to control the norms of the boundary data in \eqref{low-rho} so that~\eqref{low-T-into} is satisfied.

Next, suppose that $u_1, u_2 \in \overline{B_\varrho (0)}$. Then, proceeding similarly to the path that resulted in \eqref{lin-est-low3}, we have
\eq{
\norm{\Phi(u_1) - \Phi(u_2)}_{Y_T^{s, \lambda}}
&\leq \abs \kappa c(s, \lambda, T) \norm{|u_1|^{\lambda - 1} u_1 - |u_2|^{\lambda - 1} u_2}_{L_t^1 ((0, T); H_x^s (0,\ell))}.
}
Thus, in view of the nonlinear estimate \eqref{non-diff-est} and the fact that $u_1, u_2 \in \overline{B_\varrho (0)}$,  
\eq{
\norm{\Phi(u_1) - \Phi(u_2)}_{Y_T^{s, \lambda}}
\leq 2 \abs \kappa c(s, \lambda) c(s, \lambda, T) \, T^{\frac{7 - \lambda + 2 s (\lambda - 1)}{6}} \varrho^{\lambda - 1} \norm{u_1 - u_2}_{Y_T^{s, \lambda}}
}
so $\Phi$ is a contraction on $\overline{B_\varrho (0)}$ provided that
\eq{
\abs{\kappa} c(s, \lambda) c(s, \lambda, T) T^{\frac{7 - \lambda + 2 s (\lambda - 1)}{6}} \varrho^{\lambda - 1} < \frac{1}{2}.
\label{low-T-contr}
}
As before, for $2 \le \lambda \le \frac{7 - 2 s}{1 - 2 s}$ we can choose $T > 0$ sufficiently small so that \eqref{low-T-contr} is satisfied, taking $\norm{u_0}_{H^s(0, \ell)}$ to be small enough in the critical case $\lambda = \frac{7 - 2 s}{1 - 2 s}$. Note that the contraction condition \eqref{low-T-contr} is stronger than the into condition \eqref{low-T-into} due to the fact that the constant involved is the same in both cases (in contrast to the high-regularity case where the constants are different).

By Banach's fixed point theorem, if $T > 0$ (and, in the critical case $\lambda = \frac{7 - 2 s}{1 - 2 s}$, the norm $\norm{u_0}_{H^s (0, \ell)}$) is small enough such that  \eqref{low-T-contr} is satisfied, then there exists a unique fixed point of the map $u \mapsto \Phi(u)$ in $\overline{B_\varrho (0)}$, amounting to a unique solution of the HNLS finite interval problem \eqref{hnls-ibvp} in $\overline{B_\varrho (0)}$.

The argument used in the high-regularity case to extend uniqueness to the entire solution space  $C_t([0, T]; H_x^s(0, \ell))$ can be adapted to obtain the analogous result in the low-regularity setting and the space $C_t([0, T]; H_x^s(0, \ell)) \cap L_t^{\frac{6\lambda}{(1-2s)(\lambda-1)}}((0, T); H_x^{s, \frac{2\lambda}{1+2(\lambda-1)s}}(0, \ell))$ after appropriate mollification along the lines of Section~8 in \cite{h2005}. Moreover, the local Lipschitz continuity in the low-regularity setting can be proved in the same way as in the high-regularity case, with Lemma \ref{qnls-non-l} replaced by Lemma \ref{lemma:H-pow-diff}. Hence, Theorem~\ref{low-wp-t} on the Hadamard well-posedness of the HNLS finite interval problem \eqref{hnls-ibvp} in the low-regularity setting has been established.
\\[2mm]
\noindent
\noindent
\textit{Funding and/or Conflicts of interests/Competing interests.} The authors have no relevant financial or non-financial interests to disclose.  
\textit{Data availability.} Data sharing is not applicable to this article as no datasets were generated or analyzed during the current study.

\bibliographystyle{myamsalpha}
\bibliography{references.bib}

@article {ccg2020,
    AUTHOR = {Capistrano-Filho, Roberto de A. and Cavalcante, M\'arcio and
              Gallego, Fernando A.},
     TITLE = {Lower regularity solutions of the biharmonic {S}chr\"odinger
              equation in a quarter plane},
   JOURNAL = {Pacific J. Math.},
  FJOURNAL = {Pacific Journal of Mathematics},
    VOLUME = {309},
      YEAR = {2020},
    NUMBER = {1},
     PAGES = {35--70},
      ISSN = {0030-8730,1945-5844},
   MRCLASS = {35C15 (35G15 35G30 35Q55)},
  MRNUMBER = {4202004},
       DOI = {10.2140/pjm.2020.309.35},
       URL = {https://doi.org/10.2140/pjm.2020.309.35},
}

@article {c2017,
    AUTHOR = {Cavalcante, M\'arcio},
     TITLE = {The initial boundary value problem for some quadratic
              nonlinear {S}chr\"odinger equations on the half-line},
   JOURNAL = {Differential Integral Equations},
  FJOURNAL = {Differential and Integral Equations. An International Journal
              for Theory \& Applications},
    VOLUME = {30},
      YEAR = {2017},
    NUMBER = {7-8},
     PAGES = {521--554},
      ISSN = {0893-4983},
   MRCLASS = {35Q55 (35B30)},
  MRNUMBER = {3646462},
       URL = {http://projecteuclid.org/euclid.die/1493863393},
}

@article {cc2020,
    AUTHOR = {Cavalcante, M\'arcio and Corcho, Ad\'an J.},
     TITLE = {Well-posedness and lower bounds of the growth of weighted
              norms for the {S}chr\"odinger--{K}orteweg--de~{V}ries
              interactions on the half-line},
   JOURNAL = {J. Evol. Equ.},
  FJOURNAL = {Journal of Evolution Equations},
    VOLUME = {20},
      YEAR = {2020},
    NUMBER = {4},
     PAGES = {1563--1596},
      ISSN = {1424-3199,1424-3202},
   MRCLASS = {35Q53 (35B65 35Q55)},
  MRNUMBER = {4181959},
MRREVIEWER = {Zhaohui\ Huo},
       DOI = {10.1007/s00028-020-00566-1},
       URL = {https://doi.org/10.1007/s00028-020-00566-1},
}

@article {ck2020,
    AUTHOR = {Cavalcante, M\'arcio and Kwak, Chulkwang},
     TITLE = {The initial-boundary value problem for the {K}awahara equation
              on the half-line},
   JOURNAL = {NoDEA Nonlinear Differential Equations Appl.},
  FJOURNAL = {NoDEA. Nonlinear Differential Equations and Applications},
    VOLUME = {27},
      YEAR = {2020},
    NUMBER = {5},
     PAGES = {Paper No. 45, 50},
      ISSN = {1021-9722,1420-9004},
   MRCLASS = {35Q53 (35G31)},
  MRNUMBER = {4130241},
       DOI = {10.1007/s00030-020-00648-6},
       URL = {https://doi.org/10.1007/s00030-020-00648-6},
}

@article {hkmms2024,
    AUTHOR = {Hennig, Dirk and Karachalios, Nikos I. and Mantzavinos,
              Dionyssios and Cuevas-Maraver, Jes\'{u}s and Stratis, Ioannis
              G.},
     TITLE = {On the proximity between the wave dynamics of the integrable focusing nonlinear {S}chr\"{o}dinger equation and its non-integrable generalizations},
   JOURNAL = {J. Differential Equations},
  FJOURNAL = {Journal of Differential Equations},
    VOLUME = {397},
      YEAR = {2024},
     PAGES = {106--165},
      ISSN = {0022-0396,1090-2732},
   MRCLASS = {35Q55 (35B35 37K40)},
  MRNUMBER = {4720515},
       DOI = {10.1016/j.jde.2024.03.005},
       URL = {https://doi.org/10.1016/j.jde.2024.03.005},
}

@article {hmy2019-kdv,
    AUTHOR = {Himonas, A. Alexandrou and Mantzavinos, Dionyssios and Yan,
              Fangchi},
     TITLE = {The {K}orteweg--de {V}ries equation on an interval},
   JOURNAL = {J. Math. Phys.},
  FJOURNAL = {Journal of Mathematical Physics},
    VOLUME = {60},
      YEAR = {2019},
    NUMBER = {5},
     PAGES = {051507, 26},
      ISSN = {0022-2488,1089-7658},
   MRCLASS = {35Q53},
  MRNUMBER = {3947621},
MRREVIEWER = {Anthony\ D.\ Osborne},
       DOI = {10.1063/1.5080366},
       URL = {https://doi.org/10.1063/1.5080366},
}

@article{amo2024,
	AUTHOR = {Alkin, Aykut and Mantzavinos, Dionyssios and Ozsari, Turker},
	TITLE = {Local well-posedness of the higher-order nonlinear {S}chrödinger equation on the half-line: Single-boundary condition case},
	JOURNAL = {Studies in Applied Mathematics},
	VOLUME = {152},
	YEAR = {2024},
	NUMBER = {1},
	PAGES = {203-248},
	URL = {https://onlinelibrary.wiley.com/doi/abs/10.1111/sapm.12642},
}

@article {boy2021,
	AUTHOR = {Batal, Ahmet and Ozsari , Turker and Yilmaz, Kemal Cem},
	TITLE = {Stabilization of higher order {S}chr\"{o}dinger equations on a
	finite interval: {P}art {I}},
	JOURNAL = {Evol. Equ. Control Theory},
	FJOURNAL = {Evolution Equations and Control Theory},
	VOLUME = {10},
	YEAR = {2021},
	NUMBER = {4},
	PAGES = {861--919},
	ISSN = {2163-2472},
	MRCLASS = {93D15 (35Q55 35Q93 93C20)},
	MRNUMBER = {4338836},
	MRREVIEWER = {Francesco Ferrante},
	DOI = {10.3934/eect.2020095},
	URL = {https://doi.org/10.3934/eect.2020095},
}

@article {bfo2020,
	AUTHOR = {Batal, A. and Fokas, A. S. and Ozsari , T.},
	TITLE = {Fokas method for linear boundary value problems involving
	mixed spatial derivatives},
	JOURNAL = {Proc. A.},
	FJOURNAL = {Proceedings A},
	VOLUME = {476},
	YEAR = {2020},
	NUMBER = {2239},
	PAGES = {20200076, 15},
	ISSN = {1364-5021},
	MRCLASS = {35K20 (35B30 35Q41)},
	MRNUMBER = {4133772},
	DOI = {10.1098/rspa.2020.0076},
	URL = {https://doi.org/10.1098/rspa.2020.0076},
}

@article {bbv2007,
	AUTHOR = {Bisognin, Eleni and Bisognin, Vanilde and Vera Villagr\'{a}n,
	Octavio Paulo},
	TITLE = {Stabilization of solutions to higher-order nonlinear
	{S}chr\"{o}dinger equation with localized damping},
	JOURNAL = {Electron. J. Differential Equations},
	FJOURNAL = {Electronic Journal of Differential Equations},
	YEAR = {2007},
	PAGES = {No. 06, 18},
	ISSN = {1072-6691},
	MRCLASS = {35Q53 (35B35 93D15)},
	MRNUMBER = {2278420},
}

@article {ccs2019,
    AUTHOR = {Cavalcanti, Marcelo M. and Corr\^ea, Wellington J. and
              Sep\'ulveda, Mauricio A. and V\'ejar-Asem, Rodrigo},
     TITLE = {Finite difference scheme for a higher order nonlinear
              {S}chr\"odinger equation},
   JOURNAL = {Calcolo},
  FJOURNAL = {Calcolo. A Quarterly on Numerical Analysis and Theory of
              Computation},
    VOLUME = {56},
      YEAR = {2019},
    NUMBER = {4},
     PAGES = {Paper No. 40, 32},
      ISSN = {0008-0624,1126-5434},
   MRCLASS = {65M06 (35C08 35Q55)},
  MRNUMBER = {4019729},
       DOI = {10.1007/s10092-019-0336-1},
       URL = {https://doi.org/10.1007/s10092-019-0336-1},
}

@article {cpv2005,
	AUTHOR = {Ceballos, Juan Carlos and Pavez, Ricardo and Vera
	Villagr\'{a}n, Octavio Paulo},
	TITLE = {Exact boundary controllability for higher order nonlinear
	{S}chr\"{o}dinger equations with constant coefficients},
	JOURNAL = {Electron. J. Differential Equations},
	FJOURNAL = {Electronic Journal of Differential Equations},
	YEAR = {2005},
	PAGES = {No. 122, 31},
	ISSN = {1072-6691},
	MRCLASS = {93B05 (35Q55 93C20)},
	MRNUMBER = {2174554},
	MRREVIEWER = {Bing-Yu Zhang},
}

@article {f2023,
    AUTHOR = {Faminskii, Andrei V.},
     TITLE = {The higher order nonlinear {S}chr\"{o}dinger equation with
              quadratic nonlinearity on the real axis},
   JOURNAL = {Adv. Differential Equations},
  FJOURNAL = {Advances in Differential Equations},
    VOLUME = {28},
      YEAR = {2023},
    NUMBER = {5-6},
     PAGES = {413--466},
      ISSN = {1079-9389},
   MRCLASS = {35Q53 (35Q55)},
  MRNUMBER = {4555001},
}

@article {f1997,
	AUTHOR = {Fokas, A. S.},
	TITLE = {A unified transform method for solving linear and certain
	nonlinear {PDE}s},
	JOURNAL = {Proc. Roy. Soc. London Ser. A},
	FJOURNAL = {Proceedings of the Royal Society. London. Series A.
	Mathematical, Physical and Engineering Sciences},
	VOLUME = {453},
	YEAR = {1997},
	NUMBER = {1962},
	PAGES = {1411--1443},
	ISSN = {0962-8444},
	MRCLASS = {35A22 (34A55 34L25 35Q53 35Q55)},
	MRNUMBER = {1469927},
	DOI = {10.1098/rspa.1997.0077},
	URL = {https://doi.org/10.1098/rspa.1997.0077},
}

@book {f2008,
	AUTHOR = {Fokas, Athanassios S.},
	TITLE = {A unified approach to boundary value problems},
	SERIES = {CBMS-NSF Regional Conference Series in Applied Mathematics},
	VOLUME = {78},
	PUBLISHER = {Society for Industrial and Applied Mathematics (SIAM),
	Philadelphia, PA},
	YEAR = {2008},
	PAGES = {xvi+336},
	ISBN = {978-0-898716-51-1},
	MRCLASS = {35C15 (35Q15 35Q53 35Q55 37K15 42A38 44A12)},
	MRNUMBER = {2451953},
	MRREVIEWER = {R. G. Airapetyan},
	DOI = {10.1137/1.9780898717068},
	URL = {https://doi.org/10.1137/1.9780898717068},
}

@article {kai2013,
    AUTHOR = {Kaikina, Elena I.},
     TITLE = {Inhomogeneous {N}eumann initial-boundary value problem for the
              nonlinear {S}chr\"odinger equation},
   JOURNAL = {J. Differential Equations},
  FJOURNAL = {Journal of Differential Equations},
    VOLUME = {255},
      YEAR = {2013},
    NUMBER = {10},
     PAGES = {3338--3356},
      ISSN = {0022-0396,1090-2732},
   MRCLASS = {35Q55 (35A01 35B40)},
  MRNUMBER = {3093366},
MRREVIEWER = {Mahendra\ Panthee},
       DOI = {10.1016/j.jde.2013.07.036},
       URL = {https://doi.org/10.1016/j.jde.2013.07.036},
}

@article {fhm2017,
    AUTHOR = {Fokas, Athanassios S. and Himonas, A. Alexandrou and Mantzavinos, Dionyssios},
     TITLE = {The nonlinear {S}chr\"{o}dinger equation on the half-line},
   JOURNAL = {Trans. Amer. Math. Soc.},
  FJOURNAL = {Transactions of the American Mathematical Society},
    VOLUME = {369},
      YEAR = {2017},
    NUMBER = {1},
     PAGES = {681--709},
      ISSN = {0002-9947},
   MRCLASS = {35Q55 (35G16 35G31)},
  MRNUMBER = {3557790},
MRREVIEWER = {Peter E. Zhidkov},
       DOI = {10.1090/tran/6734},
       URL = {https://doi-org.www2.lib.ku.edu/10.1090/tran/6734},
}

@article {cb1991,
    AUTHOR = {Carroll, Robert and Bu, Qiyue},
     TITLE = {Solution of the forced nonlinear {S}chr\"odinger ({NLS})
              equation using {PDE} techniques},
   JOURNAL = {Appl. Anal.},
  FJOURNAL = {Applicable Analysis. An International Journal},
    VOLUME = {41},
      YEAR = {1991},
    NUMBER = {1-4},
     PAGES = {33--51},
      ISSN = {0003-6811,1563-504X},
   MRCLASS = {35Q55 (58D07 81Q05)},
  MRNUMBER = {1103847},
MRREVIEWER = {Jagannathan\ Gomatam},
       DOI = {10.1080/00036819108840015},
       URL = {https://doi.org/10.1080/00036819108840015},
}

@article {sb2001,
    AUTHOR = {Strauss, Walter and Bu, Charles},
     TITLE = {An inhomogeneous boundary value problem for nonlinear
              {S}chr\"odinger equations},
   JOURNAL = {J. Differential Equations},
  FJOURNAL = {Journal of Differential Equations},
    VOLUME = {173},
      YEAR = {2001},
    NUMBER = {1},
     PAGES = {79--91},
      ISSN = {0022-0396,1090-2732},
   MRCLASS = {35Q55 (35A05)},
  MRNUMBER = {1836245},
MRREVIEWER = {Yvan\ Martel},
       DOI = {10.1006/jdeq.2000.3871},
       URL = {https://doi.org/10.1006/jdeq.2000.3871},
}

@article {bo2016,
	AUTHOR = {Batal, Ahmet and Ozsari , Turker},
	TITLE = {Nonlinear {S}chr\"{o}dinger equations on the half-line with
	nonlinear boundary conditions},
	JOURNAL = {Electron. J. Differential Equations},
	FJOURNAL = {Electronic Journal of Differential Equations},
	YEAR = {2016},
	PAGES = {Paper No. 222, 20},
	MRCLASS = {35Q55 (35B30)},
	MRNUMBER = {3547411},
}

@article {bsz2003,
    AUTHOR = {Bona, Jerry L. and Sun, Shu Ming and Zhang, Bing-Yu},
     TITLE = {A nonhomogeneous boundary-value problem for the {K}orteweg-de
              {V}ries equation posed on a finite domain},
   JOURNAL = {Comm. Partial Differential Equations},
  FJOURNAL = {Communications in Partial Differential Equations},
    VOLUME = {28},
      YEAR = {2003},
    NUMBER = {7-8},
     PAGES = {1391--1436},
      ISSN = {0360-5302,1532-4133},
   MRCLASS = {35Q53 (35B30)},
  MRNUMBER = {1998942},
MRREVIEWER = {Woodford\ W.\ Zachary},
       DOI = {10.1081/PDE-120024373},
       URL = {https://doi.org/10.1081/PDE-120024373},
}

@article {bsz2018,
	AUTHOR = {Bona, Jerry L. and Sun, Shu-Ming and Zhang, Bing-Yu},
	TITLE = {Nonhomogeneous boundary-value problems for one-dimensional
	nonlinear {S}chr\"{o}dinger equations},
	JOURNAL = {J. Math. Pures Appl. (9)},
	FJOURNAL = {Journal de Math\'{e}matiques Pures et Appliqu\'{e}es. Neuvi\`eme S\'{e}rie},
	VOLUME = {109},
	YEAR = {2018},
	PAGES = {1--66},
	ISSN = {0021-7824},
	MRCLASS = {35Q55 (35Q35)},
	MRNUMBER = {3734975},
	MRREVIEWER = {Noel Frederick Smyth},
	DOI = {10.1016/j.matpur.2017.11.001},
	URL = {https://doi.org/10.1016/j.matpur.2017.11.001},
}

@article {cl2003,
	AUTHOR = {Carvajal, X. and Linares, F.},
	TITLE = {A higher-order nonlinear {S}chr\"{o}dinger equation with variable
	coefficients},
	JOURNAL = {Differential Integral Equations},
	FJOURNAL = {Differential and Integral Equations. An International Journal
	for Theory \& Applications},
	VOLUME = {16},
	YEAR = {2003},
	NUMBER = {9},
	PAGES = {1111--1130},
	ISSN = {0893-4983},
	MRCLASS = {35Q55 (35B30)},
	MRNUMBER = {1989544},
	MRREVIEWER = {Takayoshi Ogawa},
}

@article {c2004,
	AUTHOR = {Carvajal, Xavier},
	TITLE = {Local well-posedness for a higher order nonlinear
	{S}chr\"{o}dinger equation in {S}obolev spaces of negative
	indices},
	JOURNAL = {Electron. J. Differential Equations},
	FJOURNAL = {Electronic Journal of Differential Equations},
	YEAR = {2004},
	PAGES = {No. 13, 10},
	ISSN = {1072-6691},
	MRCLASS = {35Q55 (35B30)},
	MRNUMBER = {2036197},
}

@article {c2006,
	AUTHOR = {Carvajal, Xavier},
	TITLE = {Sharp global well-posedness for a higher order {S}chr\"{o}dinger
	equation},
	JOURNAL = {J. Fourier Anal. Appl.},
	FJOURNAL = {The Journal of Fourier Analysis and Applications},
	VOLUME = {12},
	YEAR = {2006},
	NUMBER = {1},
	PAGES = {53--70},
	ISSN = {1069-5869},
	MRCLASS = {35Q55 (35B30)},
	MRNUMBER = {2215677},
	MRREVIEWER = {W. Watzlawek},
	DOI = {10.1007/s00041-005-5028-3},
	URL = {https://doi-org.libezproxy.iyte.edu.tr/10.1007/s00041-005-5028-3},
}

@article {chen2018,
	AUTHOR = {Chen, Mo},
	TITLE = {Stabilization of the higher order nonlinear {S}chr\"{o}dinger
	equation with constant coefficients},
	JOURNAL = {Proc. Indian Acad. Sci. Math. Sci.},
	FJOURNAL = {Indian Academy of Sciences. Proceedings. Mathematical
	Sciences},
	VOLUME = {128},
	YEAR = {2018},
	NUMBER = {3},
	PAGES = {Art. 39, 15},
	ISSN = {0253-4142},
	MRCLASS = {93D15 (35Q53 35Q55 93C20)},
	MRNUMBER = {3814780},
	DOI = {10.1007/s12044-018-0410-7},
	URL = {https://doi-org.libezproxy.iyte.edu.tr/10.1007/s12044-018-0410-7},
}

@article {kod2024,
    AUTHOR = {Kalimeris, Konstantinos and Ozsari, Turker and Dikaios,
              Nikolaos},
     TITLE = {Numerical computation of {N}eumann controls for the heat
              equation on a finite interval},
   JOURNAL = {IEEE Trans. Automat. Control},
  FJOURNAL = {Institute of Electrical and Electronics Engineers.
              Transactions on Automatic Control},
    VOLUME = {69},
      YEAR = {2024},
    NUMBER = {1},
     PAGES = {161--173},
      ISSN = {0018-9286,1558-2523},
   MRCLASS = {93C20 (93-08)},
  MRNUMBER = {4703061},
       DOI = {10.1109/tac.2023.3263753},
       URL = {https://doi.org/10.1109/tac.2023.3263753},
}

@article {ozs2015,
    AUTHOR = {Ozsari, Turker},
     TITLE = {Well-posedness for nonlinear {S}chr\"odinger equations with
              boundary forces in low dimensions by {S}trichartz estimates},
   JOURNAL = {J. Math. Anal. Appl.},
  FJOURNAL = {Journal of Mathematical Analysis and Applications},
    VOLUME = {424},
      YEAR = {2015},
    NUMBER = {1},
     PAGES = {487--508},
      ISSN = {0022-247X,1096-0813},
   MRCLASS = {35Q55 (35B30)},
  MRNUMBER = {3286575},
       DOI = {10.1016/j.jmaa.2014.11.034},
       URL = {https://doi.org/10.1016/j.jmaa.2014.11.034},
}

@article {ok2023,
    AUTHOR = {\"Ozsar{\i}, Turker and Kalimeris, Konstantinos},
     TITLE = {Existence of unattainable states for {S}chr\"odinger type
              flows on the half-line},
   JOURNAL = {IMA J. Math. Control Inform.},
  FJOURNAL = {IMA Journal of Mathematical Control and Information},
    VOLUME = {40},
      YEAR = {2023},
    NUMBER = {4},
     PAGES = {789--803},
      ISSN = {0265-0754,1471-6887},
   MRCLASS = {93B03 (93B05 93C20)},
  MRNUMBER = {4681262},
       DOI = {10.1093/imamci/dnad032},
       URL = {https://doi.org/10.1093/imamci/dnad032},
}

@article {mz2001,
    AUTHOR = {Micu, Sorin and Zuazua, Enrique},
     TITLE = {On the lack of null-controllability of the heat equation on
              the half-line},
   JOURNAL = {Trans. Amer. Math. Soc.},
  FJOURNAL = {Transactions of the American Mathematical Society},
    VOLUME = {353},
      YEAR = {2001},
    NUMBER = {4},
     PAGES = {1635--1659},
      ISSN = {0002-9947,1088-6850},
   MRCLASS = {93B05 (35B37 35K05 93C20)},
  MRNUMBER = {1806726},
MRREVIEWER = {Patrick\ Martinez},
       DOI = {10.1090/S0002-9947-00-02665-9},
       URL = {https://doi.org/10.1090/S0002-9947-00-02665-9},
}

@article {ko2020,
	AUTHOR = {Kalimeris, Konstantinos and Ozsari , Turker},
	TITLE = {An elementary proof of the lack of null controllability for
	the heat equation on the half line},
	JOURNAL = {Appl. Math. Lett.},
	FJOURNAL = {Applied Mathematics Letters. An International Journal of Rapid
	Publication},
	VOLUME = {104},
	YEAR = {2020},
	PAGES = {106241, 6},
	ISSN = {0893-9659},
	MRCLASS = {93B15 (35K05 35K20 93C20)},
	MRNUMBER = {4056215},
	MRREVIEWER = {Alain Brillard},
	DOI = {10.1016/j.aml.2020.106241},
	URL = {https://doi.org/10.1016/j.aml.2020.106241},
}

@article {h2005,
	AUTHOR = {Holmer, Justin},
	TITLE = {The initial-boundary-value problem for the 1{D} nonlinear
	{S}chr\"{o}dinger equation on the half-line},
	JOURNAL = {Differential Integral Equations},
	FJOURNAL = {Differential and Integral Equations. An International Journal
	for Theory \& Applications},
	VOLUME = {18},
	YEAR = {2005},
	NUMBER = {6},
	PAGES = {647--668},
	ISSN = {0893-4983},
	MRCLASS = {35Q55 (35B30)},
	MRNUMBER = {2136703},
	MRREVIEWER = {Markus Kunze},
}

@article {l1997,
	AUTHOR = {Laurey, Corinne},
	TITLE = {The {C}auchy problem for a third order nonlinear {S}chr\"{o}dinger
	equation},
	JOURNAL = {Nonlinear Anal.},
	FJOURNAL = {Nonlinear Analysis. Theory, Methods \& Applications. An
	International Multidisciplinary Journal},
	VOLUME = {29},
	YEAR = {1997},
	NUMBER = {2},
	PAGES = {121--158},
	ISSN = {0362-546X},
	MRCLASS = {35Q55},
	MRNUMBER = {1446222},
	MRREVIEWER = {Nakao Hayashi},
	DOI = {10.1016/S0362-546X(96)00081-8},
	URL = {https://doi-org.libezproxy.iyte.edu.tr/10.1016/S0362-546X(96)00081-8},
}

@article {oy2022,
	AUTHOR = {\"{O}zsar{\i} , Turker and Yilmaz, Kemal Cem},
	TITLE = {Stabilization of higher order {S}chr\"{o}dinger equations on a
	finite interval: {P}art {II}},
	JOURNAL = {Evol. Equ. Control Theory},
	FJOURNAL = {Evolution Equations and Control Theory},
	VOLUME = {11},
	YEAR = {2022},
	NUMBER = {4},
	PAGES = {1087--1148},
	ISSN = {2163-2472},
	MRCLASS = {93D15 (35Q55 35Q93 93C20)},
	MRNUMBER = {4455286},
	DOI = {10.3934/eect.2021037},
	URL = {https://doi.org/10.3934/eect.2021037},
}

@article {s1997,
	AUTHOR = {Staffilani, Gigliola},
	TITLE = {On the generalized {K}orteweg-de {V}ries-type equations},
	JOURNAL = {Differential Integral Equations},
	FJOURNAL = {Differential and Integral Equations. An International Journal
	for Theory \& Applications},
	VOLUME = {10},
	YEAR = {1997},
	NUMBER = {4},
	PAGES = {777--796},
	ISSN = {0893-4983},
	MRCLASS = {35A07 (35B30 35Q53 35Q55)},
	MRNUMBER = {1741772},
	MRREVIEWER = {Cornelia Schiebold},
}

@article {tak2000,
	AUTHOR = {Takaoka, Hideo},
	TITLE = {Well-posedness for the higher order nonlinear {S}chr\"{o}dinger
	equation},
	JOURNAL = {Adv. Math. Sci. Appl.},
	FJOURNAL = {Advances in Mathematical Sciences and Applications},
	VOLUME = {10},
	YEAR = {2000},
	NUMBER = {1},
	PAGES = {149--171},
	ISSN = {1343-4373},
	MRCLASS = {35Q55},
	MRNUMBER = {1769176},
	MRREVIEWER = {Tohru Ozawa},
}

@article {oy2019,
	AUTHOR = {\"{O}zsar{\i} , Turker and Yolcu, Nermin},
	TITLE = {The initial-boundary value problem for the biharmonic
	{S}chr\"{o}dinger equation on the half-line},
	JOURNAL = {Commun. Pure Appl. Anal.},
	FJOURNAL = {Communications on Pure and Applied Analysis},
	VOLUME = {18},
	YEAR = {2019},
	NUMBER = {6},
	PAGES = {3285--3316},
	ISSN = {1534-0392},
	MRCLASS = {35Q55 (35B30 35B45 35C15)},
	MRNUMBER = {3985385},
	DOI = {10.3934/cpaa.2019148},
	URL = {https://doi.org/10.3934/cpaa.2019148},
}

@article {ko2022,
	AUTHOR = {Köksal, Bilge and \"{O}zsar{\i} , Turker},
	TITLE = { The interior-boundary {S}trichartz estimate for the {S}chr\"odinger equation on the half line revisited},
	JOURNAL = {Turkish J. Math.},
	FJOURNAL = {Turkish Journal of Mathematics},
	VOLUME = {46},
	YEAR = {2022},
	NUMBER = {8},
	PAGES = {3323–3351},
	ISSN = {1300-0098},
	MRCLASS = {35A22, 35Q55, 35C15, 35B65, 35B4},
	MRNUMBER = {},
}

@book {s1994,
    AUTHOR = {Strichartz, Robert S.},
     TITLE = {A guide to distribution theory and {F}ourier transforms},
    SERIES = {Studies in Advanced Mathematics},
 PUBLISHER = {CRC Press, Boca Raton, FL},
      YEAR = {1994},
     PAGES = {x+213},
      ISBN = {0-8493-8273-4},
   MRCLASS = {42-01 (35-01 46-01 46Fxx)},
  MRNUMBER = {1276724},
MRREVIEWER = {J.\ Horv\'{a}th},
}

@article {hm2020,
    AUTHOR = {Himonas, A. Alexandrou and Mantzavinos, Dionyssios},
     TITLE = {Well-posedness of the nonlinear {S}chr\"{o}dinger equation on the
              half-plane},
   JOURNAL = {Nonlinearity},
  FJOURNAL = {Nonlinearity},
    VOLUME = {33},
      YEAR = {2020},
    NUMBER = {10},
     PAGES = {5567--5609},
      ISSN = {0951-7715},
   MRCLASS = {35Q55 (35B30)},
  MRNUMBER = {4151418},
MRREVIEWER = {Zhaohui Huo},
       DOI = {10.1088/1361-6544/ab9499},
       URL = {https://doi.org/10.1088/1361-6544/ab9499},
}

@book {lm1972,
    AUTHOR = {Lions, J.-L. and Magenes, E.},
     TITLE = {Non-homogeneous boundary value problems and applications.
              {V}ol. {I}},
    SERIES = {Die Grundlehren der mathematischen Wissenschaften, Band 181},
      NOTE = {Translated from the French by P. Kenneth},
 PUBLISHER = {Springer-Verlag, New York-Heidelberg},
      YEAR = {1972},
     PAGES = {xvi+357},
   MRCLASS = {35JXX (35KXX 35LXX 46E35)},
  MRNUMBER = {0350177},
}

@book {s1970,
    AUTHOR = {Stein, Elias M.},
     TITLE = {Singular integrals and differentiability properties of
              functions},
    SERIES = {Princeton Mathematical Series, No. 30},
 PUBLISHER = {Princeton University Press, Princeton, N.J.},
      YEAR = {1970},
     PAGES = {xiv+290},
   MRCLASS = {46.38 (26.00)},
  MRNUMBER = {0290095},
MRREVIEWER = {R. E. Edwards},
}

@book {g2014m,
    AUTHOR = {Grafakos, Loukas},
     TITLE = {Modern {F}ourier analysis},
    SERIES = {Graduate Texts in Mathematics},
    VOLUME = {250},
   EDITION = {Third},
 PUBLISHER = {Springer, New York},
      YEAR = {2014},
     PAGES = {xvi+624},
      ISBN = {978-1-4939-1229-2; 978-1-4939-1230-8},
   MRCLASS = {42-01 (42Bxx)},
  MRNUMBER = {3243741},
MRREVIEWER = {Atanas G. Stefanov},
       DOI = {10.1007/978-1-4939-1230-8},
       URL = {https://doi.org/10.1007/978-1-4939-1230-8},
}

@inproceedings {c1961,
    AUTHOR = {Calder\'{o}n, A.-P.},
     TITLE = {Lebesgue spaces of differentiable functions and distributions},
 BOOKTITLE = {Proc. {S}ympos. {P}ure {M}ath., {V}ol. {IV}},
     PAGES = {33--49},
 PUBLISHER = {American Mathematical Society, Providence, R.I.},
      YEAR = {1961},
   MRCLASS = {46.40 (46.38)},
  MRNUMBER = {0143037},
MRREVIEWER = {G. Marinescu},
}

@article {ck2002,
    AUTHOR = {Colliander, J. E. and Kenig, C. E.},
     TITLE = {The generalized {K}orteweg-de {V}ries equation on the half
              line},
   JOURNAL = {Comm. Partial Differential Equations},
  FJOURNAL = {Communications in Partial Differential Equations},
    VOLUME = {27},
      YEAR = {2002},
    NUMBER = {11-12},
     PAGES = {2187--2266},
      ISSN = {0360-5302},
   MRCLASS = {35Q53},
  MRNUMBER = {1944029},
MRREVIEWER = {Woodford W. Zachary},
       DOI = {10.1081/PDE-120016157},
       URL = {https://doi-org.www2.lib.ku.edu/10.1081/PDE-120016157},
}

@article {h2006,
    AUTHOR = {Holmer, Justin},
     TITLE = {The initial-boundary value problem for the {K}orteweg-de
              {V}ries equation},
   JOURNAL = {Comm. Partial Differential Equations},
  FJOURNAL = {Communications in Partial Differential Equations},
    VOLUME = {31},
      YEAR = {2006},
    NUMBER = {7-9},
     PAGES = {1151--1190},
      ISSN = {0360-5302},
   MRCLASS = {35Q53 (35B30)},
  MRNUMBER = {2254610},
MRREVIEWER = {Henrik Kalisch},
       DOI = {10.1080/03605300600718503},
       URL = {https://doi-org.www2.lib.ku.edu/10.1080/03605300600718503},
}

@article {hy2022b,
    AUTHOR = {Himonas, A. Alexanddrou and Yan, Fangchi},
     TITLE = {A higher dispersion {K}d{V} equation on the half-line},
   JOURNAL = {J. Differential Equations},
  FJOURNAL = {Journal of Differential Equations},
    VOLUME = {333},
      YEAR = {2022},
     PAGES = {55--102},
      ISSN = {0022-0396},
   MRCLASS = {35Q55 (35G16 35G31 35Q53 37K10)},
  MRNUMBER = {4441362},
       DOI = {10.1016/j.jde.2022.06.003},
       URL = {https://doi-org.www2.lib.ku.edu/10.1016/j.jde.2022.06.003},
}

@article {hm2022,
    AUTHOR = {Himonas, A. Alexandrou and Mantzavinos, Dionyssios},
     TITLE = {The {R}obin and {N}eumann problems for the nonlinear
              {S}chr\"{o}dinger equation on the half-plane},
   JOURNAL = {Proc. A.},
  FJOURNAL = {Proceedings A},
    VOLUME = {478},
      YEAR = {2022},
    NUMBER = {2265},
     PAGES = {Paper No. 279, 20},
      ISSN = {1364-5021},
   MRCLASS = {35Q55},
  MRNUMBER = {4492208},
}

@article {fhm2016,
    AUTHOR = {Fokas, Athanassios S. and Himonas, A. Alexandrou and
              Mantzavinos, Dionyssios},
     TITLE = {The {K}orteweg--de {V}ries equation on the half-line},
   JOURNAL = {Nonlinearity},
  FJOURNAL = {Nonlinearity},
    VOLUME = {29},
      YEAR = {2016},
    NUMBER = {2},
     PAGES = {489--527},
      ISSN = {0951-7715},
   MRCLASS = {35Q53 (35B30 35G16 35G31 37K10 47H10)},
  MRNUMBER = {3461607},
MRREVIEWER = {V. Oproiu},
       DOI = {10.1088/0951-7715/29/2/489},
       URL = {https://doi-org.www2.lib.ku.edu/10.1088/0951-7715/29/2/489},
}

@article {bsz2002,
    AUTHOR = {Bona, Jerry L. and Sun, S. M. and Zhang, Bing-Yu},
     TITLE = {A non-homogeneous boundary-value problem for the {K}orteweg-de
              {V}ries equation in a quarter plane},
   JOURNAL = {Trans. Amer. Math. Soc.},
  FJOURNAL = {Transactions of the American Mathematical Society},
    VOLUME = {354},
      YEAR = {2002},
    NUMBER = {2},
     PAGES = {427--490},
      ISSN = {0002-9947},
   MRCLASS = {35Q53 (35B30)},
  MRNUMBER = {1862556},
MRREVIEWER = {Vladislav G. Dubrovsky},
       DOI = {10.1090/S0002-9947-01-02885-9},
       URL = {https://doi-org.www2.lib.ku.edu/10.1090/S0002-9947-01-02885-9},
}

@article {bsz2006,
    AUTHOR = {Bona, Jerry L. and Sun, S. M. and Zhang, Bing-Yu},
     TITLE = {Boundary smoothing properties of the {K}orteweg-de {V}ries
              equation in a quarter plane and applications},
   JOURNAL = {Dyn. Partial Differ. Equ.},
  FJOURNAL = {Dynamics of Partial Differential Equations},
    VOLUME = {3},
      YEAR = {2006},
    NUMBER = {1},
     PAGES = {1--69},
      ISSN = {1548-159X,2163-7873},
   MRCLASS = {35Q53 (35B65 76B03 76B15)},
  MRNUMBER = {2221746},
MRREVIEWER = {Justin\ A.\ Holmer},
       DOI = {10.4310/DPDE.2006.v3.n1.a1},
       URL = {https://doi.org/10.4310/DPDE.2006.v3.n1.a1},
}

@article {bsz2008,
    AUTHOR = {Bona, Jerry L. and Sun, S. M. and Zhang, Bing-Yu},
     TITLE = {Non-homogeneous boundary value problems for the {K}orteweg-de
              {V}ries and the {K}orteweg-de {V}ries-{B}urgers equations in a
              quarter plane},
   JOURNAL = {Ann. Inst. H. Poincar\'{e} C Anal. Non Lin\'{e}aire},
  FJOURNAL = {Annales de l'Institut Henri Poincar\'{e} C. Analyse Non
              Lin\'{e}aire},
    VOLUME = {25},
      YEAR = {2008},
    NUMBER = {6},
     PAGES = {1145--1185},
      ISSN = {0294-1449,1873-1430},
   MRCLASS = {35Q53 (35B30 76B15)},
  MRNUMBER = {2466325},
MRREVIEWER = {Enzo\ Vitillaro},
       DOI = {10.1016/j.anihpc.2007.07.006},
       URL = {https://doi.org/10.1016/j.anihpc.2007.07.006},
}

@article {et2016,
    AUTHOR = {Erdogan, M. B. and Tzirakis, N.},
     TITLE = {Regularity properties of the cubic nonlinear {S}chr\"{o}dinger
              equation on the half line},
   JOURNAL = {J. Funct. Anal.},
  FJOURNAL = {Journal of Functional Analysis},
    VOLUME = {271},
      YEAR = {2016},
    NUMBER = {9},
     PAGES = {2539--2568},
      ISSN = {0022-1236},
   MRCLASS = {35Q55 (35B65)},
  MRNUMBER = {3545224},
MRREVIEWER = {Amin Esfahani},
       DOI = {10.1016/j.jfa.2016.08.012},
       URL = {https://doi-org.www2.lib.ku.edu/10.1016/j.jfa.2016.08.012},
}

@article {h1929,
    AUTHOR = {Hardy, G. H.},
     TITLE = {Remarks in {A}ddition to {D}r. {W}idder's {N}ote on
              {I}nequalities},
   JOURNAL = {J. London Math. Soc.},
  FJOURNAL = {The Journal of the London Mathematical Society},
    VOLUME = {4},
      YEAR = {1929},
    NUMBER = {3},
     PAGES = {199--202},
      ISSN = {0024-6107,1469-7750},
   MRCLASS = {99-04},
  MRNUMBER = {1575046},
       DOI = {10.1112/jlms/s1-4.3.199},
       URL = {https://doi.org/10.1112/jlms/s1-4.3.199},
}

@incollection {k1985,
	AUTHOR = {Kodama, Yuji},
	TITLE = {Optical solitons in a monomode fiber},
	NOTE = {Transport and propagation in nonlinear systems (Los Alamos,
	N.M., 1984)},
	JOURNAL = {J. Statist. Phys.},
	FJOURNAL = {Journal of Statistical Physics},
	VOLUME = {39},
	YEAR = {1985},
	NUMBER = {5-6},
	PAGES = {597--614},
	ISSN = {0022-4715},
	MRCLASS = {78A55},
	MRNUMBER = {807002},
	DOI = {10.1007/BF01008354},
	URL = {https://doi.org/10.1007/BF01008354},
}

@article{kh1987,
	title={Nonlinear pulse propagation in a monomode dielectric guide},
	author={Kodama, Yuji and Hasegawa, Akira},
	journal={IEEE Journal of Quantum Electronics},
	volume={23},
	number={5},
	pages={510--524},
	year={1987},
	publisher={IEEE}
}

@article {f2024,
    AUTHOR = {Faminskii, Andrei V.},
     TITLE = {Global weak solutions of an initial-boundary value problem on
              a half-line for the higher order nonlinear {S}chr\"{o}dinger
              equation},
   JOURNAL = {J. Math. Anal. Appl.},
  FJOURNAL = {Journal of Mathematical Analysis and Applications},
    VOLUME = {533},
      YEAR = {2024},
    NUMBER = {2},
     PAGES = {Paper No. 128003, 22},
      ISSN = {0022-247X,1096-0813},
   MRCLASS = {35Q55 (35D30 35G31)},
  MRNUMBER = {4674971},
       DOI = {10.1016/j.jmaa.2023.128003},
       URL = {https://doi.org/10.1016/j.jmaa.2023.128003},
}

@article {mo2024,
    AUTHOR = {Mantzavinos, Dionyssios and Ozsari, Turker},
     TITLE = {Low-regularity solutions of the nonlinear {S}chr\"{o}dinger equation on the spatial quarter-plane},
   JOURNAL = {arXiv:2403.15350v1},
  FJOURNAL = {},
    VOLUME = {},
      YEAR = {2024},
    NUMBER = {},
     PAGES = {(to appear in Siam Journal on Mathematical Analysis)},
      ISSN = {},
   MRCLASS = {},
  MRNUMBER = {},
MRREVIEWER = {},
       URL = {https://arxiv.org/abs/2403.15350v1},
}

@article {fm2023,
    AUTHOR = {Faminskii, A. V. and Martynov, E. V.},
     TITLE = {Inverse problems for the higher order nonlinear
              {S}chr\"odinger equation},
   JOURNAL = {J. Math. Sci. (N.Y.)},
  FJOURNAL = {Journal of Mathematical Sciences (New York)},
    VOLUME = {274},
      YEAR = {2023},
    NUMBER = {4},
     PAGES = {475--492},
      ISSN = {1072-3374,1573-8795},
   MRCLASS = {35Q55},
  MRNUMBER = {4644504},
}

@article {mok2024,
    AUTHOR = {Mantzavinos, Dionyssios and Ozsari, Turker and {Y}ilmaz, {K}emal {C}em},
     TITLE = {Ginzburg-Landau equation on a finite interval and rapid chaos suppression via a finite dimensional backstepping controller},
   JOURNAL = {(preprint)},
  FJOURNAL = {},
    VOLUME = {},
      YEAR = {},
    NUMBER = {},
     PAGES = {},
      ISSN = {},
   MRCLASS = {},
  MRNUMBER = {},
MRREVIEWER = {},
       URL = {},
}

@article{amo2025,
	AUTHOR = {Alkin, Aykut and Mantzavinos, Dionyssios and Ozsari, Turker},
	TITLE = {Local well-posedness of the higher-order nonlinear {S}chrödinger equation on the half-line: the case of two boundary conditions},
	JOURNAL = {(preprint)},
	VOLUME = {},
	YEAR = {2025},
	NUMBER = {},
	PAGES = {},
	URL = {},
}

\appendix

\section{Linear solution formula via the unified transform}\label{app-s}

We employ the unified transform of Fokas to derive the solution formula \eqref{hls-sol} for the forced linear problem on a finite interval given by \eqref{hls-ibvp}. Throughout our derivation, we work under the assumption of sufficient smoothness and decay. 
We begin by defining the finite interval Fourier transform pair
\eq{\label{fi-ft-def}
\hat \phi (k) := \int_0^\ell e^{-i k x} \phi (x) dx, \quad k\in\mathbb C,
\qquad
\phi (x) =
\frac{1}{2 \pi} \int_{-\infty}^\infty e^{i k x} \hat \phi (k) dk, \quad  0 < x < \ell,
}
where we note that the inverse formula follows by applying the standard Fourier inversion theorem to a function defined on $\R$ but with support on $(0, \ell)$ and equal to $\phi(x)$ there. We emphasize that, contrary to the usual Fourier transform on the whole line, which is only defined for $k\in\R$, the finite interval Fourier transform is entire with respect to $k\in\C$ thanks to the fact that the spatial variable $x$ is bounded (see Section 7.2 in \cite{s1994}).

Taking the finite interval Fourier transform \eqref{fi-ft-def} of the forced linear higher-order Schr\"odinger equation in \eqref{hls-ibvp} and then integrating in $t$, we obtain what is known in the unified transform terminology as the \textit{global relation}:
\eqs{
\begin{split}
e^{-i \omega t} \hat u(k, t) &= \hat u_0 (k) - i \int_0^t e^{-i\omega t'} \hat f(k, t') dt' + \beta \tilde g_2 (\omega, t) + i \p{\beta k - \alpha} \tilde g_1 (\omega, t) - \p{\beta k^2 - \alpha k - \delta} \tilde g_0 (\omega, t)\\
&\quad- e^{-i k \ell} \b{\beta \tilde h_2 (\omega, t) + i \p{\beta k - \alpha} \tilde h_1 (\omega, t) - \p{\beta k^2 - \alpha k - \delta} \tilde h_0 (\omega, t)}, \quad k\in\C,
\end{split}
\alignlabel{preinv}}
where $\omega$ is given by \eqref{omega} and we have introduced the notation 
\eq{
g_2 (t) = u_{xx} (0, t), \quad g_1 (t) = u_{x} (0, t), \quad h_2 (t) = u_{xx} (\ell, t),
\quad 
\tilde \phi (\omega, t) := \int_0^t e^{-i \omega t'} \phi (t') dt'. \label{tilde-transform}}
Inverting the global relation for $k\in\R$ via \eqref{fi-ft-def}, we obtain the following integral representation for the solution:
\eqs{
\begin{split}
u(x, t) &= \frac{1}{2 \pi} \int_{-\infty}^{\infty} e^{i k x + i \omega t} \b{\hat u_0 (k) - i \int_0^t e^{-i\omega t'} \hat f(k, t') dt'} dk\\
&\quad + \frac{1}{2 \pi} \int_{-\infty}^{\infty} e^{i k x + i \omega t} \Big[\beta \tilde g_2 (\omega, t) + i \p{\beta k - \alpha} \tilde g_1 (\omega, t) - \p{\beta k^2 - \alpha k - \delta} \tilde g_0 (\omega, t)\Big] dk\\
&\quad - \frac{1}{2 \pi} \int_{-\infty}^{\infty} e^{-i k (\ell - x) + i \omega t} \b{\beta \tilde h_2 (\omega, t) + i \p{\beta k - \alpha} \tilde h_1 (\omega, t) - \p{\beta k^2 - \alpha k - \delta} \tilde h_0 (\omega, t)} dk.
\end{split} \alignlabel{intrep}}
This expression is not an explicit formula for the solution because it contains the unknown boundary values $g_1, g_2, h_2$ through their respective time transforms defined by \eqref{tilde-transform}. The key idea behind the unified transform is to combine the fact that these transforms depend on $k$ only through $\omega$ with appropriate symmetries of $\omega$ in the complex $k$-plane which, therefore, leave the transforms invariant.

The first step in the implementation of the above plan is to perform appropriate contour deformations by exploiting analyticity and exponential decay. Indeed, motivated by the decay of the exponentials involved in \eqref{intrep}, and thanks to the fact that the relevant integrands are entire in $k$, we use Cauchy's theorem to deform the contours of integration of the second and third integrals in \eqref{intrep} from $\R$ to the positively oriented boundaries $\partial \tilde D_0$ and $\partial \tilde D_+\cup\partial \tilde D_-$, respectively, where the regions $\tilde D_0, \tilde D_+, \tilde D_-$ are defined by \eqref{dntil-def} and shown in Figure \ref{fig:Dbranches}. We note that these regions are obtained by puncturing the regions $D_0, D_+, D_-$ (given by \eqref{dn-def} and shown in Figure \ref{fig:Dpm}) by a disk of radius $R_\Delta$ (given by \eqref{rd-def}) centered at $\frac{\alpha}{3\beta}$. This is done proactively, now that the integrands are still entire in $k$, in order to ensure that neither the branch cut introduced later for the symmetries of $\omega$ (see discussion below~\eqref{csr}) nor the zeros of the quantity $\Delta$ given by \eqref{Delta}, which will eventually appear in the denominator of our integrands, are enclosed by the deformed contours of integration.
Eventually, by Cauchy's theorem and use of the exponential decay, we arrive at the modified integral representation
\eqs{
\begin{split}
u(x, t) &= \frac{1}{2 \pi} \int_{-\infty}^{\infty} e^{i k x + i \omega t} \b{\hat u_0 (k) - i \int_0^t e^{-i\omega t'} \hat f(k, t') dt'} dk\\
&\quad + \frac{1}{2 \pi} \int_{\partial \tilde D_0} e^{i k x + i \omega t} \Big[\beta \tilde g_2 (\omega, t) + i \p{\beta k - \alpha} \tilde g_1 (\omega, t) - \p{\beta k^2 - \alpha k - \delta} \tilde g_0 (\omega, t)\Big] dk\\
&\quad + \frac{1}{2 \pi} \int_{\partial \tilde D_+ \cup \partial \tilde D_-} e^{-i k (\ell - x) + i \omega t} \b{\beta \tilde h_2 (\omega, t) + i \p{\beta k - \alpha} \tilde h_1 (\omega, t) - \p{\beta k^2 - \alpha k - \delta} \tilde h_0 (\omega, t)} dk.
\end{split} \alignlabel{intrepdef}}

\begin{figure}[ht!]
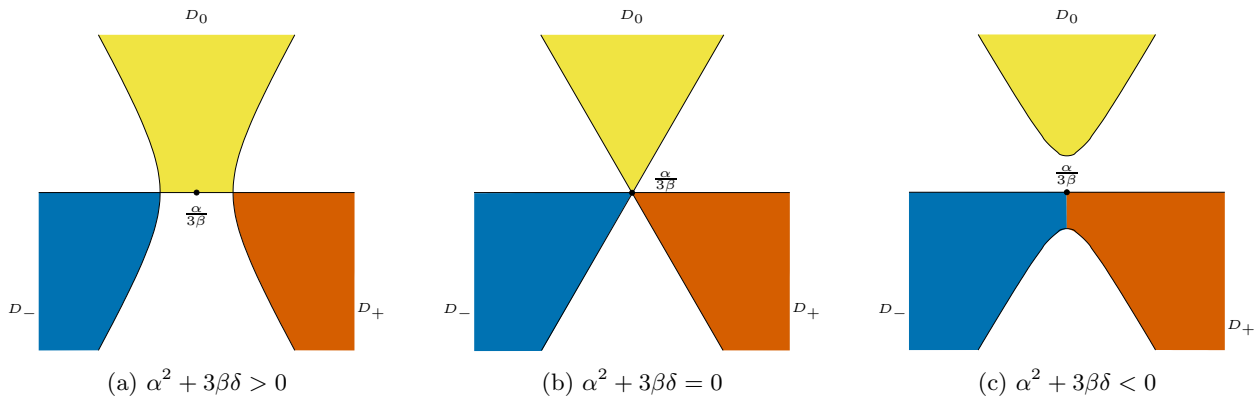

\centering
\begin{subfigure}{.33\textwidth}
\centering
%
%
{\pgfkeys{/pgf/fpu/.try=false}%
\ifx\XFigwidth\undefined\dimen1=0pt\else\dimen1\XFigwidth\fi
\divide\dimen1 by 5691
\ifx\XFigheight\undefined\dimen3=0pt\else\dimen3\XFigheight\fi
\divide\dimen3 by 5151
\ifdim\dimen1=0pt\ifdim\dimen3=0pt\dimen1=1657sp\dimen3\dimen1
  \else\dimen1\dimen3\fi\else\ifdim\dimen3=0pt\dimen3\dimen1\fi\fi
\tikzpicture[x=+\dimen1, y=+\dimen3]
{\ifx\XFigu\undefined\catcode`\@11
\def\temp{\alloc@1\dimen\dimendef\insc@unt}\temp\XFigu\catcode`\@12\fi}
\XFigu1657sp
\ifdim\XFigu<0pt\XFigu-\XFigu\fi
\pgfdeclarearrow{
  name = xfiga1,
  parameters = {
    \the\pgfarrowlinewidth \the\pgfarrowlength \the\pgfarrowwidth\ifpgfarrowopen o\fi},
  defaults = {
	  line width=+7.5\XFigu, length=+120\XFigu, width=+60\XFigu},
  setup code = {
    \dimen7 2.1\pgfarrowlength\pgfmathveclen{\the\dimen7}{\the\pgfarrowwidth}
    \dimen7 2\pgfarrowwidth\pgfmathdivide{\pgfmathresult}{\the\dimen7}
    \dimen7 \pgfmathresult\pgfarrowlinewidth
    \pgfarrowssettipend{+\dimen7}
    \pgfarrowssetbackend{+-\pgfarrowlength}
    \dimen9 -\pgfarrowlength\advance\dimen9 by-0.45\pgfarrowlinewidth
    \pgfarrowssetlineend{+\dimen9}
    \dimen9 -\pgfarrowlength\advance\dimen9 by-0.5\pgfarrowlinewidth
    \pgfarrowssetvisualbackend{+\dimen9}
    \pgfarrowshullpoint{+\dimen7}{+0pt}
    \pgfarrowsupperhullpoint{+-\pgfarrowlength}{+0.5\pgfarrowwidth}
    \pgfarrowssavethe\pgfarrowlinewidth
    \pgfarrowssavethe\pgfarrowlength
    \pgfarrowssavethe\pgfarrowwidth
  },
  drawing code = {\pgfsetdash{}{+0pt}
    \ifdim\pgfarrowlinewidth=\pgflinewidth\else\pgfsetlinewidth{+\pgfarrowlinewidth}\fi
    \pgfpathmoveto{\pgfqpoint{-\pgfarrowlength}{-0.5\pgfarrowwidth}}
    \pgfpathlineto{\pgfqpoint{0pt}{0pt}}
    \pgfpathlineto{\pgfqpoint{-\pgfarrowlength}{0.5\pgfarrowwidth}}
    \pgfpathclose
    \ifpgfarrowopen\pgfusepathqstroke\else\pgfsetfillcolor{.}
	\ifdim\pgfarrowlinewidth>0pt\pgfusepathqfillstroke\else\pgfusepathqfill\fi\fi
  }
}
\definecolor{xfigc32}{rgb}{0.941,0.894,0.259}
\definecolor{xfigc33}{rgb}{0.000,0.447,0.698}
\definecolor{xfigc34}{rgb}{0.835,0.369,0.000}
\clip(1882,-7104) rectangle (7573,-1953);
\tikzset{inner sep=+0pt, outer sep=+0pt}
\pgfsetfillcolor{black}
\pgfsetlinewidth{+7.5\XFigu}
\pgfsetdash{}{+0pt}
\pgfsetstrokecolor{black}
\pgfsetbeveljoin
\pgfsetfillcolor{xfigc34}
\fill (7086,-7088)--(7086,-7087)--(7086,-7084)--(7086,-7078)--(7086,-7069)--(7086,-7056)
  --(7086,-7039)--(7086,-7017)--(7086,-6989)--(7086,-6955)--(7086,-6915)--(7086,-6869)
  --(7086,-6817)--(7086,-6759)--(7086,-6694)--(7086,-6624)--(7086,-6548)--(7086,-6467)
  --(7086,-6381)--(7086,-6291)--(7086,-6198)--(7086,-6102)--(7086,-6004)--(7087,-5906)
  --(7087,-5808)--(7087,-5710)--(7087,-5614)--(7087,-5521)--(7087,-5431)--(7087,-5345)
  --(7087,-5264)--(7087,-5188)--(7087,-5118)--(7087,-5053)--(7087,-4995)--(7087,-4943)
  --(7087,-4897)--(7087,-4857)--(7087,-4823)--(7087,-4795)--(7087,-4773)--(7087,-4756)
  --(7087,-4743)--(7087,-4734)--(7087,-4728)--(7087,-4725)--(7087,-4724)--(7086,-4724)
  --(7082,-4724)--(7076,-4724)--(7066,-4724)--(7051,-4724)--(7032,-4724)--(7007,-4724)
  --(6977,-4724)--(6940,-4724)--(6897,-4724)--(6848,-4724)--(6792,-4724)--(6731,-4724)
  --(6664,-4724)--(6591,-4724)--(6515,-4724)--(6434,-4724)--(6351,-4724)--(6266,-4724)
  --(6179,-4724)--(6093,-4724)--(6008,-4724)--(5925,-4724)--(5844,-4724)--(5768,-4724)
  --(5695,-4724)--(5628,-4724)--(5567,-4724)--(5511,-4724)--(5462,-4724)--(5419,-4724)
  --(5382,-4724)--(5352,-4724)--(5327,-4724)--(5308,-4724)--(5293,-4724)--(5283,-4724)
  --(5277,-4724)--(5273,-4724)--(5272,-4724)--(5272,-4726)--(5272,-4731)--(5272,-4740)
  --(5272,-4751)--(5273,-4766)--(5273,-4783)--(5274,-4802)--(5275,-4822)--(5277,-4844)
  --(5279,-4869)--(5281,-4896)--(5285,-4927)--(5289,-4960)--(5294,-4993)--(5298,-5021)
  --(5302,-5043)--(5304,-5058)--(5306,-5068)--(5307,-5074)--(5308,-5078)--(5309,-5082)
  --(5310,-5088)--(5313,-5098)--(5316,-5113)--(5321,-5135)--(5328,-5163)--(5336,-5196)
  --(5344,-5229)--(5352,-5257)--(5358,-5279)--(5362,-5294)--(5365,-5304)--(5367,-5310)
  --(5368,-5314)--(5370,-5318)--(5372,-5324)--(5375,-5334)--(5380,-5349)--(5387,-5371)
  --(5397,-5399)--(5408,-5432)--(5419,-5465)--(5429,-5493)--(5437,-5515)--(5443,-5530)
  --(5447,-5540)--(5449,-5546)--(5451,-5550)--(5452,-5554)--(5455,-5560)--(5459,-5570)
  --(5465,-5585)--(5474,-5607)--(5485,-5635)--(5498,-5668)--(5510,-5697)--(5520,-5723)
  --(5529,-5744)--(5536,-5760)--(5540,-5771)--(5543,-5778)--(5545,-5783)--(5547,-5787)
  --(5548,-5790)--(5550,-5795)--(5554,-5802)--(5558,-5813)--(5565,-5829)--(5575,-5850)
  --(5586,-5876)--(5599,-5905)--(5612,-5934)--(5624,-5960)--(5633,-5981)--(5641,-5996)
  --(5646,-6007)--(5649,-6015)--(5652,-6020)--(5653,-6023)--(5655,-6027)--(5657,-6031)
  --(5661,-6039)--(5666,-6050)--(5674,-6065)--(5684,-6086)--(5696,-6112)--(5710,-6141)
  --(5724,-6170)--(5736,-6196)--(5747,-6217)--(5754,-6232)--(5760,-6243)--(5763,-6251)
  --(5766,-6256)--(5767,-6259)--(5769,-6262)--(5771,-6267)--(5775,-6275)--(5781,-6286)
  --(5788,-6301)--(5799,-6322)--(5812,-6348)--(5826,-6377)--(5840,-6406)--(5853,-6432)
  --(5864,-6453)--(5872,-6468)--(5877,-6479)--(5881,-6487)--(5884,-6491)--(5885,-6495)
  --(5887,-6498)--(5890,-6503)--(5893,-6511)--(5899,-6522)--(5907,-6537)--(5918,-6558)
  --(5931,-6584)--(5946,-6613)--(5961,-6642)--(5974,-6668)--(5985,-6689)--(5993,-6705)
  --(5999,-6716)--(6003,-6723)--(6006,-6728)--(6008,-6732)--(6009,-6735)--(6012,-6740)
  --(6016,-6747)--(6022,-6758)--(6030,-6774)--(6041,-6795)--(6055,-6821)--(6070,-6850)
  --(6085,-6879)--(6100,-6907)--(6113,-6931)--(6125,-6954)--(6136,-6974)--(6146,-6993)
  --(6155,-7011)--(6164,-7027)--(6172,-7042)--(6179,-7055)--(6185,-7066)--(6189,-7075)
  --(6192,-7081)--(6194,-7084)--(6195,-7086);
\pgfsetfillcolor{xfigc32}
\fill (3255,-2362)--(3256,-2364)--(3258,-2367)--(3261,-2373)--(3265,-2382)--(3271,-2393)
  --(3278,-2406)--(3286,-2421)--(3295,-2437)--(3304,-2455)--(3314,-2474)--(3325,-2494)
  --(3337,-2517)--(3350,-2541)--(3365,-2569)--(3380,-2598)--(3395,-2627)--(3409,-2653)
  --(3420,-2674)--(3428,-2690)--(3434,-2701)--(3438,-2708)--(3441,-2713)--(3442,-2717)
  --(3444,-2720)--(3447,-2725)--(3451,-2732)--(3457,-2743)--(3465,-2759)--(3476,-2780)
  --(3489,-2806)--(3504,-2835)--(3519,-2864)--(3532,-2890)--(3543,-2911)--(3551,-2926)
  --(3557,-2937)--(3560,-2945)--(3563,-2950)--(3565,-2953)--(3566,-2957)--(3569,-2961)
  --(3573,-2969)--(3578,-2980)--(3586,-2995)--(3597,-3016)--(3610,-3042)--(3624,-3071)
  --(3638,-3100)--(3651,-3126)--(3662,-3147)--(3669,-3162)--(3675,-3173)--(3679,-3181)
  --(3681,-3186)--(3683,-3189)--(3684,-3192)--(3687,-3197)--(3690,-3205)--(3696,-3216)
  --(3703,-3231)--(3714,-3252)--(3726,-3278)--(3740,-3307)--(3754,-3336)--(3766,-3362)
  --(3776,-3383)--(3784,-3398)--(3789,-3409)--(3793,-3417)--(3795,-3421)--(3797,-3425)
  --(3798,-3428)--(3801,-3433)--(3804,-3441)--(3809,-3452)--(3817,-3467)--(3826,-3488)
  --(3838,-3514)--(3851,-3543)--(3864,-3572)--(3875,-3598)--(3885,-3619)--(3892,-3635)
  --(3896,-3646)--(3900,-3653)--(3902,-3658)--(3903,-3662)--(3905,-3665)--(3907,-3670)
  --(3910,-3677)--(3914,-3688)--(3921,-3704)--(3930,-3725)--(3940,-3751)--(3952,-3780)
  --(3965,-3813)--(3976,-3841)--(3985,-3863)--(3991,-3878)--(3995,-3888)--(3998,-3894)
  --(3999,-3898)--(4001,-3902)--(4003,-3908)--(4007,-3918)--(4013,-3933)--(4021,-3955)
  --(4031,-3983)--(4042,-4016)--(4053,-4049)--(4063,-4077)--(4070,-4099)--(4075,-4114)
  --(4078,-4124)--(4080,-4130)--(4082,-4134)--(4083,-4138)--(4085,-4144)--(4088,-4154)
  --(4092,-4169)--(4098,-4191)--(4106,-4219)--(4114,-4252)--(4122,-4285)--(4129,-4313)
  --(4134,-4335)--(4137,-4350)--(4140,-4360)--(4141,-4366)--(4142,-4370)--(4143,-4374)
  --(4144,-4380)--(4146,-4390)--(4148,-4405)--(4152,-4427)--(4156,-4455)--(4161,-4488)
  --(4165,-4521)--(4169,-4552)--(4171,-4579)--(4173,-4604)--(4175,-4626)--(4176,-4646)
  --(4177,-4665)--(4177,-4682)--(4178,-4697)--(4178,-4708)--(4178,-4717)--(4178,-4722)
  --(4178,-4724)--(4180,-4724)--(4184,-4724)--(4193,-4724)--(4205,-4724)--(4224,-4724)
  --(4248,-4724)--(4279,-4724)--(4316,-4724)--(4359,-4724)--(4409,-4724)--(4464,-4724)
  --(4524,-4724)--(4588,-4724)--(4655,-4724)--(4723,-4724)--(4790,-4724)--(4857,-4724)
  --(4921,-4724)--(4981,-4724)--(5036,-4724)--(5086,-4724)--(5129,-4724)--(5166,-4724)
  --(5197,-4724)--(5221,-4724)--(5240,-4724)--(5252,-4724)--(5261,-4724)--(5265,-4724)
  --(5267,-4724)--(5267,-4722)--(5267,-4717)--(5267,-4708)--(5267,-4697)--(5268,-4682)
  --(5268,-4665)--(5269,-4646)--(5270,-4626)--(5272,-4604)--(5274,-4579)--(5276,-4552)
  --(5280,-4521)--(5284,-4488)--(5289,-4455)--(5293,-4427)--(5297,-4405)--(5299,-4390)
  --(5301,-4380)--(5302,-4374)--(5303,-4370)--(5304,-4366)--(5305,-4360)--(5308,-4350)
  --(5311,-4335)--(5316,-4313)--(5323,-4285)--(5331,-4252)--(5339,-4219)--(5347,-4191)
  --(5353,-4169)--(5357,-4154)--(5360,-4144)--(5362,-4138)--(5363,-4134)--(5365,-4130)
  --(5367,-4124)--(5370,-4114)--(5375,-4099)--(5382,-4077)--(5392,-4049)--(5403,-4016)
  --(5414,-3983)--(5424,-3955)--(5432,-3933)--(5438,-3918)--(5442,-3908)--(5444,-3902)
  --(5446,-3898)--(5447,-3894)--(5450,-3888)--(5454,-3878)--(5460,-3863)--(5469,-3841)
  --(5480,-3813)--(5493,-3780)--(5505,-3751)--(5515,-3725)--(5524,-3704)--(5531,-3688)
  --(5535,-3677)--(5538,-3670)--(5540,-3665)--(5542,-3662)--(5543,-3658)--(5545,-3653)
  --(5549,-3646)--(5553,-3635)--(5560,-3619)--(5570,-3598)--(5581,-3572)--(5594,-3543)
  --(5607,-3514)--(5619,-3488)--(5628,-3467)--(5636,-3452)--(5641,-3441)--(5644,-3433)
  --(5647,-3428)--(5648,-3425)--(5650,-3421)--(5652,-3417)--(5656,-3409)--(5661,-3398)
  --(5669,-3383)--(5679,-3362)--(5691,-3336)--(5705,-3307)--(5719,-3278)--(5731,-3252)
  --(5742,-3231)--(5749,-3216)--(5755,-3205)--(5758,-3197)--(5761,-3192)--(5762,-3189)
  --(5764,-3186)--(5766,-3181)--(5770,-3173)--(5776,-3162)--(5783,-3147)--(5794,-3126)
  --(5807,-3100)--(5821,-3071)--(5835,-3042)--(5848,-3016)--(5859,-2995)--(5867,-2980)
  --(5872,-2969)--(5876,-2961)--(5879,-2957)--(5880,-2953)--(5882,-2950)--(5885,-2945)
  --(5888,-2937)--(5894,-2926)--(5902,-2911)--(5913,-2890)--(5926,-2864)--(5941,-2835)
  --(5956,-2806)--(5969,-2780)--(5980,-2759)--(5988,-2743)--(5994,-2732)--(5998,-2725)
  --(6001,-2720)--(6003,-2717)--(6004,-2713)--(6007,-2708)--(6011,-2701)--(6017,-2690)
  --(6025,-2674)--(6036,-2653)--(6050,-2627)--(6065,-2598)--(6080,-2569)--(6095,-2541)
  --(6108,-2517)--(6120,-2494)--(6131,-2474)--(6141,-2455)--(6150,-2437)--(6159,-2421)
  --(6167,-2406)--(6174,-2393)--(6180,-2382)--(6184,-2373)--(6187,-2367)--(6189,-2364)
  --(6190,-2362);
\pgfsetfillcolor{xfigc33}
\fill (2362,-7088)--(2362,-7087)--(2362,-7084)--(2362,-7078)--(2362,-7069)--(2362,-7056)
  --(2362,-7039)--(2362,-7017)--(2362,-6989)--(2362,-6955)--(2362,-6915)--(2362,-6869)
  --(2362,-6817)--(2362,-6759)--(2362,-6694)--(2362,-6624)--(2362,-6548)--(2362,-6467)
  --(2362,-6381)--(2362,-6291)--(2362,-6198)--(2362,-6102)--(2362,-6004)--(2362,-5906)
  --(2362,-5808)--(2362,-5710)--(2362,-5614)--(2362,-5521)--(2362,-5431)--(2362,-5345)
  --(2362,-5264)--(2362,-5188)--(2362,-5118)--(2362,-5053)--(2362,-4995)--(2362,-4943)
  --(2362,-4897)--(2362,-4857)--(2362,-4823)--(2362,-4795)--(2362,-4773)--(2362,-4756)
  --(2362,-4743)--(2362,-4734)--(2362,-4728)--(2362,-4725)--(2362,-4724)--(2363,-4724)
  --(2367,-4724)--(2373,-4724)--(2383,-4724)--(2398,-4724)--(2417,-4724)--(2442,-4724)
  --(2472,-4724)--(2509,-4724)--(2552,-4724)--(2601,-4724)--(2657,-4724)--(2718,-4724)
  --(2785,-4724)--(2858,-4724)--(2934,-4724)--(3015,-4724)--(3098,-4724)--(3183,-4724)
  --(3270,-4724)--(3356,-4724)--(3441,-4724)--(3524,-4724)--(3605,-4724)--(3681,-4724)
  --(3754,-4724)--(3821,-4724)--(3882,-4724)--(3938,-4724)--(3987,-4724)--(4030,-4724)
  --(4067,-4724)--(4097,-4724)--(4122,-4724)--(4141,-4724)--(4156,-4724)--(4166,-4724)
  --(4172,-4724)--(4176,-4724)--(4177,-4724)--(4177,-4726)--(4177,-4731)--(4177,-4740)
  --(4177,-4751)--(4176,-4766)--(4176,-4783)--(4175,-4802)--(4174,-4822)--(4172,-4844)
  --(4170,-4869)--(4168,-4896)--(4164,-4927)--(4160,-4960)--(4155,-4993)--(4151,-5021)
  --(4147,-5043)--(4145,-5058)--(4143,-5068)--(4142,-5074)--(4141,-5078)--(4140,-5082)
  --(4139,-5088)--(4136,-5098)--(4133,-5113)--(4128,-5135)--(4121,-5163)--(4113,-5196)
  --(4105,-5229)--(4097,-5257)--(4091,-5279)--(4087,-5294)--(4084,-5304)--(4082,-5310)
  --(4081,-5314)--(4079,-5318)--(4077,-5324)--(4074,-5334)--(4069,-5349)--(4062,-5371)
  --(4052,-5399)--(4041,-5432)--(4030,-5465)--(4020,-5493)--(4012,-5515)--(4006,-5530)
  --(4002,-5540)--(4000,-5546)--(3998,-5550)--(3997,-5554)--(3994,-5560)--(3990,-5570)
  --(3984,-5585)--(3975,-5607)--(3964,-5635)--(3951,-5668)--(3939,-5697)--(3929,-5723)
  --(3920,-5744)--(3913,-5760)--(3909,-5771)--(3906,-5778)--(3904,-5783)--(3902,-5787)
  --(3901,-5790)--(3899,-5795)--(3895,-5802)--(3891,-5813)--(3884,-5829)--(3874,-5850)
  --(3863,-5876)--(3850,-5905)--(3837,-5934)--(3825,-5960)--(3816,-5981)--(3808,-5996)
  --(3803,-6007)--(3800,-6015)--(3797,-6020)--(3796,-6023)--(3794,-6027)--(3792,-6031)
  --(3788,-6039)--(3783,-6050)--(3775,-6065)--(3765,-6086)--(3753,-6112)--(3739,-6141)
  --(3725,-6170)--(3713,-6196)--(3702,-6217)--(3695,-6232)--(3689,-6243)--(3686,-6251)
  --(3683,-6256)--(3682,-6259)--(3680,-6262)--(3678,-6267)--(3674,-6275)--(3668,-6286)
  --(3661,-6301)--(3650,-6322)--(3637,-6348)--(3623,-6377)--(3609,-6406)--(3596,-6432)
  --(3585,-6453)--(3577,-6468)--(3572,-6479)--(3568,-6487)--(3565,-6491)--(3564,-6495)
  --(3562,-6498)--(3559,-6503)--(3556,-6511)--(3550,-6522)--(3542,-6537)--(3531,-6558)
  --(3518,-6584)--(3503,-6613)--(3488,-6642)--(3475,-6668)--(3464,-6689)--(3456,-6705)
  --(3450,-6716)--(3446,-6723)--(3443,-6728)--(3441,-6732)--(3440,-6735)--(3437,-6740)
  --(3433,-6747)--(3427,-6758)--(3419,-6774)--(3408,-6795)--(3394,-6821)--(3379,-6850)
  --(3364,-6879)--(3349,-6907)--(3336,-6931)--(3324,-6954)--(3313,-6974)--(3303,-6993)
  --(3294,-7011)--(3285,-7027)--(3277,-7042)--(3270,-7055)--(3264,-7066)--(3260,-7075)
  --(3257,-7081)--(3255,-7084)--(3254,-7086);
\draw (2362,-4724)--(7087,-4724);
\pgfsetarrowsend{}
\draw (6193,-7087)--(6192,-7085)--(6190,-7082)--(6187,-7076)--(6183,-7067)--(6177,-7056)
  --(6170,-7043)--(6162,-7028)--(6153,-7011)--(6144,-6994)--(6134,-6975)--(6123,-6954)
  --(6111,-6931)--(6098,-6907)--(6083,-6879)--(6068,-6850)--(6053,-6821)--(6039,-6795)
  --(6028,-6774)--(6020,-6759)--(6014,-6748)--(6010,-6740)--(6007,-6735)--(6006,-6732)
  --(6004,-6728)--(6001,-6724)--(5997,-6716)--(5991,-6705)--(5983,-6690)--(5972,-6669)
  --(5959,-6643)--(5944,-6614)--(5929,-6585)--(5916,-6559)--(5905,-6538)--(5897,-6523)
  --(5891,-6512)--(5888,-6504)--(5885,-6499)--(5883,-6496)--(5882,-6493)--(5879,-6488)
  --(5875,-6480)--(5870,-6469)--(5862,-6454)--(5851,-6433)--(5838,-6407)--(5824,-6378)
  --(5810,-6349)--(5797,-6323)--(5786,-6302)--(5779,-6287)--(5773,-6276)--(5769,-6268)
  --(5767,-6263)--(5765,-6260)--(5764,-6257)--(5761,-6252)--(5758,-6244)--(5752,-6233)
  --(5745,-6218)--(5734,-6197)--(5722,-6171)--(5708,-6142)--(5694,-6113)--(5682,-6087)
  --(5672,-6066)--(5664,-6051)--(5659,-6040)--(5655,-6032)--(5653,-6028)--(5651,-6024)
  --(5650,-6021)--(5647,-6016)--(5644,-6008)--(5639,-5997)--(5631,-5982)--(5622,-5961)
  --(5610,-5935)--(5597,-5906)--(5584,-5877)--(5573,-5851)--(5563,-5830)--(5556,-5814)
  --(5552,-5803)--(5548,-5796)--(5546,-5791)--(5545,-5788)--(5543,-5784)--(5541,-5779)
  --(5538,-5772)--(5534,-5761)--(5527,-5745)--(5518,-5724)--(5508,-5698)--(5496,-5669)
  --(5483,-5636)--(5472,-5608)--(5463,-5586)--(5457,-5571)--(5453,-5561)--(5450,-5555)
  --(5449,-5551)--(5447,-5547)--(5445,-5541)--(5441,-5531)--(5435,-5516)--(5427,-5494)
  --(5417,-5466)--(5406,-5433)--(5395,-5400)--(5385,-5372)--(5378,-5350)--(5373,-5335)
  --(5370,-5325)--(5368,-5319)--(5366,-5315)--(5365,-5311)--(5363,-5305)--(5360,-5295)
  --(5356,-5280)--(5350,-5258)--(5342,-5230)--(5334,-5197)--(5326,-5164)--(5319,-5136)
  --(5314,-5114)--(5311,-5099)--(5308,-5089)--(5307,-5083)--(5306,-5079)--(5305,-5075)
  --(5304,-5069)--(5302,-5059)--(5300,-5044)--(5296,-5022)--(5292,-4994)--(5287,-4961)
  --(5283,-4928)--(5279,-4900)--(5277,-4878)--(5275,-4863)--(5274,-4853)--(5274,-4847)
  --(5273,-4842)--(5273,-4838)--(5273,-4832)--(5272,-4822)--(5272,-4807)--(5271,-4785)
  --(5270,-4757)--(5270,-4724)--(5270,-4691)--(5271,-4663)--(5272,-4641)--(5272,-4626)
  --(5273,-4616)--(5273,-4610)--(5273,-4606)--(5274,-4602)--(5274,-4596)--(5275,-4586)
  --(5277,-4571)--(5279,-4549)--(5283,-4521)--(5287,-4488)--(5292,-4455)--(5296,-4427)
  --(5300,-4405)--(5302,-4390)--(5304,-4380)--(5305,-4374)--(5306,-4370)--(5307,-4366)
  --(5308,-4360)--(5311,-4350)--(5314,-4335)--(5319,-4313)--(5326,-4285)--(5334,-4252)
  --(5342,-4219)--(5350,-4191)--(5356,-4169)--(5360,-4154)--(5363,-4144)--(5365,-4138)
  --(5366,-4134)--(5368,-4130)--(5370,-4124)--(5373,-4114)--(5378,-4099)--(5385,-4077)
  --(5395,-4049)--(5406,-4016)--(5417,-3983)--(5427,-3955)--(5435,-3933)--(5441,-3918)
  --(5445,-3908)--(5447,-3902)--(5449,-3898)--(5450,-3894)--(5453,-3888)--(5457,-3878)
  --(5463,-3863)--(5472,-3841)--(5483,-3813)--(5496,-3780)--(5508,-3751)--(5518,-3725)
  --(5527,-3704)--(5534,-3688)--(5538,-3677)--(5541,-3670)--(5543,-3665)--(5545,-3662)
  --(5546,-3658)--(5548,-3653)--(5552,-3646)--(5556,-3635)--(5563,-3619)--(5573,-3598)
  --(5584,-3572)--(5597,-3543)--(5610,-3514)--(5622,-3488)--(5631,-3467)--(5639,-3452)
  --(5644,-3441)--(5647,-3433)--(5650,-3428)--(5651,-3425)--(5653,-3421)--(5655,-3417)
  --(5659,-3409)--(5664,-3398)--(5672,-3383)--(5682,-3362)--(5694,-3336)--(5708,-3307)
  --(5722,-3278)--(5734,-3252)--(5745,-3231)--(5752,-3216)--(5758,-3205)--(5761,-3197)
  --(5764,-3192)--(5765,-3189)--(5767,-3186)--(5769,-3181)--(5773,-3173)--(5779,-3162)
  --(5786,-3147)--(5797,-3126)--(5810,-3100)--(5824,-3071)--(5838,-3042)--(5851,-3016)
  --(5862,-2995)--(5870,-2980)--(5875,-2969)--(5879,-2961)--(5882,-2957)--(5883,-2953)
  --(5885,-2950)--(5888,-2945)--(5891,-2937)--(5897,-2926)--(5905,-2911)--(5916,-2890)
  --(5929,-2864)--(5944,-2835)--(5959,-2806)--(5972,-2780)--(5983,-2759)--(5991,-2743)
  --(5997,-2732)--(6001,-2725)--(6004,-2720)--(6006,-2717)--(6007,-2713)--(6010,-2708)
  --(6014,-2701)--(6020,-2690)--(6028,-2674)--(6039,-2653)--(6053,-2627)--(6068,-2598)
  --(6083,-2569)--(6098,-2541)--(6111,-2517)--(6123,-2494)--(6134,-2474)--(6144,-2455)
  --(6153,-2437)--(6162,-2421)--(6170,-2406)--(6177,-2393)--(6183,-2382)--(6187,-2373)
  --(6190,-2367)--(6192,-2364)--(6193,-2362);
\draw (3255,-7087)--(3256,-7085)--(3258,-7082)--(3261,-7076)--(3265,-7067)--(3271,-7056)
  --(3278,-7043)--(3286,-7028)--(3295,-7011)--(3304,-6994)--(3314,-6975)--(3325,-6954)
  --(3337,-6931)--(3350,-6907)--(3365,-6879)--(3380,-6850)--(3395,-6821)--(3409,-6795)
  --(3420,-6774)--(3428,-6759)--(3434,-6748)--(3438,-6740)--(3441,-6735)--(3442,-6732)
  --(3444,-6728)--(3447,-6724)--(3451,-6716)--(3457,-6705)--(3465,-6690)--(3476,-6669)
  --(3489,-6643)--(3504,-6614)--(3519,-6585)--(3532,-6559)--(3543,-6538)--(3551,-6523)
  --(3557,-6512)--(3560,-6504)--(3563,-6499)--(3565,-6496)--(3566,-6493)--(3569,-6488)
  --(3573,-6480)--(3578,-6469)--(3586,-6454)--(3597,-6433)--(3610,-6407)--(3624,-6378)
  --(3638,-6349)--(3651,-6323)--(3662,-6302)--(3669,-6287)--(3675,-6276)--(3679,-6268)
  --(3681,-6263)--(3683,-6260)--(3684,-6257)--(3687,-6252)--(3690,-6244)--(3696,-6233)
  --(3703,-6218)--(3714,-6197)--(3726,-6171)--(3740,-6142)--(3754,-6113)--(3766,-6087)
  --(3776,-6066)--(3784,-6051)--(3789,-6040)--(3793,-6032)--(3795,-6028)--(3797,-6024)
  --(3798,-6021)--(3801,-6016)--(3804,-6008)--(3809,-5997)--(3817,-5982)--(3826,-5961)
  --(3838,-5935)--(3851,-5906)--(3864,-5877)--(3875,-5851)--(3885,-5830)--(3892,-5814)
  --(3896,-5803)--(3900,-5796)--(3902,-5791)--(3903,-5788)--(3905,-5784)--(3907,-5779)
  --(3910,-5772)--(3914,-5761)--(3921,-5745)--(3930,-5724)--(3940,-5698)--(3952,-5669)
  --(3965,-5636)--(3976,-5608)--(3985,-5586)--(3991,-5571)--(3995,-5561)--(3998,-5555)
  --(3999,-5551)--(4001,-5547)--(4003,-5541)--(4007,-5531)--(4013,-5516)--(4021,-5494)
  --(4031,-5466)--(4042,-5433)--(4053,-5400)--(4063,-5372)--(4070,-5350)--(4075,-5335)
  --(4078,-5325)--(4080,-5319)--(4082,-5315)--(4083,-5311)--(4085,-5305)--(4088,-5295)
  --(4092,-5280)--(4098,-5258)--(4106,-5230)--(4114,-5197)--(4122,-5164)--(4129,-5136)
  --(4134,-5114)--(4137,-5099)--(4140,-5089)--(4141,-5083)--(4142,-5079)--(4143,-5075)
  --(4144,-5069)--(4146,-5059)--(4148,-5044)--(4152,-5022)--(4156,-4994)--(4161,-4961)
  --(4165,-4928)--(4169,-4900)--(4171,-4878)--(4173,-4863)--(4174,-4853)--(4174,-4847)
  --(4175,-4842)--(4175,-4838)--(4175,-4832)--(4176,-4822)--(4176,-4807)--(4177,-4785)
  --(4178,-4757)--(4178,-4724)--(4178,-4691)--(4177,-4663)--(4176,-4641)--(4176,-4626)
  --(4175,-4616)--(4175,-4610)--(4175,-4606)--(4174,-4602)--(4174,-4596)--(4173,-4586)
  --(4171,-4571)--(4169,-4549)--(4165,-4521)--(4161,-4488)--(4156,-4455)--(4152,-4427)
  --(4148,-4405)--(4146,-4390)--(4144,-4380)--(4143,-4374)--(4142,-4370)--(4141,-4366)
  --(4140,-4360)--(4137,-4350)--(4134,-4335)--(4129,-4313)--(4122,-4285)--(4114,-4252)
  --(4106,-4219)--(4098,-4191)--(4092,-4169)--(4088,-4154)--(4085,-4144)--(4083,-4138)
  --(4082,-4134)--(4080,-4130)--(4078,-4124)--(4075,-4114)--(4070,-4099)--(4063,-4077)
  --(4053,-4049)--(4042,-4016)--(4031,-3983)--(4021,-3955)--(4013,-3933)--(4007,-3918)
  --(4003,-3908)--(4001,-3902)--(3999,-3898)--(3998,-3894)--(3995,-3888)--(3991,-3878)
  --(3985,-3863)--(3976,-3841)--(3965,-3813)--(3952,-3780)--(3940,-3751)--(3930,-3725)
  --(3921,-3704)--(3914,-3688)--(3910,-3677)--(3907,-3670)--(3905,-3665)--(3903,-3662)
  --(3902,-3658)--(3900,-3653)--(3896,-3646)--(3892,-3635)--(3885,-3619)--(3875,-3598)
  --(3864,-3572)--(3851,-3543)--(3838,-3514)--(3826,-3488)--(3817,-3467)--(3809,-3452)
  --(3804,-3441)--(3801,-3433)--(3798,-3428)--(3797,-3425)--(3795,-3421)--(3793,-3417)
  --(3789,-3409)--(3784,-3398)--(3776,-3383)--(3766,-3362)--(3754,-3336)--(3740,-3307)
  --(3726,-3278)--(3714,-3252)--(3703,-3231)--(3696,-3216)--(3690,-3205)--(3687,-3197)
  --(3684,-3192)--(3683,-3189)--(3681,-3186)--(3679,-3181)--(3675,-3173)--(3669,-3162)
  --(3662,-3147)--(3651,-3126)--(3638,-3100)--(3624,-3071)--(3610,-3042)--(3597,-3016)
  --(3586,-2995)--(3578,-2980)--(3573,-2969)--(3569,-2961)--(3566,-2957)--(3565,-2953)
  --(3563,-2950)--(3560,-2945)--(3557,-2937)--(3551,-2926)--(3543,-2911)--(3532,-2890)
  --(3519,-2864)--(3504,-2835)--(3489,-2806)--(3476,-2780)--(3465,-2759)--(3457,-2743)
  --(3451,-2732)--(3447,-2725)--(3444,-2720)--(3442,-2717)--(3441,-2713)--(3438,-2708)
  --(3434,-2701)--(3428,-2690)--(3420,-2674)--(3409,-2653)--(3395,-2627)--(3380,-2598)
  --(3365,-2569)--(3350,-2541)--(3337,-2517)--(3325,-2494)--(3314,-2474)--(3304,-2455)
  --(3295,-2437)--(3286,-2421)--(3278,-2406)--(3271,-2393)--(3265,-2382)--(3261,-2373)
  --(3258,-2367)--(3256,-2364)--(3255,-2362);
\pgfsetfillcolor{black}
\pgftext[base,right,at=\pgfqpointxy{7558}{-6518}] {\fontsize{5}{6}\usefont{T1}{ptm}{m}{n}$D_+$}
\pgftext[base,left,at=\pgfqpointxy{1897}{-6529}] {\fontsize{5}{6}\usefont{T1}{ptm}{m}{n}$D_-$}
\pgftext[base,at=\pgfqpointxy{4725}{-2127}] {\fontsize{5}{6}\usefont{T1}{ptm}{m}{n}$D_0$}
\pgftext[base,at=\pgfqpointxy{4722}{-5098}] {\fontsize{5}{6}\usefont{T1}{ptm}{m}{n}$\frac{\alpha}{3 \beta}$}
\filldraw  (4724,-4724) circle [radius=+37];
\endtikzpicture}%
\caption{$\alpha^2 + 3 \beta \delta > 0$}
\end{subfigure}%
\begin{subfigure}{.33\textwidth}
\centering
%
%
{\pgfkeys{/pgf/fpu/.try=false}%
\ifx\XFigwidth\undefined\dimen1=0pt\else\dimen1\XFigwidth\fi
\divide\dimen1 by 5691
\ifx\XFigheight\undefined\dimen3=0pt\else\dimen3\XFigheight\fi
\divide\dimen3 by 5151
\ifdim\dimen1=0pt\ifdim\dimen3=0pt\dimen1=1657sp\dimen3\dimen1
  \else\dimen1\dimen3\fi\else\ifdim\dimen3=0pt\dimen3\dimen1\fi\fi
\tikzpicture[x=+\dimen1, y=+\dimen3]
{\ifx\XFigu\undefined\catcode`\@11
\def\temp{\alloc@1\dimen\dimendef\insc@unt}\temp\XFigu\catcode`\@12\fi}
\XFigu1657sp
\ifdim\XFigu<0pt\XFigu-\XFigu\fi
\pgfdeclarearrow{
  name = xfiga1,
  parameters = {
    \the\pgfarrowlinewidth \the\pgfarrowlength \the\pgfarrowwidth\ifpgfarrowopen o\fi},
  defaults = {
	  line width=+7.5\XFigu, length=+120\XFigu, width=+60\XFigu},
  setup code = {
    \dimen7 2.1\pgfarrowlength\pgfmathveclen{\the\dimen7}{\the\pgfarrowwidth}
    \dimen7 2\pgfarrowwidth\pgfmathdivide{\pgfmathresult}{\the\dimen7}
    \dimen7 \pgfmathresult\pgfarrowlinewidth
    \pgfarrowssettipend{+\dimen7}
    \pgfarrowssetbackend{+-\pgfarrowlength}
    \dimen9 -\pgfarrowlength\advance\dimen9 by-0.45\pgfarrowlinewidth
    \pgfarrowssetlineend{+\dimen9}
    \dimen9 -\pgfarrowlength\advance\dimen9 by-0.5\pgfarrowlinewidth
    \pgfarrowssetvisualbackend{+\dimen9}
    \pgfarrowshullpoint{+\dimen7}{+0pt}
    \pgfarrowsupperhullpoint{+-\pgfarrowlength}{+0.5\pgfarrowwidth}
    \pgfarrowssavethe\pgfarrowlinewidth
    \pgfarrowssavethe\pgfarrowlength
    \pgfarrowssavethe\pgfarrowwidth
  },
  drawing code = {\pgfsetdash{}{+0pt}
    \ifdim\pgfarrowlinewidth=\pgflinewidth\else\pgfsetlinewidth{+\pgfarrowlinewidth}\fi
    \pgfpathmoveto{\pgfqpoint{-\pgfarrowlength}{-0.5\pgfarrowwidth}}
    \pgfpathlineto{\pgfqpoint{0pt}{0pt}}
    \pgfpathlineto{\pgfqpoint{-\pgfarrowlength}{0.5\pgfarrowwidth}}
    \pgfpathclose
    \ifpgfarrowopen\pgfusepathqstroke\else\pgfsetfillcolor{.}
	\ifdim\pgfarrowlinewidth>0pt\pgfusepathqfillstroke\else\pgfusepathqfill\fi\fi
  }
}
\definecolor{xfigc32}{rgb}{0.941,0.894,0.259}
\definecolor{xfigc33}{rgb}{0.000,0.447,0.698}
\definecolor{xfigc34}{rgb}{0.835,0.369,0.000}
\clip(1882,-7104) rectangle (7573,-1953);
\tikzset{inner sep=+0pt, outer sep=+0pt}
\pgfsetfillcolor{black}
\pgfsetlinewidth{+7.5\XFigu}
\pgfsetdash{}{+0pt}
\pgfsetstrokecolor{black}
\pgfsetbeveljoin
\pgfsetfillcolor{xfigc33}
\fill (2365,-7097)--(2365,-7096)--(2365,-7093)--(2365,-7087)--(2365,-7078)--(2365,-7065)
  --(2365,-7048)--(2365,-7026)--(2365,-6998)--(2365,-6964)--(2365,-6924)--(2365,-6878)
  --(2365,-6826)--(2365,-6768)--(2365,-6703)--(2365,-6633)--(2365,-6557)--(2365,-6476)
  --(2365,-6390)--(2365,-6300)--(2365,-6207)--(2365,-6111)--(2365,-6013)--(2365,-5915)
  --(2365,-5817)--(2365,-5719)--(2365,-5623)--(2365,-5530)--(2365,-5440)--(2365,-5354)
  --(2365,-5273)--(2365,-5197)--(2365,-5127)--(2365,-5062)--(2365,-5004)--(2365,-4952)
  --(2365,-4906)--(2365,-4866)--(2365,-4832)--(2365,-4804)--(2365,-4782)--(2365,-4765)
  --(2365,-4752)--(2365,-4743)--(2365,-4737)--(2365,-4734)--(2365,-4733)--(2366,-4733)
  --(2369,-4733)--(2375,-4733)--(2384,-4733)--(2397,-4733)--(2414,-4733)--(2436,-4733)
  --(2464,-4733)--(2498,-4733)--(2537,-4733)--(2583,-4732)--(2635,-4732)--(2694,-4732)
  --(2758,-4732)--(2829,-4732)--(2905,-4732)--(2986,-4731)--(3072,-4731)--(3162,-4731)
  --(3255,-4731)--(3350,-4730)--(3448,-4730)--(3546,-4730)--(3644,-4730)--(3742,-4730)
  --(3837,-4729)--(3930,-4729)--(4020,-4729)--(4106,-4729)--(4187,-4728)--(4263,-4728)
  --(4334,-4728)--(4398,-4728)--(4457,-4728)--(4509,-4728)--(4555,-4727)--(4594,-4727)
  --(4628,-4727)--(4656,-4727)--(4678,-4727)--(4695,-4727)--(4708,-4727)--(4717,-4727)
  --(4723,-4727)--(4726,-4727)--(4727,-4727)--(4726,-4728)--(4725,-4730)--(4723,-4735)
  --(4718,-4742)--(4713,-4752)--(4705,-4766)--(4694,-4784)--(4682,-4807)--(4666,-4834)
  --(4648,-4866)--(4626,-4903)--(4602,-4946)--(4575,-4994)--(4544,-5047)--(4511,-5106)
  --(4474,-5170)--(4435,-5238)--(4393,-5312)--(4349,-5389)--(4303,-5470)--(4255,-5554)
  --(4205,-5641)--(4154,-5730)--(4103,-5821)--(4051,-5912)--(3999,-6003)--(3948,-6093)
  --(3897,-6182)--(3847,-6269)--(3799,-6353)--(3753,-6434)--(3709,-6511)--(3667,-6585)
  --(3628,-6653)--(3591,-6717)--(3558,-6776)--(3527,-6829)--(3500,-6877)--(3476,-6920)
  --(3454,-6957)--(3436,-6989)--(3420,-7016)--(3408,-7039)--(3397,-7057)--(3389,-7071)
  --(3384,-7081)--(3379,-7088)--(3377,-7093)--(3376,-7095)--(3375,-7096);
\pgfsetfillcolor{xfigc34}
\fill (7086,-7088)--(7086,-7087)--(7086,-7084)--(7086,-7078)--(7086,-7069)--(7086,-7056)
  --(7086,-7039)--(7086,-7017)--(7086,-6989)--(7086,-6955)--(7086,-6915)--(7086,-6869)
  --(7086,-6817)--(7086,-6759)--(7086,-6694)--(7086,-6624)--(7086,-6548)--(7086,-6467)
  --(7086,-6381)--(7086,-6291)--(7086,-6198)--(7086,-6102)--(7086,-6004)--(7087,-5906)
  --(7087,-5808)--(7087,-5710)--(7087,-5614)--(7087,-5521)--(7087,-5431)--(7087,-5345)
  --(7087,-5264)--(7087,-5188)--(7087,-5118)--(7087,-5053)--(7087,-4995)--(7087,-4943)
  --(7087,-4897)--(7087,-4857)--(7087,-4823)--(7087,-4795)--(7087,-4773)--(7087,-4756)
  --(7087,-4743)--(7087,-4734)--(7087,-4728)--(7087,-4725)--(7087,-4724)--(7086,-4724)
  --(7083,-4724)--(7077,-4724)--(7068,-4724)--(7055,-4724)--(7038,-4724)--(7016,-4724)
  --(6988,-4724)--(6954,-4724)--(6915,-4724)--(6869,-4724)--(6817,-4724)--(6758,-4724)
  --(6694,-4724)--(6624,-4725)--(6548,-4725)--(6467,-4725)--(6381,-4725)--(6291,-4725)
  --(6198,-4725)--(6103,-4725)--(6005,-4725)--(5907,-4726)--(5809,-4726)--(5711,-4726)
  --(5616,-4726)--(5523,-4726)--(5433,-4726)--(5347,-4726)--(5266,-4726)--(5190,-4726)
  --(5120,-4727)--(5056,-4727)--(4997,-4727)--(4945,-4727)--(4899,-4727)--(4860,-4727)
  --(4826,-4727)--(4798,-4727)--(4776,-4727)--(4759,-4727)--(4746,-4727)--(4737,-4727)
  --(4731,-4727)--(4728,-4727)--(4727,-4727)--(4728,-4728)--(4729,-4730)--(4731,-4735)
  --(4736,-4742)--(4741,-4752)--(4749,-4766)--(4760,-4784)--(4773,-4806)--(4788,-4833)
  --(4807,-4865)--(4828,-4903)--(4853,-4945)--(4880,-4993)--(4911,-5046)--(4945,-5105)
  --(4981,-5168)--(5021,-5236)--(5063,-5309)--(5107,-5386)--(5154,-5467)--(5202,-5551)
  --(5252,-5638)--(5303,-5726)--(5355,-5816)--(5408,-5907)--(5460,-5998)--(5512,-6088)
  --(5563,-6176)--(5613,-6263)--(5661,-6347)--(5708,-6428)--(5752,-6505)--(5794,-6578)
  --(5834,-6646)--(5870,-6709)--(5904,-6768)--(5935,-6821)--(5962,-6869)--(5987,-6911)
  --(6008,-6949)--(6027,-6981)--(6042,-7008)--(6055,-7030)--(6066,-7048)--(6074,-7062)
  --(6079,-7072)--(6084,-7079)--(6086,-7084)--(6087,-7086)--(6088,-7087);
\pgfsetfillcolor{xfigc32}
\fill (3361,-2362)--(3362,-2363)--(3363,-2365)--(3366,-2370)--(3370,-2377)--(3376,-2387)
  --(3384,-2401)--(3394,-2419)--(3407,-2441)--(3423,-2469)--(3441,-2501)--(3463,-2538)
  --(3487,-2581)--(3515,-2629)--(3546,-2682)--(3580,-2740)--(3616,-2804)--(3656,-2873)
  --(3698,-2946)--(3743,-3023)--(3789,-3104)--(3838,-3188)--(3888,-3275)--(3939,-3363)
  --(3992,-3454)--(4044,-3545)--(4096,-3635)--(4149,-3726)--(4200,-3814)--(4250,-3901)
  --(4299,-3985)--(4345,-4066)--(4390,-4143)--(4432,-4216)--(4472,-4285)--(4508,-4349)
  --(4542,-4407)--(4573,-4460)--(4601,-4508)--(4625,-4551)--(4647,-4588)--(4665,-4620)
  --(4681,-4648)--(4694,-4670)--(4704,-4688)--(4712,-4702)--(4718,-4712)--(4722,-4719)
  --(4725,-4724)--(4726,-4726)--(4727,-4727)--(4728,-4726)--(4729,-4724)--(4732,-4719)
  --(4736,-4712)--(4742,-4702)--(4750,-4688)--(4760,-4670)--(4773,-4648)--(4789,-4620)
  --(4807,-4588)--(4829,-4551)--(4854,-4508)--(4882,-4460)--(4913,-4407)--(4947,-4349)
  --(4983,-4285)--(5023,-4216)--(5066,-4143)--(5110,-4066)--(5157,-3985)--(5206,-3901)
  --(5256,-3814)--(5308,-3726)--(5360,-3635)--(5413,-3544)--(5466,-3454)--(5518,-3363)
  --(5570,-3275)--(5620,-3188)--(5669,-3104)--(5716,-3023)--(5760,-2946)--(5803,-2873)
  --(5843,-2804)--(5879,-2740)--(5913,-2682)--(5944,-2629)--(5972,-2581)--(5997,-2538)
  --(6019,-2501)--(6037,-2469)--(6053,-2441)--(6066,-2419)--(6076,-2401)--(6084,-2387)
  --(6090,-2377)--(6094,-2370)--(6097,-2365)--(6098,-2363)--(6099,-2362);
\draw (2362,-4724)--(7087,-4724);
\pgfsetarrowsend{}
\draw (3372,-7087)--(6099,-2362);
\draw (6088,-7087)--(3361,-2362);
\pgfsetfillcolor{black}
\pgftext[base,left,at=\pgfqpointxy{1897}{-6529}] {\fontsize{5}{6}\usefont{T1}{ptm}{m}{n}$D_-$}
\pgftext[base,at=\pgfqpointxy{4725}{-2127}] {\fontsize{5}{6}\usefont{T1}{ptm}{m}{n}$D_0$}
\pgftext[base,right,at=\pgfqpointxy{7558}{-6518}] {\fontsize{5}{6}\usefont{T1}{ptm}{m}{n}$D_+$}
\filldraw  (4727,-4727) circle [radius=+37];
\pgftext[base,left,at=\pgfqpointxy{5034}{-4570}] {\fontsize{5}{6}\usefont{T1}{ptm}{m}{n}$\frac{\alpha}{3 \beta}$}
\endtikzpicture}%
\caption{$\alpha^2 + 3 \beta \delta = 0$}
\end{subfigure}%
\begin{subfigure}{.33\textwidth}
\centering
%
%
{\pgfkeys{/pgf/fpu/.try=false}%
\ifx\XFigwidth\undefined\dimen1=0pt\else\dimen1\XFigwidth\fi
\divide\dimen1 by 5694
\ifx\XFigheight\undefined\dimen3=0pt\else\dimen3\XFigheight\fi
\divide\dimen3 by 5153
\ifdim\dimen1=0pt\ifdim\dimen3=0pt\dimen1=1657sp\dimen3\dimen1
  \else\dimen1\dimen3\fi\else\ifdim\dimen3=0pt\dimen3\dimen1\fi\fi
\tikzpicture[x=+\dimen1, y=+\dimen3]
{\ifx\XFigu\undefined\catcode`\@11
\def\temp{\alloc@1\dimen\dimendef\insc@unt}\temp\XFigu\catcode`\@12\fi}
\XFigu1657sp
\ifdim\XFigu<0pt\XFigu-\XFigu\fi
\pgfdeclarearrow{
  name = xfiga1,
  parameters = {
    \the\pgfarrowlinewidth \the\pgfarrowlength \the\pgfarrowwidth\ifpgfarrowopen o\fi},
  defaults = {
	  line width=+7.5\XFigu, length=+120\XFigu, width=+60\XFigu},
  setup code = {
    \dimen7 2.1\pgfarrowlength\pgfmathveclen{\the\dimen7}{\the\pgfarrowwidth}
    \dimen7 2\pgfarrowwidth\pgfmathdivide{\pgfmathresult}{\the\dimen7}
    \dimen7 \pgfmathresult\pgfarrowlinewidth
    \pgfarrowssettipend{+\dimen7}
    \pgfarrowssetbackend{+-\pgfarrowlength}
    \dimen9 -\pgfarrowlength\advance\dimen9 by-0.45\pgfarrowlinewidth
    \pgfarrowssetlineend{+\dimen9}
    \dimen9 -\pgfarrowlength\advance\dimen9 by-0.5\pgfarrowlinewidth
    \pgfarrowssetvisualbackend{+\dimen9}
    \pgfarrowshullpoint{+\dimen7}{+0pt}
    \pgfarrowsupperhullpoint{+-\pgfarrowlength}{+0.5\pgfarrowwidth}
    \pgfarrowssavethe\pgfarrowlinewidth
    \pgfarrowssavethe\pgfarrowlength
    \pgfarrowssavethe\pgfarrowwidth
  },
  drawing code = {\pgfsetdash{}{+0pt}
    \ifdim\pgfarrowlinewidth=\pgflinewidth\else\pgfsetlinewidth{+\pgfarrowlinewidth}\fi
    \pgfpathmoveto{\pgfqpoint{-\pgfarrowlength}{-0.5\pgfarrowwidth}}
    \pgfpathlineto{\pgfqpoint{0pt}{0pt}}
    \pgfpathlineto{\pgfqpoint{-\pgfarrowlength}{0.5\pgfarrowwidth}}
    \pgfpathclose
    \ifpgfarrowopen\pgfusepathqstroke\else\pgfsetfillcolor{.}
	\ifdim\pgfarrowlinewidth>0pt\pgfusepathqfillstroke\else\pgfusepathqfill\fi\fi
  }
}
\definecolor{xfigc32}{rgb}{0.941,0.894,0.259}
\definecolor{xfigc33}{rgb}{0.000,0.447,0.698}
\definecolor{xfigc34}{rgb}{0.835,0.369,0.000}
\clip(1882,-7105) rectangle (7576,-1952);
\tikzset{inner sep=+0pt, outer sep=+0pt}
\pgfsetlinewidth{+7.5\XFigu}
\pgfsetdash{}{+0pt}
\pgfsetstrokecolor{black}
\pgfsetfillcolor{black}
\pgfsetbeveljoin
\pgfsetfillcolor{xfigc33}
\fill (2362,-7087)--(2362,-7086)--(2362,-7083)--(2362,-7077)--(2362,-7068)--(2362,-7055)
  --(2362,-7038)--(2362,-7016)--(2362,-6988)--(2362,-6954)--(2362,-6914)--(2362,-6869)
  --(2362,-6816)--(2362,-6758)--(2362,-6693)--(2362,-6623)--(2362,-6547)--(2362,-6466)
  --(2362,-6380)--(2362,-6290)--(2362,-6197)--(2362,-6101)--(2362,-6004)--(2362,-5905)
  --(2362,-5807)--(2362,-5710)--(2362,-5614)--(2362,-5521)--(2362,-5431)--(2362,-5345)
  --(2362,-5264)--(2362,-5188)--(2362,-5118)--(2362,-5053)--(2362,-4995)--(2362,-4942)
  --(2362,-4897)--(2362,-4857)--(2362,-4823)--(2362,-4795)--(2362,-4773)--(2362,-4756)
  --(2362,-4743)--(2362,-4734)--(2362,-4728)--(2362,-4725)--(2362,-4724)--(2363,-4724)
  --(2366,-4724)--(2372,-4724)--(2381,-4724)--(2394,-4724)--(2411,-4724)--(2433,-4724)
  --(2461,-4724)--(2495,-4724)--(2534,-4724)--(2580,-4724)--(2632,-4724)--(2690,-4723)
  --(2755,-4723)--(2825,-4723)--(2901,-4723)--(2982,-4723)--(3068,-4723)--(3157,-4723)
  --(3250,-4722)--(3346,-4722)--(3443,-4722)--(3541,-4722)--(3639,-4722)--(3736,-4722)
  --(3832,-4722)--(3925,-4721)--(4014,-4721)--(4100,-4721)--(4181,-4721)--(4257,-4721)
  --(4327,-4721)--(4392,-4721)--(4450,-4720)--(4502,-4720)--(4548,-4720)--(4587,-4720)
  --(4621,-4720)--(4649,-4720)--(4671,-4720)--(4688,-4720)--(4701,-4720)--(4710,-4720)
  --(4716,-4720)--(4719,-4720)--(4720,-4720)--(4720,-4723)--(4720,-4729)--(4720,-4741)
  --(4720,-4759)--(4720,-4784)--(4721,-4816)--(4721,-4854)--(4721,-4897)--(4722,-4945)
  --(4722,-4995)--(4722,-5044)--(4723,-5092)--(4723,-5135)--(4723,-5173)--(4724,-5205)
  --(4724,-5230)--(4724,-5248)--(4724,-5260)--(4724,-5266)--(4724,-5269)--(4722,-5269)
  --(4719,-5270)--(4713,-5272)--(4704,-5274)--(4693,-5278)--(4680,-5282)--(4665,-5288)
  --(4649,-5295)--(4631,-5303)--(4612,-5313)--(4592,-5325)--(4569,-5340)--(4544,-5358)
  --(4517,-5380)--(4488,-5406)--(4462,-5431)--(4439,-5455)--(4419,-5476)--(4403,-5494)
  --(4391,-5508)--(4383,-5518)--(4377,-5526)--(4373,-5533)--(4370,-5538)--(4367,-5544)
  --(4363,-5550)--(4357,-5560)--(4349,-5572)--(4337,-5589)--(4321,-5611)--(4301,-5639)
  --(4278,-5671)--(4252,-5707)--(4229,-5740)--(4207,-5771)--(4188,-5798)--(4173,-5821)
  --(4161,-5839)--(4152,-5852)--(4145,-5863)--(4140,-5870)--(4137,-5876)--(4134,-5880)
  --(4131,-5885)--(4128,-5891)--(4123,-5898)--(4116,-5909)--(4107,-5924)--(4095,-5943)
  --(4080,-5967)--(4061,-5997)--(4039,-6030)--(4016,-6067)--(3993,-6104)--(3971,-6138)
  --(3952,-6168)--(3937,-6192)--(3925,-6212)--(3916,-6227)--(3909,-6237)--(3904,-6245)
  --(3901,-6251)--(3898,-6255)--(3895,-6260)--(3892,-6266)--(3887,-6273)--(3881,-6284)
  --(3871,-6300)--(3859,-6320)--(3844,-6345)--(3825,-6376)--(3803,-6411)--(3780,-6449)
  --(3757,-6487)--(3735,-6523)--(3716,-6554)--(3700,-6581)--(3687,-6602)--(3677,-6619)
  --(3669,-6632)--(3663,-6642)--(3658,-6650)--(3654,-6656)--(3650,-6663)--(3646,-6671)
  --(3640,-6680)--(3633,-6692)--(3623,-6707)--(3612,-6727)--(3598,-6750)--(3581,-6778)
  --(3562,-6809)--(3543,-6841)--(3520,-6879)--(3500,-6912)--(3485,-6938)--(3473,-6959)
  --(3464,-6973)--(3458,-6983)--(3453,-6991)--(3450,-6996)--(3447,-7001)--(3444,-7007)
  --(3440,-7014)--(3434,-7024)--(3426,-7037)--(3417,-7052)--(3407,-7069)--(3397,-7086);
\pgfsetfillcolor{xfigc32}
\fill (3397,-2362)--(3398,-2363)--(3399,-2366)--(3402,-2370)--(3406,-2377)--(3411,-2386)
  --(3417,-2397)--(3425,-2409)--(3434,-2424)--(3443,-2440)--(3454,-2459)--(3467,-2481)
  --(3482,-2506)--(3500,-2535)--(3520,-2569)--(3543,-2607)--(3562,-2639)--(3581,-2670)
  --(3598,-2698)--(3612,-2721)--(3623,-2741)--(3633,-2756)--(3640,-2768)--(3646,-2777)
  --(3650,-2785)--(3654,-2792)--(3658,-2798)--(3663,-2806)--(3669,-2816)--(3677,-2829)
  --(3687,-2846)--(3700,-2867)--(3716,-2894)--(3735,-2925)--(3757,-2961)--(3780,-2999)
  --(3803,-3037)--(3825,-3072)--(3844,-3103)--(3859,-3128)--(3871,-3148)--(3881,-3164)
  --(3887,-3175)--(3892,-3182)--(3895,-3188)--(3898,-3193)--(3901,-3197)--(3904,-3203)
  --(3909,-3211)--(3916,-3221)--(3925,-3236)--(3937,-3256)--(3952,-3280)--(3971,-3310)
  --(3993,-3344)--(4016,-3381)--(4039,-3418)--(4061,-3451)--(4080,-3481)--(4095,-3505)
  --(4107,-3524)--(4116,-3539)--(4123,-3550)--(4128,-3557)--(4131,-3563)--(4134,-3568)
  --(4137,-3572)--(4140,-3578)--(4145,-3585)--(4152,-3596)--(4161,-3609)--(4173,-3627)
  --(4188,-3650)--(4207,-3677)--(4229,-3708)--(4252,-3741)--(4278,-3777)--(4301,-3809)
  --(4321,-3837)--(4337,-3859)--(4349,-3876)--(4357,-3888)--(4363,-3898)--(4367,-3904)
  --(4370,-3910)--(4373,-3915)--(4377,-3922)--(4383,-3930)--(4391,-3940)--(4403,-3954)
  --(4419,-3972)--(4439,-3993)--(4462,-4017)--(4488,-4042)--(4514,-4065)--(4537,-4085)
  --(4557,-4102)--(4573,-4115)--(4585,-4125)--(4593,-4133)--(4599,-4139)--(4603,-4143)
  --(4606,-4147)--(4609,-4150)--(4613,-4153)--(4619,-4157)--(4627,-4161)--(4639,-4165)
  --(4655,-4170)--(4675,-4174)--(4698,-4178)--(4724,-4179)--(4750,-4178)--(4773,-4174)
  --(4793,-4170)--(4809,-4165)--(4821,-4161)--(4830,-4157)--(4835,-4153)--(4839,-4150)
  --(4843,-4147)--(4846,-4143)--(4850,-4139)--(4855,-4133)--(4864,-4125)--(4876,-4115)
  --(4892,-4102)--(4912,-4085)--(4935,-4065)--(4961,-4042)--(4987,-4017)--(5010,-3993)
  --(5030,-3972)--(5046,-3954)--(5058,-3940)--(5066,-3930)--(5072,-3922)--(5076,-3915)
  --(5079,-3910)--(5082,-3904)--(5086,-3898)--(5092,-3888)--(5100,-3876)--(5112,-3859)
  --(5128,-3837)--(5148,-3809)--(5171,-3777)--(5197,-3741)--(5220,-3708)--(5242,-3677)
  --(5261,-3650)--(5276,-3627)--(5288,-3609)--(5297,-3596)--(5304,-3585)--(5309,-3578)
  --(5312,-3572)--(5315,-3568)--(5318,-3563)--(5321,-3557)--(5326,-3550)--(5333,-3539)
  --(5342,-3524)--(5354,-3505)--(5369,-3481)--(5388,-3451)--(5410,-3418)--(5433,-3381)
  --(5456,-3344)--(5479,-3309)--(5500,-3275)--(5519,-3245)--(5537,-3216)--(5553,-3189)
  --(5569,-3164)--(5583,-3140)--(5597,-3118)--(5610,-3097)--(5622,-3077)--(5633,-3058)
  --(5643,-3042)--(5651,-3028)--(5658,-3017)--(5663,-3009)--(5666,-3003)--(5668,-3000)
  --(5669,-2999)--(5671,-2996)--(5674,-2990)--(5681,-2979)--(5691,-2963)--(5705,-2940)
  --(5722,-2912)--(5742,-2879)--(5764,-2842)--(5788,-2803)--(5811,-2764)--(5833,-2727)
  --(5853,-2694)--(5870,-2666)--(5884,-2643)--(5894,-2627)--(5901,-2616)--(5904,-2610)
  --(5906,-2607)--(5908,-2604)--(5912,-2597)--(5919,-2584)--(5930,-2566)--(5944,-2543)
  --(5961,-2515)--(5979,-2485)--(5996,-2454)--(6013,-2426)--(6027,-2403)--(6038,-2385)
  --(6045,-2372)--(6049,-2365)--(6051,-2362);
\pgfsetfillcolor{xfigc34}
\fill (4720,-4720)--(4721,-4720)--(4724,-4720)--(4730,-4720)--(4739,-4720)--(4752,-4720)
  --(4769,-4720)--(4792,-4720)--(4819,-4720)--(4853,-4720)--(4893,-4720)--(4939,-4720)
  --(4991,-4721)--(5050,-4721)--(5114,-4721)--(5185,-4721)--(5261,-4721)--(5342,-4721)
  --(5428,-4721)--(5518,-4722)--(5612,-4722)--(5707,-4722)--(5805,-4722)--(5904,-4723)
  --(6002,-4723)--(6100,-4723)--(6195,-4723)--(6289,-4723)--(6379,-4724)--(6465,-4724)
  --(6546,-4724)--(6622,-4724)--(6693,-4724)--(6757,-4724)--(6816,-4724)--(6868,-4725)
  --(6914,-4725)--(6954,-4725)--(6988,-4725)--(7015,-4725)--(7038,-4725)--(7055,-4725)
  --(7068,-4725)--(7077,-4725)--(7083,-4725)--(7086,-4725)--(7087,-4725)--(7087,-4726)
  --(7087,-4729)--(7087,-4735)--(7087,-4744)--(7087,-4757)--(7087,-4774)--(7087,-4796)
  --(7087,-4824)--(7087,-4858)--(7087,-4898)--(7087,-4943)--(7087,-4996)--(7087,-5054)
  --(7087,-5119)--(7087,-5189)--(7087,-5265)--(7087,-5346)--(7087,-5432)--(7087,-5522)
  --(7087,-5615)--(7087,-5711)--(7087,-5808)--(7087,-5907)--(7087,-6005)--(7087,-6102)
  --(7087,-6198)--(7087,-6291)--(7087,-6381)--(7087,-6467)--(7087,-6548)--(7087,-6624)
  --(7087,-6694)--(7087,-6759)--(7087,-6817)--(7087,-6870)--(7087,-6915)--(7087,-6955)
  --(7087,-6989)--(7087,-7017)--(7087,-7039)--(7087,-7056)--(7087,-7069)--(7087,-7078)
  --(7087,-7084)--(7087,-7087)--(7087,-7088)--(7085,-7088)--(7081,-7088)--(7073,-7088)
  --(7061,-7088)--(7043,-7088)--(7020,-7088)--(6991,-7088)--(6956,-7088)--(6914,-7088)
  --(6867,-7088)--(6815,-7087)--(6758,-7087)--(6697,-7087)--(6634,-7087)--(6569,-7087)
  --(6504,-7087)--(6441,-7087)--(6380,-7087)--(6323,-7087)--(6271,-7086)--(6224,-7086)
  --(6182,-7086)--(6147,-7086)--(6118,-7086)--(6095,-7086)--(6077,-7086)--(6065,-7086)
  --(6057,-7086)--(6053,-7086)--(6051,-7086)--(6050,-7085)--(6049,-7082)--(6046,-7078)
  --(6042,-7071)--(6037,-7062)--(6031,-7051)--(6023,-7039)--(6015,-7024)--(6005,-7008)
  --(5994,-6989)--(5981,-6967)--(5967,-6942)--(5949,-6913)--(5929,-6879)--(5906,-6841)
  --(5887,-6809)--(5868,-6778)--(5852,-6750)--(5837,-6727)--(5826,-6707)--(5816,-6692)
  --(5809,-6680)--(5804,-6671)--(5799,-6663)--(5795,-6656)--(5791,-6650)--(5786,-6642)
  --(5780,-6632)--(5773,-6619)--(5762,-6602)--(5749,-6581)--(5733,-6554)--(5714,-6523)
  --(5692,-6487)--(5669,-6449)--(5646,-6411)--(5624,-6376)--(5605,-6345)--(5590,-6320)
  --(5578,-6300)--(5568,-6284)--(5562,-6273)--(5557,-6266)--(5554,-6260)--(5551,-6255)
  --(5548,-6251)--(5545,-6245)--(5540,-6237)--(5533,-6227)--(5524,-6212)--(5512,-6192)
  --(5497,-6168)--(5478,-6138)--(5456,-6104)--(5433,-6067)--(5410,-6030)--(5388,-5997)
  --(5369,-5967)--(5354,-5943)--(5342,-5924)--(5333,-5909)--(5326,-5898)--(5321,-5891)
  --(5318,-5885)--(5315,-5880)--(5312,-5876)--(5309,-5870)--(5304,-5863)--(5297,-5852)
  --(5288,-5839)--(5276,-5821)--(5261,-5798)--(5242,-5771)--(5220,-5740)--(5197,-5707)
  --(5171,-5671)--(5148,-5639)--(5128,-5611)--(5112,-5589)--(5100,-5572)--(5092,-5560)
  --(5086,-5550)--(5082,-5544)--(5079,-5538)--(5076,-5533)--(5072,-5526)--(5066,-5518)
  --(5058,-5508)--(5046,-5494)--(5030,-5476)--(5010,-5455)--(4987,-5431)--(4961,-5406)
  --(4932,-5380)--(4904,-5358)--(4880,-5340)--(4857,-5325)--(4836,-5313)--(4817,-5303)
  --(4800,-5295)--(4783,-5288)--(4768,-5282)--(4755,-5278)--(4744,-5274)--(4735,-5272)
  --(4729,-5270)--(4726,-5269)--(4724,-5269);
\draw (2362,-4724)--(7087,-4724);
\pgfsetarrowsend{}
\draw (3397,-7086)--(3398,-7085)--(3399,-7082)--(3402,-7078)--(3406,-7071)--(3411,-7062)
  --(3417,-7051)--(3425,-7039)--(3434,-7024)--(3443,-7008)--(3454,-6989)--(3467,-6967)
  --(3482,-6942)--(3500,-6913)--(3520,-6879)--(3543,-6841)--(3562,-6809)--(3581,-6778)
  --(3598,-6750)--(3612,-6727)--(3623,-6707)--(3633,-6692)--(3640,-6680)--(3646,-6671)
  --(3650,-6663)--(3654,-6656)--(3658,-6650)--(3663,-6642)--(3669,-6632)--(3677,-6619)
  --(3687,-6602)--(3700,-6581)--(3716,-6554)--(3735,-6523)--(3757,-6487)--(3780,-6449)
  --(3803,-6411)--(3825,-6376)--(3844,-6345)--(3859,-6320)--(3871,-6300)--(3881,-6284)
  --(3887,-6273)--(3892,-6266)--(3895,-6260)--(3898,-6255)--(3901,-6251)--(3904,-6245)
  --(3909,-6237)--(3916,-6227)--(3925,-6212)--(3937,-6192)--(3952,-6168)--(3971,-6138)
  --(3993,-6104)--(4016,-6067)--(4039,-6030)--(4061,-5997)--(4080,-5967)--(4095,-5943)
  --(4107,-5924)--(4116,-5909)--(4123,-5898)--(4128,-5891)--(4131,-5885)--(4134,-5880)
  --(4137,-5876)--(4140,-5870)--(4145,-5863)--(4152,-5852)--(4161,-5839)--(4173,-5821)
  --(4188,-5798)--(4207,-5771)--(4229,-5740)--(4252,-5707)--(4278,-5671)--(4301,-5639)
  --(4321,-5611)--(4337,-5589)--(4349,-5572)--(4357,-5560)--(4363,-5550)--(4367,-5544)
  --(4370,-5538)--(4373,-5533)--(4377,-5526)--(4383,-5518)--(4391,-5508)--(4403,-5494)
  --(4419,-5476)--(4439,-5455)--(4462,-5431)--(4488,-5406)--(4514,-5383)--(4537,-5363)
  --(4557,-5346)--(4573,-5333)--(4585,-5323)--(4593,-5315)--(4599,-5309)--(4603,-5305)
  --(4606,-5301)--(4609,-5298)--(4613,-5295)--(4619,-5291)--(4627,-5287)--(4639,-5283)
  --(4655,-5278)--(4675,-5274)--(4698,-5270)--(4724,-5269)--(4750,-5270)--(4773,-5274)
  --(4793,-5278)--(4809,-5283)--(4821,-5287)--(4830,-5291)--(4835,-5295)--(4839,-5298)
  --(4843,-5301)--(4846,-5305)--(4850,-5309)--(4855,-5315)--(4864,-5323)--(4876,-5333)
  --(4892,-5346)--(4912,-5363)--(4935,-5383)--(4961,-5406)--(4987,-5431)--(5010,-5455)
  --(5030,-5476)--(5046,-5494)--(5058,-5508)--(5066,-5518)--(5072,-5526)--(5076,-5533)
  --(5079,-5538)--(5082,-5544)--(5086,-5550)--(5092,-5560)--(5100,-5572)--(5112,-5589)
  --(5128,-5611)--(5148,-5639)--(5171,-5671)--(5197,-5707)--(5220,-5740)--(5242,-5771)
  --(5261,-5798)--(5276,-5821)--(5288,-5839)--(5297,-5852)--(5304,-5863)--(5309,-5870)
  --(5312,-5876)--(5315,-5880)--(5318,-5885)--(5321,-5891)--(5326,-5898)--(5333,-5909)
  --(5342,-5924)--(5354,-5943)--(5369,-5967)--(5388,-5997)--(5410,-6030)--(5433,-6067)
  --(5456,-6104)--(5478,-6138)--(5497,-6168)--(5512,-6192)--(5524,-6212)--(5533,-6227)
  --(5540,-6237)--(5545,-6245)--(5548,-6251)--(5551,-6255)--(5554,-6260)--(5557,-6266)
  --(5562,-6273)--(5568,-6284)--(5578,-6300)--(5590,-6320)--(5605,-6345)--(5624,-6376)
  --(5646,-6411)--(5669,-6449)--(5692,-6487)--(5714,-6523)--(5733,-6554)--(5749,-6581)
  --(5762,-6602)--(5773,-6619)--(5780,-6632)--(5786,-6642)--(5791,-6650)--(5795,-6656)
  --(5799,-6663)--(5804,-6671)--(5809,-6680)--(5816,-6692)--(5826,-6707)--(5837,-6727)
  --(5852,-6750)--(5868,-6778)--(5887,-6809)--(5906,-6841)--(5929,-6879)--(5949,-6913)
  --(5967,-6942)--(5981,-6967)--(5994,-6989)--(6005,-7008)--(6015,-7024)--(6023,-7039)
  --(6031,-7051)--(6037,-7062)--(6042,-7071)--(6046,-7078)--(6049,-7082)--(6050,-7085)
  --(6051,-7086);
\draw (3397,-2362)--(3398,-2363)--(3399,-2366)--(3402,-2370)--(3406,-2377)--(3411,-2386)
  --(3417,-2397)--(3425,-2409)--(3434,-2424)--(3443,-2440)--(3454,-2459)--(3467,-2481)
  --(3482,-2506)--(3500,-2535)--(3520,-2569)--(3543,-2607)--(3562,-2639)--(3581,-2670)
  --(3598,-2698)--(3612,-2721)--(3623,-2741)--(3633,-2756)--(3640,-2768)--(3646,-2777)
  --(3650,-2785)--(3654,-2792)--(3658,-2798)--(3663,-2806)--(3669,-2816)--(3677,-2829)
  --(3687,-2846)--(3700,-2867)--(3716,-2894)--(3735,-2925)--(3757,-2961)--(3780,-2999)
  --(3803,-3037)--(3825,-3072)--(3844,-3103)--(3859,-3128)--(3871,-3148)--(3881,-3164)
  --(3887,-3175)--(3892,-3182)--(3895,-3188)--(3898,-3193)--(3901,-3197)--(3904,-3203)
  --(3909,-3211)--(3916,-3221)--(3925,-3236)--(3937,-3256)--(3952,-3280)--(3971,-3310)
  --(3993,-3344)--(4016,-3381)--(4039,-3418)--(4061,-3451)--(4080,-3481)--(4095,-3505)
  --(4107,-3524)--(4116,-3539)--(4123,-3550)--(4128,-3557)--(4131,-3563)--(4134,-3568)
  --(4137,-3572)--(4140,-3578)--(4145,-3585)--(4152,-3596)--(4161,-3609)--(4173,-3627)
  --(4188,-3650)--(4207,-3677)--(4229,-3708)--(4252,-3741)--(4278,-3777)--(4301,-3809)
  --(4321,-3837)--(4337,-3859)--(4349,-3876)--(4357,-3888)--(4363,-3898)--(4367,-3904)
  --(4370,-3910)--(4373,-3915)--(4377,-3922)--(4383,-3930)--(4391,-3940)--(4403,-3954)
  --(4419,-3972)--(4439,-3993)--(4462,-4017)--(4488,-4042)--(4514,-4065)--(4537,-4085)
  --(4557,-4102)--(4573,-4115)--(4585,-4125)--(4593,-4133)--(4599,-4139)--(4603,-4143)
  --(4606,-4147)--(4609,-4150)--(4613,-4153)--(4619,-4157)--(4627,-4161)--(4639,-4165)
  --(4655,-4170)--(4675,-4174)--(4698,-4178)--(4724,-4179)--(4750,-4178)--(4773,-4174)
  --(4793,-4170)--(4809,-4165)--(4821,-4161)--(4830,-4157)--(4835,-4153)--(4839,-4150)
  --(4843,-4147)--(4846,-4143)--(4850,-4139)--(4855,-4133)--(4864,-4125)--(4876,-4115)
  --(4892,-4102)--(4912,-4085)--(4935,-4065)--(4961,-4042)--(4987,-4017)--(5010,-3993)
  --(5030,-3972)--(5046,-3954)--(5058,-3940)--(5066,-3930)--(5072,-3922)--(5076,-3915)
  --(5079,-3910)--(5082,-3904)--(5086,-3898)--(5092,-3888)--(5100,-3876)--(5112,-3859)
  --(5128,-3837)--(5148,-3809)--(5171,-3777)--(5197,-3741)--(5220,-3708)--(5242,-3677)
  --(5261,-3650)--(5276,-3627)--(5288,-3609)--(5297,-3596)--(5304,-3585)--(5309,-3578)
  --(5312,-3572)--(5315,-3568)--(5318,-3563)--(5321,-3557)--(5326,-3550)--(5333,-3539)
  --(5342,-3524)--(5354,-3505)--(5369,-3481)--(5388,-3451)--(5410,-3418)--(5433,-3381)
  --(5456,-3344)--(5478,-3310)--(5497,-3280)--(5512,-3256)--(5524,-3236)--(5533,-3221)
  --(5540,-3211)--(5545,-3203)--(5548,-3197)--(5551,-3193)--(5554,-3188)--(5557,-3182)
  --(5562,-3175)--(5568,-3164)--(5578,-3148)--(5590,-3128)--(5605,-3103)--(5624,-3072)
  --(5646,-3037)--(5669,-2999)--(5692,-2961)--(5714,-2925)--(5733,-2894)--(5749,-2867)
  --(5762,-2846)--(5773,-2829)--(5780,-2816)--(5786,-2806)--(5791,-2798)--(5795,-2792)
  --(5799,-2785)--(5804,-2777)--(5809,-2768)--(5816,-2756)--(5826,-2741)--(5837,-2721)
  --(5852,-2698)--(5868,-2670)--(5887,-2639)--(5906,-2607)--(5929,-2569)--(5949,-2535)
  --(5967,-2506)--(5981,-2481)--(5994,-2459)--(6005,-2440)--(6015,-2424)--(6023,-2409)
  --(6031,-2397)--(6037,-2386)--(6042,-2377)--(6046,-2370)--(6049,-2366)--(6050,-2363)
  --(6051,-2362);
\pgfsetfillcolor{black}
\pgftext[base,at=\pgfqpointxy{4725}{-2127}] {\fontsize{5}{6}\usefont{T1}{ptm}{m}{n}$D_0$}
\pgftext[base,right,at=\pgfqpointxy{7561}{-6770}] {\fontsize{5}{6}\usefont{T1}{ptm}{m}{n}$D_+$}
\pgftext[base,left,at=\pgfqpointxy{1897}{-6529}] {\fontsize{5}{6}\usefont{T1}{ptm}{m}{n}$D_-$}
\pgftext[base,at=\pgfqpointxy{4729}{-4485}] {\fontsize{5}{6}\usefont{T1}{ptm}{m}{n}$\frac{\alpha}{3 \beta}$}
\filldraw  (4724,-4724) circle [radius=+37];
\endtikzpicture}%
\caption{$\alpha^2 + 3 \beta \delta < 0$}
\end{subfigure}
\caption{The regions $D_0$, $D_+$, $D_-$ depending on the sign of the quantity $\alpha^2 + 3 \beta \delta$.}
\label{fig:Dpm}
\end{figure}

The symmetries of $\omega$ can be identified by solving the equation $\omega(\nu) = \omega(k)$ for $\nu = \nu(k)$. There are three solutions to this polynomial equation: the trivial solution $\nu_0 = k$, and two nontrivial solutions given by $\nu_\pm$ in \eqref{nupm}. The latter solutions involve the complex square root
\eq{\label{csr}
\Big[\Big(k - \frac{\alpha}{3 \beta}\Big)^2 - \frac{4}{9 \beta^2} \p{\alpha^2 + 3 \beta \delta}\Big]^{\frac 12},
}
which is made single-valued by introducing an appropriate branch cut in the complex $k$-plane as follows. 
When $\alpha^2 + 3 \beta \delta > 0$,  \eqref{csr} has two real branch points given by
\eq{b_\pm = \frac{\alpha}{3 \beta} \pm \frac{2}{3 \beta} \sqrt{\alpha^2 + 3 \beta \delta}.} 
Then, taking a branch cut from $b_-$ to $b_+$ along the real line, we define
\eq{
\Big[\Big(k - \frac{\alpha}{3 \beta}\Big)^2 - \frac{4}{9 \beta^2} \p{\alpha^2 + 3 \beta \delta}\Big]^{\frac 12} &= \b{\p{k - b_-} \p{k - b_+}}^{\frac{1}{2}}
= \sqrt{r_- r_+} e^{i \frac{\theta_- + \theta_+}{2}}, \label{rootdefp}}
where the radii $r_\pm = \left|k-b_\pm\right| \geq 0$ and the counterclockwise angles $\theta_\pm\in [0, 2\pi)$ are shown in Figure \ref{fig:Dbranches}.
When $\alpha^2 + 3 \beta \delta < 0$, the branch points of \eqref{csr} are complex and given by 
\eq{b_\pm = \frac{\alpha}{3 \beta} \pm \frac{2}{3 \beta} i \sqrt{\abs{\alpha^2 + 3 \beta \delta}}.} 
Thus, we define $r_\pm$ as before but measure the counterclockwise angles $\theta_\pm \in [0, 2\pi)$ from the rays emanating vertically from $b_\pm$, resulting in the following definition for \eqref{csr}:
\eq{
\Big[\Big(k - \frac{\alpha}{3 \beta}\Big)^2 - \frac{4}{9 \beta^2} \p{\alpha^2 + 3 \beta \delta}\Big]^{\frac 12} 
&= \b{\p{k - b_-} \p{k - b_+}}^{\frac{1}{2}}
= \sqrt{r_- r_+} e^{i \frac{\theta_- + \theta_+ + \pi}{2}}. \label{rootdefm}}
Finally, when $\alpha^2 + 3 \beta \delta = 0$, the zeros of the radicand in \eqref{csr} coincide at $b_\pm = \frac{\alpha}{3 \beta}$. Therefore, since both signs are already present in \eqref{nupm}, we simply define \eqref{csr} to be
\eq{\Big[\Big(k - \frac{\alpha}{3 \beta}\Big)^2 - \frac{4}{9 \beta^2} \p{\alpha^2 + 3 \beta \delta}\Big]^{\frac 12} = k - \frac{\alpha}{3 \beta}. \label{rootdefz}}

At this point, we provide an alternative characterization of the regions $\tilde D_0$, $\tilde D_+$, $\tilde D_-$ that will be useful when employing the symmetries of $\omega$ later. We begin with the following observation:
\begin{lemma} \label{lemma:Dchar}
Let $k\in \C$ off the branch cut of $\nu_\pm$. Then, $\Im(\omega) < 0$ if and only if $\Im(\nu_0) \Im(\nu_+) \Im(\nu_-) > 0$. Moreover, the zeros of $\Im(\nu_0) \Im(\nu_+) \Im(\nu_-)$ and of $\Im(\omega)$, excluding those that may lie on the branch cut, are identical. In other words, off the branch cut, the sign of $\Im(\nu_0) \Im(\nu_+) \Im(\nu_-)$ is the opposite of the sign of $\Im(\omega)$.
\end{lemma}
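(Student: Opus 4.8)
The plan is to reduce the claim to an elementary real‑algebra identity by exploiting the fact that $\nu_0=k$, $\nu_+$, $\nu_-$ are exactly the three roots of the cubic $\beta\nu^3-\alpha\nu^2-\delta\nu=\omega(k)$. First I would record the factorization $\beta(\nu-\nu_0)(\nu-\nu_+)(\nu-\nu_-)=\beta\nu^3-\alpha\nu^2-\delta\nu-\omega$ and read off Vieta's relations
$\nu_0+\nu_++\nu_-=\frac{\alpha}{\beta}$, $\nu_0\nu_++\nu_+\nu_-+\nu_-\nu_0=-\frac{\delta}{\beta}$, $\nu_0\nu_+\nu_-=\frac{\omega}{\beta}$. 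These hold for every $k$ because they only involve the \emph{symmetric} functions of the two roots $\nu_\pm$ of the quadratic factor $\beta\nu^2+(\beta k-\alpha)\nu+(\beta k^2-\alpha k-\delta)$, which are unaffected by the choice of branch of the square root in \eqref{nupm}; the hypothesis that $k$ lies off the branch cut is used only so that $\nu_+$ and $\nu_-$ are individually well defined, making $\Im(\nu_0)\Im(\nu_+)\Im(\nu_-)$ meaningful.

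Next I would translate by $\frac{\alpha}{3\beta}$: setting $\hat\nu_j:=\nu_j-\frac{\alpha}{3\beta}$ for $j\in\{0,+,-\}$, the relations become $\hat\nu_0+\hat\nu_++\hat\nu_-=0$, $\hat\nu_0\hat\nu_++\hat\nu_+\hat\nu_-+\hat\nu_-\hat\nu_0=-\frac{\alpha^2+3\beta\delta}{3\beta^2}\in\R$, and $\Im(\hat\nu_0\hat\nu_+\hat\nu_-)=\frac{1}{\beta}\Im(\omega)$ (the last because the first two elementary symmetric functions are real and the shift is by a real number). Writing $x_j:=\Re(\hat\nu_j)$ and $y_j:=\Im(\hat\nu_j)=\Im(\nu_j)$, the real and imaginary parts of the first two relations read precisely $\sum_j x_j=\sum_j y_j=0$ and $\sum_j x_jy_j=0$; that is, the vectors $x=(x_0,x_+,x_-)$ and $y=(y_0,y_+,y_-)$ in $\R^3$ are each orthogonal to $(1,1,1)$ and to one another.

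The heart of the argument is then the computation of $\Im(\omega)\cdot\Im(\nu_0)\Im(\nu_+)\Im(\nu_-)=\beta\,\Im(\hat\nu_0\hat\nu_+\hat\nu_-)\,y_0y_+y_-$. If $x=0$ then $\hat\nu_j=iy_j$ are purely imaginary, $\Im(\hat\nu_0\hat\nu_+\hat\nu_-)=-y_0y_+y_-$, and the product equals $-\beta(y_0y_+y_-)^2\le0$. If $x\neq0$ then, since $\sum_j x_j=0$, the vector $(x_+-x_-,\,x_--x_0,\,x_0-x_+)$ is a nonzero element of the line orthogonal to both $(1,1,1)$ and $x$, so $y=m\,(x_+-x_-,\,x_--x_0,\,x_0-x_+)$ for some $m\in\R$. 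Substituting this and expanding, using the cyclic identity $x_0x_+(x_0-x_+)+x_+x_-(x_+-x_-)+x_-x_0(x_--x_0)=-(x_0-x_+)(x_+-x_-)(x_--x_0)=:-\Delta$ (equivalently $a^2(b-c)+b^2(c-a)+c^2(a-b)=-(a-b)(b-c)(c-a)$), one finds $y_0y_+y_-=m^3\Delta$ and $\Im(\hat\nu_0\hat\nu_+\hat\nu_-)=-m(1+m^2)\Delta$, whence the product equals $-\beta\,m^4(1+m^2)\Delta^2\le0$. Since $\beta>0$ and $1+m^2>0$, in both cases $\Im(\omega)$ and $\Im(\nu_0)\Im(\nu_+)\Im(\nu_-)$ have opposite signs, and $\Im(\omega)=0$ iff $m\Delta=0$ (resp. $y_0y_+y_-=0$ when $x=0$) iff $\Im(\nu_0)\Im(\nu_+)\Im(\nu_-)=0$. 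This yields all three assertions, including $\Im(\omega)<0\iff\Im(\nu_0)\Im(\nu_+)\Im(\nu_-)>0$ and the coincidence of the two zero sets off the branch cut.

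The only genuine obstacle is the bookkeeping in the last step — verifying the two closed‑form expressions for $\Im(\hat\nu_0\hat\nu_+\hat\nu_-)$ and $y_0y_+y_-$ — but this is routine once the substitution $y=m(x_+-x_-,x_--x_0,x_0-x_+)$ is in place and the Vandermonde‑type identity above is invoked; one must also be slightly careful to isolate the degenerate case $x=0$, where the parametrization of $y$ is unavailable, and to recall that the Vieta relations of the second paragraph are branch‑cut independent. As a consistency check one may, alternatively, apply \eqref{reomega} at each of $k=\nu_0,\nu_+,\nu_-$ to get $\Im(\omega)=\beta\Im(\nu_j)\{3(\Re\nu_j-\tfrac{\alpha}{3\beta})^2-(\Im\nu_j)^2-\tfrac{\alpha^2+3\beta\delta}{3\beta^2}\}$ and verify agreement of the signs.
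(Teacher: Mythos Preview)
Your argument is correct and takes a genuinely different route from the paper. The paper proceeds by direct computation with the explicit square-root formula \eqref{nupm}: writing $z=k-\tfrac{\alpha}{3\beta}$ and $c=\tfrac{\alpha^2+3\beta\delta}{3\beta^2}$, it evaluates $\Im(\nu_+)\Im(\nu_-)$ using the identities $[\Re(\zeta^{1/2})]^2=\tfrac12(|\zeta|+\Re\zeta)$ and $[\Im(\zeta^{1/2})]^2=\tfrac12(|\zeta|-\Re\zeta)$, eventually arriving at the factorization $\Im(\nu_+)\Im(\nu_-)=H(k)Z(k)$ with $H(k)=c+[\Im(k)]^2-3[\Re(k)-\tfrac{\alpha}{3\beta}]^2$ and $Z(k)$ a manifestly positive ratio; since \eqref{reomega} gives $\Im(\omega)=-\beta\Im(k)H(k)$, the sign relation follows.

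Your approach instead never touches the explicit radical: everything is drawn from Vieta's relations and the orthogonality structure $\sum x_j=\sum y_j=\sum x_jy_j=0$ in $\R^3$, which forces $y$ to be a scalar multiple of $(1,1,1)\times x$ when $x\neq0$. This is cleaner and visibly branch-independent, and it would carry over verbatim to any real depressed cubic. The paper's computation, on the other hand, buys something you do not obtain: the explicit factor $H(k)$ is reused downstream (it is precisely the hyperbola \eqref{hyperbola} bounding the regions $D_n$), so the concrete formula feeds directly into the geometric characterizations in Lemmas~\ref{lemma:Dschar} and~\ref{lemma:charDbar}. Your proof establishes the lemma as stated but would not immediately supply that intermediate identity.
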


\begin{proof}
Letting $c = \frac{\alpha^2 + 3 \beta \delta}{3 \beta^2}$ and $z = k - \frac{\alpha}{3 \beta}$, we compute
\eq{\Im(\nu_\pm) &= -\frac{1}{2} \Im(z) \pm \frac{\sqrt 3}{2} \Re \Big[\Big(z^2 - \frac{4}{3} c\Big)^\frac{1}{2}\Big], \label{imnupm}}
Applying the identities $\big[\Re(\zeta^{\frac{1}{2}})\big]^2 = \frac{1}{2} \p{\abs \zeta + \Re(\zeta)}$ and $\big[\Im(\zeta^{\frac{1}{2}})\big]^2 = \frac{1}{2} \p{\abs \zeta - \Re(\zeta)}$, which are valid for any $\zeta \in \C$, and multiplying by conjugates, we eventually obtain
\eq{
\Im(\nu_+) \Im(\nu_-) 
&= \frac{1}{8} \Big(4 c + 5 \b{\Im(z)}^2 - 3 \b{\Re(z)}^2 - 3 \Big|z^2 - \frac{4}{3} c\Big|\Big)
\nn\\
&= \p{c + \b{\Im(z)}^2 - 3 \b{\Re(z)}^2} \frac{\b{\Im(z)}^2}{\b{\Im(z)}^2 + \frac{3}{2} \b{\abs{z^2 - \frac{4}{3} c} - \Re \p{z^2 - \frac{4}{3} c}}}
= H(k) Z(k), \label{imsnupmHZ}
\\
H(k) &:= \frac{\alpha^2 + 3 \beta \delta}{3 \beta^2} + \b{\Im(k)}^2 - 3 \Big[\Re\Big(k - \frac{\alpha}{3 \beta}\Big)\Big]^2, \label{hyperbola}
\\
Z(k) &:= \frac{\b{\Im(k)}^2}{\b{\Im(k)}^2 + 3 \Big[\Im \Big(\Big[\Big(k - \frac{\alpha}{3 \beta}\Big)^2 - \frac{4}{9 \beta^2} \p{\alpha^2 + 3 \beta \delta}\Big]^{\frac 12}\Big)\Big]^2}.
}
Recall that the branch cut is excluded from the lemma. Furthermore, $Z$ is undefined when $\Im(k) = 0$ (which implies that the radicand must be real) and, at the same time, $\big(k - \frac{\alpha}{3 \beta}\big)^2 - \frac{4}{9 \beta^2} \p{\alpha^2 + 3 \beta \delta} \ge 0$. When $k$ is real and not on the branch cut, the second condition will always hold. Hence, $Z$ is undefined precisely on the real axis. However, it can be seen directly that these values of $k$ behave consistently with the lemma, as $\Im(\omega) = \Im(\nu_0) \Im(\nu_+) \Im(\nu_-) = 0$ for $k\in \R$.
Otherwise, for $k\notin \R$ and off the branch cut, $Z$ is well-defined and  positive. Since $\Im(\omega) = -\beta \Im(k) H(k)$ in view of \eqref{reomega}, and $\Im(\nu_0) \Im(\nu_+) \Im(\nu_-) = \Im(k) H(k) Z(k)$, the lemma follows.
\end{proof}

Lemma \ref{lemma:Dchar} identifies $D$ as the region where $\Im(\nu_0) \Im(\nu_+) \Im(\nu_-) > 0$ (up to a portion of the branch cut that is missing in the case $\alpha^2 + 3 \beta \delta < 0$). However, it would be helpful if we could characterize the three regions $D_0$, $D_+$, $D_-$ individually. This is accomplished by the next lemma.

\begin{lemma} \label{lemma:Dschar}
For each $n \in \set{0, +, -}$, the set $D_n$ without the branch cut consists of precisely those values of $k$ that are off the branch cut and such that $\Im(\nu_n) > 0$ and the remaining two symmetries have negative imaginary parts.
\end{lemma}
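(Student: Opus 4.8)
The plan is to leverage the sign characterization of $D = D_0 \cup D_+ \cup D_-$ provided by Lemma \ref{lemma:Dchar}, which identifies $D$ (off the branch cut) as the set where $\Im(\nu_0)\,\Im(\nu_+)\,\Im(\nu_-) > 0$, and then to refine this into a statement about which individual factor is positive. The first step is to observe that, away from the branch cut, the product being positive means either all three imaginary parts are positive or exactly one is positive and the other two are negative. I would rule out the first possibility by a direct argument: since $\nu_0 + \nu_+ + \nu_- = -\frac{3}{2}\big(k - \frac{\alpha}{3\beta}\big) - \ldots$ — more precisely, from \eqref{nupm} together with $\nu_0 = k$, summing gives a relation in which the imaginary parts satisfy $\Im(\nu_0) + \Im(\nu_+) + \Im(\nu_-) = \Im(k) - \Im(k) = 0$ (the square-root contributions to $\nu_+$ and $\nu_-$ cancel, and $-\tfrac12\Im(k) - \tfrac12\Im(k) + \Im(k) = 0$). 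Hence the three imaginary parts sum to zero, so they cannot all be strictly positive; therefore, in $D$, exactly one of $\Im(\nu_0), \Im(\nu_+), \Im(\nu_-)$ is positive and the other two are negative. This already shows that $D$ (off the branch cut) is the disjoint union of the three sets $E_n := \{k : \Im(\nu_n) > 0,\ \Im(\nu_m) < 0 \text{ for } m \neq n\}$, $n \in \{0, +, -\}$.

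The remaining task is to match $E_n$ with $D_n$ as defined in \eqref{dn-def}. For $E_0$: here $\Im(\nu_0) = \Im(k) > 0$, which is exactly the condition $\Im(k) > 0$ appearing in the definition of $D_0$ (together with $\Im(\omega) < 0$, which holds throughout $D$ by Lemma \ref{lemma:Dchar}); so $E_0 = D_0$. For $E_+$ and $E_-$: in both cases $\Im(k) < 0$ (since $\Im(\nu_0) = \Im(k) < 0$ there), matching the $\Im(k) < 0$ condition in the definitions of $D_\pm$. It remains to distinguish $E_+$ from $E_-$ via the sign of $\Re\big(k - \frac{\alpha}{3\beta}\big)$. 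From \eqref{imnupm}, namely $\Im(\nu_\pm) = -\tfrac12\Im(z) \pm \tfrac{\sqrt3}{2}\Re\big[(z^2 - \tfrac43 c)^{1/2}\big]$ with $z = k - \frac{\alpha}{3\beta}$, and given that $\Im(z) < 0$ so $-\tfrac12\Im(z) > 0$, the condition "$\Im(\nu_+) > 0$ and $\Im(\nu_-) < 0$" forces $\Re\big[(z^2 - \tfrac43 c)^{1/2}\big] > 0$ and, moreover, of sufficiently large magnitude; similarly $E_-$ forces $\Re\big[(z^2 - \tfrac43 c)^{1/2}\big] < 0$. Thus I need to show that, within $D$, the sign of $\Re\big[(z^2 - \tfrac43 c)^{1/2}\big]$ agrees with the sign of $\Re(z)$. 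This follows from the chosen branch conventions \eqref{rootdefp}, \eqref{rootdefm}, \eqref{rootdefz}: in each case the branch cut of the square root is confined to a bounded vertical or horizontal segment through $\frac{\alpha}{3\beta}$ (and in the degenerate case $\alpha^2 + 3\beta\delta = 0$ the root is literally $z$), and for $z$ in the lower half-plane outside this segment the principal-type branch has $\Re\big[(z^2 - \tfrac43 c)^{1/2}\big]$ and $\Re(z)$ of the same sign — this can be checked by continuity and by testing a single point in each of the two relevant connected components (e.g. large $|z|$ with $\Re(z) > 0$ versus $\Re(z) < 0$). Consequently $E_+ = D_+$ and $E_- = D_-$.

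The main obstacle I anticipate is the last step: carefully verifying that the sign of the real part of the square root \eqref{csr} tracks the sign of $\Re\big(k - \frac{\alpha}{3\beta}\big)$ uniformly throughout each of $D_+$ and $D_-$, given the three different branch-cut conventions (real segment, vertical segment, and the degenerate collapse). This requires a short case analysis keyed to the sign of $\alpha^2 + 3\beta\delta$, in each case using the explicit formulas \eqref{rootdefp}–\eqref{rootdefz} for the angles $\theta_\pm$ and radii $r_\pm$, and confirming that the curve $\Im(\omega) = 0$ (the hyperbola or intersecting lines of Figure \ref{fig:Dbranches}) separates the plane so that $D_+$ lies entirely in $\{\Re(z) > 0\}$-side behavior and $D_-$ in the $\{\Re(z) < 0\}$-side — this is visually evident from Figure \ref{fig:Dbranches} but needs to be pinned down analytically via \eqref{hyperbola}. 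Everything else is bookkeeping with the identity $\Im(\nu_0) + \Im(\nu_+) + \Im(\nu_-) = 0$ and Lemma \ref{lemma:Dchar}.
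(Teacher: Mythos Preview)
Your proposal is correct and follows essentially the same approach as the paper: both use Lemma~\ref{lemma:Dchar} together with the identity $\Im(\nu_0)+\Im(\nu_+)+\Im(\nu_-)=0$ (the paper obtains this via Vieta's formula $\nu_0+\nu_++\nu_-=\alpha/\beta$) to conclude that exactly one imaginary part is positive in $D$, and then finish by a continuity-plus-point-check argument on each connected component. The only difference is that the paper applies continuity directly to the sign pattern of $(\Im\nu_0,\Im\nu_+,\Im\nu_-)$ itself rather than routing through the sign of $\Re\big[(z^2-\tfrac{4}{3}c)^{1/2}\big]$, which sidesteps the branch-cut case analysis you flagged as the main obstacle.
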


\begin{proof}
Let $k$ be a point off the branch cut. Suppose first that $k \notin D = D_0 \cup D_+ \cup D_-$ so that $\Im(\omega) \geq 0$. Then, by Lemma \ref{lemma:Dchar}, it must be that $\Im(\nu_0) \Im(\nu_+) \Im(\nu_-) \le 0$ and hence it is impossible to have one symmetry with a positive imaginary part and the other two with negative imaginary parts. This implies that the conditions on the imaginary parts of the symmetries described by the lemma will not be satisfied.

Now suppose that $k \in D$ and not on the branch cut so that, again by Lemma \ref{lemma:Dchar}, we have $\Im(\nu_0) \Im(\nu_+) \Im(\nu_-) > 0$. For this inequality to hold, either exactly two factors must be negative or all three must be positive. It turns out that the latter case can never happen. To see this, recall that the symmetries satisfy the equation $\beta \nu^3 - \alpha \nu^2 - \delta \nu - \beta k^3 + \alpha k^2 - \delta k = 0$. Thus, by Vieta's formula, $\nu_0 + \nu_+ + \nu_- = \frac{\alpha}{\beta}$, so
\eq{\Im(\nu_0) + \Im(\nu_+) + \Im(\nu_-) = 0. \label{vietaim}}
As all three imaginary parts cannot be positive simultaneously, exactly one of $\Im(\nu_0)$, $\Im(\nu_+)$, and $\Im(\nu_-)$ must be positive. By continuity of each symmetry (which, in $D$, is only lost when crossing the branch cut in the case that $\alpha^2 + 3 \beta \delta < 0$), we can check one point in each of $D_0$, $D_+$, and $D_-$ to determine the signs of the symmetries in the entirety of the region. This process results in the characterization given by the lemma.
\end{proof}

The following lemma expands the results of Lemma \ref{lemma:Dchar} to include the boundary of $D$.

\begin{lemma}[Characterization of $\overline D_n$ beyond the branch points] \label{lemma:charDbar}
Suppose $k \in \C$ is such that $\big|k - \frac{\alpha}{3 \beta}\big| \geq \sqrt 6 \big|b_\pm - \frac{\alpha}{3 \beta}\big| = \frac{2\sqrt 2}{\sqrt 3 \beta} \sqrt{\abs{\alpha^2 + 3 \beta \delta}}$ and let $n \in \set{0, +, -}$.
\begin{enumerate}[label=\textnormal{(\roman*)}, leftmargin=7mm, itemsep=1mm, topsep=2mm]
\item $k \in \overline{D_n}$ if and only if $\Im(\nu_n) > 0$ and the other two symmetries have nonpositive imaginary parts. Moreover, if $k \in \overline D_n$, then these two remaining symmetries will simultaneously have zero imaginary parts if and only if $\alpha^2 + 3 \beta \delta = 0$ and $k = \frac{\alpha}{3 \beta}$.
\item If $k \in \overline D_n$, then $\Im(\nu_n)$ is bounded below by a constant multiple of $\big|k - \frac{\alpha}{3 \beta}\big|$. Specifically,
\eq{\label{imnu0bound}
\Im(\nu_0) \ge \frac{\sqrt{23}}{4 \sqrt 2} \, \Big|k - \frac{\alpha}{3 \beta}\Big|, \ k \in \overline D_0, 
\quad
\Im(\nu_\pm) \ge \frac{1}{4} \, \Big|k - \frac{\alpha}{3 \beta}\Big|, \ k\in \overline D_\pm.
}
\end{enumerate}
\end{lemma}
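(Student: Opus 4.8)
The plan is to work with the explicit representations $\Im(\nu_\pm)$ from \eqref{imnupm}, the product formula \eqref{imsnupmHZ}--\eqref{hyperbola} for $\Im(\nu_+)\Im(\nu_-)$, and Vieta's constraint \eqref{vietaim}. The key preliminary observation is that the hypothesis $\big|k-\frac{\alpha}{3\beta}\big|\geq \sqrt 6\,\big|b_\pm-\frac{\alpha}{3\beta}\big|$ is precisely the condition under which the quadratic term $\big(k-\frac{\alpha}{3\beta}\big)^2$ dominates the constant $\frac{4}{9\beta^2}(\alpha^2+3\beta\delta)$ inside the radicand of \eqref{csr} by a definite factor; concretely $\big|\big(k-\frac{\alpha}{3\beta}\big)^2 - \frac{4}{9\beta^2}(\alpha^2+3\beta\delta)\big| \geq \frac{1}{2}\big|k-\frac{\alpha}{3\beta}\big|^2$, and more importantly the sign of the real part of the radicand is controlled. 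I would first record these elementary magnitude/sign estimates on the radicand and on $\Re\big[(z^2-\frac43 c)^{1/2}\big]$, $\Im\big[(z^2-\frac43 c)^{1/2}\big]$ via the identities $[\Re(\zeta^{1/2})]^2=\frac12(|\zeta|+\Re\zeta)$, $[\Im(\zeta^{1/2})]^2=\frac12(|\zeta|-\Re\zeta)$ already used in Lemma \ref{lemma:Dchar}.

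For part (i): the ``only if'' direction and the ``if'' direction for open $D_n$ are already contained in Lemma \ref{lemma:Dschar}; what must be added is the behaviour on $\partial D$, i.e. where $\Im(\omega)=0$. By \eqref{reomega}, $\Im(\omega)=-\beta\Im(k)H(k)$, so on $\partial D$ either $\Im(k)=0$ or $H(k)=0$. In the far region $|z|\ge\sqrt6|b_\pm-\frac\alpha{3\beta}|$ the curve $H(k)=0$ (a hyperbola, or a pair of lines when $\alpha^2+3\beta\delta=0$) has well-separated branches, and I would argue by continuity of the three symmetries (which, off the branch cut, are continuous across $\partial D$) together with \eqref{vietaim}: approaching $\overline{D_n}$ from inside $D_n$, the strict signs $\Im(\nu_n)>0$, $\Im(\nu_m)<0$ ($m\neq n$) pass to $\Im(\nu_n)\geq 0$, $\Im(\nu_m)\leq 0$; one then shows $\Im(\nu_n)$ cannot vanish in the stated range using the lower bound of part (ii), so $\Im(\nu_n)>0$ is retained, while the other two can only hit zero subject to \eqref{vietaim}, which forces them to vanish simultaneously, and then $\Im(\nu_n)=0$ too — contradiction unless we are at a point where the radicand and $\Im(k)$ both vanish, i.e. $\alpha^2+3\beta\delta=0$ and $k=\frac\alpha{3\beta}$ (which must be checked directly to be excluded from the ``far'' region only when $\alpha^2+3\beta\delta\neq 0$, and is the stated exceptional point otherwise). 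The converse on $\overline{D_n}$ follows by the same continuity/Vieta bookkeeping applied in reverse, or by invoking Lemma \ref{lemma:Dchar} plus the sign analysis of $H(k)Z(k)$ on the closure.

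For part (ii): this is the quantitative heart. Using \eqref{imnupm}, on $\overline{D_0}$ I would show $\Im(\nu_0)=-\frac12\Im(k)=\Im(\nu_+)+\Im(\nu_-)$ wait — rather, from \eqref{vietaim}, $\Im(\nu_0)=-\Im(\nu_+)-\Im(\nu_-)$, and combine with \eqref{imnupm} to get $\Im(\nu_0)=\Im(k)-\sqrt3\,\Re\big[(z^2-\frac43 c)^{1/2}\big]$ where $z=k-\frac\alpha{3\beta}$; in the far region the square root is comparable to $z$ in modulus, and on $\overline{D_0}$ (where $\Im(k)>0$ and one is to the ``upper'' side of the hyperbola) both terms have the same sign, giving $\Im(\nu_0)\gtrsim|z|$ with an explicit constant. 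To pin the constant $\frac{\sqrt{23}}{4\sqrt2}$ one parametrizes extremally: on $\overline{D_0}$ the worst case is approached on $H(k)=0$, i.e. $[\Im(k)]^2=3[\Re(z)]^2-c$, and substituting this relation (together with the far-region constraint $|z|^2\geq 6\cdot\frac{4}{9\beta^2}|c|\cdot\frac{9\beta^2}{...}$, i.e. $|z|^2\ge\frac{8}{3\beta^2}|\alpha^2+3\beta\delta|=6|b_\pm-\frac\alpha{3\beta}|^2$) into the expression for $\Im(\nu_0)^2/|z|^2$ and minimizing over the remaining free angle yields the claimed bound; I would present the minimization schematically rather than grinding it. The $\overline{D_\pm}$ bound $\Im(\nu_\pm)\ge\frac14|z|$ is obtained analogously from $\Im(\nu_\pm)=-\frac12\Im(k)\pm\frac{\sqrt3}{2}\Re\big[(z^2-\frac43 c)^{1/2}\big]$, this time using that on $\overline{D_\pm}$ one has $\Im(k)\leq0$ so $-\frac12\Im(k)\geq0$ and the $\pm\frac{\sqrt3}{2}\Re[\cdots]$ term is also nonnegative on the appropriate half (this is exactly the content of the sign characterization in part (i)), and again the extremal configuration lies on $H(k)=0$.

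The main obstacle I anticipate is the bookkeeping in part (ii): keeping track of which square-root sign convention (\eqref{rootdefp}, \eqref{rootdefm}, or \eqref{rootdefz}) is in force, verifying that $\Re\big[(z^2-\frac43 c)^{1/2}\big]$ genuinely has the sign I claim on each $\overline{D_n}$ in the far region (this is where part (i) must be invoked, so the two parts are somewhat intertwined and should probably be proved together), and then carrying out the constrained optimization cleanly enough to produce the stated explicit constants rather than merely ``some positive constant.'' A secondary subtlety is the case $\alpha^2+3\beta\delta<0$, where the branch cut is a vertical segment through $\frac\alpha{3\beta}$; one must confirm that the far-region hypothesis $|z|\ge\frac{2\sqrt2}{\sqrt3\beta}\sqrt{|\alpha^2+3\beta\delta|}$ places $k$ strictly outside a neighbourhood of that cut so that all three symmetries are continuous there and the Vieta/continuity argument goes through without interruption.
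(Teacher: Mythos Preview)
Your overall strategy for part (ii) on $\overline{D_\pm}$ matches the paper's, but there is a concrete error in your treatment of $\overline{D_0}$: since $\nu_0=k$, one has simply $\Im(\nu_0)=\Im(k)$, not $\Im(k)-\sqrt 3\,\Re\big[(z^2-\tfrac{4}{3}c)^{1/2}\big]$ (the $\pm$ terms in \eqref{imnupm} cancel when you sum $\Im(\nu_+)+\Im(\nu_-)$, so Vieta just reproduces $\Im(\nu_0)=\Im(z)$). The paper exploits this directly: writing $\Im(\nu_0)=|z|\sin\theta$ with $\theta=\arg z$, the constraint $k\in\overline{D_0}$ reads $H(k)\ge 0$, i.e.\ $[\Im(z)]^2\ge 3[\Re(z)]^2-c$, which for fixed $|z|$ gives $\sin^2\theta\ge \tfrac{3}{4}-\tfrac{c}{4|z|^2}$; evaluating at the extremal radius $|z|^2=8c$ yields $\sin^2\theta\ge \tfrac{23}{32}$, whence the constant $\tfrac{\sqrt{23}}{4\sqrt 2}$. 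No square-root term enters at all, and the optimization is one line rather than ``schematic.''

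On part (i), your plan introduces an avoidable circularity by invoking part (ii) to get $\Im(\nu_n)>0$ on $\partial D_n$. The paper instead treats the boundary directly, splitting $\partial D_n$ into the real axis and the hyperbola $H(k)=0$. On the real axis $\Im(\nu_0)=0$ while \eqref{imnupm} together with the branch definitions \eqref{rootdefp}--\eqref{rootdefz} force $\Im(\nu_\pm)$ to be nonzero with opposite signs; on the hyperbola, \eqref{imsnupmHZ} gives $\Im(\nu_+)\Im(\nu_-)=0$, and since $\Im(k)\neq 0$ there, Vieta \eqref{vietaim} forces exactly one of $\Im(\nu_\pm)$ to vanish while the other equals $-\Im(\nu_0)\neq 0$. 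Continuity from the interior then fixes which symmetry carries which sign on each $\overline{D_n}$. This decouples (i) from (ii) cleanly. Also, your sentence ``which forces them to vanish simultaneously'' is backwards: Vieta \emph{prevents} the two nonpositive ones from vanishing simultaneously when $\Im(\nu_n)>0$, which is precisely why the exceptional point $\alpha^2+3\beta\delta=0$, $k=\tfrac{\alpha}{3\beta}$ (where all three imaginary parts are zero) is singled out.
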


\begin{remark}
In the special case of $\beta = 1$, $\alpha = \delta = 0$, in which the linear higher-order Schr\"odinger equation in~\eqref{hls-ibvp} reduces to the Airy equation, the results of Lemma \ref{lemma:charDbar} are trivial. Indeed, the symmetries simplify to $\nu_0 = k$, $\nu_+ = e^{i \frac{2 \pi}{3}} k$, $\nu_- = e^{i \frac{4 \pi}{3}} k$ and the sets $\set{k \in \C : \Im(\nu_n) > 0}$ for $n \in \set{0, +, -}$ are simply rotations of the upper half-plane. For each choice of $n$, asserting that $\Im(\nu_n) > 0$ and the other two symmetries have nonpositive real part clearly corresponds to the region $\overline D_n$ (refer to the case $\alpha^2 + 3 \beta \delta = 0$ in Figure \ref{fig:Dpm}). Lemma \ref{lemma:charDbar} establishes that the same basic behavior holds for all valid choices of parameters $\beta, \alpha, \delta$, even though the regions $D_n$ are more complicated and the symmetries are no longer simple rotations of $k$.
\end{remark}

\noindent
\textit{Proof.}
(i) Suppose $k \in \overline D_n$ lies beyond the branch points in the sense implied by the statement of the lemma. If $k \in D_n$, then Lemma \ref{lemma:Dschar} proves (i). Therefore, we will focus on the boundaries of $D_n$ that are beyond the branch points. Suppose $k$ is on the real axis. Obviously $\Im(\nu_0) = 0$, and by \eqref{imnupm} together with \eqref{rootdefp}, \eqref{rootdefm}, \eqref{rootdefz}, and the fact that we are beyond the branch points, $\Im(\nu_\pm)$ are both nonzero with opposite signs. To maintain their continuity, their signs should be the same as if they were in $D_n$, so (i) is consistent on the real axis.

If $k$ lies on the hyperbola (or, in the case of $\alpha^2 + 3 \beta \delta = 0$, the diagonal lines) defining part of $\partial D_n$, then by~\eqref{imsnupmHZ} at least one of $\Im(\nu_\pm)$ is zero. However, since we are not on the real axis, \eqref{vietaim} requires that exactly one must be zero and the other has the opposite sign of $\Im(\nu_0)$. This is sufficient to prove the case of $k \in \overline D_0$. For $k \in \overline D_+$ on the hyperbola, because the imaginary parts $\nu_0, \nu_-$ must have matching signs within $D_+$, we can use continuity to claim that $\Im(\nu_-)$ cannot have the opposite sign of $\Im(\nu_0)$ on the hyperbola. Hence, it must be the case that $\Im(\nu_+) > 0$ to balance the negativity of $\Im(\nu_0)$, while $\Im(\nu_-)$ becomes zero. The argument is similar for $\overline D_-$.

Now suppose that, for some $k$ beyond the branch points, $\Im(\nu_n) > 0$ and the other two symmetries have nonpositive imaginary parts. If both of the latter two symmetries have negative imaginary parts, then $k \in D_n \subset \overline D_n$ by Lemma \ref{lemma:Dschar}. Note that the  imaginary parts of these two symmetries cannot vanish simultaneously, because then~\eqref{vietaim} would be violated. Thus, assume that exactly one of them has zero imaginary part. Given an arbitrarily small neighborhood of $k$, we can always find a point in that neighborhood such that the imaginary part of this symmetry becomes negative, which implies that $k \in \overline D_n$. For example, if we have that $\Im(\nu_0) > 0$, $\Im(\nu_+) = 0$, and $\Im(\nu_-) < 0$, then in light of \eqref{imnupm}, we can adjust $k$ by an arbitrarily small amount in such a way that $\Re \big(\big[\big(k - \frac{\alpha}{3 \beta}\big)^2 - \frac{4}{9 \beta^2} \p{\alpha^2 + 3 \beta \delta}\big]^{\frac 12}\big)$ remains constant, $\Im(k)$ increases, and $\Im(\nu_0)$, $\Im(\nu_-)$ maintain their signs. Since this new point satisfies $\Im(\nu_0) > 0$ and the other symmetries have negative imaginary parts, this point belongs to $D_0$, so $k \in \overline D_0$.
\\[2mm]
(ii) Starting from $\Im(\nu_0) = \Im(k)$ in $\overline D_0$, we write
$\Im(k) = \big|k - \frac{\alpha}{3 \beta}\big| \sin \theta$, where $\theta = \text{arg}\big(k - \frac{\alpha}{3 \beta}\big)$. Taking into consideration the geometries shown in Figure \ref{fig:Dpm}, the smallest possible value of $\sin \theta$ will occur in the case $\alpha^2 + 3 \beta \delta > 0$ at the point where the hyperbola is closest to the real axis. This happens where the hyperbola is a distance of $\frac{2 \sqrt 2}{\sqrt 3 \beta} \sqrt{\alpha^2 + 3 \beta \delta}$ away from $\frac{\alpha}{3 \beta}$, as this is where we stop considering $\overline D_0$. Here, \eqref{hyperbola} implies that if $k$ is on the hyperbola in $\partial D_0$ and $\big|k - \frac{\alpha}{3 \beta}\big| = \frac{2 \sqrt 2}{\sqrt 3 \beta} \sqrt{\alpha^2 + 3 \beta \delta}$, then $\sin^2 \theta \ge \frac{23}{32}$ so $\Im(\nu_0) \ge \frac{\sqrt{23}}{4 \sqrt 2} \, \big|k - \frac{\alpha}{3 \beta}\big|$.

As for $k \in \overline D_\pm$, \eqref{imnupm}, \eqref{rootdefp}, \eqref{rootdefm} and \eqref{rootdefz} imply
\eqs{
\Im(\nu_\pm) &\ge \pm \frac{\sqrt 3}{2} \Re \Big(\Big[\Big(k - \frac{\alpha}{3 \beta}\Big)^2 - \frac{4}{9 \beta^2} \p{\alpha^2 + 3 \beta \delta}\Big]^{\frac 12}\Big) = \frac{\sqrt 3}{2} \, \Big|\Re \Big(\Big[\Big(k - \frac{\alpha}{3 \beta}\Big)^2 - \frac{4}{9 \beta^2} \p{\alpha^2 + 3 \beta \delta}\Big]^{\frac 12}\Big)\Big|.
}
Using the identity $\big[\Re(\zeta^{\frac{1}{2}})\big]^2 = \frac{1}{2} \p{\abs \zeta + \Re(\zeta)}$, $\zeta \in \C$, along with the half-angle formula, we eventually find
\eqs{
\Im(\nu_\pm) &\ge \frac{\sqrt 3}{2} \, \Big|k - \frac{\alpha}{3 \beta}\Big| \sqrt{\cos^2 (\theta) - \frac{2 \p{\abs{\alpha^2 + 3 \beta \delta} + \alpha^2 + 3 \beta \delta}}{9 \beta^2 \big|k - \frac{\alpha}{3 \beta}\big|^2}},
}
where again $\theta = \text{arg} (k - \frac{\alpha}{3 \beta})$. If $\alpha^2 + 3 \beta \delta \le 0$, then the above inequality becomes 
$\Im(\nu_\pm) 
\geq 
\frac{\sqrt 3}{2} \big|k - \frac{\alpha}{3 \beta}\big| \abs{\cos\theta}$
and we need to find a lower bound for $\abs{\cos \theta}$. According to \eqref{hyperbola}, if $\alpha^2 + 3 \beta \delta \le 0$, then the hyperbola is at a distance $\frac{2\sqrt 2}{\sqrt 3 \beta} \sqrt{\abs{\alpha^2 + 3 \beta \delta}}$ from $\frac{\alpha}{3 \beta}$ when $\big(k_R - \frac{\alpha}{3 \beta}\big)^2 = \frac{7 \abs{\alpha^2 + 3 \beta \delta}}{12 \beta^2}$. Therefore, $\cos^2 (\theta) \ge \frac{7}{32}$, which implies $\Im(\nu_\pm) \ge \frac{\sqrt{7}}{4 \sqrt 2} \big|k - \frac{\alpha}{3 \beta}\big|$. If instead $\alpha^2 + 3 \beta \delta > 0$, then using the fact that $\big|k - \frac{\alpha}{3 \beta}\big|\geq \frac{2\sqrt 2}{\sqrt 3\beta}\sqrt{\abs{\alpha^2 + 3 \beta \delta}}$ we have
$$
\frac{2\b{\abs{\alpha^2 + 3 \beta \delta}+\p{\alpha^2 + 3 \beta \delta}}}{9 \beta^2 \big|k - \frac{\alpha}{3 \beta}\big|^2}
=
\frac{4\abs{\alpha^2 + 3 \beta \delta}}{9 \beta^2 \big|k - \frac{\alpha}{3 \beta}\big|^2}
\leq
\frac 16
$$
so, since now $\cos(\theta) \geq \frac 12$ due to the angle of the asymptotes of the hyperbola, we obtain the desired inequality
\eqs{
\Im(\nu_\pm) 
\geq 
\frac{\sqrt 3}{2}\, \Big|k - \frac{\alpha}{3 \beta}\Big| \sqrt{\cos^2(\theta) - \frac 16}
\geq 
\frac{1}{4} \, \Big|k - \frac{\alpha}{3 \beta}\Big|.
}

\vspace*{-.75cm} \qed

\vspace*{5mm}

With a more thorough understanding of the region $D$, we are ready to employ the symmetries $\nu_\pm$ in order to remove the unknown terms from  the integral representation \eqref{intrepdef}. Returning to the global relation \eqref{preinv} and replacing $k$ by $\nu_+$ and $\nu_-$ gives rise to the two additional relations
\eq{
e^{-i \omega t} \hat u(\nu_+, t) &= \hat u_0 (\nu_+) - i \int_0^t e^{-i\omega t'} \hat f(\nu_+, t') dt' + \beta \tilde g_2 (\omega, t) + i \p{\beta \nu_+ - \alpha} \tilde g_1 (\omega, t) - \p{\beta \nu_+^2 - \alpha \nu_+ - \delta} \tilde g_0 (\omega, t)
\nn\\
&\quad - e^{-i \nu_+ \ell} \b{\beta \tilde h_2 (\omega, t) + i \p{\beta \nu_+ - \alpha} \tilde h_1 (\omega, t) - \p{\beta \nu_+^2 - \alpha \nu_+ - \delta} \tilde h_0 (\omega, t)}, \quad k\in \C, \label{GR+}
\\
e^{-i \omega t} \hat u(\nu_-, t) &= \hat u_0 (\nu_-) - i \int_0^t e^{-i\omega t'} \hat f(\nu_-, t') dt'+ \beta \tilde g_2 (\omega, t) + i \p{\beta \nu_- - \alpha} \tilde g_1 (\omega, t) - \p{\beta \nu_-^2 - \alpha \nu_- - \delta} \tilde g_0 (\omega, t) \nn\\
&\quad - e^{-i \nu_- \ell} \b{\beta \tilde h_2 (\omega, t) + i \p{\beta \nu_- - \alpha} \tilde h_1 (\omega, t) - \p{\beta \nu_-^2 - \alpha \nu_- - \delta} \tilde h_0 (\omega, t)}, \quad k\in \C. \label{GR-}
}

The two identities  \eqref{GR+} and \eqref{GR-} together with the original global relation \eqref{preinv} are valid for all values of $k \in \tilde D$ and can be treated as a $3 \times 3$ linear system for the unknown transforms $\beta \tilde g_2 (\omega, t)$, $\tilde g_1 (\omega, t)$, $\beta \tilde h_2 (\omega, t)$. Solving this system for these unknowns and substituting the resulting expressions into the integral representation  \eqref{intrepdef}  while simplifying using Vieta's formulas and the fact that $\omega' = -\beta \mu_+ \mu_-$, we eventually obtain 
\eq{
u(x, t) &=\frac{1}{2 \pi} \int_{-\infty}^{\infty} e^{i k x + i \omega t} \Big[\hat u_0 (k) - i \int_0^t e^{-i\omega t'} \hat f(k, t') dt'\Big] dk
\nn\\
&\quad - \frac{1}{2 \pi} \int_{\partial \tilde D_0} \frac{e^{i k x + i \omega t}}{\Delta(k)}  \p{\mu_+ e^{-i \nu_+ \ell} + \mu_- e^{-i \nu_- \ell}} \Big[\hat u_0 (k) - i \int_0^t e^{-i\omega t'} \hat f(k, t') dt'\Big] dk
\nn\\
&\quad + \frac{1}{2 \pi} \int_{\partial \tilde D_+ \cup \partial \tilde D_-} \frac{e^{-i k (\ell - x) + i \omega t}}{\Delta(k)} \mu_0 \Big[\hat u_0 (k) - i \int_0^t e^{-i\omega t'} \hat f(k, t') dt'\Big] dk
\nn\\
&\quad + \frac{1}{2 \pi} \int_{\partial \tilde D_0 \cup \partial \tilde D_+ \cup \partial \tilde D_-} \frac{e^{-i k \p{\ell - x} + i \omega t}}{\Delta(k)}  \bigg\{ \mu_+ \Big[\hat u_0 (\nu_+) - i \int_0^t e^{-i\omega t'} \hat f(\nu_+, t') dt'\Big]
\nn\\
&\qquad
 + \mu_- \Big[\hat u_0 (\nu_-) - i \int_0^t e^{-i\omega t'} \hat f(\nu_-, t') dt'\Big]
 - \mu_0 \omega' \, \tilde g_0 (\omega, t) 
\nn\\
&\qquad
 - \p{\nu_- e^{-i \nu_+ \ell} - \nu_+ e^{-i \nu_- \ell}} \omega' \, \tilde h_0 (\omega, t)
- i \p{e^{-i \nu_+ \ell} - e^{-i \nu_- \ell}} \omega' \, \tilde h_1 (\omega, t) 
 \bigg\} \, dk
 \nn\\
&\quad - \frac{1}{2 \pi} \int_{\partial \tilde D_0} \frac{e^{i k x}}{\Delta(k)} \b{-\p{\mu_+ e^{-i \nu_+ \ell} + \mu_- e^{-i \nu_- \ell}} \hat u(\nu_0, t) + \mu_+  e^{-i \nu_0 \ell}\hat u(\nu_+, t) + \mu_- e^{-i \nu_0 \ell} \hat u(\nu_-, t)} dk
\nn\\
&\quad - \frac{1}{2 \pi} \int_{\partial \tilde D_+ \cup \partial \tilde D_-} \frac{e^{-i k \p{\ell - x}}}{\Delta(k)} \b{\mu_0 \hat u(\nu_0, t) + \mu_+ \hat u(\nu_+, t) + \mu_- \hat u(\nu_-, t)} dk,
\label{intrepsub}
}
where $\Delta(k)$ is defined by \eqref{Delta} and we recall that $\nu_0=k$. 

The expression \eqref{intrepsub} still contains unknowns in the form of the transforms $\hat u(\nu_n, t)$, $n\in\set{0, +, -}$. In addition, one needs to ensure that $\Delta(k)$ does not vanish on any of the contours of integration and, furthermore, in the neighborhoods surrounding the zeros of $\Delta$ our integrands may lose analyticity, preventing us from using Cauchy's theorem to eliminate the aforementioned unknowns (see relevant argument later). 
The following lemma guarantees that, if $\Delta(k)$ has any zeros, these do not lie in the closure of $\tilde D_0 \cup \tilde D_+ \cup \tilde D_-$ and hence $\Delta(k)$ is bounded away from zero in that region.

\begin{lemma} \label{lemma:Deltabound}
Let $R_\Delta>0$ be given by \eqref{rd-def} so that, in particular, $R_\Delta$ is greater than the distance $\frac{2}{3 \beta} \sqrt{\abs{\alpha^2 + 3 \beta \delta}}$ of $\frac{\alpha}{3 \beta}$ from the branch points, should these exist. Then, there is some number $c > 0$ such that, for any $n \in \set{0, +, -}$, if $\big|k - \frac{\alpha}{3 \beta}\big| \ge R_\Delta$ and $k \in \overline{D_n}$ then $\abs{e^{i \nu_n \ell} \Delta(k)} \ge c \big|k - \frac{\alpha}{3 \beta}\big|$. 
\end{lemma}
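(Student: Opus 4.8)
The plan is to reduce the claimed lower bound $\abs{e^{i\nu_n\ell}\Delta(k)} \ge c\,\abs{k-\frac\alpha{3\beta}}$ to an analysis of the three exponential terms in the definition \eqref{Delta} of $\Delta$, exploiting the sign information on $\Im(\nu_m)$ provided by Lemma \ref{lemma:charDbar}. Fix $n\in\{0,+,-\}$ and $k\in\overline{D_n}$ with $\abs{k-\frac\alpha{3\beta}}\ge R_\Delta$. Since $R_\Delta\ge \frac{2\sqrt2}{\sqrt3\beta}\sqrt{\abs{\alpha^2+3\beta\delta}} = \sqrt6\,\abs{b_\pm-\frac\alpha{3\beta}}$ by \eqref{rd-def}, the hypotheses of Lemma \ref{lemma:charDbar} are met: $\Im(\nu_n)>0$ and the other two symmetries have nonpositive imaginary parts, with the two remaining ones vanishing simultaneously only in the degenerate case $\alpha^2+3\beta\delta=0$, $k=\frac\alpha{3\beta}$ (which is excluded by $\abs{k-\frac\alpha{3\beta}}\ge R_\Delta>0$). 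Multiplying $\Delta(k)$ through by $e^{i\nu_n\ell}$ and using Vieta ($\nu_0+\nu_++\nu_-=\frac\alpha\beta$, so that in each term one of the three exponents $e^{i(\nu_n-\nu_m)\ell}$ appears), I would rewrite $e^{i\nu_n\ell}\Delta(k)$ as a sum of a ``dominant'' term — the one whose exponential factor is $e^{i(\nu_n-\nu_\ell)\ell}$ with $\Im(\nu_n-\nu_\ell)>0$, hence of modulus $\ge 1$ (in fact growing), carrying a polynomial coefficient $\nu_j-\nu_k \simeq \abs{k-\frac\alpha{3\beta}}$ — plus two ``error'' terms whose exponential factors have nonpositive imaginary part in the exponent and hence modulus $\le 1$.

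The key steps, in order: (1) record the explicit form of $e^{i\nu_n\ell}\Delta(k)$ for each of the three cases $n=0,+,-$ separately, grouping terms so that the term with the growing exponential is isolated; (2) use Lemma \ref{lemma:charDbar}(ii), i.e. $\Im(\nu_n)\ge c_1\abs{k-\frac\alpha{3\beta}}$, together with $\Im(\nu_m)\le 0$ for $m\ne n$, to conclude $\Im(\nu_n-\nu_m)\ge c_1\abs{k-\frac\alpha{3\beta}}$, so the dominant exponential has modulus $e^{\,c_1\ell\,\abs{k-\frac\alpha{3\beta}}}\ge 1$, while the two error exponentials have modulus $\le 1$; (3) estimate the polynomial coefficients: from \eqref{nupmbound} (and the analogous lower bound) one gets $\abs{\nu_j-\nu_k}\simeq\abs{k-\frac\alpha{3\beta}}$ for $\abs{k-\frac\alpha{3\beta}}$ large, and for $R_\Delta\le\abs{k-\frac\alpha{3\beta}}$ bounded away from $\infty$ a compactness/continuity argument on the (closed) region gives the same comparison — here the fact that $\mu_\pm := \nu_0-\nu_\pm$ and $\mu_0:=\nu_+-\nu_-$ vanish only at the (excluded) point $k=\frac\alpha{3\beta}$ in the degenerate case is what keeps the coefficient from degenerating; (4) combine: $\abs{e^{i\nu_n\ell}\Delta(k)} \ge \abs{\text{dominant coeff}}\cdot\abs{\text{dominant exp}} - 2\max\abs{\text{error coeffs}} \ge c_1\abs{k-\frac\alpha{3\beta}}\cdot 1 - C\abs{k-\frac\alpha{3\beta}}$, which is not yet enough, so instead I would argue more carefully: divide by $\abs{k-\frac\alpha{3\beta}}$ and show the normalized quantity is bounded below on the closed unbounded region. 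For large $\abs{k-\frac\alpha{3\beta}}$ the dominant exponential blows up and swamps the bounded error terms; for $\abs{k-\frac\alpha{3\beta}}$ in the compact annular region $R_\Delta\le\abs{k-\frac\alpha{3\beta}}\le M$ intersected with $\overline{D_n}$ off the branch cut, the function $k\mapsto e^{i\nu_n\ell}\Delta(k)/(k-\frac\alpha{3\beta})$ is continuous and nonvanishing (nonvanishing because a zero of $\Delta$ in $\overline{D_n}$ with $\abs{k-\frac\alpha{3\beta}}\ge R_\Delta$ would force the dominant term to be cancelled by strictly smaller terms, a contradiction), hence bounded below by a positive constant; taking the minimum of the two regimes gives the uniform $c>0$.

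The main obstacle I anticipate is step (3)–(4) in the \emph{intermediate} range of $\abs{k-\frac\alpha{3\beta}}$, i.e. making the lower bound genuinely uniform rather than just asymptotic. For $\abs{k-\frac\alpha{3\beta}}\to\infty$ the estimate is essentially free because $e^{i\nu_n\ell}\Delta(k)$ is dominated by a single exponentially large term; the delicate point is that when $\abs{k-\frac\alpha{3\beta}}=R_\Delta$ the ``growing'' exponential need only have modulus $\ge 1$, so one cannot simply absorb the error terms by size alone — one must instead rule out exact cancellation. The role of the constant $9/\ell$ inside the max in \eqref{rd-def} is presumably precisely to guarantee that on $\abs{k-\frac\alpha{3\beta}}\ge R_\Delta$ the dominant exponential is not merely $\ge 1$ but bounded below by some explicit constant strictly exceeding the combined modulus of the error terms (after normalization), giving a clean algebraic inequality without invoking compactness. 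I would therefore first attempt the explicit route: bound $\abs{e^{i(\nu_n-\nu_m)\ell}}\ge e^{c_1\ell R_\Delta}$ using $\ell R_\Delta\ge 9$ and $c_1=\frac14$ (or $\frac{\sqrt{23}}{4\sqrt2}$), check this exceeds the sum of the two error-term moduli by a fixed margin, and only fall back on the continuity/compactness argument for the $n=0$ versus $n=\pm$ bookkeeping and for handling points on $\partial D_n$ where one error symmetry has zero imaginary part.
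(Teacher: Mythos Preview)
Your overall strategy matches the paper's exactly: multiply $\Delta(k)$ by $e^{i\nu_n\ell}$, use the sign information on $\Im(\nu_m)$ from Lemma~\ref{lemma:charDbar} to isolate a dominant term, bound the coefficients $\mu_m$ above and below by multiples of $\big|k-\frac{\alpha}{3\beta}\big|$, and apply the reverse triangle inequality. However, you have a persistent sign error that causes you to overcomplicate the argument. For any $\theta\in\C$ one has $|e^{i\theta}| = e^{-\Im\theta}$, not $e^{+\Im\theta}$. After multiplying by $e^{i\nu_n\ell}$, the three terms in $e^{i\nu_n\ell}\Delta(k)$ are $\mu_m e^{i(\nu_n-\nu_m)\ell}$ for $m\in\{0,+,-\}$; the $m=n$ term has exponential exactly $e^0=1$ --- this is the dominant term, and there is \emph{no} growing exponential anywhere --- while for $m\ne n$ one has $|e^{i(\nu_n-\nu_m)\ell}| = e^{-\Im(\nu_n-\nu_m)\ell}\le e^{-\Im(\nu_n)\ell}$ since $\Im(\nu_m)\le 0$. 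By Lemma~\ref{lemma:charDbar}(ii) this is $\le e^{-c_1\ell|k-\alpha/(3\beta)|}$, i.e.\ exponentially small, not merely $\le 1$.

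Once the sign is fixed, your step~(4) becomes
\[
\big|e^{i\nu_n\ell}\Delta(k)\big| \;\ge\; |\mu_n| - \big(|\mu_{m_1}|+|\mu_{m_2}|\big)\,e^{-c_1\ell\,|k-\alpha/(3\beta)|},
\]
and since $\ell\,\big|k-\frac{\alpha}{3\beta}\big|\ge \ell R_\Delta\ge 9$, the exponential factor is a fixed small number that is beaten by the ratio of the lower bound on $|\mu_n|$ to the upper bound on the other $|\mu_m|$'s (these are the bounds \eqref{mu0bound}--\eqref{mupmbound}). This is exactly what the paper does, with explicit constants --- e.g.\ for $n=0$ it obtains $\big|e^{i\nu_0\ell}\Delta\big|\ge \big|k-\frac{\alpha}{3\beta}\big|\big[\frac{\sqrt5}{\sqrt2} - (3+\frac{\sqrt7}{\sqrt2})e^{-9\sqrt{23}/(4\sqrt2)}\big]$ --- and it works uniformly for all $\big|k-\frac{\alpha}{3\beta}\big|\ge R_\Delta$. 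Your compactness/continuity fallback for the ``intermediate'' range is therefore unnecessary. You correctly guessed in your last paragraph that the $9/\ell$ in $R_\Delta$ is what makes the explicit route succeed; you simply had the roles of the exponentials reversed.
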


\begin{proof}
We begin by noting that the value of $R_\Delta$ provided by \eqref{rd-def} is certainly not optimal as such a task is of no consequence for our purposes.
Alongside the fact that $\big|k - \frac{\alpha}{3 \beta}\big| > \frac{2}{3 \beta} \sqrt{\abs{\alpha^2 + 3 \beta \delta}}$, repeated application of the triangle inequality yields the bounds
\eq{
\sqrt 3\, \Big|k - \frac{\alpha}{3 \beta}\Big| \sqrt{1 - \frac{4 \abs{\alpha^2 + 3 \beta \delta}}{9 \beta^2 \big|k - \frac{\alpha}{3 \beta}\big|^2}} \le \abs{\mu_0} &\le \sqrt 3 \, \Big|k - \frac{\alpha}{3 \beta}\Big| \sqrt{1 + \frac{4 \abs{\alpha^2 + 3 \beta \delta}}{9 \beta^2 \big|k - \frac{\alpha}{3 \beta}\big|^2}}, \label{mu0bound}\\
\Big|k - \frac{\alpha}{3 \beta}\Big| \Bigg(\frac{3}{2} - \frac{\sqrt 3}{2} \sqrt{1 + \frac{4 \abs{\alpha^2 + 3 \beta \delta}}{9 \beta^2 \big|k - \frac{\alpha}{3 \beta}\big|^2}}\,\Bigg) \le \abs{\mu_\pm} &\le \Big|k - \frac{\alpha}{3 \beta}\Big| \Bigg(\frac{3}{2} + \frac{\sqrt 3}{2} \sqrt{1 + \frac{4 \abs{\alpha^2 + 3 \beta \delta}}{9 \beta^2 \big|k - \frac{\alpha}{3 \beta}\big|^2}}\,\Bigg). \label{mupmbound}
}
If $n = 0$, then by the characterization of $\overline{D_0}$ beyond the branch points given in Lemma \ref{lemma:charDbar} we have
\eqs{
\abs{e^{i \nu_0 \ell} \Delta(k)} &\ge \abs{\mu_0} - \p{\abs{\mu_+} + \abs{\mu_-}} e^{-\Im \p{\nu_0} \ell}\\
&\ge \Big|k - \frac{\alpha}{3 \beta}\Big| \Bigg[\sqrt 3 \sqrt{1 - \frac{4 \abs{\alpha^2 + 3 \beta \delta}}{9 \beta^2 \big|k - \frac{\alpha}{3 \beta}\big|^2}} - \Bigg(3 + \sqrt 3 \sqrt{1 + \frac{4 \abs{\alpha^2 + 3 \beta \delta}}{9 \beta^2 \big|k - \frac{\alpha}{3 \beta}\big|^2}}\, \Bigg) e^{-\Im \p{\nu_0} \ell}\,\Bigg]. 
}
Since $\big|k - \frac{\alpha}{3 \beta}\big| \ge \frac{2 \sqrt 2}{\sqrt 3 \beta} \sqrt{\abs{\alpha^2 + 3 \beta \delta}}$, the two square roots containing $k$ on the right-hand side are bounded below by $\frac{\sqrt 5}{\sqrt 6}$ and  $\frac{\sqrt 7}{\sqrt 2}$ respectively. Furthermore, by \eqref{imnu0bound} and the fact that $\big|k - \frac{\alpha}{3 \beta}\big| \ge \frac{9}{\ell}$, $\Im(\nu_0) \ell \ge \frac{3 \sqrt{23}}{\sqrt 2}$. Altogether, these bounds combine to produce 
$
\abs{e^{i \nu_0 \ell} \Delta(k)} \ge \big|k - \frac{\alpha}{3 \beta}\big| \big[\frac{\sqrt{5}}{\sqrt 2} - \big(3 + \frac{\sqrt 7}{\sqrt 2}\big) e^{-\frac{9 \sqrt{23}}{4 \sqrt 2}}\big]
$
as claimed.
For $n \in \set{+,-}$ similar steps result in the inequality 
$
\abs{e^{i \nu_\pm \ell} \Delta(k)}
\ge \big|k - \frac{\alpha}{3 \beta}\big|
\big[\big(\frac{3 \sqrt 2- \sqrt 7}{2 \sqrt 2}\big)
- \big(\frac{3 \sqrt 2 + 3 \sqrt 7}{2 \sqrt 2}\big) e^{-\frac{9}{4}}\big]$, 
concluding the proof.
\end{proof}

We will now argue that the terms involving the unknowns $\hat u(\nu_n, t)$, $n\in\set{0, +, -}$, on the right-hand side of \eqref{intrepsub} have zero contribution in that expression. This process involves the consideration of the integrals of each term taken over arcs of radius $R$ centered at $\frac{\alpha}{3 \beta}$ that subtend the respective $\tilde D_n$. By Cauchy's theorem and analyticity of the relevant integrands, it suffices to show that the values of these integrals decay uniformly to zero as $R \to \infty$.
For each term containing $\hat u(\nu_n, t)$ within the final two integrals of \eqref{intrepsub} and for any fixed $R = \big|k - \frac{\alpha}{3 \beta}\big|$, we parametrize the arc by $k = \frac{\alpha}{3 \beta} + R e^{i \theta}$ for appropriate bounds on $\theta$ so that the arc subtends the associated region $\tilde D_n$. 

The significant factors appearing alongside $\hat u(\nu_n, t)$ are the following. (i) Each term has a factor of $\frac{1}{\Delta(k)} = \frac{e^{i \nu_n \ell}}{e^{i \nu_n \ell} \Delta(k)}$. By Lemma \ref{lemma:Deltabound}, the behavior of $\frac{1}{\Delta(k)}$ is like $\frac 1R e^{i \nu_n \ell}$.
(ii) There is always one factor of $\mu_0$, $\mu_+$, or $\mu_-$, whose magnitudes according to \eqref{mu0bound} and \eqref{mupmbound} behave as factors of $R$.
(iii) In view of the definition \eqref{fi-ft-def}, the transform $\hat u(\nu_n, t)$ contains the exponential $e^{-i \nu_n y}$ (here, $n$ does not necessarily correspond with the region $\tilde D_n$).
(iv) A factor of $i R e^{i \theta}$ appears due to the parametrization. 
(v) Finally, any other exponentials appearing explicitly within the term must also be considered.
Overall, thanks to Lemma \ref{lemma:charDbar}, the aforementioned factors are guaranteed to combine to produce uniform exponential decay. Therefore, the final two integrals in \eqref{intrepsub} vanish, resulting in the explicit solution formula
\eq{
\begin{split}
u(x, t) &=\frac{1}{2 \pi} \int_{-\infty}^{\infty} e^{i k x + i \omega t} \Big[\hat u_0 (k) - i \int_0^t e^{-i\omega t'} \hat f(k, t') dt'\Big] dk
\\
&\quad - \frac{1}{2 \pi} \int_{\partial \tilde D_0} \frac{e^{i k x + i \omega t}}{\Delta(k)}  \p{\mu_+ e^{-i \nu_+ \ell} + \mu_- e^{-i \nu_- \ell}} \Big[\hat u_0 (k) - i \int_0^t e^{-i\omega t'} \hat f(k, t') dt'\Big] dk
\\
&\quad + \frac{1}{2 \pi} \int_{\partial \tilde D_+ \cup \partial \tilde D_-} \frac{e^{-i k (\ell - x) + i \omega t}}{\Delta(k)} \mu_0 \Big[\hat u_0 (k) - i \int_0^t e^{-i\omega t'} \hat f(k, t') dt'\Big] dk
\\
&\quad + \frac{1}{2 \pi} \int_{\partial \tilde D_0 \cup \partial \tilde D_+ \cup \partial \tilde D_-} \frac{e^{-i k \p{\ell - x} + i \omega t}}{\Delta(k)}  \bigg\{ \mu_+ \Big[\hat u_0 (\nu_+) - i \int_0^t e^{-i\omega t'} \hat f(\nu_+, t') dt'\Big]
\\
&\qquad
 + \mu_- \Big[\hat u_0 (\nu_-) - i \int_0^t e^{-i\omega t'} \hat f(\nu_-, t') dt'\Big]
 - \mu_0 \omega' \, \tilde g_0 (\omega, t) 
\\
&\qquad
 - \p{\nu_- e^{-i \nu_+ \ell} - \nu_+ e^{-i \nu_- \ell}} \omega' \, \tilde h_0 (\omega, t)
- i \p{e^{-i \nu_+ \ell} - e^{-i \nu_- \ell}} \omega' \, \tilde h_1 (\omega, t) 
 \bigg\} \, dk.
 \end{split}
\label{sol-t}
}

The final step in the derivation of the unified transform solution \eqref{hls-sol} consists in replacing the time transforms of the boundary data and the forcing in formula \eqref{sol-t} by 
$$
\tilde g_0(\omega, T), \quad \tilde h_0(\omega, T), \quad \tilde h_1(\omega, T), 
\quad
\int_0^T e^{-i \omega t'} \hat f(k, t') dt'
$$  
for any fixed $T>t$. We emphasize that this task is not necessary for obtaining the solution to problem \eqref{hls-ibvp}, since~\eqref{sol-t} is indeed an explicit solution formula that contains only known terms. However, replacing the variable $t$ by a fixed $T$ in the above transforms facilitates significantly the analysis of Section \ref{reduced-s} that leads to the proofs of Theorems \ref{thm:fi-se} and \ref{thm:strichartz} for the Sobolev and Strichartz estimates of the reduced initial-boundary value problem \eqref{HNLSreduced}, which as already noted play the key role in establishing  the general linear estimates of Theorem \ref{lin-est-t}. 

The replacement described above amounts to showing that the quantities 
\eq{
\int_t^T e^{-i \omega t'} g_0(t') dt', \quad
\int_t^T e^{-i \omega t'} h_0(t') dt', \quad
\int_t^T e^{-i \omega t'} h_1(t') dt', \quad
\int_t^T e^{-i \omega t'} \hat f(k, t') dt'
\label{tilde-aug}
}
have zero contributions when replacing the relevant transforms in the last  three $k$-integrals of \eqref{sol-t}. Therefore, replacing $t$ with $T$ inside these transforms does not alter \eqref{sol-t} and results in the desired solution formula \eqref{hls-sol}. We refer to \eqref{tilde-aug} as the tilde transform augmentations of the associated functions. 

We proceed as before with an argument relying on Cauchy's theorem similar to the one used for eliminating $\hat u(\nu_n,t)$ from \eqref{intrepsub}. The main difference is that the exponential $e^{-i \omega t'}$ appears from the tilde transform augmentation instead of the exponential $e^{-i \nu_n y}$, which is no longer present. We also have the occasional factor of $\omega'$, which behaves like $R^2$. As before, all terms in consideration can be shown to display uniform exponential decay, except for those terms corresponding to the boundary data $h_0$ and $h_1$ in the regions $\tilde D_\pm$, which become oscillatory as we approach the real axis. For these terms, we first integrate by parts within the tilde transform augmentation. This produces an extra factor of $\frac{1}{\omega}$, which behaves like $R^{-3}$. Hence, while we lose uniform exponential decay, we have uniform algebraic decay of order $R^{-1}$ in the case of $h_0$ and $R^{-2}$ in the case of $h_1$. Consequently, the integrals~\eqref{tilde-aug} vanish when substituted in place of their associated time transforms in \eqref{sol-t}, as claimed.

\end{document}